\def\RSthmtxt{theorem~}\newref{thm}{name = \RSthmtxt}}
\def\RSlemtxt{lemma~}\newref{lem}{name = \RSlemtxt}}
\theoremstyle{plain}
\newtheorem*{thm*}{\protect\theoremname}
\theoremstyle{plain}
\newtheorem{thm}{\protect\theoremname}[section]
\theoremstyle{plain}
\newtheorem{conjecture}[thm]{\protect\conjecturename}
\theoremstyle{definition}
\newtheorem{defn}[thm]{\protect\definitionname}
\theoremstyle{remark}
\newtheorem{rem}[thm]{\protect\remarkname}
\theoremstyle{plain}
\newtheorem{lem}[thm]{\protect\lemmaname}
\theoremstyle{definition}
\newtheorem{example}[thm]{\protect\examplename}
\theoremstyle{plain}
\newtheorem{cor}[thm]{\protect\corollaryname}
\providecommand{\conjecturename}{Conjecture}
\providecommand{\corollaryname}{Corollary}
\providecommand{\definitionname}{Definition}
\providecommand{\examplename}{Example}
\providecommand{\lemmaname}{Lemma}
\providecommand{\remarkname}{Remark}
\providecommand{\theoremname}{Theorem}
\providecommand{\conjecturename}{Conjecture}
\providecommand{\corollaryname}{Corollary}
\providecommand{\definitionname}{Definition}
\providecommand{\examplename}{Example}
\providecommand{\lemmaname}{Lemma}
\providecommand{\remarkname}{Remark}
\providecommand{\theoremname}{Theorem}
\begin{document}
\title{Sentences over Random Groups I: Existential Sentences}
\author{Sobhi Massalha}
\maketitle
\begin{abstract}
Random groups of density $d<\frac{1}{2}$ are infinite hyperbolic,
and of density $d>\frac{1}{2}$ are finite. We prove that for any
given system of equations $\Sigma$, all the solutions of $\Sigma$
over a random group of density $d<\frac{1}{2}$ are projected from
solutions of $\Sigma$ over the free group $F_{k}$, with overwhelming
probability, where $k$ is the rank of the group. We conclude that
any given sentence in the Boolean algebra of universal sentences,
is a truth sentence over $F_{k}$ if and only if it is a truth sentence
over random groups of density $d<\frac{1}{2}$, with overwhelming
probability. 
\end{abstract}
\tableofcontents{}

\section{Introduction}

Around 1945, a well-known question was presented by Alfred Tarski
on the first order theory of free groups. He asked if every two non-abelian
finitely generated free groups are elementary equivalent. His question
was answered affirmatively by Z. Sela through his seminal work, which
was published in a series consisting of seven papers titled ``Diophantine
Geometry''. 
\begin{thm*}
(Z. Sela, \cite{DGI,DGII,DGIII,DGIV,DGV,DGV2,DGVI}) Every two non-abelian
free groups share the same first order theory. 
\end{thm*}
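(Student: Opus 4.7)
The plan is to reduce the equivalence of first-order theories of non-abelian free groups to a structural/diophantine analysis of solution sets of systems of equations over $F_k$, and then to exploit the fact that this structure is essentially independent of $k$ for $k \geq 2$. At the top level, one iteratively reduces every first-order sentence to a boolean combination of existential ones, and then verifies existential equivalence by a direct embedding argument.

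First I would develop the combinatorial machinery for the existential/diophantine step: for every system $\Sigma(x,a)=1$ construct a \emph{Makanin--Razborov diagram}, a finite rooted tree whose leaves are labelled by \emph{limit groups} (fully residually free groups) such that every homomorphism from the coefficient group $F\langle a,x\rangle / \langle\!\langle \Sigma\rangle\!\rangle$ to $F_k$ factors through a branch of the tree composed with modular automorphisms of the leaf. Two ingredients make the rank drop out: (i) the diagram and the limit groups at its leaves depend only on $\Sigma$, not on $k$; (ii) every (non-abelian) limit group embeds into $F_2$. Together these yield existential equivalence between $F_k$ and $F_2$ at once.

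Next I would treat quantifier alternations by induction. For a $\forall\exists$ sentence $\forall y\,\exists x\,(\Sigma(x,y,a)=1 \wedge \Psi(x,y,a)\neq 1)$, the goal is to show that its truth value is controlled by finitely many ``formal solutions'' $x = x(y)$ defined over JSJ and rigid/solid quotients of the relevant limit groups. A shortening argument on test sequences should furnish termination: only finitely many such formal solutions are required, which in turn gives a quantifier-free witness in a suitably enriched first-order setup. Iterating one quantifier block at a time reduces every sentence to a boolean combination of diophantine statements whose truth is manifestly rank-independent.

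The main obstacle, and the technical core of any proof in this direction, is the termination of the formal solution construction at each new quantifier alternation. One must classify rigid and solid limit groups, bound the number of maximal shortening quotients, and prove that iterating the construction stabilises after finitely many steps. This is what turns the heuristic ``finitely many formal solutions cover everything'' into a genuine quantifier-elimination procedure; without it, the inductive reduction sketched above cannot be made effective, and it is precisely this package of results that occupies Sela's seven-paper series.
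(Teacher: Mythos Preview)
The paper does not prove this theorem at all: it is stated in the introduction purely as background, with a citation to Sela's seven-paper series \cite{DGI,DGII,DGIII,DGIV,DGV,DGV2,DGVI}, and no argument is supplied. There is therefore nothing in the paper to compare your proposal against.

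That said, your outline is a reasonable high-level summary of the strategy Sela actually carries out in those papers: Makanin--Razborov diagrams and limit groups for the diophantine layer, formal solutions and the shortening argument to handle $\forall\exists$, and an iterative quantifier-elimination procedure for deeper alternations. You correctly identify the termination of the formal-solution construction as the technical heart of the matter. As a sketch this is fine, but of course it is not a proof; the actual work fills several hundred pages, and none of it is reproduced or required in the present paper.
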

For a different approach to Tarski's problem, see also \cite{Kharlampovich=000020Myasnikov=000020-=000020Elementary=000020theory=000020of=000020free=000020non-abelian=000020groups}.

Actually, Sela obtained a classification of all the f.g. (finitely
generated) groups that are elementary equivalent to a non-abelian
f.g. free group.

Hence, all the non-abelian f.g. free groups share the same collection
of truth sentences. This collection is called the theory of free groups.
In this paper, and in the coming papers of this series, we will be
interested in the groups that can or cannot be distinguished from
the free groups by a single sentence. More precisely, we aim to prove
that ``almost all the groups'' cannot be distinguished from the
free groups by a single sentence.

In order to describe the mathematical meaning of the term ``almost
all the groups'', we need to fix a model of groups through which
one can pick a group at random. A model for picking a finitely presented
group at random was suggested by M. Gromov in \cite{Gromov=000020-=000020Asymptotic=000020invariants}.
This model has two fixed parameters $0\leq d\leq1$ and an integer
$k\geq2$. The value of $d$ is called the density of the model, and
$k$ is the rank of a fixed free group $F_{k}$ with a fixed basis
$a=(a_{1},...,a_{k})$. All the groups in this model are given by
presentations in terms of the generating set $a$. The number of relations
in a group of level $l$ in the density model, grows exponentially
with $l$. This is in contrast to the few relators model, also introduced
by Gromov in \cite{Gromov=000020Hyperbolic=000020groups}, where the
number of relations is fixed in advance.

Picking a group at random in the density model, enables us to consider
the probability of the satisfaction of a given property in that random
group. For example, in \cite{Gromov=000020-=000020Asymptotic=000020invariants}
Gromov proved that in all the densities $d<\frac{1}{2}$, a random
group is infinite hyperbolic in overwhelming probability (tends to
$1$). This fact gives a possible answer for the question ``What
does a generic group look like?''. The answer is ``It is hyperbolic
almost always''. It was proved in \cite{Gromov=000020-=000020Asymptotic=000020invariants}
that at densities $d>\frac{1}{2}$, the random groups are either trivial
or isomorphic to $\mathbb{Z}/2\mathbb{Z}$, with overwhelming probability.

In 2013, J. Knight raised the following natural question: for any
given first order sentence $\psi$, the sentence $\psi$ is a truth
sentence over free groups if and only if $\psi$ is a truth sentence
over random groups (in overwhelming probability) in the few relators
model. In \cite{Kharlampovich=000020Sklinos=000020-=000020FIRST-ORDER=000020SENTENCES},
O. Kharlampovich and R. Sklinos stated a similar conjecture, but for
random groups in the density model with densities $d<\frac{1}{16}$.
We extend these conjectures to the following more general conjecture. 
\begin{conjecture}
\label{conj:1} Let $0\leq d<\frac{1}{2}$ and let $\psi$ be a first
order sentence in the language of groups. The random group of density
$d$ satisfies $\psi$ (in overwhelming probability) if and only if
the free group $F_{k}$ satisfies $\psi$. 
\end{conjecture}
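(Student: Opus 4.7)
The plan is to mirror the structure of Sela's solution to the Tarski problem for free groups, transferring each step to the random-group setting while exploiting the small-cancellation/isoperimetric geometry that persists at density $d<\tfrac{1}{2}$. The proof naturally splits by the quantifier complexity of $\psi$.

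\textbf{Step 1: existential and universal sentences.} The easy direction is that any existential sentence satisfied in $F_k$ is satisfied in the quotient $G_d$: push solutions through the canonical epimorphism $F_k\twoheadrightarrow G_d$. The substantial direction is the converse, which I would establish via a lifting/projection theorem: any tuple $(g_1,\ldots,g_n)\in G_d^n$ satisfying a finite system of equations and inequations $\Sigma$ admits, with overwhelming probability, a lift $(w_1,\ldots,w_n)\in F_k^n$ that already solves $\Sigma$ over $F_k$. The mechanism is the one announced in the abstract: a would-be $G_d$-solution that is not a projection of an $F_k$-solution forces a van Kampen diagram in which random relators participate with total boundary contribution incompatible with the Ollivier density-$\tfrac12$ isoperimetric inequality. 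Once this projection theorem is available, the universal case follows by applying it to the negation.

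\textbf{Step 2: Boolean algebra of universal sentences.} This is a formal consequence of Step 1: a Boolean combination of universal/existential sentences is true in $F_k$ iff each of its atoms is, and the atomic equivalence is exactly what Step 1 supplies. This is the range of statements the present paper claims to settle.

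\textbf{Step 3: higher quantifier alternation.} For an $AE$ sentence $\forall\bar x\,\exists\bar y\,\phi(\bar x,\bar y)$ I would build, following Sela, a Makanin--Razborov diagram for $\phi$ in the parameter $\bar x$, together with a finite collection of formal solutions that cover $\mathrm{Th}(F_k)$-truth. The random-group content is to show that with overwhelming probability these formal solutions remain valid once the random relators are imposed, and that no new $\bar x$-values in $G_d$ escape the $F_k$-diagram, again by a lifting argument based on isoperimetry. Iterating this, one reduces any sentence of higher quantifier depth by Sela's quantifier-elimination procedure down to $AE$-statements, and then applies the random-group $AE$-transfer.

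\textbf{Main obstacle.} The serious difficulty lies in Step 3. Step 1's lifting is a more-or-less direct small-cancellation argument, but in the $AE$ case one must lift a whole continuous family of solutions and control the behaviour of \emph{limit groups} of $G_d$ under the addition of random relators: in particular one needs a random-group analogue of Sela's shortening argument and a classification, in overwhelming probability, of the Makanin--Razborov diagrams of equations over $G_d$. Understanding how random relators interact with the JSJ decomposition of a given limit group is where the geometric and combinatorial input beyond generic hyperbolicity becomes essential, and this is presumably the content of the subsequent papers in the series; the present paper isolates the existential fragment precisely because it can be treated by the projection argument alone.
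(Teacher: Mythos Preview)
The statement is a \emph{conjecture}; the paper does not prove it. What the paper proves is the special case where $\psi$ lies in the Boolean algebra of universal sentences (Theorem~\ref{thm:2}), so your Step~3 lies entirely outside the scope of this paper and there is nothing to compare it against here.

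More importantly, your account of Step~1 misidentifies where the work lies. You describe the lifting of solutions as ``a more-or-less direct small-cancellation argument'' via incompatibility with the isoperimetric inequality. That is only how the paper handles solutions of \emph{bounded} length (at most $l\ln^2 l$, Theorem~\ref{thm:35}): one enumerates the possible decorated van Kampen diagrams, bounds their number polylogarithmically, and kills each with the $(2k-1)^{-(\frac12-d)l}$ probability estimate. But this counting argument breaks down for solutions of unbounded length, because the number of candidate diagrams grows too fast relative to the probability decay. The bulk of the paper is devoted to this unbounded case, and the method is precisely the one you reserved for Step~3: the paper develops limit groups over ascending sequences of random groups, proves the analogue of the superstability properties of the limit action (Theorem~\ref{thm:51}), establishes quantitative control on how long axes of non-commuting elements can fellow-travel in a random group (Section~\ref{sec:Axes-of-Elements-in-Random-Groups}, the most technical part), and then runs a full shortening argument to show that non-lift sequences cannot exist (Corollary~\ref{cor:61}, Theorem~\ref{thm:63}). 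In other words, the shortening argument and the JSJ/limit-group machinery are already required for the existential fragment, not first at the $AE$ level. Your proposal underestimates the existential case and overestimates how much of the difficulty is deferred to higher quantifier depth.
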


In this paper, we prove \conjref{1} for sentences that belong to
the Boolean algebra of universal sentences. In the next papers we
consider general sentences. 
\begin{thm}
\label{thm:2} Let $0\leq d<\frac{1}{2}$. Let $\psi$ be a first
order sentence in the language of groups, and assume that $\psi$
belongs to the Boolean algebra of universal sentences. Then the random
group of density $d$ satisfies $\psi$ (in overwhelming probability)
if and only if the free group $F_{k}$ satisfies $\psi$. 
\end{thm}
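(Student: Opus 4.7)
The plan is to reduce \thmref{2} to the main technical claim announced in the abstract---that, with overwhelming probability over the random relators $R$, every solution of $\Sigma(\bar{x})=1$ in the random group $G$ is the image, under the quotient map $\pi\colon F_k \to G$, of some solution of $\Sigma$ over $F_k$. Granting this ``solution-lifting'' statement, passing to an arbitrary $\psi$ in the Boolean algebra of universal sentences is a purely logical exercise combined with one elementary probabilistic estimate.

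I would first put $\psi$ into a normal form adapted to the Boolean structure. Since the Boolean combinations of $\Pi_1$-sentences are closed under negation, and since the intersection of finitely many overwhelming-probability events is still overwhelming, it suffices to treat a single existential sentence of the form
\[
\psi \;=\; \exists \bar{x}\,\bigl(\Sigma(\bar{x})=1 \,\wedge\, \Psi(\bar{x})\neq 1\bigr),
\]
where $\Sigma$ is a finite system of equations and $\Psi$ a finite collection of words. For the direction $F_k\models\psi \Rightarrow G\models\psi$, I would fix any witness $\bar{g}\in F_k^n$; the image $\pi(\bar{g})$ still solves $\Sigma$, and each $\Psi_i(\bar{g})$ is a \emph{fixed} nontrivial element of $F_k$, so the standard single-element estimate (any fixed nontrivial element of $F_k$ remains nontrivial in $G$ with overwhelming probability at density $d<1/2$) combined with a union bound over the finitely many $\Psi_i$ forces $\pi(\bar{g})$ to witness $\psi$ in $G$. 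For the converse, given a witness $\bar{g}\in G^n$, I would apply solution lifting to obtain $\bar{g}'\in F_k^n$ with $\Sigma(\bar{g}')=1$ and $\pi(\bar{g}')=\bar{g}$; then $\pi(\Psi_i(\bar{g}'))=\Psi_i(\bar{g})\neq 1$ in $G$ forces $\Psi_i(\bar{g}')\neq 1$ in $F_k$, so $\bar{g}'$ witnesses $\psi$ in $F_k$.

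The main obstacle, and the technical core of the paper, is the solution-lifting claim itself. The approach I would pursue is to run Sela's Makanin--Razborov analysis of $\Sigma$ over $F_k$: the $F_k$-solutions of $\Sigma$ factor through finitely many maximal limit groups $L_1,\ldots,L_m$, and every homomorphism $L_i\to F_k$ can be shortened, using the modular (JSJ) automorphism group of $L_i$, to one of a controlled family of ``short'' representatives. A hypothetical solution in $G$ that is \emph{not} the projection of any $F_k$-solution would correspond to a homomorphism $L_i\to G$ whose lifts to $F_k$ all fail to solve $\Sigma$; this forces the image in $F_k$ of a suitable ``test word'' of $L_i$ to lie in the normal closure of the random relators. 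The anticipated obstacle is to bound, via a union bound over the countable family of such test words, the probability of this event: for $d<1/2$ the relators are long and generic enough that Gromov's linear isoperimetric inequality for random groups forces the probability to be $o(1)$, yielding solution lifting and hence \thmref{2}.
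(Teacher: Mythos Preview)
Your reduction of \thmref{2} to the solution-lifting claim (the Main Theorem, \thmref{3}) is correct and is exactly how the paper proceeds in \thmref{66}: put $\psi$ in disjunctive normal form, push a fixed $F_k$-witness forward using that any fixed nontrivial word of $F_k$ survives in the random group (the paper invokes \thmref{26}, that a ball of radius $C_0l$ in the Cayley graph is a tree), and in the other direction lift a $\Gamma$-witness via \thmref{64}. So the logical layer of your argument matches the paper.

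The gap is in your third paragraph, where you sketch the proof of solution lifting itself. You propose to run the Makanin--Razborov diagram of $\Sigma$ over $F_k$ and then assert that a non-liftable $G$-solution ``would correspond to a homomorphism $L_i\to G$.'' But the $L_i$ are defined as limits of $F_k$-solutions; there is no a priori reason a $G$-solution factors through any of them---indeed, that $G$-solutions factor through the $F_k$-MR diagram is stated in the paper (\thmref{65}) as a \emph{consequence} of solution lifting, not a tool for proving it. The subsequent ``test word plus countable union bound'' outline inherits this circularity and, separately, gives no mechanism for making the probabilities summable.

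The paper's route to \thmref{3} is quite different. It first proves directly, by counting decorated van Kampen diagrams, that all solutions of length $\leq l\ln^2 l$ lift (\thmref{35}). For longer solutions it works not over a single random group but over an \emph{ascending sequence} $\Gamma_{l_n}$: a hypothetical sequence of non-liftable solutions is rescaled to produce an action on a real tree, and the crucial geometric input (Sections~\ref{sec:Axes-of-Elements-in-Random-Groups}--\ref{sec:Hyperbolic-Geometry-over-Random-Groups}, especially \thmref{49}) is a quantitative bound on how long the axes of two non-commuting elements can fellow-travel in $\Gamma_l$, forcing abelian arc stabilizers and trivial tripod stabilizers in the limit. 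Only then does the shortening argument apply, to these ``$lln^2$-limit groups''; it yields a strictly descending chain of non-lift limit quotients, which a separate diagonal argument (\thmref{60}) shows must terminate, contradicting existence.
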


The statement of \thmref{2} was proved for densities $d<\frac{1}{16}$
in \cite{Kharlampovich=000020Sklinos=000020-=000020FIRST-ORDER=000020SENTENCES},
using a different argument than the one we present in this paper.

In \cite{DGI,DGII,DGIII,DGIV,DGV,DGV2,DGVI}, Sela introduced new
mathematical tools, that can be applied on a given sentence $\psi$
over a non-abelian f.g. free group. Roughly speaking, when a sentence
$\psi$ is a truth sentence over some non-abelian f.g. free group,
these tools give a syntactic ``proof'' for the correctness of that
sentence. These tools, together with the ``proof'' that is ``written''
by them, are shared uniformly by the various non-abelian f.g. free
groups. In particular, the ``proof'' of $\psi$ obtained over a
single non-abelian f.g. free group can be used as a ``proof'' for
the correctness of $\psi$ over all the other f.g. free groups.

Our strategy is proving that all of these tools, developed over free
groups, can be applied over a random group without changing them or
their functionality.

The most basic among these tools is the Makanin-Razborov diagram,
which was introduced in \cite{DGI}. The Makanin-Razborov diagram
of a given system of equations over a free group is a finite diagram
that describes the structure of the set of solutions of that system.
The development of the more advanced tools introduced by Sela, rely
heavily on the Makanin-Razborov diagram.

Hence, it is natural that our initial step towards the proof of the
conjecture is showing that the Makanin-Razborov diagram of a given
system of equations can be used uniformly for describing the sets
of solutions of that system over random groups.

In order to do so, we will prove the following statement. 
\begin{thm}
\label{thm:3} (Main Theorem) Let $k\geq2$ be an integer, let $d<\frac{1}{2}$
be a real number, and consider the density model with $k$ generators
and density $d$. Let $\Sigma_{0}$ be a system of equations. Then
the random presentation $\pi_{\Gamma}:F_{k}\rightarrow\Gamma$ of
density $d$ satisfies the following property with overwhelming probability.

The set of solutions of the system $\Sigma_{0}$ in $\Gamma$ equals
the projection to $\Gamma$ of the set of solutions of $\Sigma_{0}$
in $F_{k}$. 
\end{thm}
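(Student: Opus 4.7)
The plan is to establish the non-trivial direction: with overwhelming probability, every solution of $\Sigma_0$ over $\Gamma$ is the projection of a solution over $F_k$ (the reverse containment is tautological). Given a solution $h:G\to\Gamma$, where $G:=F_n/\langle\!\langle\Sigma_0\rangle\!\rangle$ is the group presented by the equations, I first pick an arbitrary set-theoretic lift $\tilde h:F_n\to F_k$ by choosing representatives of $h(x_i)\in\Gamma$. The obstruction to $\tilde h$ descending to a homomorphism $G\to F_k$ is precisely that some equation $r\in\Sigma_0$ is sent to a word $\tilde h(r)\in F_k$ which is non-trivial in $F_k$ but trivial in $\Gamma$; equivalently, a non-trivial element of the normal closure of the random relators. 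So the goal is to show that, with overwhelming probability, for every $h$ some lift $\tilde h$ can be chosen so that $\tilde h(r)=1$ in $F_k$ for all $r\in\Sigma_0$.

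To control the a priori uncountable family of lifts, I would invoke the Makanin--Razborov diagram of $G$ over $F_k$: a finite combinatorial object classifying all honest homomorphisms $G\to F_k$ through finitely many limit groups $L_1,\ldots,L_m$ together with their modular groups. Following Sela's shortening argument, I would adapt the construction to set-theoretic lifts of $\Gamma$-solutions: among all lifts $\tilde h$, after composing with modular automorphisms of $G$ and inner automorphisms of $F_k$, choose one of minimal total word length. These minimal lifts should fall into finitely many combinatorial types, each encoded by a limit group of the MR diagram together with a Stallings-type reduction.

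For each combinatorial type, a failure of the desired lifting would produce a van Kampen diagram over $\Gamma$ whose boundary is labelled by a non-trivial word $\tilde h(r)$ in $F_k$ and whose $2$-cells carry random relators. Ollivier's sharp isoperimetric inequality at density $d<\tfrac{1}{2}$ bounds the number of boundary circuits of a given length that can bound such a diagram, which should make the probability of the existence of a bad minimal lift exponentially small in the random-presentation parameter $l$; a union bound over the finitely many combinatorial types then gives the conclusion with overwhelming probability. The main obstacle I foresee is the adaptation of Sela's shortening: the classical version operates on sequences of honest homomorphisms $G\to F_k$, whereas here one must shorten set-theoretic lifts of $\Gamma$-homomorphisms without a priori knowing that $h$ factors through any particular limit group. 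A secondary difficulty, compared to the argument of \cite{Kharlampovich Sklinos - FIRST-ORDER SENTENCES} at $d<\tfrac{1}{16}$, is that no classical $C'(\lambda)$ small-cancellation hypothesis survives near $d=\tfrac{1}{2}$, so the geometric/probabilistic step must rest on the full strength of Ollivier's random isoperimetric estimates rather than elementary word-combinatorics.
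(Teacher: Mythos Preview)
Your proposal contains a genuine structural gap, and it diverges from the paper's architecture in a way that matters.

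The paper separates the problem into two regimes. For solutions of length at most $l\ln^{2}l$ (\thmref{35}), one counts decorated topological van~Kampen diagrams with boundary length $\leq |\Sigma_{0}|\cdot l\ln^{2}l$; Ollivier's isoperimetric inequality bounds the number of cells by $O(\ln^{2}l)$, so the number of diagram types is subexponential in $l$ and a union bound works. This part of your sketch is sound. But for solutions of unbounded length there is no a~priori bound on the size of the relevant van~Kampen diagrams, so the ``finitely many combinatorial types plus union bound'' argument you propose simply does not close: the number of types grows with the solution length, and you have given no mechanism to control that growth.

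Your attempt to close this with a shortening argument is where the real problem lies. You propose to shorten \emph{set-theoretic lifts} $\tilde h:F_{n}\to F_{k}$ by precomposing with modular automorphisms of $G=F_{n}/\langle\!\langle\Sigma_{0}\rangle\!\rangle$. But $\tilde h$ is not a homomorphism from $G$; it does not even descend to $G$ (that is the whole obstruction), so precomposing with automorphisms of $G$ is undefined. The paper's shortening (\defref{56}) operates in the opposite direction: one shortens the genuine homomorphisms $h_{l_{n}}:F_{q}\to\Gamma_{l_{n}}$ using the hyperbolic geometry of the random groups $\Gamma_{l_{n}}$ themselves, not of $F_{k}$. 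A non-lift sequence with $\mu_{n}\geq l_{n}\ln^{2}l_{n}$ subconverges to an action on a real tree (\thmref{50}), and the bulk of Sections~\ref{sec:Axes-of-Elements-in-Random-Groups}--\ref{sec:Hyperbolic-Geometry-over-Random-Groups} is devoted to proving that this limit action has the stabilizer properties (\thmref{51}) needed for Rips theory and the shortening argument to apply---in particular, the delicate estimate on how long axes of non-commuting elements can fellow-travel in $\Gamma_{l}$ (\lemref{45}, \thmref{48}, \thmref{49}). Nothing in your outline replaces this geometric input. Once shortening in $\Gamma_{l_{n}}$ produces a proper non-lift $lln^{2}$-limit quotient, a descending-chain argument (\thmref{60}) yields the contradiction of \corref{61}, from which \thmref{64} (your target statement) follows.
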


The majority of the work in this paper is devoted to the proof of
\thmref{3}. For doing that, we rely partially on some of the techniques
introduced originally in \cite{Gromov=000020-=000020Asymptotic=000020invariants}
and stated formally in \cite{Ollivier=000020-=000020Sharp=000020phase}.
These techniques lead us to a better understanding for the hyperbolic
nature of random groups, an understanding that leads to an application
of a shortening procedure. The shortening procedure was introduced
originally in \cite{Structure=000020and=000020rigidity=000020in=000020hyperbolic=000020groups=000020I},
and used in \cite{DGI} mainly for constructing the Makanin-Razborov
diagram of a given system of equations over free groups.

The paper is organized as follows.

In \secref{The-Gromov-Density-Model}, we recall the definition of
the Gromov density model.

In \secref{Van-Kampen-Diagrams}, we recall the definition of a Van-Kampen
diagram and a decorated Van-Kampen diagram. Decorated Van-Kampen diagrams
appeared originally in \cite{Ollivier=000020-=000020Sharp=000020phase},
and were used in \cite{Kharlampovich=000020Sklinos=000020-=000020FIRST-ORDER=000020SENTENCES}.
We need to handle problems regarding lengths in the Cayley graphs
of Random groups, rather than triviality problems. Therefore, we found
it necessary to extend this object. Then, in \secref{Van-Kampen-Diagrams},
we continue by introducing the procedure of ``generally reducing''
a decorated Van-Kampen diagram. This procedure will be necessary for
the purposes of bounding probabilities of the satisfaction of equations
over random groups.

In \secref{Van-Kampen-Diagrams}, we also introduce ``mining modification'',
which will be applied on decorated diagrams. The purpose of this modification
is to enable us to handle variables which are supposed to get values
of small lengths, and that occur in equations over random groups.
Also in \secref{Van-Kampen-Diagrams} we further estimate bounds for
the probabilities of a random group to fulfill a decorated diagram.

In \secref{Isoperimetric-Inequality-in-Random-Groups}, we recall
some facts from \cite{Ollivier=000020-=000020Sharp=000020phase},
regarding the hyperbolicity of random groups, and we bound the number
of decorated Van-Kampen diagrams in terms of appropriate parameters.

In \secref{Axes-of-Elements-in-Free-Groups}, we prove some facts
that hold in free groups, and that are related to commutativity in
free groups. We will count on these facts in order to be able to handle
diagrams with no cells over the random group (i.e., trees).

In \secref{Lifting-Solutions-of-Bounded-Lengths}, we prove that there
are no new solutions of bounded lengths of a given system over random
groups, i.e., all such solutions are projected from solutions in free
groups.

In \secref{Axes-of-Elements-in-Random-Groups}, we start to handle
lengths problems in random groups. In particular, we bound the ``time''
for which the axes of two non-commutative elements could travel together
(in close distance). This bound is interpreted in terms of the level
of the random group under consideration (and the density parameter
$d$). This bound will play an essential role during the application
of the shortening procedure in the coming sections. \Secref{Axes-of-Elements-in-Random-Groups}
is the most technically involved section in this paper.

In \secref{Hyperbolic-Geometry-over-Random-Groups}, we use the facts
proved in the previous sections in order to understand the hyperbolic
nature of a random group.

In \secref{Limit-Groups-over-Ascending-Sequences-of-Random-Groups},
we apply a shortening procedure on sequences of solutions of systems
of equations over random groups. As stated above, the shortening procedure
was used in \cite{DGI} in order to construct the Makanin-Razborov
diagram of a given system over free groups. Here, in \secref{Limit-Groups-over-Ascending-Sequences-of-Random-Groups},
our goal in applying the shortening procedure is proving the non-existence
of ``non-lift sequences'' over random groups, i.e., the non-existence
of sequences of solutions that are not projected from free groups.

Finally, in \secref{Consequences}, we deduce \thmref{2}, and that
the Makanin-Razborov diagram of a given system of equations over free
groups, encodes all the solutions of that system over a random group.
\\
 \\
 \textbf{Acknowledgment.} I am indebted to my advisor Zlil Sela who
introduced me to this problem. He shared his knowledge and ideas with
me, and without his input, I couldn't have completed this work.

\section{The Gromov Density Model}\label{sec:The-Gromov-Density-Model}

We fix an integer $k\geq2$, a free group $F_{k}$, and a basis $a=(a_{1},...,a_{k})$
of $F_{k}$. Given a property $P$, and a tuple $T$ of words in $F_{k}$,
we may consider the following claim: the group presentation $\langle a:T\rangle$
satisfies the property $P$. For example, we may choose the property
$P$ to be ``the group is Hopfian''. Then, given a tuple $T$ of
words in $F_{k}$, our claim becomes: the group $\langle a:T\rangle$
is Hopfian. The property $P$ may also be a property of a presentation,
such as ``the group presentation is small cancellation $C'(1/6)$''.
Then, given a tuple $T$ of words in $F_{k}$, the claim will be:
``the presentation $\langle a:T\rangle$ is $C'(1/6)$''. Since
every presentation with generators $a$ has a natural interpretation
of the elements of the free group $F_{k}$, the claim $P$ can contain
constants too. For example, $P$ can be the claim ``the word $a_{1}a_{2}a_{1}^{-1}a_{3}^{2}$
is trivial''.

Suppose further that the tuples that we consider are ordered in levels,
i.e., for each integer $l$, we are given a collection $\mathcal{M}_{l}$
of tuples consisting of words in the free group $F_{k}$. Then, the
description of the claim $P$ could also include the parameter $l$,
such as the claim ``the group is $2l$-hyperbolic''.

The group presentations defined by the tuples in the collection $\mathcal{M}_{l}$
are called the groups of level $l$. Once we define the groups of
level $l$, for all integers $l$, we obtain a probability model of
groups.

The way that a probability model of groups is used in general, is
as follows. We fix a claim $P$ and a collection of tuples ordered
in levels $\mathcal{M}_{l}$. And then, for every integer $l$, we
consider the following number: 
\[
p_{l}:=\frac{|\{T\in\mathcal{M}_{l}:\text{The group presentation }\langle a:T\rangle\text{ satisfies the property }P\}|}{|\mathcal{M}_{l}|}\,.
\]
This number is called the probability that the random group of level
$l$ (in the relevant model) satisfies the property $P$. Then, our
interest will be in the asymptotic behavior of $p_{l}$. I.e., we
check if the limit $\underset{l\rightarrow\infty}{\lim}p_{l}$ exists.
If the limit 
\[
p_{0}=\underset{l\rightarrow\infty}{\lim}p_{l}
\]
do exist, we say that the probability that the random group (in the
relevant model) satisfies the property $P$ is the real number $p_{0}$.
If in addition we have that $p_{0}=1$, we say then that the random
group (in the relevant model) satisfies the property $P$ in overwhelming
probability.

In this paper, we are interested in a specific model, which was introduced
in \cite{Gromov=000020-=000020Asymptotic=000020invariants} by M.
Gromov. 
\begin{defn}
\label{def:4}(Gromov density model). Fix a natural number $k\geq2$,
fix a free group $F_{k}$ with a fixed basis $a=(a_{1},...,a_{k})$,
and fix a real number $0\leq d\leq1$. The number $d$ is called the
\emph{density} of the model. Given an integer $l$, we denote by $B_{l}$
the set of cyclically reduced words of length $l$ in $F_{k}$

\[
B_{l}:=\{r\in F_{k}:|r|=l,\ r\text{ is cyclically reduced}\}\,.
\]
We also define the set $\mathcal{M}_{l}$, called \emph{the set of
group presentations of level $l$}, to be the collection of all the
tuples $T=\left(r_{1},...,r_{s(l)}\right)$ consisting of $s(l)=|B_{l}|^{d}$
elements $r_{i}\in B_{l}$, $i=1,...,\left[s(l)\right]$.

Given a property $P$, we call the number 
\[
p_{l}:=\frac{|\{T\in\mathcal{M}_{l}:\text{The group presentation }\langle a:T\rangle\text{ satisfies the property }P\}|}{|\mathcal{M}_{l}|}\,,
\]
the probability that \emph{the random group of level $l$ and density
$d$} satisfies the property $P$. If the limit $p_{0}=\underset{l\rightarrow\infty}{\lim}p_{l}$
exists, then we call it \emph{the probability that the random group
of density $d$ satisfies the property $P$}.

If $p_{0}=1$, we say that the random group of density $d$ satisfies
the property $P$ with \emph{overwhelming probability}. In this case
we say also that \emph{almost all the groups of density $d$ satisfy
the property $P$}. If $p_{0}=0$ in contrast, we say then that the
property $P$ is \emph{negligible in density $d$}.

Suppose that $\mathcal{N}=\underset{l}{\cup}\mathcal{N}_{l}$ is a
given collection of tuples. If the tuple defining the group (presentation)
$\Gamma$ does not belong to the collection $\mathcal{N}$, for almost
all the groups $\Gamma$ of density $d$, we say then that \emph{the
collection $\mathcal{N}$ is negligible in density $d$}.

Note that if the probability that the random group of density $d$
satisfies the property $P$ is $p_{0}$, then the probability that
the random group of density $d$ satisfies the property $\neg P$
is $1-p_{0}$. In particular, the random group of density $d$ satisfies
the property $P$ if and only if the property $\neg P$ is negligible
in density $d$. 
\end{defn}

As the reader may note, \defref{4} defines a family of models rather
than a single one. For each value $0\leq d\leq1$ of $d$, there is
a model defined. Actually, random groups in those various models may
behave differently, depending on the fixed density, even for properties
that are naturally brought up in the context of groups. For example,
we have the following facts. 
\begin{thm}
(\cite{Ollivier=000020-=000020Some=000020small=000020cancellation}) 
\begin{enumerate}
\item For $d<\frac{1}{5}$, almost all the presentations of density $d$
are Dehn presentations. 
\item For $d>\frac{1}{5}$, almost all the presentations of density $d$
are not Dehn presentations. 
\end{enumerate}
\end{thm}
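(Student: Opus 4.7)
The plan is to handle the two regimes $d<1/5$ and $d>1/5$ by the same underlying machinery: counting combinatorial types of (decorated) van Kampen diagrams and, for each type, estimating the probability that a random presentation of density $d$ admits a reduced diagram of that type. Roughly, for a reduced diagram $D$ with $n$ cells and total interior boundary length $L$, a union bound over tuples gives a realization probability of order $(2k-1)^{dnl-L}$ (each internal edge of $D$ imposes one letter-matching constraint, while each cell contributes a relator chosen from a pool of $(2k-1)^{dl}$); simultaneously the number of combinatorial types with these parameters is only exponential in $l$ and $n$ with a controllable base. This is the exact input I would take from \cite{Ollivier - Sharp phase}.

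For $d<\tfrac{1}{5}$, I would argue by contradiction. A failure of the Dehn property produces a cyclically reduced nontrivial $w=1$ in $\Gamma$ together with a minimal (hence reduced) van Kampen diagram $D$ in which every boundary cell meets $\partial D$ only in arcs of length $\leq l/2$. First I would establish, by a direct counting estimate, that with overwhelming probability no two relators share a common piece longer than $(2d+\varepsilon)l$, so $\Gamma$ admits a $C'(2d+\varepsilon)$-presentation; for $d<1/5$ this keeps pieces strictly below $(2/5+\varepsilon)l$. Each cell then has its internal boundary (of length $\geq l/2$) subdivided into at least two pieces, and a combinatorial Gauss--Bonnet argument converts this into a lower bound on the total interior length $L$ per cell. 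Plugging that lower bound into the probabilistic estimate $(2k-1)^{dnl-L}$ makes the exponent strictly negative precisely when $d<1/5$; summing over $n$, $L$, and combinatorial types yields negligible total probability.

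For $d>\tfrac{1}{5}$, I would instead produce a witness. A first-moment calculation shows that the expected number of ordered pairs of relators sharing a common initial subword of length $\lfloor 2l/5\rfloor+1$ tends to infinity, and a second-moment computation promotes this to a witness that exists with overwhelming probability. Given such a pair $r_1=pu_1$, $r_2=pu_2$ with $|p|$ slightly above $l/2$, the word $w=u_1u_2^{-1}$ is nontrivial, cyclically reduced up to free reduction, trivial in $\Gamma$, and each of $u_1,u_2$ has length $<l/2$; so no subword of $w$ of length $>l/2$ can match a cyclic conjugate of any relator except on pieces of length $\leq (2d+\varepsilon)l$ among distinct relators, and one checks (again by counting) that with overwhelming probability no such accidental long coincidence occurs. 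Hence $w$ witnesses the failure of the Dehn condition.

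The main obstacle I anticipate is the quantitative Gauss--Bonnet step in the first part: one must simultaneously bound the number of pieces per cell, the total interior length, and the number of decorated combinatorial types while keeping every error term strictly below the gap $1/5-d$. This is exactly the tight combinatorics that makes $1/5$ the sharp threshold, and it is the motivation for passing from ordinary to \emph{decorated} van Kampen diagrams (as in \cite{Ollivier - Sharp phase}) so that the probabilistic and combinatorial counts separate cleanly before being recombined by the union bound.
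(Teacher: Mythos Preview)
The paper does not prove this theorem at all; it is stated purely as a citation of \cite{Ollivier - Some small cancellation}, with no argument given. So there is no ``paper's own proof'' to compare your proposal against.

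That said, your $d>\tfrac15$ witness is internally inconsistent and does not work. You first take a shared piece of length $\lfloor 2l/5\rfloor+1$, then in the next sentence assert $|p|$ is ``slightly above $l/2$''; these are different numbers. More seriously, with $|p|\approx 2l/5$ you get $|u_1|,|u_2|\approx 3l/5>l/2$, so $u_1$ itself is more than half of the relator $r_1$ and sits as a subword of $w=u_1u_2^{-1}$. Thus $w$ \emph{does} contain more than half of a relator and cannot witness the failure of the Dehn condition. The two-cell picture you are using is the one behind the $C'(\tfrac12)$ threshold, and a naive internal-length count on it points to $d=\tfrac14$, not $\tfrac15$; Ollivier's sharp $\tfrac15$ comes from a finer configuration and a more delicate local-to-global estimate than a single shared piece between two relators. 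Your $d<\tfrac15$ sketch is closer in spirit (decorated diagrams plus a union bound), but the ``Gauss--Bonnet converts this into a lower bound on $L$'' step is exactly the place where the argument is nontrivial, and as written it does not isolate why the exponent becomes negative precisely at $d=\tfrac15$ rather than at the cruder $\tfrac14$.
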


In fact, for densities $d>\frac{1}{2}$, the density models are trivial. 
\begin{thm}
(\cite{Gromov=000020-=000020Asymptotic=000020invariants}) For $d>\frac{1}{2}$,
almost all the groups of density $d$ are either trivial or isomorphic
to $\mathbb{Z}\slash2\mathbb{Z}$. 
\end{thm}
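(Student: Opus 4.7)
My plan is a pigeonhole-type second-moment argument on the tuple $T \times T$, combined with the mod-$2$ length homomorphism $\epsilon \colon F_k \to \mathbb{Z}/2\mathbb{Z}$ sending each $a_i$ to $1$ (so that $\epsilon(w) = |w| \bmod 2$ on reduced words). Since every $r \in T$ satisfies $|r| = l$, the difference $r^{-1}r'$ of any two relators lies in $\ker \epsilon$. The strategy is to show that, when $d > 1/2$, every short element of $\ker \epsilon$ is forced into the normal closure $N(T)$, so that $\Gamma = \langle a : T\rangle$ is automatically a quotient of $F_k / \ker \epsilon \cong \mathbb{Z}/2\mathbb{Z}$, which already gives the statement.

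The key \emph{saturation lemma} I would prove is: for any fixed $u \in \ker \epsilon$ of length $m = O(1)$, with overwhelming probability there is an ordered pair $(r_i, r_j) \in T \times T$ with $r_j = r_i u$ in $F_k$ (which forces $u \in N(T)$). The count: a pair $(r,r') \in B_l \times B_l$ satisfies $r' = r u$ precisely when the last $m/2$ letters of $r$ cancel the first $m/2$ letters of $u$ and the result is cyclically reduced of length $l$, which gives $(2k-1)^{l - m/2}(1+o(1))$ such pairs. Hence the expected number of good ordered pairs in $T \times T$ is
\[
s(l)^2 \cdot \frac{(2k-1)^{l - m/2}(1+o(1))}{|B_l|^2} \;=\; (2k-1)^{(2d-1)l - m/2}(1+o(1)),
\]
which tends to infinity for $d > 1/2$ and constant $m$. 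A Chebyshev / second-moment estimate, exploiting that conditioning on a single relator only multiplies the completion probability by a factor $O((2k-1)^{-m/2})$, bounds the probability that no good pair exists by $O((2k-1)^{-(2d-1)l + m/2})$, which is overwhelmingly small.

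Then I would apply the saturation lemma and a union bound over the finite set $\{a_i a_j^{-1} : 1 \leq i < j \leq k\} \cup \{a_1^2\} \subset \ker \epsilon$. With overwhelming probability every such element is trivial in $\Gamma$, so $\Gamma$ is a quotient of $\langle a_1 \mid a_1^2 \rangle \cong \mathbb{Z}/2\mathbb{Z}$, which is exactly the theorem. (Refining the possibilities: if $l$ is odd, substituting $a_i = a_1$ in any single relator $r \in T$ collapses it to $a_1^l = a_1$, which must also be trivial, so $\Gamma$ is trivial; if $l$ is even, the parity homomorphism $\epsilon$ descends to a nontrivial map $\Gamma \to \mathbb{Z}/2\mathbb{Z}$, so $\Gamma \cong \mathbb{Z}/2\mathbb{Z}$.)

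The main obstacle I expect is the second-moment bound in the saturation lemma: the indicators $\mathbf{1}[r_j = r_i u]$ are not independent across pairs $(i,j)$, because a single relator participates in many pairs. One has to check that pairs sharing an index contribute only a lower-order term to the variance, so $\mathrm{Var}$ remains of order the mean; this is essentially the same counting as for the mean and is the content of the more general isoperimetric / collision estimates of \cite{Ollivier - Sharp phase}.
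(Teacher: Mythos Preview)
The paper does not give its own proof of this statement; it is quoted from \cite{Gromov - Asymptotic invariants} as background, so there is nothing in the paper to compare against. Your sketch is essentially the standard pigeonhole/second-moment argument due to Gromov (and written out carefully by Ollivier), and it is correct. The expected number of ordered pairs $(r_i,r_j)\in T\times T$ with $r_j=r_i u$ is indeed of order $(2k-1)^{(2d-1)l}$, and the variance is under control because the covariance terms coming from quadruples sharing exactly one index contribute at most of order $s(l)^3(2k-1)^{-2l}=(2k-1)^{(3d-2)l}$, which is negligible against $(\mathbb{E}X)^2\asymp(2k-1)^{(4d-2)l}$; Chebyshev then gives your saturation lemma, and the union bound over the finitely many $u$'s finishes.

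Two minor corrections that do not affect validity. First, ``precisely when the last $m/2$ letters of $r$ cancel the first $m/2$ letters of $u$'' is necessary but not sufficient: you must also forbid a further cancellation at position $l-m/2$ and check that $ru$ remains cyclically reduced; these are single-letter constraints and only perturb the count by bounded factors. Second, the appeal to the isoperimetric estimates of \cite{Ollivier - Sharp phase} for the variance is misplaced, since those concern the regime $d<\tfrac12$; here the second moment is an elementary direct computation as above. Your parity refinement at the end (odd $l$ forces triviality, even $l$ forces $\mathbb Z/2\mathbb Z$) is correct and in fact sharper than the theorem as stated.
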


\section{Van-Kampen Diagrams }\label{sec:Van-Kampen-Diagrams}

\subsection{Van-Kampen Diagrams over Group Presentations}

Let $l\geq1$ be an integer. We want to define Van-Kampen diagrams
over groups (presentations) with relations each of length $l$.

First, we start by defining topological Van-Kampen diagrams. 
\begin{defn}
A \emph{topological Van-Kampen diagram} (of level $l$), usually denoted
by $TD$, is a planar graph whose bounded faces are tiled by cells
(as disks). The \emph{boundary of each cell} in the diagram $TD$
consists of exactly $l$ edges. The boundary of each cell admits a
\emph{starting vertex} and a \emph{direction} (between two: clockwise
or counterclockwise), and the boundary of the whole diagram $TD$
admits a \emph{starting vertex} and a \emph{direction}.

An equivalence relation is defined between the cells, whose classes
are called \emph{numberings}, and are linearly ordered. In particular,
different cells may admit the same numbering. The \emph{direction
of an edge $e$ with respect to the cell $c$}, where $e$ belongs
to the boundary of $c$, is defined to be the direction of the cell
$c$, and \emph{the order of $e$ in $c$} is the natural one induced
by the starting point and the direction of $c$. An edge $e$ in the
diagram $TD$ is called a \emph{filament}, if $e$ does not belong
to (the boundary of) a cell.

A \emph{topological Van-Kampen diagram without numbering} is to forget
the numberings of the cells in the topological Van-Kampen diagram
(i.e., to forget the equivalence relation defined on the cells).

For every topological Van-Kampen diagram $TD$ we associate an \emph{auxiliary
graph} $K$, defined as follows. Assume that the cells of $TD$ admit
exactly $n$ different numberings, which we denote $1,...,n$. For
each $i=1,...,n$, we introduce $l$ different vertices, that correspond
to the (directed) edges of a (single copy of a) cell numbered $i$
in $TD$. The matchings between the edges of the various cells inside
the diagram $TD$, naturally suggests the following way for connecting
the vertices in the graph $K$. Let $1\leq i,j\leq n$ be two numberings,
and let $v_{i},v_{j}$ be two vertices associated to the numberings
$i,j$ in $K$ respectively. If an edge $e_{i}$ represented by the
vertex $v_{i}$ in some cell $c_{i}$ numbered $i$ inside the diagram
$TD$ equals the edge $e_{j}$ represented by the vertex $v_{j}$
in some cell $c_{j}$ numbered $j$ in $TD$, then, we connect $v_{i}$
to $v_{j}$. If further the direction of $e_{i}$ w.r.t. $c_{i}$
equals the direction of $e_{j}$ w.r.t. $c_{j}$, we label the edge
between $v_{i}$ and $v_{j}$ by $-1$, and by $1$ otherwise.

The topological Van-Kampen diagram $TD$ is called \emph{reduced},
if there exist no loops (edges between a vertex and itself) in the
associated graph $K$. Hence, by iteratively eliminating pairs of
cells $c,c'$ with the same numbering that share a common edge $e$
of the same order in $c$ and in $c'$ (and identifying pairs of edges
in the boundary of the constructed hole accordingly), every non-reduced
topological Van-Kampen diagram $TD$ can be converted to a reduced
one while reducing the number of cells, and without changing the boundary
(length) of the diagram. We call the procedure of eliminating all
such pairs of cells from the diagram \emph{standard reduction} of
the diagram.

Once we assign for each (directed) edge in a topological Van-Kampen
diagram $TD$ a letter in the alphabet $a^{\pm}$, the boundary of
each cell in $TD$ will be given some word in $F_{k}$, called the
\emph{contour of that cell}. Similarly, the boundary of the whole
diagram will be given a word in $F_{k}$, called \emph{the contour
of the diagram}. 
\end{defn}

\begin{defn}
Let $\Gamma=\langle a:\mathcal{R}\rangle$ be a group (presentation)
whose relations are all (reduced words) of length $l$. A \emph{Van-Kampen
diagram over $\Gamma$}, denoted usually by $VKD$, is a topological
Van-Kampen diagram $TD$ without numbering, together with assigning
to each edge in $TD$ a letter in the alphabet $a^{\pm}$, so that
the contour of every cell is a relator of $\Gamma$. If there exists
no relator $r$ of $\Gamma$ so that $r$ is the contour of two distinct
cells $c,c'$ in $TD$ that share an edge $e$ so that $e$ has the
same orders in $c,c'$ (but the direction of $e$ in $c$ differs
from the direction of $e$ in $c'$), then, we say that the Van-Kampen
diagram $VKD$ is \emph{reduced}.

If $TD$ is a topological Van-Kampen diagram without numbering, then,
we say that \emph{the group (presentation) $\Gamma$ fulfills the
topological diagram without numbering $TD$} if one can assign to
each edge in $TD$ a letter in the alphabet $a^{\pm}$, so that the
obtained diagram is a Van-Kampen diagram over $\Gamma$.

Let $TD$ be a topological Van-Kampen diagram (with numbering), that
admits $n$ distinct numberings. Let $t=(t_{i})$ be an $n$-tuple
of reduced words in $F_{k}$. We say that \emph{the $n$-tuple $t$
fulfills the topological diagram $TD$} if one can assign to each
edge in $TD$ a letter in the alphabet $a^{\pm}$, so that for every
numbering $i=1,...,n$, the contours of the cells numbered $i$ in
$TD$ are all equal to the word $t_{i}$. We say that \emph{the group
(presentation) $\Gamma$ fulfills the topological diagram $TD$} if
there exists an $n$-tuple $t$ consisting of \uline{pairwise distinct}
relators of $\Gamma$, so that $t$ satisfies $TD$. 
\end{defn}

\begin{rem}
Let $TD$ be a topological Van-Kampen diagram, and consider its associated
graph $K$. 
\begin{enumerate}
\item If $n$ is the amount of distinct numberings in $TD$, then, the group
$\Gamma$ fulfills $TD$, if and only if $\Gamma$ admits an $n$-tuple
of \uline{pairwise distinct} relators that satisfy the restrictions
defined by $K$. 
\item If the group $\Gamma$ fulfills $TD$, then the obtained Van-Kampen
diagram $VKD$ is reduced if and only if the topological diagram $TD$
is reduced. 
\end{enumerate}
\end{rem}

A more general notion than topological Van-Kampen diagram, is a decorated
topological Van-Kampen diagram. 
\begin{defn}
A \emph{decorated topological Van-Kampen diagram} (of level $l$),
usually denoted by $DTD$, is a topological Van-Kampen diagram (of
level $l$), who is equipped with the following extra structure. The
directed path (cycle) that represents the boundary of the diagram
$DTD$ is partitioned into consecutive parts (i.e., they can pairwise
intersect in at most two points and they cover all the boundary cycle),
that are called \emph{the parts of the boundary of $DTD$}. An equivalence
relation is defined on these parts, and each part is given a \emph{direction}
(between two directions). Each class in this equivalence relation
is called a \emph{variable}, and the equivalence relation together
with the parts of the boundary and their directions are called \emph{the
decoration of $DTD$}. If two parts represent the same variable, then
they are required to be of the same length (number of edges). If $e$
is an edge that belongs to some part $p$ of the boundary, then, \emph{the
direction of $e$ w.r.t. $p$} is defined to be the direction of $p$,
and \emph{the order of $e$ in $p$} is the natural one induced by
the starting point and the direction of $p$.

Informally, a variable $x$ can be written along a part of the boundary,
and $x^{-1}$ (the same variable but the other direction) could be
written on another part of the boundary of the same length as the
previous one. A decoration of the boundary could be explained briefly
by a word in the variables, e.g. $xyx^{-1}y^{-1}$.

We declare a subset (maybe empty) of the variables to be \emph{rigid},
and another subset (maybe empty) to be \emph{constant}. Every (directed)
edge that lie in a part of the boundary that represents a constant
variable, is given a labeling in advance, i.e., a letter from the
alphabet $a^{\pm}$. Moreover, as a part of the definition, we require
that each rigid variable contains only one element (only one part
of the boundary represents that rigid variable). If $e$ is an edge
in the diagram $DTD$, then $e$ is said to be a \emph{rigid edge}
if $e$ belongs to a rigid part of the boundary, and $e$ is called
a \emph{constant edge} if it belongs to a constant part of the boundary.
A cell $c$ in the diagram $DTD$ is called an \emph{isolated cell},
if it admits a rigid edge. A numbering $i$ of a cell in $DTD$ is
called an \emph{isolated numbering}, if all the cells with numbering
$i$ in the diagram are isolated cells.

A \emph{decorated topological Van-Kampen diagram without numbering}
is to forget the numberings of the cells in the decorated topological
Van-Kampen diagram.

A decorated topological Van-Kampen diagram is called \emph{reduced},
if the underlying topological Van-Kampen diagram is reduced.

For every decorated topological Van-Kampen diagram $DTD$ we associate
an \emph{auxiliary graph} $K$. This graph is initialized to be the
graph associated to the underlying topological Van-Kampen diagram.
However, the structure of the decorated diagram $DTD$ (the existence
of the decoration and possible cancellations between the parts along
the boundary - filaments) implies a natural generalization of the
standard notion of ``contiguity'' (matching) between edges in a
topological Van-Kampen diagram.

This contiguity is defined by declaring two edges $e_{1},e_{2}$ in
the diagram to be contiguous if $e_{1},e_{2}$ belong to some parts
$P_{1}x,P_{2}x$ (respectively) of the boundary of $DTD$, so that
the orders of $e_{1},e_{2}$ in $P_{1}x,P_{2}x$ (respectively) are
equal, and $P_{1}x,P_{2}x$ represent the same variable $x$ in the
decoration. The obtained minimal equivalence relation on the set of
edges in $DTD$ is called \emph{the contiguity between edges in the
decorated diagram $DTD$}. Edges in the same contiguity class are
called\emph{ contiguous edges}.

Motivated by this notion of contiguity, we extend the graph $K$ as
follows. Let $1\leq i,j\leq n$ be two numberings, and let $v_{i},v_{j}$
be two vertices associated to the numberings $i,j$ in $K$ respectively.
If an edge $e_{i}$ represented by the vertex $v_{i}$ in some cell
$c_{i}$ numbered $i$ inside the diagram $DTD$ is contiguous to
an edge $e_{j}$ represented by the vertex $v_{j}$ in some cell $c_{j}$
numbered $j$ in $DTD$, then, we connect $v_{i}$ to $v_{j}$. If
further the direction of $e_{i}$ w.r.t. $c_{i}$ equals the direction
of $e_{j}$ w.r.t. $c_{j}$, we label the edge between $v_{i}$ and
$v_{j}$ by $-1$, and by $1$ otherwise.

A connected component of the graph $K$ that contains a vertex that
represents some constant edge, is called a \emph{constant component}.
A vertex $v$ of $K$ is called a \emph{constant vertex}, if it belongs
to a constant component of $K$, and $v$ is called an \emph{isolated
vertex}, if $\{v\}$ is a non-constant connected component of $K$.
An edge $e$ in a decorated topological Van-Kampen diagram $DTD$
is called a \emph{cusp edge}, if $e$ is a non-rigid filament edge,
so that $\{e\}$ is a contiguity class. 
\end{defn}

\begin{rem}
$ $ 
\begin{enumerate}
\item Note that the only edge in the diagram $DTD$ that is contiguous to
a non-filament rigid edge $e$, is $e$ itself. 
\item A cusp edge could occur only when a variable is represented by exactly
two parts on the boundary, and these parts meet in ``dual'' edges. 
\end{enumerate}
\end{rem}

\begin{defn}
Let $\Gamma=\langle a:\mathcal{R}\rangle$ be a group (presentation)
whose relations are all (reduced words) of length $l$.

Let $DTD$ be a decorated topological Van-Kampen diagram without numbering.
We say that \emph{the group (presentation) $\Gamma$ fulfills the
decorated topological diagram without numbering $DTD$}, if one can
assign to each edge in $DTD$ a letter in the alphabet $a^{\pm}$,
so that the obtained diagram is a Van-Kampen diagram over $\Gamma$,
and so that the decoration is satisfied, i.e., for every part $p$
of the boundary, the word that is given to the part $p$ (according
to the assignment), equals (graphical equality - not only in $\Gamma$)
to the word given to any other equivalent part of the boundary.

Let $DTD$ be a decorated topological Van-Kampen diagram (with numbering),
that admits $n$ distinct numberings. Let $t=(t_{i})$ be an $n$-tuple
of reduced words in $F_{k}$. We say that \emph{the $n$-tuple $t$
fulfills the decorated topological diagram $DTD$}, if one can assign
to each edge in $DTD$ a letter in the alphabet $a^{\pm}$, so that
for every numbering $i=1,...,n$, the contours of the cells numbered
$i$ are all equal to the word $t_{i}$, and so that the decoration
is satisfied. We say that \emph{the group (presentation) $\Gamma$
fulfills the decorated topological diagram $DTD$}, if there exists
an $n$-tuple $t$ consisting of \uline{pairwise distinct} relators
of $\Gamma$, so that $t$ fulfills $DTD$. 
\end{defn}

\begin{rem}
Let $DTD$ a decorated topological Van-Kampen diagram, and consider
its associated graph $K$. Denote by $n$ the amount of distinct numberings
in $DTD$. Then, the group $\Gamma$ fulfills $DTD$, if and only
if, $\Gamma$ admits an $n$-tuple of \uline{pairwise distinct} relators
that satisfy the restrictions defined by $K$. 
\end{rem}

\subsection{Topological Van-Kampen Diagrams over Random Groups}
\begin{lem}
\label{lem:5} Let $DTD$ be a decorated topological Van-Kampen diagram,
that admits $n$ distinct numberings. Let $K$ be the graph associated
to the decorated diagram $DTD$, and let $u$ be the amount of distinct
non-constant connected components in $K$. Denote by $B_{l}$ the
set of cyclically reduced words of length $l$ in $F_{k}$. Then,
the probability that a random $n$-tuple of words in $B_{l}$ fulfills
the diagram $DTD$ is at most

\[
\frac{(2k)^{n}\cdot(2k-1)^{u}}{B_{l}^{n}}\,.
\]
\end{lem}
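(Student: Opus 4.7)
The plan is a direct counting argument: since the sample space $B_l^n$ has cardinality $|B_l|^n$, it suffices to bound the number of tuples $(t_1,\dots,t_n)\in B_l^n$ that fulfill $DTD$ by $(2k)^n(2k-1)^u$ and then divide. If $(t_1,\dots,t_n)$ fulfills $DTD$, then reading the letters of $t_i$ along any cell numbered $i$ labels every cell edge by a letter in $a^{\pm}$; identifying the $l$ directed edges of a representative cell of numbering $i$ with the $l$ vertices of $K$ associated with $i$, this produces a labeling of all $nl$ vertices of $K$. The edges of $K$ force vertices in the same connected component to share the same underlying letter (the $\pm 1$ labels on edges of $K$ record whether the shared letter appears with the same or opposite orientation), and constant components already have their letters prescribed.

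I would then bound the number of such labelings by processing the vertices of $K$ in lexicographic order: first all $l$ positions of numbering $1$ in the natural cell order, then all positions of numbering $2$, and so on, while maintaining a running upper bound on the choices at each step. If the current vertex belongs to a component that is constant or has already been encountered, its letter is forced and contributes a factor of $1$. Otherwise the vertex is the first encountered in a non-constant component, and we may pick any letter in $a^{\pm}$, subject only to the reducedness of $t_i$: at position $1$ of a numbering no preceding letter constrains the choice, giving at most $2k$ possibilities, while at any later position the letter cannot be the inverse of the letter just placed, giving at most $2k-1$ possibilities.

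Let $\alpha$ be the number of non-constant components first encountered at position $1$ of some numbering, and $\beta$ the number first encountered at a later position. Since every non-constant component is first encountered exactly once, $\alpha+\beta=u$; and since each of the $n$ numberings contributes at most one first-position encounter, $\alpha\leq n$. Therefore the number of fulfilling tuples is at most
\[
(2k)^{\alpha}(2k-1)^{u-\alpha} \leq (2k)^{n}(2k-1)^{u},
\]
where the inequality uses $(2k/(2k-1))^{\alpha}\leq(2k)^{\alpha}\leq(2k)^{n}$. Dividing by $|B_l|^n$ yields the lemma. The only delicacy in the argument is that the first position of each numbering must be counted separately from the subsequent positions, because at a first position there is no preceding letter in $t_i$ to rule out one candidate; this is precisely what produces the factor $(2k)^{n}$ rather than $(2k-1)^{n}$ in the bound, and explains the asymmetric roles of $n$ and $u$. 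Cyclic reducedness of each $t_i$ (first letter not the inverse of the last) and possible conflicts among the prescribed letters in constant components only sharpen the estimate further, and are all absorbed by the per-step upper bounds above.
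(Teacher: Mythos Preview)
Your proof is correct and follows essentially the same counting strategy as the paper: bound the number of fulfilling $n$-tuples by $(2k)^{n}(2k-1)^{u}$ using that the first letter of each $t_i$ has at most $2k$ choices while every other freely chosen letter has at most $2k-1$ by reducedness, and that a single choice per non-constant component of $K$ determines all its vertices. Your bookkeeping via the parameters $\alpha,\beta$ and the inequality $(2k)^{\alpha}(2k-1)^{u-\alpha}\leq(2k)^{n}(2k-1)^{u}$ is more explicit than the paper's terse two-line version, but the underlying argument is the same.
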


\begin{proof}
To construct an $n$-tuple of cyclically reduced words that fulfills
$DTD$, we start by specifying a label from $a^{\pm}$ for the first
edge in every cell, and then we choose one label from $a^{\pm}$ for
each non-constant connected component in the auxiliary graph $K$
associated to $DTD$.

Hence, we deduce that there exist at most $(2k)^{n}\cdot(2k-1)^{u}$
distinct $n$-tuples of cyclically reduced words of length $l$ that
fulfill $DTD$.

Then, the probability that a random $n$-tuple of cyclically reduced
words fulfills $DTD$ is at most

\[
\frac{(2k)^{n}\cdot(2k-1)^{u}}{B_{l}^{n}}\,.
\]
\end{proof}
\begin{lem}
\label{lem:6} Let $DTD$ be a decorated topological Van-Kampen diagram,
that admits $n$ distinct numberings. Denote by $B_{l}$ the set of
cyclically reduced words of length $l$ in $F_{k}$. Then, the probability
that the random group $\Gamma$ of level $l$ and density $d$, fulfills
the decorated diagram $DTD$, is at most

\[
B_{l}^{nd}\cdot p\,,
\]
where $p$ is the probability that a random $n$-tuple of words in
$B_{l}$, fulfills the decorated diagram $DTD$.

Hence, the probability that the random group $\Gamma$ of level $l$
and density $d$, fulfills the decorated diagram $DTD$ is at most

\[
\frac{(2k)^{n}\cdot(2k-1)^{u}}{B_{l}^{n\cdot(1-d)}}\,,
\]
where $u$ is given in \lemref{5}. 
\end{lem}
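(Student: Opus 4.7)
The plan is to reduce the statement directly to \lemref{5} via a single union bound over the possible choices of an $n$-tuple of relators within the random presentation. Recall from \defref{4} that the random group $\Gamma$ is defined by a tuple $T=(r_{1},\ldots,r_{s(l)})\in\mathcal{M}_{l}$, whose $s(l)=B_{l}^{d}$ entries are chosen independently and uniformly at random from $B_{l}$. By the definition of group-level fulfillment, $\Gamma$ fulfills $DTD$ if and only if there exist pairwise distinct indices $i_{1},\ldots,i_{n}\in\{1,\ldots,s(l)\}$ such that the words $r_{i_{1}},\ldots,r_{i_{n}}$ are pairwise distinct and the ordered $n$-tuple $(r_{i_{1}},\ldots,r_{i_{n}})$ fulfills $DTD$.

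First I would count the number of ordered index tuples $(i_{1},\ldots,i_{n})$ with pairwise distinct entries: this equals $s(l)(s(l)-1)\cdots(s(l)-n+1)\le s(l)^{n}=B_{l}^{nd}$. Next, for any one such fixed index tuple, the positional sub-tuple $(r_{i_{1}},\ldots,r_{i_{n}})$ is itself an i.i.d.\ uniform sample in $B_{l}^{n}$, so \lemref{5} applies to it directly and bounds the probability that it fulfills $DTD$ by $p$. The extra requirement that the $r_{i_{j}}$ be pairwise distinct as \emph{words} only restricts the event further, so it can be dropped when passing to an upper bound. A single union bound over the at most $B_{l}^{nd}$ index tuples then yields
\[
\Pr\bigl[\Gamma\text{ fulfills }DTD\bigr]\;\le\;B_{l}^{nd}\cdot p,
\]
which is the first bound in the statement. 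Substituting the explicit estimate $p\le(2k)^{n}(2k-1)^{u}/B_{l}^{n}$ from \lemref{5} then immediately gives the second bound $(2k)^{n}(2k-1)^{u}/B_{l}^{n(1-d)}$.

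There is no genuine obstacle in this argument: the entire proof is a one-shot union bound combined with the per-tuple estimate from \lemref{5}. The only point worth emphasizing is that a sub-tuple of $T$ taken at fixed positions is itself an i.i.d.\ uniform sample from $B_{l}^{n}$, which is precisely what makes the per-tuple appeal to \lemref{5} legitimate; this is immediate from the sampling procedure in \defref{4}. Once this observation is in place, the bookkeeping of the number of positional $n$-tuples is the only remaining ingredient, and it is elementary.
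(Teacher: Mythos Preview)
Your proof is correct and essentially identical to the paper's own argument: both perform a union bound over the at most $B_{l}^{nd}$ ordered $n$-tuples of positions in the random presentation, observe that each positional sub-tuple is uniform in $B_{l}^{n}$ so that the per-tuple probability is $p$, and then substitute the bound from \lemref{5}. Your write-up is in fact slightly more careful (explicitly noting that the positional sub-tuple is i.i.d.\ and that the pairwise-distinctness of words only shrinks the event), but the method is the same.
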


\begin{proof}
Recall that the random group $\Gamma$ of level $l$ and density $d$,
is defined by choosing in random a $B_{l}^{d}$-tuple of cyclically
reduced words of length $l$, and note that, by definition, the group
$\Gamma$ fulfills $DTD$ if and only if $\Gamma$ admits an $n$-tuple
of \uline{pairwise distinct} relators that fulfills the diagram $DTD$.

Hence, the probability that the random group of level $l$ and density
$d$ fulfills $DTD$, is at most the probability that when choosing
in random a $B_{l}^{d}$ words $w=(w_{i})$ from $B_{l}$, picked
one after the other, one of the $n$-tuples $(w_{i_{1}},...,w_{i_{n}})$,
where $1\leq i_{1},...,i_{n}\leq B_{l}^{d}$, fulfills $DTD$.

Hence, the probability that the random group of level $l$ and density
$d$ fulfills $DTD$, is bounded by 
\[
T_{n}\cdot p\,,
\]
where $T_{n}$ is the amount of distinct $n$-tuples consisting of
elements in the sequence $1,2,...,B_{l}^{d}$. That is, the probability
that the random group of level $l$ and density $d$ fulfills $DTD$,
is bounded by 
\[
B_{l}^{nd}\cdot p\,.
\]
\end{proof}
\begin{lem}
\label{lem:7} Let $DTDWN$ be a decorated topological Van-Kampen
diagram \uline{without numbering}, and let $\mathcal{A}$ be the collection
of all the \uline{reduced} decorated topological Van-Kampen diagrams
$DTD$ (with numberings) for which forgetting the numbering from $DTD$
yields the diagram $DTDWN$.

Then, the probability that the random group $\Gamma$ of level $l$
fulfills the diagram $DTDWN$ \uline{so that the obtained Van-Kampen
diagram over %
\mbox{%
\mbox{$\Gamma$}%
} is reduced}, is at most the probability that the random group $\Gamma$
fulfills one of the diagrams in $\mathcal{A}$. 
\end{lem}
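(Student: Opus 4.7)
The plan is to establish an event inclusion from which the probability bound follows at once. Suppose $\Gamma$ fulfills $DTDWN$ via an edge labeling $L$ in the alphabet $a^{\pm}$, producing a reduced Van-Kampen diagram $VKD$ over $\Gamma$. I would exhibit a specific $DTD \in \mathcal{A}$ that $\Gamma$ fulfills, depending on $L$.

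First, I would promote $DTDWN$ to a numbered diagram by declaring two cells of $DTDWN$ equivalent iff their contour words (induced by $L$) coincide as reduced words in $F_{k}$, and linearly ordering the resulting equivalence classes according to some fixed ordering on $F_{k}$ (say lexicographic). These numberings convert $DTDWN$ into a decorated topological Van-Kampen diagram with numbering $DTD$, and forgetting the numbering returns $DTDWN$ by construction. The decoration of $DTDWN$ (the partition of the boundary into parts, the equivalence relation defining variables, and the rigid/constant designations) is inherited unchanged by $DTD$, since it lives on the boundary and is untouched by the numbering of cells.

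Next, I would verify that $\Gamma$ fulfills $DTD$: by construction, the tuple $t = (t_{i})$ of contours indexed by the numberings consists of \emph{pairwise distinct} words, each of which is a relator of $\Gamma$ (as $VKD$ is a Van-Kampen diagram over $\Gamma$), and $L$ itself witnesses that $t$ fulfills $DTD$ with the prescribed decoration. Finally, I would verify that $DTD$ is reduced, i.e., that $DTD \in \mathcal{A}$: the VK diagram over $\Gamma$ arising from $(t, L)$ is exactly the given reduced $VKD$, and by the remark in this section equating reducedness of a VK diagram obtained from a fulfillment with reducedness of the underlying topological diagram with numbering, this forces the underlying topological diagram of $DTD$ to be reduced.

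Consequently, the event ``$\Gamma$ fulfills $DTDWN$ so that the resulting VK diagram is reduced'' is contained in the event ``$\Gamma$ fulfills some diagram in $\mathcal{A}$'', and the probability inequality follows. The argument is essentially a matching between the two reducedness notions; the one nontrivial ingredient, namely the equivalence of reducedness between a VK diagram over $\Gamma$ and the underlying topological diagram with numbering, is handed to us by the remark preceding this lemma, so no new technical obstacle is expected.
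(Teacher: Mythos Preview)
Your proposal is correct and is precisely the argument the paper leaves implicit behind its one-word proof ``Obvious.'' You spell out the event inclusion by numbering cells according to their contour words and then invoking the preceding remark to transfer reducedness from the Van-Kampen diagram to the topological diagram with numbering; this is exactly the intended reasoning.
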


\begin{proof}
Obvious. 
\end{proof}
\begin{example}
Consider the diagram composed of two cells, each of length $l$ (i.e.,
$l$ edges), sharing more than $\frac{l}{6}$ consecutive edges. Assume
that the cells share the same starting point, and admit opposite directions.
This information gives a topological Van-Kampen diagram without numberings
$D$. The probability that the random group of level $l$ fulfills
$D$ is of course $1$, since any (non-free) group (presentation)
fulfills this diagram.

On the other side, in this example there is only two ways to write
numberings in the cells, which are $1,1$, and $1,2$ (the two cells
are equivalent, or not otherwise). Let $D'$ be the one with the first
numbering, and $D''$ be the second.

The probability that the random group $\Gamma$ of level $l$ fulfills
$D$ so that the obtained Van-Kampen diagram over $\Gamma$ is reduced,
is bounded by the probability that $\Gamma$ satisfies $D''$, since
$D'$ is not reduced as a topological Van-Kampen diagram (and hence
it will not provide any reduced Van-Kampen diagram over $\Gamma$).
Let $K$ be the graph associated to $D''$. Then $K$ admits at most
$\frac{11l}{6}$ connected components. Hence, the probability that
the random group of level $l$ and density $d$ satisfies $D''$ is
bounded by

\[
\frac{(2k)^{2}\cdot(2k-1)^{\frac{11l}{6}}}{B_{l}^{2\cdot(1-d)}}\leq\frac{(2k)^{2}}{(2k-1)^{(\frac{1}{6}-2d)\cdot l}}\,.
\]
In particular, this argument shows that if the density $d$ satisfies
$d<\frac{1}{12}$, then the random group of density $d$ cannot admit
two distinct relators that begin with the same $\frac{l}{6}$-piece.
Of course, this argument can easily extended to prove that the random
group of density $d<\frac{1}{12}$, is $C'(\frac{1}{6})$-small cancellation
(it was shown in \cite{Gromov=000020-=000020Asymptotic=000020invariants}
that the random group of density $d<\frac{r}{2}$ is $C'(r)$-small
cancellation). 
\end{example}

\subsection{General Reduction of a Decorated Topological Van-Kampen Diagram}

Let $DTD$ be a decorated topological Van-Kampen diagram. We describe
the following modifications on $DTD$ that does not change the decoration
of the diagram $DTD$. 
\begin{defn}
\label{def:8} Let $DTD$ be a decorated topological Van-Kampen diagram,
and let $K$ be the auxiliary graph associated to $DTD$.

Let $v$ be an \uline{isolated vertex of %
\mbox{%
\mbox{$K$}%
}} (i.e., $\{v\}$ is a non-constant connected component of $K$),
that represents some edge $e$ in (the boundary of) some cell numbered
$i$ in the diagram $DTD$. Let $C$ be the contiguity class of the
edge $e$ in $DTD$. Recall that, since every rigid variable in the
decoration corresponds only to one part of the boundary, if $C$ contains
a non-filament rigid edge, then $C=\{e\}$ is a singleton. 
\begin{itemize}
\item If $C$ does not contain a rigid edge, we eliminate all the cells
that contain an edge in $C$ (all of these cells share the numbering
$i$, since $v$ is isolated), and stretch the filament edges that
belong to $C$ accordingly (while keeping the parts of the boundary
that does not contain an edge in $C$ unchanged - so that the decoration
is preserved). 
\item If $C$ contains a rigid edge, then, if not all of the cells containing
an edge in $C$ are isolated, we eliminate all the cells that contain
an edge in $C$, we stretch the filament non-rigid edges that belong
to $C$ accordingly, and on each of the (filament) rigid edges in
$C$, we paste a copy of the cell numbered $i$ accordingly (since
we don't want to change the rigid parts, we do not stretch the filament
edges in $C$ that belong to rigid parts. Instead, we paste a copy
of the corresponding cell on that edges correspondingly, without changing
the rigid parts). 
\end{itemize}
For clarifying the geometric picture in the last case, if $C$ contains
a non-filament rigid edge, then we do nothing, and otherwise all the
rigid edges in $C$ must be filaments, and in particular we paste
on each of them a copy of the cell numbered $i$ accordingly, in condition
that not all of the cells containing edges in $C$ are isolated (if
all of them are isolated, in the last case, we do nothing).

We call this modification, a \emph{$(v,C)$-isolation} applied on
the decorated diagram $DTD$. 
\end{defn}

\begin{lem}
\label{lem:9} Let the notation be as in \defref{8}, and let $DTD'$
be the decorated diagram obtained by applying a $(v,C)$-isolation
on $DTD$. Let $K'$ be the graph associated to $DTD'$. Assume that
the $(v,C)$-isolation did not reduce the amount of numberings. Then,
the vertex (corresponding to) $v$ is (again) an isolated vertex in
$K'$. 
\end{lem}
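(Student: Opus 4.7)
The plan is to show that in $K'$ the vertex $v$ remains isolated, namely that $\{v\}$ is a non-constant connected component. Since the $(v,C)$-isolation preserves numbering $i$ by hypothesis, $v$ is still a vertex of $K'$, so it remains to check that no edge of $K'$ connects $v$ to any vertex other than itself and that $\{v\}$ is non-constant. The guiding observation will be that the $(v,C)$-isolation modifies the diagram only along the contiguity class $C$, while preserving every contiguity datum supported off $C$.

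I would treat the two kinds of surviving cells numbered $i$ in $DTD'$ separately. An original cell $c_j$ of $DTD$ survives iff its position-$v$ edge $e_j$ does not belong to $C$, and then the contiguity class $C_j$ of $e_j$ in $DTD$ is disjoint from $C$. Using that $v$ is isolated and non-constant in $K$, I would first note that $C_j$ consists only of filament edges together with position-$v$ cell edges of cells numbered $i$ and contains no constant cell edge. By \defref{8}, the $(v,C)$-isolation leaves unchanged all boundary parts that do not contain an edge of $C$; in particular, the boundary parts carrying edges of $C_j$ are untouched, so the contiguity class of $e_j$ in $DTD'$ still consists of filament edges and position-$v$ cell edges of cells numbered $i$, with no constant cell edge.

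In the rigid subcase of \defref{8}, where all rigid edges of $C$ are filaments, newly pasted cells numbered $i$ appear in $DTD'$, one on each rigid filament edge $f$ of $C$, with $f$ realized as the position-$v$ edge of the pasted cell. Since $f$ is rigid, the rigid part containing $f$ is preserved, so in $DTD'$ the contiguity class of $f$ still contains only position-$v$ edges of cells numbered $i$ (the other rigid filaments of $C$, now also position-$v$ edges of pasted cells) together with surviving non-rigid filament edges of $C$. The remaining $l-1$ edges of each pasted cell are freshly introduced boundary edges carrying fresh variables of the decoration, distinct from any pre-existing variable; their contiguity classes in $DTD'$ are therefore singletons and create no new $K'$-edge across distinct positions.

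Putting the two cases together, every position-$v$ cell edge of a cell numbered $i$ in $DTD'$ has a contiguity class whose cell-edge members are confined to position $v$ of numbering $i$ and which contains no constant cell edge. Therefore $v$ is again an isolated, non-constant vertex of $K'$. The main technical obstacle is the rigid subcase: one must check that the pasting truly assigns fresh variables to the newly created boundary arcs, and that the stretching of filament edges in $C$ does not inadvertently merge a surviving class $C_j$ with another class through the reconfigured boundary — both facts are ensured by the construction in \defref{8}, which explicitly preserves the decoration off $C$ and the rigid parts on $C$.
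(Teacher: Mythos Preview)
Your approach is essentially the paper's: for each cell numbered $i$ in $DTD'$ (surviving or newly pasted), verify that its position-$r$ edge has a contiguity class containing only filaments and position-$r$ edges of cells numbered $i$, with no constant edge. One claim you make is incorrect, though fortunately it is also unnecessary. You assert that the $l-1$ non-$r$ edges of each pasted cell carry \emph{fresh} variables; this contradicts the opening line before \defref{8}, which stipulates that these modifications do not change the decoration. In fact those $l-1$ edges lie on the pre-existing non-rigid boundary part that formerly passed through the rigid filament $f$ (that part is now rerouted around the pasted cell rather than through $f$). But this whole paragraph is superfluous: since $v$ encodes position $r$ of numbering $i$, only the contiguity classes of position-$r$ edges are relevant to whether $v$ is isolated, and you have already handled those. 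Note also that in $DTD'$ the position-$r$ edge $f$ of a pasted cell lies \emph{only} on its rigid part (the non-rigid side is now interior to the cell), so its contiguity class is just $\{f\}$, not the larger set you describe --- even better behaved than you claim. The paper's proof accordingly makes no separate mention of the other $l-1$ edges of the pasted cells.
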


\begin{proof}
We keep the notation of \defref{8}. Let $f$ be an edge represented
by $v$ in the diagram $DTD$, and let $F$ be the contiguity class
of $f$ in $DTD$. Assume that $f$ is not contiguous to $e$ in $DTD$.
Since $v$ is an isolated numbering of $K$, all of the cells in $DTD$
that admit an edge that belong to $F$ share the numbering $i$, and,
each of these edges have the same order $r$ in that cells, which
equals also to the order of $e$ in the corresponding cell.

Hence, by the definition of $(v,C)$-isolation, neither the filament
edges in $F$, nor the non-filament edges in $F$, are affected by
this modification. Hence, the collection of edges in $DTD'$ corresponding
to the edges in $F$, forms again a contiguity class in $DTD'$.

Recalling that a $(v,C)$-isolation can add cells to the diagram only
by pasting cells numbered $i$ on the rigid parts along the edges
of order $r$, we conclude that every contiguity class of the edge
of order $r$ in a cell numbered $i$ in the obtained diagram $DTD'$,
contains only filaments and edges that are of order $r$ in cells
numbered $i$. That is, if the $(v,C)$-isolation did not eliminate
the numbering $i$ totally from the diagram, then $v$ will keep to
be an isolated vertex of $K'$. 
\end{proof}
\begin{defn}
\label{def:10} Let the notation be as in \defref{8}. In accordance
with \ref{lem:9}, the procedure of applying iterative $(v,C_{j})$-isolations
on the decorated diagram $DTD$, is called \emph{isolating the numbering
$i$}, or briefly, a \emph{numbering isolation}. 
\end{defn}

\begin{lem}
\label{lem:11} The numbering isolation procedure terminates after
finitely many steps. 
\end{lem}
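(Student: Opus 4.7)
The plan is to exhibit a potential function $\Phi$ on decorated topological Van-Kampen diagrams that takes values in a well-ordered set and strictly decreases with each non-trivial $(v,C_j)$-isolation performed during the isolation of numbering $i$. Since any strictly decreasing sequence in a well-ordered set is finite, termination follows.

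I would take $\Phi(DTD) = (N_i(DTD), M_i(DTD))$, where $N_i$ is the number of non-isolated cells numbered $i$ in $DTD$, $M_i$ is the total number of cells numbered $i$, and $\mathbb{N}^2$ is equipped with the lexicographic order. The key preliminary observation, used throughout, is that because $v$ is an isolated vertex of $K$, no edge in $C$ is represented by a vertex of $K$ other than $v$. Hence every non-filament edge in $C$ is an edge of the fixed order $r$ (the order of $e$ in its cell) inside some cell numbered $i$, and each cell numbered $i$ contributes at most one edge to $C$.

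I would then do a case analysis following \defref{8}. In the non-rigid case, all cells containing an edge in $C$ (all of which are numbered $i$ by the observation above) are deleted and no cell is created, and the cell of $e$ itself is among those deleted; thus $M_i$ strictly drops while $N_i$ weakly drops, so $\Phi$ strictly decreases lexicographically. In the rigid case with at least one non-isolated cell among those containing edges of $C$, the cells containing edges of $C$ are deleted and replaced, on the rigid filament edges of $C$, by pasted copies of the cell numbered $i$; these pasted copies are automatically isolated, since they meet the rigid part of the boundary at their order-$r$ edge. Therefore $N_i$ strictly decreases by at least one, and $\Phi$ decreases lexicographically regardless of how $M_i$ changes. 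The remaining subcases of \defref{8} (a non-filament rigid edge in $C$, or all containing cells already isolated) leave the diagram unchanged and are not counted as steps.

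The main obstacle is precisely the rigid case, where pasting new cells on rigid filament edges can in principle \emph{increase} the total cell count $M_i$, so a single scalar potential counting cells is insufficient. This is why the primary component of $\Phi$ tracks only non-isolated cells numbered $i$: the cells pasted during a rigid isolation are isolated by construction and hence do not contribute to $N_i$. With this lexicographic potential in place, the procedure must terminate after finitely many steps.
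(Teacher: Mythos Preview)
Your proposal is correct and follows essentially the same approach as the paper: both arguments track the pair $(N_i, M_i)$ of non-isolated and total cells numbered $i$, observe that each effective $(v,C)$-isolation either strictly decreases $N_i$ (the rigid case, where newly pasted cells are isolated) or keeps $N_i$ fixed and strictly decreases $M_i$ (the non-rigid case), and conclude termination by lexicographic descent. Your write-up is slightly more explicit in justifying why the pasted cells are isolated, but the underlying potential and case split are identical to the paper's.
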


\begin{proof}
Let $DTD$ be a decorated diagram, and let $v$ be an isolated vertex
in the graph associated to $DTD$, corresponding to an edge in a cell
numbered $i$ in $DTD$. By definition, every $(v,C_{j})$-isolation
either reduces the amount of non-isolated cells with numbering $i$,
or otherwise, it reduces the amount of cells with numbering $i$ in
the diagram. Moreover, a $(v,C_{j})$-isolation does not increase
the amount of non-isolated cells with numbering $i$.

Hence, the procedure of isolating the numbering $i$ must stop to
reduce the amount of non-isolated cells numbered $i$ eventually.
Hence, it must reduce the amount of cells numbered $i$ eventually.
Hence, it must terminate eventually. 
\end{proof}
\begin{lem}
\label{lem:12} Let $DTD$ be a decorated topological Van-Kampen diagram.
An (effective) numbering isolation applied on $DTD$, keeps isolated
numberings isolated, and does not increase the amounts of their (isolated)
cells in the diagram. And, it either 
\begin{enumerate}
\item Isolates a new numbering (that wasn't isolated before), or 
\item Reduces the amount of (isolated) cells of an isolated numbering, or 
\item Reduces the amount of numberings in $DTD$. 
\end{enumerate}
\end{lem}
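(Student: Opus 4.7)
The plan is to track the effect of a numbering isolation separately on the target numbering $i$ and on every other numbering, and then split into cases according to whether the procedure eliminates $i$ and whether $i$ was already isolated at the outset. The key preliminary observation is that every $(v,C_j)$-isolation in the procedure only removes or creates cells carrying numbering $i$. On the creation side this is built into the pasting rule of \defref{8}, which only pastes copies of cells numbered $i$. On the removal side, any cell containing an edge in $C_j$ must carry numbering $i$: since $v$ is an isolated vertex of the auxiliary graph, the position-$r$ edge of any cell numbered $i$ (with $r$ the order represented by $v$) has no contiguity relations with edges at any other position of any numbering, so $C_j$ consists only of filaments and of position-$r$ edges of cells numbered $i$.

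From this it follows immediately that, for any isolated numbering $j\neq i$ in $DTD$, no cell numbered $j$ is ever removed or created during the procedure, and every rigid boundary edge (in particular on the boundary of a $j$-cell) is preserved because an $(v,C_j)$-isolation leaves all non-$C_j$ boundary parts intact. Therefore every isolated numbering other than $i$ stays isolated and its cell count does not increase.

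For the trichotomy, if the procedure eliminates every cell carrying numbering $i$ then the total number of distinct numberings strictly decreases, which is conclusion~(3). Otherwise, by \lemref{11} the procedure terminates with numbering $i$ still present, and I would show that at termination every cell $c$ numbered $i$ is isolated. Indeed, letting $e$ be the edge of $c$ at the position $r$ represented by $v$ and $C$ its contiguity class, the fact that the procedure halted on $C$ leaves exactly two possibilities by \defref{8}: either $C$ contains a non-filament rigid edge, in which case the earlier remark that a non-filament rigid edge is contiguous only to itself forces $C=\{e\}$ and makes $c$ itself the carrier of $e$, hence isolated; or $C$ contains only filament rigid edges and every cell meeting $C$ is already isolated, so $c$ is isolated. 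Either way numbering $i$ is fully isolated at termination.

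To finish, I would subdivide this surviving case. If $i$ was not isolated at the outset, then it has just become isolated, giving conclusion~(1). If $i$ was already isolated from the start, then every cell numbered $i$ already carried a rigid edge, so the pasting branch of \defref{8} cannot fire for any $C_j$ (pastings require at least one non-isolated cell containing an edge in $C_j$, but here all such cells are isolated). Effectiveness of the numbering isolation then forces at least one elimination step on some $C_j$ with no rigid edges, which strictly decreases the number of cells numbered $i$ and gives conclusion~(2). The subtlest point, and the one I would be most careful with, is this last observation: when $i$ is already isolated, no pasting can occur, so effectiveness must manifest as a strict decrease in the cell count of $i$ rather than as a trivial rearrangement of the diagram.
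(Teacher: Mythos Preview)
Your proof is correct and follows essentially the same case split as the paper: numberings $j \neq i$ are unaffected, and for $i$ one distinguishes whether it was already isolated. Your version is more thorough---you explicitly analyze the halting condition of \defref{8} to show that every surviving cell numbered $i$ is isolated at termination, and you treat conclusion~(3) separately---whereas the paper's proof asserts these points more tersely.
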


\begin{proof}
Let $K$ be the graph associated to $DTD$, and let $v$ be an isolated
vertex of $K$ corresponding to an edge in a cell numbered $i$ in
$DTD$. We consider the numbering isolation procedure corresponding
to $v$.

First, note that if $j\neq i$ is another numbering, then the iteration
will not change the amount of cells numbered $j$ in the diagram.
In particular, if $j$ was an isolated numbering then the procedure
above keeps it isolated, and preserves the amount of (isolated) cells
numbered $j$.

Now assume that $i$ was isolated. Then, the procedure will not paste
any new cell to the diagram (it only removes cells in this case).
Hence, the numbering $i$ will stay isolated after the iteration,
and, assuming the procedure was effective, the procedure reduces the
amount of cells numbered $i$ in this case.

Now assume that $i$ was not isolated. Then, the procedure will eliminate
every non-isolated cell numbered $i$ in the diagram, and (maybe)
will paste cells numbered $i$ on rigid edges. Hence, $i$ will be
an isolated numbering after the procedure. 
\end{proof}
\begin{defn}
\label{def:13} Let $DTD$ be a decorated topological Van-Kampen diagram.
Let $x$ be a non-constant variable in the decoration, and let $P_{1}x,...,P_{q}x$
be all the parts of the boundary of $DTD$ representing the variable
$x$. Assume that $P_{1}x$ contains a self-intersection $S$.

For all $i=1,...,q$, 
\begin{itemize}
\item if the part $P_{i}x$ contains a self-intersection compatible with
$S$, we eliminate it from the part $P_{i}x$, and 
\item otherwise, we paste $S$ on the corresponding edges in $P_{i}x$ (filling
the cut sub-diagram $S$ into $P_{i}x$ from the direction of the
unbounded face of the graph $DTD$). 
\end{itemize}
We call this modification an \emph{elimination of self-intersection}.
Note that self-intersection elimination modification, reduces the
boundary length of the diagram. 
\end{defn}

\begin{lem}
\label{lem:14} Let $DTD$ be a decorated topological Van-Kampen diagram.
An (effective) self-intersection elimination modification, reduces
the boundary length of the diagram. Moreover, it keeps isolated numberings
isolated, and does not increase the amounts of their (isolated) cells
in the diagram. 
\end{lem}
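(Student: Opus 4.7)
My plan is to isolate a single structural observation about the sub-diagram $S$ -- that its cells cannot carry any rigid edge, save in the degenerate case where the variable $x$ is itself rigid -- and then to deduce the two non-trivial assertions of the lemma from it. The boundary-length assertion was already noted in Definition 13, and I would dispatch it first by a short geometric argument.

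For the boundary-length claim, I would observe that on each part $P_i x$ from which $S$ is eliminated, the self-intersecting loop along $P_i x$ is contracted, strictly shortening the contribution of $P_i x$ to the outer boundary. On each part onto which $S$ is pasted, the copy is filled in from the direction of the unbounded face, which introduces new cells but leaves the length of the part equal to the new, shorter length of $x$. Assuming the modification is effective, at least one part actually has its self-intersection contracted, so the net boundary length of $DTD$ strictly decreases.

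Next I would establish the structural observation. Because $P_1 x$ is a part of the outer boundary of $DTD$ and it self-intersects along a loop, Jordan-curve separation splits the plane into two open regions, exactly one of which is disjoint from the unbounded face; by definition, $S$ is precisely the bounded sub-diagram consisting of this region together with the cells it encloses. Hence the boundary of $S$ consists entirely of edges of $P_1 x$, and every other edge of a cell of $S$ lies in the interior of $DTD$. Assuming $x$ is non-rigid (so that each part $P_i x$ lies on a non-rigid portion of the boundary), no edge of a cell of $S$ lies on a rigid part of the boundary, so no cell of $S$ carries a rigid edge, and hence no cell of $S$ is an isolated cell.

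Finally I would conclude as follows. Fix an isolated numbering $j$. Every cell numbered $j$ carries a rigid edge, and by the observation above no cell of $S$ carries one, so no cell of $S$ is numbered $j$. Thus eliminating $S$ from any part removes no cell numbered $j$, and pasting a copy of $S$ onto any part adds no cell numbered $j$, since the pasted copies inherit the numberings of the cells of $S$. Consequently the set of cells numbered $j$ is left unchanged by the modification, so $j$ remains an isolated numbering and its count of (isolated) cells stays the same, in particular it does not increase. The degenerate case where $x$ itself is rigid forces $q = 1$ and allows only elimination, for which the statement is immediate. The only real subtlety is the purely topological claim that $S$ is bounded solely by edges of $P_1 x$; once this is granted, the remainder is bookkeeping.
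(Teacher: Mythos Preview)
Your proof is correct and follows essentially the same line as the paper's. Both arguments hinge on the single observation that, when $x$ is non-rigid, the sub-diagram $S$ is bounded entirely by edges of the non-rigid part $P_1 x$, so no cell of $S$ carries a rigid edge and hence no cell of $S$ can have an isolated numbering; the rigid case $q=1$ is then immediate. The paper compresses this into one sentence, while you spell out the Jordan-curve justification for why $\partial S \subset P_1 x$ and the bookkeeping that pasted copies inherit numberings from $S$, but the substance is identical.
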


\begin{proof}
Let $x$, $P_{1}x,...,P_{q}x$, and $S$ be as in \ref{def:13}. If
$x$ is a rigid variable, then, $q=1$, and $x$ is represented only
by the part $P_{1}x$, so the claim is obvious. Assume now that $x$
is a non-rigid variable, and let $i$ be an isolated numbering in
the diagram. Since $i$ is an isolated numbering, and $x$ is not
a rigid variable, the cut sub-diagram $S$ does not contain any cell
numbered $i$. 
\end{proof}
Finally, we note that we can extend our ``reduction process'' to
include also standard reduction: 
\begin{lem}
\label{lem:15} Let $DTD$ be a decorated topological Van-Kampen diagram.
Applying standard reduction on the diagram $DTD$ (with forgetting
the decoration), reduces the amount of cells in the diagram. Moreover,
it keeps isolated numberings isolated, and does not increase the amounts
of their (isolated) cells in the diagram. 
\end{lem}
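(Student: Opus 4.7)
The plan is to observe that standard reduction is defined purely on the underlying topological diagram and leaves the outer boundary of $DTD$ entirely untouched. Concretely, each elementary reduction step eliminates a pair of cells $c,c'$ sharing a numbering $i$ and a common edge of matching order, and then closes the resulting hole by identifying pairs of edges that came from the boundaries of $c$ and $c'$ (minus the shared edge). These identified edges are interior edges of the original diagram, and the boundary cycle of $DTD$, together with its partition into parts, their orientations, the variable equivalence relation, the rigidity and constancy declarations, and the labels on constant edges, is preserved intact. The first assertion of the lemma is then immediate: every elementary reduction step strictly drops the total cell count by two.

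For the second and third assertions, I would fix one elementary reduction step and track its effect on a given isolated numbering $j$. Since the decoration is preserved in the sense above, the set of rigid edges of the reduced diagram coincides with the set of rigid edges of $DTD$, and each such edge still belongs to the same cell (among those that survive). If $j\neq i$, then no cell numbered $j$ is removed, every surviving $j$-cell still contains its witnessing rigid edge, so $j$ remains an isolated numbering and its cell count is unchanged. If $j=i$, then both eliminated cells were isolated (as all $i$-cells were, by hypothesis), the remaining $i$-cells still contain their rigid edges, so $i$ stays isolated and its cell count strictly decreases. The full claim then follows by iterating over the elementary reduction steps, exactly as in the proofs of \lemref{12} and \lemref{14}.

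The one mildly delicate point I expect to have to check is that the hole-closing identification cannot accidentally turn a rigid or constant edge into an ordinary one, or merge two distinct rigid/constant edges into a single edge of mixed status. This is ruled out because the edges participating in the hole-closure all lie in the interior of $DTD$ (they belong to the boundaries of the two removed cells, not to the outer boundary cycle), whereas rigid and constant edges lie by definition on the outer boundary. Hence no rigid or constant edge is ever involved in the identification, and the decoration transports verbatim to the reduced diagram, which is exactly what the second and third assertions require.
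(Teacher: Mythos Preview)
Your proposal is correct and simply spells out what the paper records in one word as ``Obvious.'' One small inaccuracy worth flagging: in your final paragraph you claim that the edges participating in the hole-closure all lie in the interior of $DTD$, but in fact an edge on the boundary of a removed cell may also lie on the outer boundary cycle; this does not damage your argument, since the paper's definition of standard reduction explicitly stipulates that the boundary of the diagram is unchanged, so the decoration (and in particular the rigid/constant status of every boundary edge) transports verbatim regardless.
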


\begin{proof}
Obvious. 
\end{proof}
\begin{lem}
\label{lem:16} Let $DTD$ be a decorated topological Van-Kampen diagram.
The procedure of consecutively applying numbering isolation, self-intersection
elimination, and standard reduction, applied on the diagram $DTD$,
terminates after finitely many steps. 
\end{lem}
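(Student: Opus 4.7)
The plan is to prove termination by producing a lexicographic monovariant that strictly decreases under every effective step. For each decorated topological Van-Kampen diagram $DTD$, I will define the quadruple
\[
\Phi(DTD)=\bigl(N_{noniso}(DTD),\,C_{iso}(DTD),\,L(DTD),\,n_{total}(DTD)\bigr)\in\mathbb{Z}_{\geq 0}^{4},
\]
where $N_{noniso}$ is the number of non-isolated numberings, $C_{iso}$ the total number of cells belonging to isolated numberings, $L$ the boundary length of $DTD$, and $n_{total}$ the total number of cells. Ordering $\mathbb{Z}_{\geq 0}^{4}$ lexicographically gives a well-founded order, so it suffices to verify that each effective application of numbering isolation, self-intersection elimination, or standard reduction strictly decreases $\Phi$.

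For numbering isolation applied to a numbering $i$, Lemma~\ref{lem:12} gives a trichotomy: either (a) $i$ becomes newly isolated, so $N_{noniso}$ strictly drops; or (b) $i$ was already isolated and some of its cells are removed, so $N_{noniso}$ is unchanged while $C_{iso}$ strictly drops; or (c) $i$ is eliminated entirely, reducing $N_{noniso}$ if $i$ was non-isolated and $C_{iso}$ if $i$ was isolated. By Lemma~\ref{lem:12}, together with the observation (from Definition~\ref{def:8}) that a single $(v,C)$-isolation only alters cells of the chosen numbering $i$, no isolated numbering becomes non-isolated and no unaffected isolated numbering gains cells, so $N_{noniso}$ and $C_{iso}$ cannot increase. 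For self-intersection elimination, Lemma~\ref{lem:14} delivers the strict drop in $L$; since (as in the proof of that lemma) the cut sub-diagram $S$ contains no cell of an isolated numbering and the operation introduces no new numberings, both $N_{noniso}$ and $C_{iso}$ are non-increasing. For standard reduction, Lemma~\ref{lem:15} provides the strict drop in $n_{total}$, the boundary length is preserved (as noted in the opening discussion of topological Van-Kampen diagrams), and $N_{noniso}, C_{iso}$ are non-increasing by the same lemma.

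In each effective step, the leading component of $\Phi$ that changes does so strictly downward while all earlier components are non-increasing, so $\Phi$ drops lex-strictly. Well-foundedness of the lexicographic order on $\mathbb{Z}_{\geq 0}^{4}$ then yields termination in finitely many steps. The main obstacle is verifying the non-increase of $N_{noniso}$ and $C_{iso}$ across the three operations: in case (a) of numbering isolation the newly-isolated numbering $i$ contributes its remaining cells to $C_{iso}$, which could a priori go up, so the argument must rest on the strict drop of $N_{noniso}$ in that case rather than any direct monotonicity of $C_{iso}$; similarly, for self-intersection elimination the pasting of copies of $S$ into other parts of the boundary produces new non-isolated cells, and one has to invoke the structural observation that $S$ carries only non-isolated cells in order to conclude that $C_{iso}$ is actually preserved and $N_{noniso}$ cannot grow.
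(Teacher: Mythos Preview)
Your proof is correct and takes essentially the same approach as the paper: both rest on the same hierarchy of quantities (non-isolated numberings, isolated-cell count, boundary length, total cell count), with you packaging the paper's step-by-step stabilization argument into a single lexicographic monovariant $\Phi$, which is a cleaner formulation. One small imprecision: your assertion that $S$ contains no cell of an isolated numbering is drawn from the proof of Lemma~\ref{lem:14} only in the non-rigid case, but for rigid $x$ there is no pasting and Lemma~\ref{lem:14} still yields that $C_{iso}$ is non-increasing, so your conclusion stands.
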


\begin{proof}
According to \lemref{12}, \lemref{14}, and \lemref{15}, every iteration
of the procedure, keeps isolated numberings isolated, and does not
increase the amounts of their (isolated) cells.

However, since the sequence of amounts of numberings in the diagrams
obtained along the procedure is descending, it must be stabilized
eventually. Hence, the procedure must stop to isolate new numberings
eventually. Hence, according to \lemref{12}, every (effective) numbering
isolation, will reduce the amount of (isolated) cells of the isolated
numberings eventually. Hence, the procedure must stop to apply (effective)
numbering isolation eventually.

But self-intersection elimination modification reduces the boundary
length of the diagram, and standard reduction does not change the
boundary length. Hence, the procedure must stop to apply self-intersection
elimination modifications eventually.

However, standard reduction reduces the amount of cells in the diagram.
Hence, the procedure then must stop to apply standard reduction eventually. 
\end{proof}
\begin{defn}
\label{def:17} Let $DTD$ be a decorated topological Van-Kampen diagram.
In light of \lemref{16}, we summarize our terminology so that the
procedure of consecutively applying numbering isolation, self-intersection
elimination, and standard reduction, on the diagram $DTD$, is called
\emph{generally reducing the decorated diagram $DTD$}.

Hence, a \emph{generally reduced decorated diagram $DTD$} will mean
a reduced decorated diagram with no self-intersections, and for which
for every isolated vertex $v$ in the graph associated to $DTD$,
the contiguity class $C$ of any edge $e$ in $DTD$ represented by
$v$, contains a rigid edge, but all the cells that contain an edge
in $C$ are isolated. 
\end{defn}

\begin{rem}
Note that running general reduction on a decorated diagram, could
affect the rigid parts of the boundary only by eliminating self-intersections
from them. It is not hard to see that running general reduction on
a decorated diagram, does not affect the constant parts of the boundary. 
\end{rem}

\begin{thm}
\label{thm:18} Let $DTD$ be a \uline{generally reduced} decorated
topological Van-Kampen diagram, with associated graph $K$. Let $A$
be the amount of isolated vertices in $K$, and let $R$ be the total
amount of rigid edges in $DTD$. Then

\[
A\leq R\,.
\]
Moreover, if $i$ is a numbering of a cell in the diagram, so that
there exists an isolated vertex $v$ in $K$ corresponding to the
numbering $i$, then $i$ is an isolated numbering. In particular,
if there exists no isolated numberings in the generally reduced diagram
$DTD$, then $A=0$. 
\end{thm}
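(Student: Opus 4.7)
The plan is to extract everything directly from the definition of ``generally reduced'' (Definition~\ref{def:17}), which tells us exactly what contiguity classes at isolated vertices look like: for every isolated vertex $v$ in $K$ and every edge $e$ represented by $v$, the contiguity class $C(e)$ contains a rigid edge, and moreover every cell that contains an edge of $C(e)$ is already an isolated cell.

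First I would establish the ``Moreover'' part, since it is essentially an immediate unpacking. Fix an isolated vertex $v$ corresponding to some numbering $i$ and some edge-position $r$. For every cell $c$ in $DTD$ with numbering $i$, the edge $e_c$ at position $r$ inside $c$ is represented by $v$. Applying the generally reduced hypothesis to $e_c$, every cell containing an edge of $C(e_c)$ is isolated; in particular $c$ itself, which contains $e_c$, is an isolated cell. Since this holds for every cell of numbering $i$, the numbering $i$ is isolated. Contrapositively, if $DTD$ admits no isolated numberings then no vertex of $K$ can be isolated, giving $A=0$.

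Next, for the bound $A\leq R$, I would construct an injection $\phi$ from the set of isolated vertices of $K$ into the set of rigid edges of $DTD$. For each isolated vertex $v$ pick any edge $e_v$ represented by $v$ (which exists, since the vertices associated to the numbering $i$ are introduced only because some cell of numbering $i$ exists in $DTD$, and every such cell uses all $l$ positions). By the generally reduced hypothesis, $C(e_v)$ contains a rigid edge; set $\phi(v)$ to be any such rigid edge.

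To verify injectivity, suppose $\phi(v_1)=\phi(v_2)=f$ for isolated vertices $v_1,v_2$. Then $f\in C(e_{v_1})\cap C(e_{v_2})$, and since contiguity is an equivalence relation, $C(e_{v_1})=C(e_{v_2})$. Thus $e_{v_1}$ and $e_{v_2}$ are contiguous edges; by the construction of the auxiliary graph $K$, this forces either $v_1=v_2$ or an edge in $K$ between $v_1$ and $v_2$, the latter contradicting the isolation of $v_1$. Hence $\phi$ is injective, and $A\leq R$. I do not expect any real obstacle: the argument is a careful reading of the ``generally reduced'' hypothesis combined with the observation that, because contiguity is an equivalence relation, a single rigid edge can witness the contiguity class of at most one isolated vertex.
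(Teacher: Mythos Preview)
Your proof is correct and follows essentially the same approach as the paper: for $A\leq R$ the paper argues that the contiguity classes attached to distinct isolated vertices are disjoint (since otherwise $K$ would have an edge between them), which is exactly your injectivity argument for $\phi$; and for the ``moreover'' part both you and the paper read off directly from Definition~\ref{def:17} that any cell carrying an edge represented by an isolated vertex must itself be isolated. Your write-up is slightly more explicit in quantifying over all cells of numbering $i$, but the underlying argument is identical.
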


\begin{proof}
Let $v$ be an isolated vertex of $K$, and let $C$ be a contiguity
class of an edge in $DTD$ that is represented by the vertex $v$.
If $C$ does not contain a rigid edge, then, by the definition of
general reduction, the decorated diagram is not generally reduced.
If $v'\neq v$ is another isolated vertex of $K$, and $C'$ is a
contiguity class for an edge in the diagram represented by the vertex
$v'$, then $C$ and $C'$ are disjoint sets (otherwise, $v,v'$ will
be connected by an edge).

For the last statement, assume that $v$ is an isolated vertex of
$K$, and let $e$ be an edge represented by $v$ in $DTD$. Denote
by $c$ a cell containing the edge $e$ in the diagram. If $c$ is
not isolated, then, by the definition of general reduction, the diagram
is not generally reduced. 
\end{proof}

\subsection{Mining in Decorated Van-Kampen Diagrams}

We now describe a modification applied on a decorated diagram, a modification
that could change the rigid parts. 
\begin{defn}
\label{def:19} Let $DTD$ be a decorated topological Van-Kampen diagram.
A \emph{mining modification} in $DTD$ is eliminating all the (isolated)
cells of an isolated numbering, from the diagram, without changing
the non-rigid variables. 
\end{defn}

\begin{lem}
\label{lem:20} Let $DTD$ be a decorated topological Van-Kampen diagram.
Let $DTD'$ be the diagram obtained after an (effective) mining modification
on $DTD$. Then, 
\begin{enumerate}
\item The diagram $DTD'$ contains less numberings than $DTD$. 
\item The probability that the random group $\Gamma$ of level $l$ and
density $d$, fulfills the original diagram $DTD$, is at most the
probability that $\Gamma$ fulfills the obtained diagram $DTD'$. 
\end{enumerate}
\end{lem}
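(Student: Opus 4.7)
My plan is to handle the two assertions separately: the first by inspection, and the second by a direct coupling between the fulfillment events of $DTD$ and of $DTD'$.

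For assertion (1), since the mining modification is effective, the isolated numbering $i$ being eliminated is realized by at least one cell of $DTD$. By the very definition of an isolated numbering (\defref{8}), every cell carrying $i$ is an isolated cell, and the mining operation removes exactly those cells. Hence no cell of $DTD'$ carries the numbering $i$, while every other numbering $j \neq i$ and the cells realizing it are left untouched. The set of distinct numberings of $DTD'$ is therefore the set of distinct numberings of $DTD$ with $i$ deleted, and its cardinality strictly decreases.

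For assertion (2), I plan to show that the event $\{\Gamma \text{ fulfills } DTD\}$ is set-theoretically contained in the event $\{\Gamma \text{ fulfills } DTD'\}$, from which the probability inequality follows immediately. Assume $\Gamma$ fulfills $DTD$: by definition there is an $n$-tuple $t=(t_{1},\ldots,t_{n})$ of pairwise distinct relators of $\Gamma$ together with an assignment of letters from $a^{\pm}$ to the edges of $DTD$ realizing the contours and the decoration. I will restrict this assignment to the edges surviving in $DTD'$ and form the $(n-1)$-tuple $t'$ by dropping $t_{i}$. The contours of the surviving cells are unchanged; the non-rigid variables coincide with those of $DTD$ by the hypothesis on mining and hence remain satisfied by $t'$; and the edges that become newly part of the boundary after the cell removal, together with the previously existing rigid parts, form the rigid decoration of $DTD'$, whose fixed labels are inherited from the restricted assignment. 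Since $t'$ is still a tuple of pairwise distinct relators of $\Gamma$, the group $\Gamma$ fulfills $DTD'$.

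The only subtle point, and the place I expect to spend some care, is verifying that the ``new boundary'' pieces exposed by the cell removal are absorbed into $DTD'$ as rigid parts in a way that is consistent with the fixed labels prescribed by the restriction of the original assignment, so that fulfillment of $DTD'$ is meaningful in the sense of \defref{19}. This is bookkeeping rather than a conceptual obstacle: the hypothesis that mining preserves the non-rigid variables pins down precisely which boundary segments must now be rigid, and the original assignment of letters supplies a consistent labeling of them by construction.
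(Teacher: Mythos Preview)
Your proof is correct and follows the same approach as the paper: point (1) is immediate from the definition of mining, and point (2) is the containment of events $\{\Gamma \text{ fulfills } DTD\} \subset \{\Gamma \text{ fulfills } DTD'\}$. The paper dispatches both points in two lines (``Point 1 is obvious. For Point 2, we note that every group that fulfills $DTD$, must fulfill the obtained diagram $DTD'$''), whereas you spell out the restriction-of-assignment argument explicitly; your extra bookkeeping about the newly exposed boundary edges being absorbed into rigid parts is accurate but unnecessary, since rigid variables are singleton equivalence classes and therefore impose no decoration constraint to verify.
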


\begin{proof}
Point 1 is obvious. For Point 2, we note that every group that fulfills
$DTD$, must fulfill the obtained diagram $DTD'$. 
\end{proof}
After applying a mining modification on a decorated diagram, we generally
reduce the obtained diagram. Thus, we extend the meaning of a mining
modification, to include further generally reducing the obtained diagram.

\subsection{Decorated Topological Van-Kampen Diagrams over Random Groups}
\begin{lem}
\label{lem:21} Let $DTD$ be a decorated topological Van-Kampen diagram.
Let $DTD'$ be the diagram obtained by generally reducing $DTD$.
Then, the probability that the random group $\Gamma$ of level $l$
and density $d$, fulfills the original diagram $DTD$, is at most
the probability that $\Gamma$ fulfills the obtained generally reduced
diagram $DTD'$. 
\end{lem}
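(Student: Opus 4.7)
The plan is to reduce the statement to a single observation: for each of the three elementary modifications that compose a general reduction (a $(v,C)$-isolation, a self-intersection elimination, and a standard reduction), any group $\Gamma$ that fulfills the pre-modification diagram also fulfills the post-modification diagram. Since \lemref{16} tells us that general reduction is a finite composition of such elementary steps, this gives an inclusion of events
\[
\{\Gamma \text{ fulfills } DTD\}\subseteq\{\Gamma \text{ fulfills } DTD'\},
\]
from which the probability inequality follows immediately.

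For standard reduction the observation is routine: deleting a pair of cancelling cells sharing an edge does not alter the boundary contour or the decoration, any labeling that made the original cells carry relators of $\Gamma$ still labels all remaining cells by relators of $\Gamma$, and the sub-tuple of a pairwise-distinct tuple is pairwise distinct. For a self-intersection elimination in a part $P_1 x$, given a labeling that fulfills $DTD$, the shared variable $x$ forces the same word on every $P_i x$, so the sub-diagram $S$ is present (in compatible or in the complementary orientation) on each $P_i x$; simultaneously cutting it from those parts that already contain it and pasting a copy of $S$ labeled by the forced sub-word into those that do not, yields a labeling of the modified diagram that still uses relators of $\Gamma$ and still satisfies the decoration.

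The main case is $(v,C)$-isolation, as in \defref{8}. Let $v$ be an isolated vertex corresponding to an edge $e$ of a cell numbered $i$, and let $C$ be the contiguity class of $e$. Since $v$ is isolated, every cell of $DTD$ that meets $C$ is numbered $i$, so all of them carry, in any fulfillment by $\Gamma$, the same relator $r_i$. When $C$ contains no rigid edge, eliminating those cells and stretching the filament edges in $C$ only drops constraints, so the original labeling (with the cells numbered $i$ removed) still witnesses fulfillment of $DTD'$; if the numbering $i$ disappears altogether the tuple simply shrinks. When $C$ contains a rigid edge but not every cell containing an edge of $C$ is isolated, the rigid part of the boundary is fixed in advance, so the relator $r_i$ used in the original labeling is forced to agree with the rigid labels along $C$; hence pasting a copy of the cell numbered $i$ labeled by $r_i$ on each filament rigid edge in $C$, and deleting the non-isolated cells meeting $C$, produces a consistent labeling of $DTD'$ whose cells still carry $r_i$ and the other relators of $\Gamma$. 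In every case, the tuple of relators witnessing fulfillment of $DTD'$ is a sub-tuple of (or equals) the tuple that witnessed fulfillment of $DTD$, and hence remains pairwise distinct, completing the verification that the defining condition of fulfillment is met.
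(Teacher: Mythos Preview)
Your approach is exactly the paper's: the paper's proof is the single sentence ``every group that fulfills $DTD$ must fulfill the diagram obtained by applying numbering isolation, or self-intersection elimination, or standard reduction,'' and you have simply unpacked that sentence step by step.

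One correction in your $(v,C)$-isolation case: you write that ``the rigid part of the boundary is fixed in advance,'' but that is the defining property of \emph{constant} parts, not rigid ones. A rigid variable is merely one represented by a single boundary part; its edges carry no predetermined labels. The reason pasting a cell numbered $i$ on a rigid filament edge in $C$ is consistent is different: since $v$ is isolated, every non-filament edge in $C$ lies in a cell numbered $i$ at the same position as $e$, and contiguity forces every edge in $C$ (including the rigid filament one) to receive, under the given labeling, the letter of $r_i$ at that position. Hence the freshly pasted cell, labeled by $r_i$, matches the existing label on that edge. With this fix your argument goes through; the remaining points (tuple stays a sub-tuple, hence pairwise distinct; decoration is preserved by construction) are fine.
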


\begin{proof}
We note that every group that fulfills $DTD$, must fulfill the diagram
obtained by applying numbering isolation, or self-intersection elimination,
or standard reduction, on $DTD$. 
\end{proof}
\begin{thm}
\label{thm:22} Let $DTD$ be a \uline{generally reduced} decorated
topological Van-Kampen diagram, that admits $n$ distinct numberings.
Let $R$ be the total amount of rigid edges in $DTD$. Denote by $B_{l}$
the set of cyclically reduced words of length $l$ in $F_{k}$.

Then, the probability that a random $n$-tuple of words in $B_{l}$
fulfills the diagram $DTD$ is at most

\[
\frac{(2k)^{n}\cdot(2k-1)^{\frac{nl+R}{2}}}{B_{l}^{n}}\,.
\]

Moreover, if there exist no isolated numberings in $DTD$, then the
probability that a random $n$-tuple of words in $B_{l}$ fulfills
the diagram is at most

\[
\frac{(2k)^{n}\cdot(2k-1)^{\frac{nl}{2}}}{B_{l}^{n}}\,.
\]
\end{thm}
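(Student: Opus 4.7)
The plan is to combine Lemma~\ref{lem:5} with a short counting argument on the auxiliary graph $K$, invoking Theorem~\ref{thm:18} to control isolated vertices.

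First, I would apply Lemma~\ref{lem:5} directly to the diagram $DTD$: it yields the probability bound $(2k)^{n}(2k-1)^{u}/B_{l}^{n}$, where $u$ is the number of non-constant connected components of $K$. So the theorem reduces to the purely combinatorial estimates $u \leq (nl+R)/2$ in general, and $u \leq nl/2$ when $DTD$ has no isolated numberings.

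To prove these estimates, I would first observe that by construction the vertex set of $K$ has cardinality exactly $nl$: each of the $n$ distinct numberings contributes $l$ vertices, one for each directed edge of a single representative cell. Let $A$ denote the number of isolated vertices of $K$ (i.e.\ singleton non-constant connected components). Every other non-constant connected component contains at least two vertices, so the number of vertices belonging to non-constant components is at least
\[
A + 2(u - A) \;=\; 2u - A.
\]
Since this quantity cannot exceed the total vertex count $nl$, I obtain $2u - A \leq nl$, i.e.\ $u \leq (nl + A)/2$.

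At this point the first bound is reduced to Theorem~\ref{thm:18}, which asserts $A \leq R$ whenever $DTD$ is generally reduced; substituting gives $u \leq (nl + R)/2$ and, via Lemma~\ref{lem:5}, the first inequality of the theorem. For the second bound, I would invoke the last statement of Theorem~\ref{thm:18}: if no numbering of $DTD$ is isolated, then $A = 0$, so $u \leq nl/2$, again plugging into Lemma~\ref{lem:5}. There is no genuine obstacle here, since the structural content (the inequality $A \leq R$ for generally reduced diagrams, and the vanishing of $A$ in the absence of isolated numberings) has already been extracted as Theorem~\ref{thm:18}; the only verification needed is the elementary vertex-count observation that non-singleton components contribute at least two vertices apiece.
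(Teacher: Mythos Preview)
Your proposal is correct and follows essentially the same route as the paper: bound the number $u$ of non-constant components of $K$ by $(nl+A)/2$ via the observation that non-singleton components contribute at least two vertices, then invoke Theorem~\ref{thm:18} to get $A\le R$ (or $A=0$ without isolated numberings), and finish with Lemma~\ref{lem:5}. The paper's version separates out the constant vertices explicitly in the count, but this only sharpens the intermediate inequality and does not affect the final bound.
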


\begin{proof}
Let $K$ be the auxiliary graph associated to the decorated diagram
$DTD$. Since $DTD$ is generally reduced, we have, according to \thmref{18},
that the amount $A$ of isolated vertices in $K$ is at most $A\leq R$,
and in case $DTD$ contains no isolated numberings, we have that $A=0$.

Denote by $c$ the amount of constant vertices in $K$, and denote
by $V$ the amount of non-isolated non-constant vertices of $K$.
Of course, the graph $K$ consists of $nl$ vertices. Hence,

\[
V=nl-A-c\,,
\]
and by definition, every non-isolated non-constant vertex belongs
to a connected component of $K$ of size at least $2$.

Hence, the amount of distinct non-constant connected components in
$K$ is at most

\[
u\leq\frac{V}{2}+A\leq\frac{nl+A}{2}\,.
\]

Now \lemref{5} completes the proof. 
\end{proof}
\begin{thm}
\label{thm:23} Let $DTD$ be a \uline{generally reduced} decorated
topological Van-Kampen diagram, that admits $n$ distinct numberings.
Let $R$ be the total amount of rigid edges in $DTD$. Denote by $B_{l}$
the set of cyclically reduced words of length $l$ in $F_{k}$.

Then, the probability that the random group $\Gamma$ of level $l$
and density $d$, fulfills the decorated diagram $DTD$, is at most

\[
\frac{(2k)^{n}\cdot(2k-1)^{\frac{nl+R}{2}}}{B_{l}^{n\cdot(1-d)}}\leq\left(\frac{2k}{(2k-1)^{(\frac{1}{2}-d-\frac{R}{2nl})l}}\right)^{n}\,.
\]

Moreover, if there exist no isolated numberings in $DTD$, then the
probability that $\Gamma$ fulfills the diagram is at most

\[
\left(\frac{2k}{(2k-1)^{(\frac{1}{2}-d)l}}\right)^{n}\,.
\]
\end{thm}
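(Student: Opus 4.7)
The plan is to stack the two previous results: start from \lemref{6}, substitute the bound on random $n$-tuples from \thmref{22}, and then pass from $B_{l}$ to $(2k-1)^{l}$ in the denominator.

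First I would invoke \lemref{6}, which bounds the probability that $\Gamma$ fulfills $DTD$ by $B_{l}^{nd}\cdot p$, where $p$ is the probability that a random $n$-tuple of words in $B_{l}$ fulfills $DTD$. Since $DTD$ is generally reduced, \thmref{22} applies and gives $p\leq (2k)^{n}(2k-1)^{(nl+R)/2}/B_{l}^{n}$, with the sharper variant ($R$ replaced by $0$) available when $DTD$ has no isolated numberings. Multiplying immediately produces the left-hand side of the first displayed inequality, namely $(2k)^{n}(2k-1)^{(nl+R)/2}/B_{l}^{n(1-d)}$.

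Next, to rewrite this in the form claimed on the right of the display, I would use the standard estimate $B_{l}\geq (2k-1)^{l}$ for the number of cyclically reduced words of length $l$ in $F_{k}$, so that $B_{l}^{n(1-d)}\geq (2k-1)^{nl(1-d)}$. Combining exponents of $(2k-1)$ in the resulting ratio then yields
\[
nl(1-d)-\frac{nl+R}{2}\;=\;nl\Bigl(\tfrac{1}{2}-d-\tfrac{R}{2nl}\Bigr),
\]
which gives exactly $\bigl(2k/(2k-1)^{(\frac{1}{2}-d-R/(2nl))l}\bigr)^{n}$, as claimed. The ``moreover'' clause follows by the same computation with $R$ replaced by $0$, using the sharper case of \thmref{22}.

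I do not anticipate any real obstacle here: the statement is essentially an algebraic repackaging of \lemref{6} and \thmref{22}. The only point requiring a little care is the inequality $B_{l}\geq (2k-1)^{l}$, which follows from the explicit formula $B_{l}=(2k-1)^{l}+k+(k-1)(-1)^{l}$ obtained from the eigenvalues of the transfer matrix counting cyclically reduced words in $F_{k}$ (so in particular $B_{l}\geq(2k-1)^{l}+1$ for all $l\geq 1$ and all $k\geq 2$).
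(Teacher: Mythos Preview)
Your proposal is correct and follows exactly the approach the paper takes: the paper's proof is the single sentence ``We only used \thmref{22} and evaluated in \lemref{6}.'' You have simply spelled out that evaluation, including the passage from $B_{l}$ to $(2k-1)^{l}$ that the paper leaves implicit; your formula $B_{l}=(2k-1)^{l}+k+(k-1)(-1)^{l}$ and the resulting inequality $B_{l}\geq(2k-1)^{l}$ are correct.
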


\begin{proof}
We only used \thmref{22} and evaluated in \lemref{6}. 
\end{proof}

\section{Isoperimetric Inequality in Random Groups }\label{sec:Isoperimetric-Inequality-in-Random-Groups}

\subsection{Isoperimetric Inequality and the Hyperbolicity Constant}
\begin{thm}
\label{thm:24} (\cite{Ollivier=000020-=000020Sharp=000020phase}).
At density $d<\frac{1}{2}$, for every $\epsilon>0$, with overwhelming
probability, every reduced Van-Kampen diagram $D$ over a random group
at density $d$ satisfies

\[
|\partial D|\ge(1-2d-\epsilon)\cdot l\cdot|D|\,,
\]
where $|D|$ is the number of faces in $D$, and $|\partial D|$ is
the length of its boundary.

In particular, for $\epsilon=\frac{1}{2}-d$, we have with overwhelming
probability that

\[
|D|\leq\frac{|\partial D|}{l\cdot(\frac{1}{2}-d)}\,.
\]
\end{thm}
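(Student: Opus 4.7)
The plan is to follow the standard union-bound strategy for isoperimetric inequalities in random groups, using the per-diagram probability estimate from Theorem~\ref{thm:23} together with a combinatorial count of topological Van-Kampen diagrams of prescribed size.

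First, fix $n\geq 1$ and $b\geq 0$ with $b < (1 - 2d - \epsilon) l n$, and fix a reduced topological Van-Kampen diagram $TD$ with $n$ cells (each numbered distinctly) and boundary length $b$. I would view $TD$ as a generally reduced decorated diagram with trivial decoration, i.e., no variables on the boundary, no rigid edges, and no constant edges, so that all vertices of the associated graph $K$ are non-constant and no numbering is isolated. A direct count of identifications shows that $K$ has at most $(nl + b)/2$ connected components, since each internal edge of the planar complex identifies two cell-edges while each boundary edge contributes a singleton class. Plugging this into Lemmas~\ref{lem:5} and \ref{lem:6} gives
\[
\Pr(\Gamma \text{ fulfills } TD) \;\leq\; \frac{(2k)^{n}\,(2k-1)^{(nl+b)/2}}{B_{l}^{n(1-d)}}.
\]
Since $B_{l}\geq C'(2k-1)^{l}$ for an absolute constant $C'$, this simplifies to a bound of the form $\bigl(C_{0}\cdot (2k-1)^{(b/n - (1-2d)l)/2}\bigr)^{n}$, and the hypothesis $b < (1-2d-\epsilon)ln$ forces the inner base to be at most $C_{0}(2k-1)^{-\epsilon l/2}$, which decays geometrically in $l$.

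Next, I would bound the number $N(n,b)$ of reduced topological Van-Kampen diagrams of level $l$ with $n$ cells and boundary length $b$. A classical planar-map enumeration, together with careful handling of pendant trees, filaments, and cells glued along multiple edges, yields a bound of the form $N(n,b)\leq C_{1}^{\,n+b}$, where $C_{1}=C_{1}(k,l)$ grows only polynomially in $l$. Finally, summing over all offending pairs $(n,b)$ with $b\leq (1-2d-\epsilon)ln$ via the union bound produces
\[
\sum_{n\geq 1}\;\sum_{b=0}^{\lfloor (1-2d-\epsilon)ln\rfloor} N(n,b)\cdot \Pr(\Gamma \text{ fulfills some such } TD),
\]
which, for $l$ sufficiently large, is dominated by a geometric series whose sum tends to $0$ as $l\to\infty$.

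The principal obstacle is the combinatorial count in the second step: one must show that the number of distinct planar complexes built from $n$ cells of perimeter $l$ with total boundary length $b$ is indeed bounded by $C_{1}^{\,n+b}$ with only polynomial dependence on $l$. This requires a careful case analysis of how cells attach, how pendant trees and filaments form, and how folds arise along the boundary; it is precisely the combinatorial input developed by Ollivier in \cite{Ollivier - Sharp phase}. Once that counting bound is in place, the per-diagram probability estimate derived from Theorem~\ref{thm:23} is small enough to overwhelm the count, yielding the asserted isoperimetric inequality with overwhelming probability.
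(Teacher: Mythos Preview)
The paper does not prove this theorem; it is quoted from \cite{Ollivier - Sharp phase} and used as a black box. So there is no ``paper's own proof'' to compare against, and your sketch should be measured against Ollivier's actual argument.

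Your outline has a genuine gap at the union-bound step. The count $N(n,b)$ of reduced topological diagrams with $n$ cells of perimeter $l$ and boundary $b$ is of order $C^{nl}$ (these are planar maps with $\sim nl/2$ edges), not $C_{1}^{\,n+b}$ with $C_{1}$ merely polynomial in $l$; when $b$ is small compared to $nl$ the two are wildly different. Even granting your bound, summing $C_{1}^{\,n+b}\cdot(2k-1)^{-\epsilon l n/2}$ over all $n\ge 1$ requires $C_{1}^{\,1+(1-2d-\epsilon)l}<(2k-1)^{\epsilon l/2}$, i.e.\ $C_{1}<(2k-1)^{\epsilon/(2(1-2d-\epsilon))}$, which fails for small $\epsilon$ (and certainly fails if $C_{1}$ grows with $l$). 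Ollivier does \emph{not} sum over all $n$. His argument has two stages: (i) for diagrams with at most $M$ faces, where $M$ is a fixed constant depending only on $d$ and $\epsilon$, the number of topological types is polynomial in $l$ (this is the regime where a bound like Theorem~\ref{thm:28} applies), so the per-diagram estimate from Lemmas~\ref{lem:5}--\ref{lem:6} wins; (ii) a local-to-global lemma (of Gromov--Papasoglu type) says that if every reduced subdiagram with at most $M$ faces satisfies $|\partial D|\ge(1-2d-\epsilon')l|D|$, then \emph{every} reduced diagram satisfies $|\partial D|\ge(1-2d-\epsilon)l|D|$ for a slightly larger $\epsilon$. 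Step (ii) is purely combinatorial and deterministic, and it is the missing ingredient in your plan. A secondary issue: restricting to diagrams with all cells numbered distinctly does not cover the event in question, since a reduced Van-Kampen diagram over $\Gamma$ may reuse the same relator in several faces; you must sum over all numberings as in Lemma~\ref{lem:7}.
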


\begin{rem}
\label{rem:25} Let $d<\frac{1}{2}$. Assume that we have a collection
$\mathcal{V}$ of reduced decorated topological Van-Kampen diagrams
(whose cells are of boundary length $l$), and that we are interested
in bounding the probability that the random group $\Gamma$ of level
$l$ and density $d$ fulfills some diagram in the collection $\mathcal{V}$.

Then, according to \thmref{24}, we can drop all the diagrams $D$
in our collection $\mathcal{V}$, for which $|D|>\frac{|\partial D|}{l\cdot(\frac{1}{2}-d)}$,
since they add nothing (a negligible fraction) to the probability
that we wanted to bound. In accordance with this, we will say that
a diagram $D$ is among \emph{those diagrams that the random group
$\Gamma$} (of level $l$ and density $d$) \emph{could satisfy},
if $|D|\leq\frac{|\partial D|}{l\cdot(\frac{1}{2}-d)}$ (and each
of the cells in $D$ are of boundary length $l$). 
\end{rem}

\begin{thm}
\label{thm:26} Let $d<\frac{1}{2}$, and denote

\[
C_{0}=\frac{\frac{1}{2}-d}{4}\,.
\]
Let $\Gamma$ be the random group of level $l$ and density $d$.

Then, with overwhelming probability, the ball of radius $C_{0}l$
in the Cayley graph of $\Gamma$, is a tree. 
\end{thm}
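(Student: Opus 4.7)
The plan is to translate the tree property of $B_{C_{0}l}$ into a lower bound on the length of the shortest non-trivial relation of $\Gamma$, and then to use \thmref{24} to forbid short relations with overwhelming probability.

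First, I would record the standard graph-theoretic equivalence: the ball $B_{r}$ of radius $r$ in the Cayley graph of $\Gamma$ (with respect to $a$) is a tree if and only if no non-trivial reduced word $w\in F_{k}$ with $|w|\leq 2r+1$ equals the identity in $\Gamma$. In one direction, such a relation gives a reduced closed walk of length $|w|$ based at the identity that stays inside $B_{r}$, because every prefix of a reduced $w$ satisfies $|{\text{prefix}}|_{\Gamma}\leq\min(i,|w|-i)\leq r$; being reduced in $F_{k}$, this closed walk cannot be supported on a tree, so $B_{r}$ contains a cycle. In the other direction, a BFS from the identity inside $B_{r}$ exhibits a non-tree edge whose two endpoints together with the BFS tree paths to them trace a simple cycle of length at most $2r+1$, and the trace of any simple Cayley cycle is automatically a reduced non-trivial word in $F_{k}$ (consecutive edges of a simple cycle cannot backtrack) that is trivial in $\Gamma$.

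Next, I would argue by contradiction: suppose that with non-negligible probability there exists a reduced non-trivial $w\in F_{k}$ with
\[
|w|\leq 2C_{0}l+1=\frac{(\tfrac{1}{2}-d)\,l}{2}+1
\]
and $w=1$ in $\Gamma$. Then $w$ bounds a reduced Van-Kampen diagram $D$ over $\Gamma$ with $|\partial D|\leq|w|$. Applying \thmref{24} with $\epsilon=\tfrac{1}{2}-d$ yields, with overwhelming probability,
\[
|D|\leq\frac{|\partial D|}{l\cdot(\tfrac{1}{2}-d)}\leq\frac{1}{2}+\frac{1}{l\cdot(\tfrac{1}{2}-d)}<1
\]
for all sufficiently large $l$, forcing $|D|=0$. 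But a reduced Van-Kampen diagram with no cells is a forest of filaments whose boundary traverses each edge twice in opposite directions, so its boundary word freely reduces to the empty word in $F_{k}$; this contradicts the assumption that $w$ is reduced and non-trivial. Hence no such relation exists with overwhelming probability, and the equivalence of the previous paragraph delivers the theorem.

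The substantive content is carried entirely by the isoperimetric inequality \thmref{24}, which is already in hand. The only mildly delicate piece is the graph-theoretic translation in the first paragraph, which is standard; apart from this bookkeeping, the proof is a one-line comparison of $|\partial D|$ to $l(\tfrac{1}{2}-d)$.
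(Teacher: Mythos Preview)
Your proof is correct and follows essentially the same route as the paper's: both reduce the claim to showing that any relation of length at most $2C_{0}l$ forces a reduced Van-Kampen diagram with fewer than one cell via the isoperimetric inequality of \thmref{24}. The paper's version is simply terser, leaving implicit the graph-theoretic translation (ball is a tree $\Leftrightarrow$ no short relations) and the observation that a cell-free diagram has freely trivial boundary, both of which you spell out carefully.
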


\begin{proof}
Let $D$ be a reduced Van-Kampen diagram over $\Gamma$ for some relation
of length at most $2C_{0}l$ in $\Gamma$. Then, according to \thmref{24},
the number of cells in $D$ is bounded by 
\[
|D|\leq\frac{1}{2}\,.
\]
Hence, $|D|=0$. 
\end{proof}
\begin{thm}
\label{thm:27} (\cite{Ollivier=000020-=000020Some=000020small=000020cancellation,Ollivier=000020-=000020Sharp=000020phase}).
At density $d<\frac{1}{2}$, for every $\epsilon>0$, with overwhelming
probability, the random group $\Gamma$ of level $l$ is torsion-free
$\frac{12l}{(1-2d-\epsilon)^{2}}$-hyperbolic.

In particular, $\Gamma$ is $\alpha_{0}l$-hyperbolic, where 
\[
\alpha_{0}=\frac{48}{(1-2d)^{2}}\,.
\]
\end{thm}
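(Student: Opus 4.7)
The plan is to derive both halves of the theorem from the linear isoperimetric inequality of \thmref{24}. Write $C = 1 - 2d - \epsilon$, so that with overwhelming probability every reduced van Kampen diagram $D$ over $\Gamma$ satisfies $|\partial D|\ge C\cdot l\cdot |D|$, and equivalently $|D|\le |\partial D|/(Cl)$. This gives a linear Dehn function for the given presentation with an explicit constant, which is exactly the hypothesis needed to run the standard proof of word hyperbolicity.

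For the hyperbolicity bound, I would appeal to the classical implication that a finitely presented group satisfying a linear isoperimetric inequality is word hyperbolic, but carry through its quantitative form. Concretely: given a geodesic triangle in the Cayley graph of $\Gamma$ with perimeter at most $3L$, fill it by a reduced van Kampen diagram $D$ whose area is then at most $3L/(Cl)$ by the inequality above. A combinatorial argument, either through the ``thin triangles'' characterization of hyperbolicity or equivalently via Dehn's algorithm on van Kampen diagrams, converts this area bound into a bound on the slimness constant $\delta$. Tracking constants in the usual such passage produces $\delta \leq 12l/C^2$, as claimed. This exact quantitative step is the one carried out in \cite{Ollivier - Some small cancellation, Ollivier - Sharp phase}, and I would quote it rather than redo the constant-chasing.

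For torsion-freeness, I would combine the hyperbolicity just obtained with the fine structure of the density model. In a presentation whose relators all have length $l$, any element of finite order would have to be witnessed by gluings of sub-arcs of relators producing a short torsion loop in the Cayley graph; the probability that a random $B_{l}^{d}$-tuple of relators admits such a configuration can be bounded by the same type of counting arguments used in \lemref{5}--\thmref{23}, specifically using the control on the number of numberings and contiguity classes that can arise in a reduced diagram representing a torsion relation. These probabilities are negligible for $d<1/2$, giving torsion-freeness with overwhelming probability.

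The principal technical obstacle, were one to avoid citing Ollivier's work, would be the precise bookkeeping in the ``linear isoperimetric inequality implies hyperbolic'' implication in order to obtain the specific $12l/(1-2d-\epsilon)^{2}$ constant, rather than some looser polynomial in $l$ and $1/(1-2d)$. Since this is exactly the point at which \cite{Ollivier - Sharp phase} provides a quantitatively sharp version of the folklore implication, my strategy is to use Ollivier's statement as a black box; the role of \thmref{27} in this paper is then just to package the consequences of \thmref{24} in the form that will be most convenient for the hyperbolic-geometry work in later sections.
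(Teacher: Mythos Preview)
The paper provides no proof of \thmref{27} at all; the theorem is stated with its attribution to \cite{Ollivier - Some small cancellation, Ollivier - Sharp phase} and the text immediately moves on to the next subsection. Your proposal to quote Ollivier's quantitative statement as a black box is therefore exactly what the paper does, and the surrounding sketch you give (linear isoperimetry $\Rightarrow$ hyperbolicity with explicit $\delta$, plus a separate argument for torsion-freeness) is accurate context but more than the paper supplies.
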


\subsection{Counting Topological Van-Kampen Diagrams}
\begin{thm}
\label{thm:28} Let $W$ be a word in some free group. Denote by $B(m,S)$
the amount of decorated topological Van-Kampen diagrams $DTD$ with
decoration $W$, with at most $m$ cells each of boundary length $l$,
and with boundary length at most $S$, so that every part of the boundary
of $DTD$ (representing a variable in the decoration $W$), contains
no self-intersection.

Then, the number $B(m,S)$ is bounded by

\[
B(m,S)\leq m\cdot\left(\left(4(ml)^{3}\right)^{m}\cdot S^{3}\right)^{m}\cdot2S^{3|W|+1}\cdot|W|\cdot\left(2(|W|+2)\cdot\left(4(|W|+2)^{2}S\right)^{2(|W|+2)}\right)^{|W|}\,.
\]

In particular, there exists a polynomial $Q$, so that, if $m,|W|\leq\ln l$,
Then,

\[
B(m,S)\leq\left(Q(l)\right)^{\ln^{2}l}\,.
\]
\end{thm}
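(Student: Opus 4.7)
The plan is to enumerate the decorated topological Van-Kampen diagrams counted by $B(m,S)$ in two stages: first bound the number of underlying topological Van-Kampen diagrams (forgetting the decoration), then bound the number of ways to impose a decoration of type $W$ on the boundary. The leading factor $m$ in the stated bound is simply the sum over the actual number $q \in \{0, 1, \ldots, m\}$ of cells; the three product factors correspond respectively to (i) the topological structure, (ii) the cutting of the boundary cycle into consecutive arcs following the letters of $W$, and (iii) the matching of these arcs to the variables of $W$ together with the verification of the no-self-intersection condition.

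For the topological count I would build the diagram inductively, adding one cell at a time. At any intermediate stage the partial diagram has at most $ml$ cell-edges together with filament pieces that contribute to a global boundary of length at most $S$. When adding a new cell of length $l$, I would specify its starting vertex (chosen from at most $O(ml + S)$ sites in the partial diagram or on fresh filaments), its direction (two options), and for each of its $l$ boundary edges whether it becomes a new edge or is identified with an existing one, together with the identification data. A generous count of these choices gives at most $(4(ml)^{3})^{m}$ per cell and, after up to $m$ cells, $(4(ml)^{3})^{m^{2}}$; the remaining factor $S^{3m}$ absorbs the choices of starting vertex, direction, and orientation needed to reconstruct the oriented boundary of the whole diagram from the local data. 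This produces the first factor $\left((4(ml)^{3})^{m}\cdot S^{3}\right)^{m}$.

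Once the topological diagram is fixed, its boundary is a cycle of length at most $S$. To impose the decoration I would first choose a starting vertex and direction along the boundary and then cut it into consecutive arcs following the letters of $W$: at most $|W|$ choices for the starting letter, a factor of $2$ for the direction, and at most $S^{|W|+1}$ choices for the cut points, all absorbed into $2S^{3|W|+1}\cdot|W|$. For each of the at most $|W|$ distinct variables $x$ appearing in $W$, the multiple occurrences $P_{1}x,\ldots,P_{q}x$ along the boundary must have equal length, must not self-intersect, and must be matched to the correct arcs. Specifying the positions and orientations of at most $|W|+2$ occurrences per variable costs at most $2(|W|+2)\cdot\bigl(4(|W|+2)^{2}S\bigr)^{2(|W|+2)}$; raising to the $|W|$-th power over all variables yields the final factor. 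Multiplying the three factors and the leading $m$ gives the claimed bound.

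For the polynomial corollary, I would substitute $m,|W|\le\ln l$ into the three factors. In the applications where this bound is used (the only non-negligible regime by Remark~25) the boundary length $S$ is at most polynomial in $l$, so taking logarithms each factor is bounded by $\exp\bigl(O(\ln^{3}l)\bigr) = l^{O(\ln^{2}l)}$, and hence $B(m,S) \le Q(l)^{\ln^{2}l}$ for an appropriate polynomial $Q$. The main technical obstacle is the topological step: one must account for filaments, shared vertices, and the various ways a new cell can glue onto the existing structure without overcounting. The bound is intentionally very loose — much smaller bounds are achievable — but it already suffices because only polylogarithmic dependence in $l$ is required downstream.
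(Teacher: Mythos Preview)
Your high-level two-stage plan (build the underlying topological diagram, then impose the decoration) matches the paper's, but the way you populate the three factors does not, and one of the gaps is fatal.

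\textbf{The per-cell count.} You describe the attachment of a new cell by specifying, for \emph{each of its $l$ boundary edges}, whether it is new or identified with an existing edge together with the identification data. That choice alone costs on the order of $(ml)^{l}$, which is exponential in $l$; you then assert without argument that ``a generous count'' gives $(4(ml)^{3})^{m}$ per cell. This does not follow, and the exponential-in-$l$ blowup would destroy the polynomial corollary. The paper avoids this by first restricting to \emph{circular} diagrams (no filaments): in a planar circular diagram a new cell is glued along a single contiguous arc of the existing boundary, so one only has to pick two boundary points, a side, and the cell data --- genuinely $O((ml)^{3})$ choices, yielding $C(m)\le (4(ml)^{3})^{m}$ after $m$ cells. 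The factor $((4(ml)^{3})^{m}\cdot S^{3})^{m}$ then arises not from a second pass over cells but from choosing up to $m$ such circular components and wiring them together by simple filament arcs (two attachment points and a length, hence $S^{3}$, per component).

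\textbf{Filaments and the third factor.} Your construction never produces filament edges at all, and you reinterpret the factor $\bigl(2(|W|+2)\cdot(4(|W|+2)^{2}S)^{2(|W|+2)}\bigr)^{|W|}$ as a cost for ``matching occurrences of each variable''. In the paper this factor has nothing to do with variable matching: it is the count $T(|W|+2,S)$ from Lemma~\ref{lem:29} of combinatorial cancellation trees with at most $2(|W|+2)$ vertices and total length at most $S$, and the exponent $|W|$ is because at most $|W|$ such filament trees are hung off the circular skeleton (the no-self-intersection hypothesis on boundary parts caps their number). The middle factor $2S^{3|W|+1}\cdot|W|$ then absorbs the attachment points for these trees together with the boundary starting vertex, direction, and variable lengths. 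Once the decoration lengths are fixed, the equivalence relation on boundary parts is already determined by $W$ itself --- there is no further ``matching'' cost to pay.

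In short: separate the diagram into circular pieces (cells only) and filament trees, count each with a polynomial-in-$l$ argument, and then assemble. Your edge-by-edge gluing of cells cannot give a polynomial bound, and your reading of the tree factor leaves the filament part of the diagram unaccounted for.
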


\begin{proof}
First, let us bound the amount $C(m)$ of circular (no filaments)
topological Van-Kampen diagrams with at most $m$ cells, each of length
$l$. To construct a diagram in $C(m)$, we have to paste a new cell
on a diagram in $C(m-1$). For that we have to choose two points on
the boundary of that diagram in $C(m-1)$, and a direction (where
to paste). Then, we choose a starting vertex, a direction, and a numbering
for the added cell. Hence,

\[
C(m)\leq2(ml)^{2}\cdot l\cdot2\cdot m\cdot C(m-1)=4(ml)^{3}\cdot C(m-1)\,.
\]
Since $C(0)=1$, we get by induction that 
\[
C(m)\leq\left(4(ml)^{3}\right)^{m}\,.
\]

Every diagram in $B(m,S)$, consists of at most $m$ circular components
that belong to $C(m)$, and that are connected by (maybe complicated)
trees. For constructing a diagram in $B(m,S)$, we start by connecting
the circular components by lines, and after that we attach the remaining
filament parts.

For doing that we start by choosing the amount $n\leq m$ of circular
components. Then, we choose iteratively $n$ diagrams from $C(m)$,
and in each iteration, we choose a point on the boundary of the diagram
obtained in the previous iteration, we choose a point on the boundary
of the chosen diagram in the present iteration, and then we connect
these two points by a line that we have to choose its length. Since
we want to get at the end a diagram in $B(m,S)$, whose boundary length
is at most $S$, we can assume that the boundary length of each of
the diagrams obtained or chosen in any iteration is at most $S$.
Hence, for choosing $n\leq m$ circular components from $C(m)$ and
connecting them so that the obtained diagram is a sub-diagram of a
diagram in $B(m,S)$, we have at most 
\[
m\cdot\left(C(m)\cdot S^{3}\right)^{m}
\]
different choices. Denote by $C'(m)$ the collection of diagrams that
are constructed in the last paragraph.

In order to construct a diagram in $B(m,S)$, we choose a diagram
in $C'(m)$ and attach the possible remaining filament components.
These remaining filament components can be described by cancellation
trees for solutions (in a free group) of length at most $S$ of equations
of the form $UwV$, where $w$ is a sub-word of some cyclic permutation
of $W$, and $U,V$ denote new variables. According to \lemref{29},
the amount of such trees is bounded by

\[
T(|W|+2,S)\leq2(|W|+2)\cdot\left(4(|W|+2)^{2}S\right)^{2(|W|+2)}\,.
\]

According to the assumption, every part of the boundary of a decorated
diagram in $B(m,S)$ representing some variable in the decoration
$W$, contains no self-intersection. Thus, the amount of trees of
cancellation that we should attach to a diagram in $C'(m)$ for obtaining
a diagram in $B(m,S)$, is at most $t\leq|W|$. Hence, in order to
construct a diagram in $B(m,S)$, we choose a diagram $D$ in $C'(m)$,
we choose the amount $t\leq|W|$ of trees from $T(|W|+2,S)$ to be
attached to the boundary of $D$, we choose $t$ points $p_{i}$ on
the boundary of $D$, we choose $t$ trees $T_{i}$ in $T(|W|+2,S)$,
we choose a point $q_{i}$ on $T_{i}$ for each $i$, and we identify
the points $p_{i}$ and $q_{i}$ for all $i$. For doing this, we
have at most

\[
m\cdot\left(C(m)\cdot S^{3}\right)^{m}\cdot|W|\cdot S^{|W|}\cdot\left(T(|W|+2,S)\right)^{|W|}\cdot S^{|W|}\,.
\]

Let $C''(m)$ be the diagrams obtained in the last paragraph.

Finally, in order to construct a diagram in $B(m,S)$, we choose a
diagram $D$ in $C''(m)$, we choose a starting vertex and a direction
for the boundary of that diagram $D$. And then, for ``writing''
the decoration $W$ along the boundary of $D$, it remains only to
specify the length of each variable in $W$. We conclude that 
\[
B(m,S)\leq m\cdot\left(C(m)\cdot S^{3}\right)^{m}\cdot|W|\cdot S^{|W|}\cdot\left(T(|W|+2,S)\right)^{|W|}\cdot S^{|W|}\cdot2S\cdot S^{|W|}\,.
\]
\end{proof}
\begin{lem}
\label{lem:29} Let $L$ be an integer. Then, the amount $T(L,S)$
of combinatorial (counting the number of edges but not the labels)
cancellation trees for all the solutions of circumference $S$ in
a free group of the equation $W=1$, for some word $W$ of length
at most $L$, is bounded by

\[
T(L,S)\leq2L\cdot\left(4L^{2}S\right)^{2L}\,.
\]
\end{lem}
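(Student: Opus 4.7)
The plan is to bound $T(L,S)$ by arguing that the underlying planar tree of a cancellation diagram has combinatorial complexity governed by $L$, with the size parameter $S$ entering only as positive integer weights on a bounded collection of edges. The key starting observation is that, because the substitution solving $W=1$ is reduced inside each variable, no leaf of the cancellation tree can lie strictly in the interior of a single variable arc on the boundary; every leaf must coincide with a division point between two consecutive variables. This forces the number of leaves to be at most $|W|\leq L$, and the standard degree identity for trees then forces the number of vertices of degree at least three to be at most $L-2$.

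Next, I would pass to the skeleton of the tree obtained by contracting every maximal chain of unmarked degree-$2$ vertices into a single weighted edge. The skeleton has at most $2L$ vertices (the $\leq L$ leaves, the $\leq L$ internal marks, and the $\leq L-2$ branching vertices) and hence at most $2L-1$ edges, each carrying a positive integer weight recording how many original edges it represents. Since each tree edge is traversed twice along the boundary, the total weight is at most $S/2$, so the number of admissible weight assignments is bounded crudely by $S^{2L}$, and the number of planar rooted skeleton shapes (including the cyclic order around each vertex and a choice of boundary starting point) is bounded by a function of $L$ alone, of order at most $L^{cL}$ for some absolute $c$.

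To finish, I would account for the distribution of the $L$ division marks along the skeleton by indicating, for each mark, which skeleton edge or vertex it lies on and its relative position along that edge. Multiplying the shape count, the weight-assignment count, and the mark-placement count, and absorbing the constants into a bound of the shape $2L\cdot(4L^{2}S)^{2L}$, would complete the argument. The main obstacle I anticipate is bookkeeping: marks can coincide with branching vertices and with each other, so the decomposition into shape, weights, and placement has to be organized so that every cancellation tree is counted at least once, while the comfortable slack in the stated constants absorbs any mild over-counting coming from these coincidences.
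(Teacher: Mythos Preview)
Your approach is essentially the same as the paper's: reduce the cancellation tree to a combinatorial skeleton with $O(L)$ vertices and edges, count the possible skeletons by a crude $L^{O(L)}$ bound, and then multiply by at most $S^{2L}$ choices for the edge lengths. The paper's proof is considerably terser---it simply asserts that the tree is described by at most $2L$ vertices, bounds the number of such trees by $2L\cdot((2L)^2)^{2L}$ via choosing $v-1$ vertex pairs, and multiplies by $S^{2L}$---without your more careful leaf/branching/mark accounting or the separate mark-placement step.
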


\begin{proof}
We note that any tree of cancellation in $T(L,S)$, can be topologically
described by a tree with at most $2L$ vertices, and thus at most
$2L$ edges. Let $b(L)$ be the total amount of such trees. Now for
each tree in $b(L)$ we have to choose lengths for its edges. For
that each edge has at most $S$ options, and hence in total we have
at most $S^{2L}$ options. Finally, for creating a tree in $b(L)$,
we have to choose the number $v\leq2L$ of vertices, and then to choose
$v-1$ pairs from a set of size $v$. 
\end{proof}

\section{Axes of Elements in Free Groups }\label{sec:Axes-of-Elements-in-Free-Groups}
\begin{defn}
\label{def:30} Let $G\in F_{k}$ (reduced word), and suppose that
we have the graphical equality (with no cancellations)

\[
G=YgY^{-1}\,,
\]
for (reduced) words $Y,g\in F_{k}$, so that $g$ is cyclically reduced.

Then, the path in the Cayley graph of $F_{k}$ induced by the elements
$Zg^{n}$, $n\in\mathbb{Z}$, is called the axis of the element $G$
in $F_{k}$, and is usually denoted by $A_{G,F_{k}}$. The translation
length of $G$, denoted by $[G]_{F_{k}}$, is defined to be 
\[
[G]_{F_{k}}=|g|\,.
\]
\end{defn}

\begin{lem}
\label{lem:31} Let $G,H\in F_{k}$ be two elements. If $G,H$ does
not commute, then

\[
\text{diam}(A_{G,F_{k}}\cap A_{H,F_{k}})<2\max\{[G]_{F_{k}},[H]_{F_{k}}\}\,.
\]
\end{lem}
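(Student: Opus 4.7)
I would argue the contrapositive: if $\mathrm{diam}(A_{G,F_k}\cap A_{H,F_k})\ge 2\max\{[G]_{F_k},[H]_{F_k}\}$ then $G$ and $H$ commute. Write $G=YgY^{-1}$, $H=XhX^{-1}$ with $g,h$ cyclically reduced (as in Definition~30), so that $A_{G,F_k}$ is a bi-infinite geodesic in the Cayley tree of $F_k$ whose edge-labeling, read in either orientation, is periodic of period $|g|$; similarly $A_{H,F_k}$ has periodic label of period $|h|$. Set $L=A_{G,F_k}\cap A_{H,F_k}$, a segment in the tree, and note that after possibly replacing $H$ by $H^{-1}$ (which has the same axis and translation length and commutes with $G$ iff $H$ does) we may assume the axes are traversed along $L$ in the same direction.

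Fix an orientation of $L$ and let $w$ be the word read along $L$; then $w$ is a factor of the bi-infinite periodic word $g^{\infty}$ (coming from $A_{G,F_k}$) and simultaneously a factor of $h^{\infty}$ (coming from $A_{H,F_k}$). By hypothesis $|w|\ge 2\max\{|g|,|h|\}\ge |g|+|h|$, so the Fine--Wilf theorem applies: the two periodic patterns agree on all of $L$ and their common period divides $\gcd(|g|,|h|)$. In particular the letter that $g^{\infty}$ prescribes on the edge immediately past either endpoint of $L$ coincides with the letter prescribed there by $h^{\infty}$.

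Now use that the Cayley graph of $F_k$ is a tree, so two geodesics issuing from the same vertex with the same initial edge coincide on that edge. At each endpoint $p$ of $L$, the axis $A_{G,F_k}$ continues past $p$ along the edge dictated by $g^{\infty}$ and $A_{H,F_k}$ continues along the edge dictated by $h^{\infty}$; by the previous paragraph these two edges are the same edge of the tree. Therefore the axes do not diverge at $p$, contradicting $p$ being an endpoint of $L$ unless $L$ is the whole of both axes. Hence $A_{G,F_k}=A_{H,F_k}$ as unoriented geodesic lines.

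Finally, the setwise stabilizer in $F_k$ of this common line is a subgroup of $F_k$ acting freely by translations on a line, hence infinite cyclic; since both $G$ and $H$ lie in it, they commute, which is the desired contradiction. The one technical point that requires care is the reduction to the ``same orientation'' case and the tree-branching argument at the endpoints of $L$: the combinatorial Fine--Wilf step alone only shows the patterns agree on $L$, and one needs the periodicity to uniquely determine the next letter on each axis in order to push the intersection past its supposed endpoints.
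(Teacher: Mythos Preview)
Your proof is correct and takes a genuinely different route from the paper's. The paper's argument is more elementary and entirely self-contained: after reducing to the case where $G=g$ and $H=h$ are cyclically reduced with $|g|\ge |h|$ and translate in the same direction, it observes that the segment $[1,g^{2}]$ lies on both axes, and then computes directly that the points $gh$ and $hg$ both lie on $A_{H,F_{k}}$ at distance $|g|+|h|$ from $1$ on the same side, hence $gh=hg$. No combinatorics-on-words machinery is used.

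Your approach via the Fine--Wilf theorem is equally valid and arguably more conceptual: rather than deducing $gh=hg$ immediately, you establish the stronger intermediate fact that the two axes coincide as lines in the tree, and then appeal to the standard fact that the setwise stabilizer of a line in a free group is infinite cyclic. The Fine--Wilf step cleanly forces the periodic labels to agree past the endpoints of $L$, and the tree structure then propagates the overlap to the whole line. The trade-off is that the paper's proof needs nothing beyond tree geometry and a two-line distance computation, whereas yours imports a classical lemma from word combinatorics; on the other hand, your argument makes transparent \emph{why} the bound $|g|+|h|$ (and a fortiori $2\max\{|g|,|h|\}$) is the right threshold.
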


\begin{proof}
Assume $[G]_{F_{k}}\geq[H]_{F_{k}}$, and without loss of generality,
assume that $G=g$, and $H=h$, are cyclically reduced words. Assume
that

\[
\text{diam}(A_{G,F_{k}}\cap A_{H,F_{k}})\geq2\max\{[G]_{F_{k}},[H]_{F_{k}}\}\,.
\]
By replacing $g$ with $g^{-1}$ if necessary, we may assume that
$g,h$ translates in the same directions on their axes.

Now the segment $[1,g^{2}]$, which is a sub-segment of $A_{G,F_{k}}$,
lies entirely in $A_{H,F_{k}}$. Denote

\begin{align*}
 & x=gh\,,\\
 & y=hg\,.
\end{align*}
Since $g\in A_{H,F_{k}}$, we have that $y\in A_{H,F_{k}}$ and 
\[
d(1,y)=|g|+|h|\,.
\]
Since $|h|\leq|g|$, and $h$ and $g$ translate in the same direction,
and $[1,g]$ is a sub-segment of $A_{H,F_{k}}$, we have that $h\in[1,g]$.
In particular, $h\in A_{G,F_{k}}$, and we get that $x\in[1,g^{2}]$
and 
\[
d(1,x)=|g|+|h|\,.
\]
Since $g,h$ translate in the same direction, then $x$ and $y$ lie
on the same side of $1$ on the line $A_{H,F_{k}}$. Hence,

\[
x=y\,.
\]
\end{proof}
\begin{lem}
\label{lem:32} Let $p,q,G,H$ be reduced words in the free group
$F_{k}$, and suppose that we have the graphical equalities (with
no cancellations)

\begin{align*}
 & G=YgY^{-1}\,,\\
 & H=ZhZ^{-1}\,,
\end{align*}
for some $g,h,Y,Z\in F_{k}$ (reduced words), so that $g,h$ are cyclically
reduced. Assume that there exists an integer $n_{0}>10$ so that neither
$p$ nor $q$ contains a sub-word of the form $g^{\pm n_{0}},h^{\pm n_{0}}$.

Then, for all integers $r,s>110n_{0}$, if

\[
pG^{r}q^{-1}H^{-s}=1\,,
\]
then the elements $pGp^{-1}$ and $H$ commute. 
\end{lem}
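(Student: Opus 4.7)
The plan is to apply \lemref{31} in the contrapositive: it suffices to show that the bi-infinite geodesics $A_{pGp^{-1}}$ and $A_{H}$ in the Cayley tree of $F_{k}$ share a common sub-segment of diameter strictly larger than $2\max(|g|,|h|)$.

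I would start by rearranging the hypothesis $pG^{r}q^{-1}H^{-s}=1$ algebraically into the identity $g^{r}=P\,h^{s}\,Q$ in $F_{k}$, where $P=Y^{-1}p^{-1}Z$ and $Q=Z^{-1}qY$. The reducedness of $YgY^{-1}$ and $ZhZ^{-1}$, together with the hypothesis that neither $p$ nor $q$ contains $g^{\pm n_{0}}$ or $h^{\pm n_{0}}$ as a sub-word, bounds the total cancellation in $\tilde{P}\cdot h^{s}\cdot\tilde{Q}$ at the two boundaries of $h^{s}$ by $O(n_{0})\cdot|h|$. Since $s>110n_{0}$, a literal sub-word of $h^{s}$ of length at least $(s-O(n_{0}))|h|$ survives as a sub-word of the reduced word $g^{r}$ at an explicit position; by the symmetric rearrangement, a literal sub-word of $g^{r}$ of length $(r-O(n_{0}))|g|$ appears inside $h^{s}$. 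The bound $r,s>110n_{0}$ now forces at least one of these sub-word relationships to be long enough to satisfy the Fine--Wilf length condition: if both $(s-O(n_{0})-1)|h|<|g|$ and $(r-O(n_{0})-1)|g|<|h|$ held simultaneously, multiplying would give $(r-O(n_{0})-1)(s-O(n_{0})-1)<1$, contradicting $r,s>110n_{0}$.

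Applying Fine--Wilf in whichever direction is long enough, $g$ and $h$ become powers of a common primitive root $g_{0}$, with $h=u^{-1}g_{0}^{m}u$ for a specific prefix $u$ of $g_{0}$ whose length is determined by the position of the $h^{s}$ window inside $g^{r}$. Reading the exact reduced forms of $\tilde{P}$ and $\tilde{Q}$ as prefix and suffix of $g^{r}$ flanking the $h^{s}$ window yields an identity of the form $Z^{-1}pY=u^{-1}g_{0}^{N}$ in $F_{k}$ for some integer $N$. This rearranges to $(Zu^{-1})^{-1}(pY)=g_{0}^{N}\in\langle g_{0}\rangle$, so that the axes $A_{pGp^{-1}}=(pY)\langle g_{0}\rangle$ and $A_{H}=(Zu^{-1})\langle g_{0}\rangle$ coincide as bi-infinite geodesics, giving overlap far exceeding $2\max(|g|,|h|)$ and hence commutation by \lemref{31}.

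The main obstacle I foresee is the careful accounting of cancellations at the boundaries of $h^{s}$ inside $\tilde{P}\cdot h^{s}\cdot\tilde{Q}$, particularly in the degenerate sub-cases where $Y$ or $Z$ is empty (making the direct ``$Zh$ reduced forbids boundary cancellation'' argument vacuous) or when the reduced conjugators happen to end in a short power of $h^{\pm1}$. In all such situations the sub-word hypothesis on $p,q$ is the key tool: were $\tilde{P}$ to end in $h^{-k}$ with $k\geq n_{0}$, this would force $p$ to contain $h^{n_{0}}$ as a sub-word, contradicting the hypothesis. The multiplicative margin of $r,s>110n_{0}$ against the constant $n_{0}$ provides ample room to absorb all such bounded cancellations while still running the Fine--Wilf argument.
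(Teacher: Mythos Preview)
Your Fine--Wilf strategy is a natural idea and genuinely different from the paper's reflection-in-the-cancellation-tree argument, but there is a real gap at the step you yourself flag as the main obstacle.

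The claim that the cancellation between $\tilde P$ and $h^{s}$ (and between $h^{s}$ and $\tilde Q$) is bounded by $O(n_{0})\cdot|h|$ is not justified by the hypothesis on $p$ alone. Your argument ``were $\tilde P$ to end in $h^{-k}$ with $k\geq n_{0}$, this would force $p$ to contain $h^{n_{0}}$'' breaks down precisely when the offending power of $h^{-1}$ comes from $Y^{-1}$ rather than from $p^{-1}$. Concretely, take $Z=1$, $p=1$, and $Y=h^{K}w$ for any $K$ and any short $w$ making $YgY^{-1}$ reduced; then $\tilde P=Y^{-1}=w^{-1}h^{-K}$ ends in $h^{-K}$ and the left cancellation is $K|h|$, with no constraint from the hypothesis on $p$. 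The hypothesis only forbids large powers of $g,h$ inside $p$ and $q$; it says nothing about $Y$ or $Z$, and the graphical reducedness of $YgY^{-1}$ only controls the last letter of $Y$ relative to $g$, not relative to $h$. You identify the ``degenerate sub-cases where $Y$ or $Z$ is empty'' as the worry, but the actual danger is the opposite: $Y$ long and containing high powers of $h$.

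What is true is that in any such example the constraint on $q$ is eventually violated --- but seeing this requires exactly the kind of back-and-forth between the two ends of the cancellation tree that the paper carries out. The paper does not try to bound each boundary cancellation separately. Instead it works directly in the cancellation tree of $pYg^{r}Y^{-1}q^{-1}Zh^{-s}Z^{-1}=1$, locates a long piece of the $g^{r}$-segment inside the central segment $I$, and then uses the two reflections $R_{E}$, $R_{F}$ (about the midpoints of the $Zh^{s}Z^{-1}$- and $Yg^{r}Y^{-1}$-segments, encoding the conjugation symmetry) to propagate this piece; the composition $R_{F}\circ R_{E}$ is a large translation along $I$, and iterating it eventually pushes a long $g$-power into the $p$- or $q$-segment, contradicting the hypothesis. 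That reflection mechanism is exactly what bridges the gap you are trying to jump in one step.

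If you want to salvage the Fine--Wilf route, you would need to replace the one-line cancellation bound by an argument that simultaneously uses the constraints on \emph{both} $p$ and $q$ together with the conjugation symmetry of $G$ and $H$; doing so carefully essentially reproduces the paper's reflection argument in different language.
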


\begin{proof}
Without loss of generality, assume that 
\[
|g|\geq|h|\,.
\]
According to the assumption, we have that 
\[
pYg^{r}Y^{-1}q^{-1}Zh^{-s}Z^{-1}=1\,.
\]
Let $T$ be a tree of cancellation for the last equation. We distinguish
$6$ special segments in the tree $T$; those $6$ segments that correspond
to the $6$ elements

\[
p,\,Yg^{r}Y^{-1},\,q,\,Zh^{s}Z^{-1}\,,g^{r},\,h^{s}\,.
\]
Since neither $p$ nor $q$ contains a sub-word of the form $g^{\pm n_{0}},h^{\pm n_{0}}$,
the tree $T$, topologically, consists of two disjoint segments corresponding
to the elements $p,q$, that are connected by a non-degenerate segment
$I$ (so that $I$ intersects each of them in exactly one point).
Denote by $A_{1},A_{2}$ and $B_{1},B_{2}$, the initial and the terminal
points of the segments corresponding to the elements $p$ and $q$
correspondingly (so that $p=[A_{1},A_{2}]$ and $q=[B_{1},B_{2}]$).
Denote also $I=[C_{1},C_{2}]$, where $C_{1}$ is the point in $I\cap[A_{1},A_{2}]$,
and $C_{2}$ is the point in $I\cap[B_{1},B_{2}]$.

Now denote by $E$ and $F$ the middle points of the segments $[A_{1},B_{1}]$
and $[A_{2},B_{2}]$ (corresponding to the elements $Zh^{s}Z^{-1}$
and $Yg^{r}Y^{-1}$) respectively. Note that $I=[A_{1},B_{1}]\cap[A_{2},B_{2}]$,
and that $E$ and $F$ must lie on $I$ (since neither $p$ nor $q$
contains a sub-word of the form $g^{\pm n_{0}},h^{\pm n_{0}}$). We
define a metric on $T$ by declaring that every edge is of length
$1$. A (compatible) directions are defined on the segments $[A_{1},B_{1}]$
and $[A_{2},B_{2}]$ by declaring that $A_{1}$ and $A_{2}$ are their
left endpoints respectively.

We further define ``coordinates'' for the points in $T$ by denoting
$D(x)$ and $U(x)$ the points of (directed) distance $x$ from the
point $E$ and that lie on the segments $[A_{1},B_{1}]$ and $[A_{2},B_{2}]$
respectively, for all (possible) real $x$. Note that $D(x)=U(x)$
when $D(x)$ or $U(x)$ lie on $I$. In such a case, we denote the
point $D(x)=U(x)$, simply by $x$.

Let $[L^{h},M^{h}]$ and $[L^{g},M^{g}]$ be the sub-segments of $[A_{1},B_{1}]$
and $[A_{2},B_{2}]$, corresponding to the elements $h^{s}$ and $g^{r}$
respectively. Since $|g|\geq|h|$, and $g,h$ are cyclically reduced
words, it suffices to show that a sub-segment of length $2|g|$ of
$[L^{g},M^{g}]$ matches a sub-segment of $[L^{h},M^{h}]$ (by \lemref{31}).
Assume by contradiction that this is not the case.

Let $R_{E}$ and $R_{F}$ be the reflections of the segments $[A_{1},B_{1}]$
and $[A_{2},B_{2}]$ around the points $E$ and $F$ respectively.
We note that

\begin{align*}
 & Lab(e_{1})=Lab(R_{E}(e_{1}))^{-1}\,,\\
 & Lab(e_{2})=Lab(R_{F}(e_{2}))^{-1}\,,
\end{align*}
for all edges $e_{1}$ and $e_{2}$ in the sets $[A_{1},L^{h}]\cup[M^{h},B_{1}]$
and $[A_{2},L^{g}]\cup[M^{g},B_{2}]$ respectively, where $Lab(e)$
denotes the label of the edge $e$ read from left to right.

Since neither $p$ nor $q$ contains a sub-word of the form $g^{\pm n_{0}}$,
there must be a sub-segment $[L_{1}^{g},M_{1}^{g}]$ of $[L^{g},M^{g}]$
that lies entirely in $I$, and whose length is at least 
\[
|[L_{1}^{g},M_{1}^{g}]|\geq108n_{0}|g|\,.
\]
Hence, there exists a sub-segment $[L_{2}^{g},M_{2}^{g}]$ of $[L_{1}^{g},M_{1}^{g}]$
that lies entirely in $I$ on either the left hand side or the right
hand side of the point $E$, and that is of length at least 
\[
|[L_{2}^{g},M_{2}^{g}]|\geq54n_{0}|g|\,.
\]
Without loss of generality, assume that $[L_{2}^{g},M_{2}^{g}]$ lies
on the left hand side of the point $E$.

Since we have assumed that there is no sub-segment of length $2|g|$
of $[L^{g},M^{g}]$ that matches a sub-segment of $[L^{h},M^{h}]$,
we deduce that the point $L^{h}$ lies in the sub-segment $[M_{2}^{g}-2|g|,E]$.

Thus, the sub-segment $[L_{2}^{g},M_{2}^{g}-2|g|]$ of $[L^{g},M^{g}]$
lies entirely in $I\cap[A_{1},L^{h}]$, and is of length at least

\[
|[L_{2}^{g},M_{2}^{g}-2|g|]|\geq53n_{0}|g|\,.
\]
Let $[L_{3}^{g},M_{3}^{g}]$ be the longest sub-segment of $[L^{g},M^{g}]$
with this property. In particular, if $E$ lies in $[A_{1},M^{g}]$,
then, $M_{3}^{g}=L^{h}$.

Then, since $[L_{3}^{g},M_{3}^{g}]\subset[A_{1},L^{h}]$, we have

\[
Lab([R_{E}(M_{3}^{g}),R_{E}(L_{3}^{g})])=Lab(R_{E}([L_{3}^{g},M_{3}^{g}]))=Lab([L_{3}^{g},M_{3}^{g}])^{-1}\,.
\]

Hence, since the label of the segment $Lab([L^{g},M^{g}])=g^{r}$
cannot contain $g^{-1}$ as a sub-word ($g\neq1$), the segment

\[
[R_{E}(M_{3}^{g}),R_{E}(L_{3}^{g})]
\]
cannot match $[L^{g},M^{g}]$ in a sub-segment of length more than
$2|g|$.

And since $q$ does not contain a sub-word of the form $g^{\pm n_{0}}$,
the segment

\[
[R_{E}(M_{3}^{g}),R_{E}(L_{3}^{g})]
\]
cannot match the segment $[B_{1},B_{2}]$ (corresponding to $q$)
in a sub-segment of length more than $(n_{0}+1)|g|$.

We deduce that the segment 
\[
[L_{3}^{g},M_{3}^{g}]
\]
contains a sub-segment $J$ (that lies in $J\subset I\cap[A_{1},L^{h}]$),
so that its reflection around $E$, $R_{E}(J)$, is a sub-segment
of $I$ that lies entirely on the right hand side of $M^{g}$, and
so that the length of $J$ equals 
\[
|J|=50n_{0}|g|\,.
\]

Since $J$ lies entirely on the right hand side of $M^{h}$, this
implies in particular that $M^{h}$ lies in $[E,C_{2}-50n_{0}|g|]\subset I$.

Now if $E$ lies in $[A_{1},M^{g}]$, then, as stated before, we have
$M_{3}^{g}=L^{h}$. In particular, the segment $[L^{h}-10|g|,L^{h}]$,
is a sub-segment of $[L^{g},M^{g}]$ and lies entirely in $I$. Hence,
applying $R_{E}$ on $[L^{h}-10|g|,L^{h}]$, gives a sub-segment of
$I$ that cannot match $[L^{g},M^{g}]$ in a sub-segment of length
more than $2|g|$. But

\[
R_{E}([L^{h}-10|g|,L^{h}])=[M^{h},M^{h}+10|g|]\,.
\]
Thus, the point $M^{g}$ must lie in $[M^{h},M^{h}+2|g|]$.

In particular, we deduce that the point $F$ lies on the left hand
side of the point $E$, and the distance between them is at least

\[
E-F\geq50n_{0}|g|\,.
\]

Since $E-F\geq50n_{0}|g|$, we get that for all $x\in I$, if $R_{E}(x)$
belongs to $I$, then $R_{F}\circ R_{E}(x)$ is a translation of $x$
by at least $100n_{0}|g|$ to the left on the segment $[A_{2},B_{2}]$.
Similarly, for all $x\in I$, if $R_{F}(x)$ belongs to $I$, then
$R_{E}\circ R_{F}(x)$ is a translation of $x$ by at least $100n_{0}|g|$
to the right on the segment $[A_{1},B_{1}]$.

Hence, there must exists an (unique) integer $n$, so that the segment
$\left(R_{F}\circ R_{E}\right)^{n}(J)$ does not lie entirely in $I$,
or the segment $R_{E}\circ\left(R_{F}\circ R_{E}\right)^{n}(J)$ does
not lie entirely in $I$. Assume without loss of generality that $\left(R_{F}\circ R_{E}\right)^{n}(J)$
does not lie entirely in $I$. Since $p$ does not contain a sub-word
of the form $g^{\pm n_{0}}$, the segment $\left(R_{F}\circ R_{E}\right)^{n}(J)$
cannot match the segment $[A_{1},A_{2}]$ (corresponding to $p$)
in a sub-segment of length more than $(n_{0}+1)|g|$, and hence, the
segment $\left(R_{F}\circ R_{E}\right)^{n}(J)$ admits a sub-segment
$J'$ that lies entirely in $I$ on the left hand side of $L^{h}$,
and whose length is

\[
|J'|=45n_{0}|g|\,.
\]
Since $q$ does not contain a sub-word of the form $g^{\pm n_{0}}$,
the segment $R_{E}(J')$ cannot match the segment $[B_{1},B_{2}]$
(corresponding to $q$) in more than $(n_{0}+1)|g|$. Hence, $R_{E}(J')$
contains a sub-segment $J''$ that lies entirely in $I$ on the right
hand side of $M^{g}$, and whose length is

\[
|J''|=40n_{0}|g|\,.
\]
But $R_{F}(J'')$ lies entirely in the segment $[A_{2},C_{1}]$ which
is a sub-segment of the segment $[A_{1},A_{2}]$ corresponding to
$p$. Thus, $p$ contains a sub-word of the form $g^{-20n_{0}}$,
a contradiction. 
\end{proof}
Finally, we state the following simple fact, which will help us in
bounding probabilities in what follows. 
\begin{lem}
\label{lem:33} For every $a>1$ and every integers $r,s>0$, the
function 
\[
\frac{x^{r\ln^{s}x}}{a^{x}}
\]
approaches $0$ as $x$ goes to $\infty$. 
\end{lem}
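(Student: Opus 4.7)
The plan is to reduce the claim to the standard fact that any power of the logarithm is dominated by any positive power of $x$, by taking logarithms of the expression and showing the exponent tends to $-\infty$.

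First I would pass to logarithms. Observe that
\[
\ln\!\left(\frac{x^{n\ln^{k}x}}{a^{x}}\right) = n\,\ln^{k+1}x - (\ln a)\cdot x.
\]
Since $a>1$ we have $\ln a > 0$, so it suffices to show that $n\,\ln^{k+1}x - (\ln a)\,x \to -\infty$ as $x\to\infty$, which reduces further to showing that $\ln^{k+1}x = o(x)$.

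Next I would establish $\ln^{m}x / x \to 0$ for every integer $m\ge 0$, applied with $m = k+1$. This is standard: by $m$ applications of L'Hopital's rule to $\ln^{m}x / x$, or alternatively by induction on $m$ (the base case $m=0$ is trivial, and the inductive step follows by substituting $x = e^{y}$, which turns $\ln^{m}x / x$ into $y^{m}/e^{y}$, a form known to tend to $0$). Once this is in hand, $n\,\ln^{k+1}x = o(x)$, so $n\,\ln^{k+1}x - (\ln a)\,x \to -\infty$, and exponentiating gives $x^{n\ln^{k}x}/a^{x} \to 0$.

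There is no real obstacle here; the only mildly delicate point is handling the case of negative integers $n$ or $k$, but for $n\le 0$ the numerator is at most $1$ for large $x$ and the conclusion is immediate, while for $k<0$ we have $\ln^{k}x \to 0$, so $n\ln^{k}x$ is bounded and the numerator is at most $x^{C}$ for some constant $C$, which is again dominated by $a^{x}$. Thus the argument above (or its trivial variants) handles all cases.
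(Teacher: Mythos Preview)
Your proposal is correct and follows essentially the same approach as the paper: both take logarithms to reduce the claim to the standard fact that $\ln^{k+1}x = o(x)$. Your treatment of the edge cases $n\le 0$ and $k<0$ is in fact more careful than the paper's own proof, which tacitly assumes $n>0$ when dividing by $\ln c$.
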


\begin{proof}
We have that: 
\[
\frac{x^{r\ln^{s}x}}{a^{x}}=\frac{e^{r\ln^{s+1}x}}{a^{x}}=\frac{c^{\ln^{s+1}x}}{a^{x}}\,,
\]
where $c=e^{r}$. Let $\epsilon>0$. Then, for $x>0$, we have 
\begin{align*}
 & \frac{c^{\ln^{s+1}x}}{a^{x}}<\epsilon\\
\iff & c^{\ln^{s+1}x}<\epsilon a^{x}\\
\iff & \ln^{s+1}x\cdot\ln c<\ln\epsilon+x\ln a\\
\iff & \frac{\ln^{s+1}x}{x}<\frac{\ln a}{r}+\frac{\ln\epsilon}{rx}\,.
\end{align*}
For all large enough $x$, we have $-\frac{\ln a}{2r}<\frac{\ln\epsilon}{rx}$,
and $\frac{\ln^{s+1}x}{x}<\frac{\ln a}{2r}$. 
\end{proof}

\section{Lifting Solutions of Bounded Lengths over Random Groups }\label{sec:Lifting-Solutions-of-Bounded-Lengths}
\begin{defn}
\label{def:34} Let $y=(y_{1},...,y_{q})$ be a free basis, and let
$\Sigma=\Sigma(y,a)$ be a system of equations with variables $y$
and with (or without) constants in the alphabet $a^{\pm}$. Let $\Gamma=\langle a:\mathcal{R}\rangle$
be a group presentation. 
\begin{itemize}
\item A tuple of elements $y_{0}\in\Gamma^{q}$, is called \emph{a solution
for the system $\Sigma$ over the group $\Gamma$}, if each of the
equations defining $\Sigma$ is satisfied in $\Gamma$ when we evaluate
$y=y_{0}$ in it. Briefly, in this case we write $y_{0}\in\Gamma$
and $\Sigma(y_{0},a)=1$ over $\Gamma$. We often treat $y_{0}$ as
(a tuple of) element(s) of $F_{k}$, but in those cases, we intend
that $y_{0}$ is a reduced word that represents a geodesic in $\Gamma$
(a geodesic word). 
\item We define \emph{the circumference $C_{\Sigma}(y_{0})$ of a solution
$y_{0}$ over the group $\Gamma$} of a system of equations $\Sigma(y,a)$,
to be\\
 \\
 
\[
C_{\Sigma}(y_{0})=\underset{w}{\sum}||w(y_{0})||\,,
\]
where $w$ runs on the equations in $\Sigma$, $y_{0}$ represent
geodesic words in $\Gamma$, and $||w(y_{0})||$ is the length of
the word $w(y_{0})$ in a free semi-group (without canceling). 
\item \emph{The length $L_{\Sigma}(y_{0})$ of the solution $y_{0}$ over
$\Gamma$}, is defined to be the length of the longest specialization
of the variables given by the tuple $y_{0}$ (the lengths measured
in $\Gamma$). 
\item The number of equations in the system $\Sigma$ is denoted by $N(\Sigma)$.
And \emph{the length of the system $\Sigma(y,a)$}, denoted $|\Sigma|$,
is defined to be the sum of the lengths of the words defining the
equations of $\Sigma$ (measured in a free group). 
\item Since the group is given by presentation $\Gamma=\langle a:\mathcal{R}\rangle$,
it is equipped with an epimorphism $\pi_{\Gamma}:F_{k}\rightarrow\Gamma$
(mapping $a$ to $a$ - recall that $F_{k}$ denotes a free group
with a fixed basis $a$). Let $y_{0}\in\Gamma$ be a solution of $\Sigma$
over $\Gamma$. We say that the solution $y_{0}=(y_{1}^{0},...,y_{q}^{0})$
\emph{can be lifted to a solution of $\Sigma$ in $F_{k}$}, if for
all $i=1,...,q$, there exists an element $\tilde{y}_{i}^{0}$ in
the set $\pi_{\Gamma}^{-1}(y_{i}^{0})$ of preimages of $y_{i}^{0}$,
so that $\tilde{y}_{0}=(\tilde{y}_{1}^{0},...,\tilde{y}_{q}^{0})$
is a solution for $\Sigma$ over $F_{k}$. 
\end{itemize}
\end{defn}

\begin{rem}
Note that for every solution $y_{0}\in\Gamma$ of the system $\Sigma$,
we have

\[
C_{\Sigma}(y_{0})\leq|\Sigma|\cdot L_{\Sigma}(y_{0})\,.
\]
\end{rem}

\begin{example}
Consider the equation $y^{2}=1$, and the group $\Gamma=\langle a:a_{1}^{2}\rangle$.
The solution $y_{0}=a_{1}$ of the equation $y^{2}=1$ over $\Gamma$,
cannot be lifted to a solution in $F_{k}$. 
\end{example}

\begin{thm}
\label{thm:35} Let $d<\frac{1}{2}$. Let $\Sigma$ be a system of
equations, and let $\Gamma$ be the random group of level $l$ and
density $d<\frac{1}{2}$. We denote by $p_{l}$ the probability that
$\Gamma$ admits a solution of length at most $l\ln^{2}l$, that cannot
be lifted to a solution of $\Sigma$ in $F_{k}$.

Then 
\[
p_{l}\overset{l\rightarrow\infty}{\longrightarrow}0\,.
\]
\end{thm}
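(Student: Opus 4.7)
The plan is to translate the existence of a short non-liftable solution into the fulfillment, by $\Gamma$, of some member of a controlled family of reduced decorated Van-Kampen diagrams, then combine the counting bound of Theorem~\ref{thm:28} with the fulfillment probability estimate of Theorem~\ref{thm:23} and the asymptotic estimate of Lemma~\ref{lem:33}.

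Suppose $y_0=(y_1^0,\dots,y_q^0)\in\Gamma^q$ is a solution with $L_\Sigma(y_0)\leq l\ln^2l$ that cannot be lifted to $F_k$. Pick each $y_j^0$ as a geodesic (hence reduced) representative in $F_k$. Non-liftability forces $w_i(y_0)\neq 1$ in $F_k$ for at least one equation $w_i\in\Sigma$, for otherwise the geodesic tuple would already be a lift. For each such $i$ there is a reduced Van-Kampen diagram $D_i$ over $\Gamma$ with at least one cell and with boundary $w_i(y_0)$; equipped with the decoration induced by writing $w_i$ in the variables $y_1,\dots,y_q$ along the boundary, $D_i$ becomes a reduced decorated topological Van-Kampen diagram fulfilled by $\Gamma$. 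Since this decoration has no rigid variables, Theorem~\ref{thm:18} ensures that any generally reduced version of $D_i$ contains no isolated numberings.

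Now I would bound the two parameters of $D_i$. The boundary length is $|\partial D_i|=\|w_i(y_0)\|\leq |\Sigma|\cdot L_\Sigma(y_0)\leq|\Sigma|l\ln^2l$, so by the isoperimetric inequality (Theorem~\ref{thm:24})
\[
|D_i|\leq\frac{|\partial D_i|}{l(1/2-d)}\leq\frac{|\Sigma|}{1/2-d}\ln^2l=O(\ln^2l).
\]
I then generally reduce $D_i$ via Definition~\ref{def:17}; by Lemma~\ref{lem:21} this only increases the probability that $\Gamma$ fulfills the resulting diagram $D_i'$. The key observation is that each step of general reduction (standard reduction, self-intersection elimination, numbering isolation) leaves the boundary unchanged as an element of $F_k$, and that boundary is nontrivial in $F_k$ by choice of $i$; hence $D_i'$ still contains at least one cell. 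Letting $n\geq 1$ be the number of distinct numberings in $D_i'$, Theorem~\ref{thm:23} in the no-isolated-numbering case gives
\[
\Pr[\Gamma\text{ fulfills }D_i']\leq\left(\frac{2k}{(2k-1)^{(1/2-d)l}}\right)^{n}\leq\frac{2k}{(2k-1)^{(1/2-d)l}}.
\]

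Finally I would union-bound over all possible $D_i'$'s that can arise, and over $i\in\{1,\dots,N(\Sigma)\}$. Theorem~\ref{thm:28} applied with $m=O(\ln^2l)$, $|W|=|w_i|=O(1)$, and $S=O(l\ln^2l)$ bounds the number of candidate diagrams by an expression of the form $l^{c\ln^k l}$ for constants $c,k$ depending only on $|\Sigma|$ and $d$. Therefore
\[
p_l\leq N(\Sigma)\cdot l^{c\ln^kl}\cdot\frac{2k}{(2k-1)^{(1/2-d)l}},
\]
and since $(2k-1)^{1/2-d}>1$, Lemma~\ref{lem:33} yields $p_l\to 0$. The main obstacle is the qualitative step of verifying that non-liftability persists through general reduction, i.e., that at least one cell survives; this reduces to checking that no step of general reduction alters the boundary as an element of $F_k$, after which the counting and probability estimates of Sections~\ref{sec:Van-Kampen-Diagrams} and \ref{sec:Isoperimetric-Inequality-in-Random-Groups} combine to give the result.
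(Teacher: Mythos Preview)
Your overall strategy---count diagrams, bound fulfillment probability, union bound---is the right one and matches the paper's. But there is a genuine gap in the step where you argue that the generally reduced diagram $D_i'$ still contains at least one cell.

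The claim that ``each step of general reduction leaves the boundary unchanged as an element of $F_k$'' is false. Numbering isolation (Definition~\ref{def:8}) removes a cell and replaces the boundary sub-path along that cell by its complement around the cell; the two agree in $\Gamma$ because the cell boundary is a relator, but they need not agree in $F_k$. Self-intersection elimination (Definition~\ref{def:13}) explicitly reduces the boundary length (Lemma~\ref{lem:14}). What general reduction does preserve is the \emph{value in $\Gamma$} of each variable in the decoration, not its $F_k$-representative. Thus a diagram whose contour $w_i(y_0)$ is nontrivial in $F_k$ can very well reduce to a tree: the tree then furnishes \emph{new} representatives $\tilde y_0$ (equal to $y_0$ in $\Gamma$, but different in $F_k$) with $w_i(\tilde y_0)=1$ in $F_k$. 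So your argument that $D_i'$ has $n\ge 1$ numberings collapses.

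There is a second, related problem with treating one equation at a time. Even if each single-equation diagram reduced to a tree, the lifts $\tilde y_0$ produced by the tree for $w_i$ need not coincide with those produced by the tree for $w_j$; a variable shared between two equations may receive two different $F_k$-lifts. The paper handles both issues at once by working with a decorated \emph{family} of diagrams, one diagram per equation of $\Sigma$, sharing a single decoration for the variables. It then splits the families into those that generally reduce to a family of trees (these automatically give a consistent lift of the whole solution, so they cause no trouble) and those that retain at least one cell after reduction (these are the ones to which Theorem~\ref{thm:23} is applied). Non-liftability is never used to force a cell to survive; instead, the argument shows that with overwhelming probability \emph{every} fulfilled family lies in the tree class, and then reads the lift off the trees.
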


\begin{proof}
Let $\mathcal{V}^{0}$ be the collection of all the reduced decorated
families of topological Van-Kampen diagrams $DFTD$ with decoration
$\Sigma$ and boundary length at most $|\Sigma|\cdot l\ln^{2}l$ (the
boundary length of a family of diagrams is the sum of the lengths
of the boundaries of the diagrams in that family), that $\Gamma$
could satisfy. For each such $DFTD$ in $\mathcal{V}^{0}$, we declare
no rigid parts in the boundary, and the parts in the boundary corresponding
to the constants appearing in $\Sigma$, are declared to be the constant
parts of the boundary.

Since the boundary length of each of the diagrams in each of the decorated
families in $\mathcal{V}^{0}$ is at most $|\Sigma|\cdot l\ln^{2}l$,
we have according to \thmref{24}, that the amount of cells $m$ in
any diagram in $\mathcal{V}^{0}$ is bounded by $m\leq\beta_{0}\ln^{2}l$,
where $\beta_{0}=\frac{|\Sigma|}{\frac{1}{2}-d}$ (see \remref{25}).
Thus, according to \thmref{28}, there exists a polynomial $Q=Q_{d}$,
so that

\[
|\mathcal{V}^{0}|\leq Q(l)^{\ln^{4}l}\,.
\]

We generally reduce each of the decorated families in $\mathcal{V}^{0}$,
and denote the collection of the obtained generally reduced decorated
families by $\mathcal{V}$.

We denote by $\mathcal{V}_{t}$ the sub-collection of decorated families
in $\mathcal{V}$ that contain no cells, i.e., those decorated families
consisting of $N(\Sigma)$ trees (where $N(\Sigma)$ is the number
of equations in $\Sigma$). And we denote by $\mathcal{V}_{1}$ the
remaining decorated families in $\mathcal{V}$.

We denote by $\mathcal{V}_{t}^{0}$ and $\mathcal{V}_{1}^{0}$, the
sub-collections consisting of those diagrams in $\mathcal{V}^{0}$
who admit an associated diagram in $\mathcal{V}_{t}$ and $\mathcal{V}_{1}$
respectively.

By the definition of the collection $\mathcal{V}_{1}$, every decorated
family in $\mathcal{V}_{1}$ admits a positive amount $n\geq1$ of
distinct numberings, but contains no isolated numberings (since there
are no rigid variables in the decoration). Hence, according to \thmref{23},
for every decorated family $DFTD$ in $\mathcal{V}_{1}$, the probability
that the random group $\Gamma$ of level $l$ and density $d$ fulfills
$DFTD$ is at most

\[
\left(\frac{2k}{(2k-1)^{(\frac{1}{2}-d)l}}\right)^{n}\leq\frac{2k}{(2k-1)^{(\frac{1}{2}-d)l}}\,.
\]

Thus, we conclude that the probability that there exists some decorated
family $DFTD$ in the collection $\mathcal{V}_{1}$ so that $\Gamma$
fulfills $DFTD$, is at most

\[
|\mathcal{V}_{1}|\cdot\frac{2k}{(2k-1)^{(\frac{1}{2}-d)l}}\leq|\mathcal{V}^{0}|\cdot\frac{2k}{(2k-1)^{(\frac{1}{2}-d)l}}\leq\frac{2k\cdot Q(l)^{\ln^{4}l}}{(2k-1)^{(\frac{1}{2}-d)l}}\,\overset{l\rightarrow\infty}{\longrightarrow}0
\]

which, according to \lemref{33}, approaches $0$ as $l$ goes to
infinity.

Recall according to \lemref{21} that general reduction can only increase
the probability of fulfilling the decorated family. Hence, we conclude
that the probability $p_{1}(l)$ that there exists some decorated
family $DF$ in the collection $\mathcal{V}_{1}^{0}$ so that the
random group $\Gamma$ of level $l$ and density $d$ fulfills $DF$,
approaches $0$ as $l$ goes to infinity 
\[
p_{1}(l)\overset{l\rightarrow\infty}{\longrightarrow}0\,.
\]

Hence, we may assume that the random group $\Gamma$ of level $l$
and density $d$, does not fulfill any of the diagrams in the collection
$\mathcal{V}_{1}^{0}$.

Finally, assume that $\Gamma$ admits a solution $y_{0}$ of length
(over $\Gamma$) at most $L_{\Sigma}(y_{0})\leq l\ln^{2}l$ for the
system $\Sigma$. We need to prove that $y_{0}$ admits a lift $\tilde{y}_{0}\in F_{k}$,
so that $\tilde{y}_{0}$ is a solution of $\Sigma$ in $F_{k}$.

Since $\Sigma(y_{0})=1$ in $\Gamma$, and $L_{\Sigma}(y_{0})\leq l\ln^{2}l$,
the group $\Gamma$ must fulfill some reduced decorated family $DFTD$
with decoration $\Sigma$ in the collection $\mathcal{V}^{0}$, so
that the obtained family $FVKD$ of Van-Kampen diagrams consists of
reduced diagrams over $\Gamma$, and their contours are given by $\Sigma(y_{0})$
(i.e., the contour of the $i$-th diagram is $w_{i}(y_{0})$, where
$w_{i}$ is the $i$-th word in $\Sigma$, for all $i=1,...,N(\Sigma)$).

However, since we have already proved that the collection of groups
of level $l$ and density $d$ that satisfy a decorated family in
$\mathcal{V}_{1}^{0}$ is negligible, we can assume that $DFTD$ belongs
to the collection $\mathcal{V}_{t}^{0}$.

Since the decorated family $DFTD$ of topological diagrams, by the
definition of $\mathcal{V}_{t}^{0}$, can be converted to a tree by
means of applying topological modifications (general reduction) on
its diagrams, the family $FVKD$ of Van-Kampen diagrams can be converted
to a tree $T$ by applying the same sequence of modifications on its
diagrams. However, these modifications do not change the values (in
$\Gamma$) of the variables along the boundary.

Hence, the procedure of applying those modifications on $FVKD$, provides
lifts $\tilde{y}_{0}\in F_{k}$ for the elements $y_{0}\in\Gamma$,
so that

\[
\Sigma(\tilde{y}_{0})=1
\]
in $F_{k}$. As claimed. 
\end{proof}
\begin{defn}
\label{def:36} Let $\Sigma(y,a)$ be a system of equations, and let
$l$ be an integer. Let $\pi:F_{k}\rightarrow\Gamma$ be a group presentation.
We say that $\Gamma$ satisfies the \emph{$\Sigma(y,a)$-$lln^{2}$
lifting property}, or briefly $\Sigma$-$lln^{2}$ l.p., if every
solution of $\Sigma$ of length at most $l\ln^{2}l$ in $\Gamma$,
can be lifted to a solution of $\Sigma$ in $F_{k}$. 
\end{defn}

\begin{thm}
\label{thm:37} Let $\mathcal{H}$ be the collection of all finite
systems of equations. Let $d<\frac{1}{2}$. Then, we can order the
systems in $\mathcal{H}$ in an ascending sub-collections $H_{1}\subset H_{2}\subset H_{3}\subset...$,
so that the following property is satisfied.

For all $l$, let $\Gamma_{l}$ be the random group of level $l$
and density $d$. Then, if $p_{l}$ is the probability that the random
group $\Gamma_{l}$ satisfies the $\Sigma$-$lln^{2}$ l.p. for all
$\Sigma\in H_{l}$, then $p_{l}$ converges to $1$ as $l$ approaches
$\infty$. 
\end{thm}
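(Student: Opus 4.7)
The plan is to reduce to \thmref{35} by a standard diagonalization over the countable set $\mathcal{H}$. A finite system of equations is specified by finitely many elements of the free product $F_{k} * F(y_{1}, y_{2}, \ldots)$, which is countable, so we can enumerate $\mathcal{H} = \{\Sigma_{1}, \Sigma_{2}, \Sigma_{3}, \ldots\}$. For each index $i$, let $p_{l}^{(i)}$ denote the probability that the random group $\Gamma_{l}$ admits a solution of $\Sigma_{i}$ of length at most $l\ln^{2}l$ that does not lift to $F_{k}$; by \thmref{35}, for every fixed $i$ we have $p_{l}^{(i)} \to 0$ as $l \to \infty$.

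Since for each fixed $n$ there are only finitely many sequences $p_{l}^{(i)}$ with $i \leq n$, each tending to $0$, we can choose integers $l_{1} < l_{2} < l_{3} < \cdots$ so that $p_{l}^{(i)} < 1/n^{2}$ for every $l \geq l_{n}$ and every $i \leq n$. Let $n(l)$ be the largest integer $n$ with $l_{n} \leq l$ (set $n(l) = 0$ for $l < l_{1}$), and define
\[
H_{l} := \{\Sigma_{1}, \ldots, \Sigma_{n(l)}\}.
\]
By construction $H_{1} \subseteq H_{2} \subseteq H_{3} \subseteq \cdots$ and $n(l) \to \infty$, so this ascending family exhausts $\mathcal{H}$.

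Finally, applying the union bound, the probability that $\Gamma_{l}$ fails the $\Sigma$-$lln^{2}$ l.p.\ for some $\Sigma \in H_{l}$ is at most
\[
\sum_{i=1}^{n(l)} p_{l}^{(i)} \leq n(l) \cdot \frac{1}{n(l)^{2}} = \frac{1}{n(l)},
\]
which tends to $0$ as $l \to \infty$. Taking complements yields $p_{l} \to 1$, as required. No deep obstacle arises here: the argument is a direct diagonal packaging of the single-system result \thmref{35}, and the only care needed is making the growth rate $n(l)$ slow enough that the union bound still converges, which is arranged by the $1/n^{2}$ threshold above.
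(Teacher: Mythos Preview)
Your proof is correct and follows essentially the same approach as the paper: enumerate the countable family $\mathcal{H}$, invoke \thmref{35} for each system, choose thresholds $l_{n}$ so that the first $n$ failure probabilities are uniformly small for $l\ge l_{n}$, and define $H_{l}$ by the largest $n$ with $l_{n}\le l$, concluding via a union bound. The only cosmetic difference is that the paper bounds the \emph{sum} $q_{l}^{\Sigma_{1}}+\cdots+q_{l}^{\Sigma_{n}}<1/n$ directly, whereas you bound each term by $1/n^{2}$ and then sum; both yield the same $1/n(l)\to 0$ conclusion.
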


\begin{proof}
Let $q_{l}=1-p_{l}$. Of course, assuming that $H_{l}$ was defined,
$q_{l}$ equals the probability that there exists some system $\Sigma$
in $H_{l}$, so that the random group of level $l$, $\Gamma_{l}$,
does not satisfy the $\Sigma$-$lln^{2}$ l.p..

Given a system $\Sigma\in\mathcal{H}$, we denote by $q_{l}^{\Sigma}$,
the probability that $\Gamma_{l}$ does not satisfy the $\Sigma$-$lln^{2}$
l.p.. Fixing a system $\Sigma$, we already know, according to \thmref{35},
that $q_{l}^{\Sigma}\rightarrow0$ as $l\rightarrow\infty$.

Since there exists only a countable amount of finite systems, we can
write them in a sequence $\Sigma_{1},\Sigma_{2},\Sigma_{3},...$.
Now let $1\leq l_{1}\leq l_{2}\leq l_{3},...$ be an ascending sequence
of integers, so that for all $n\in\mathbb{N}$, we have that $q_{l}^{\Sigma_{1}}+q_{l}^{\Sigma_{2}}...+q_{l}^{\Sigma_{n}}<\frac{1}{n}$
for all $l\geq l_{n}$.

For every integer $l$, let $n$ be the maximal integer for which
$l_{n}\leq l$, and we define $H_{l}$ to be $H_{l}=\{\Sigma_{1},...,\Sigma_{n}\}$.
Of course, the probability $q_{l}$ that there exists some system
$\Sigma$ in $H_{l}$, so that the random group of level $l$, $\Gamma_{l}$,
does not satisfy the $\Sigma$-$lln^{2}$ l.p., satisfies $q_{l}\leq q_{l}^{\Sigma_{1}}+q_{l}^{\Sigma_{2}}...+q_{l}^{\Sigma_{n}}<\frac{1}{n}$.

Hence $q_{l}\rightarrow0$ as $l\rightarrow\infty$. As required. 
\end{proof}

\section{Axes of Elements in Random Groups }\label{sec:Axes-of-Elements-in-Random-Groups}

We recall according to \thmref{27} that for a fixed $d<\frac{1}{2}$,
the random group of level $l$ and density $d$ can be assumed to
be torsion-free $\alpha_{0}l$-hyperbolic, where 
\[
\alpha_{0}=\frac{48}{(1-2d)^{2}}\,.
\]
In what follows, we will use this fact extensively, and hence without
explicit reference. We use the abbreviation nbhd for neighborhood.

\subsection{Conjugates of Small Elements in Random Groups}
\begin{lem}
\label{lem:38} Let $d<\frac{1}{2}$. Let $\Gamma$ be the random
group of level $l$ and density $d$.

Then, with overwhelming probability, if $u,v\in\Gamma$ are two conjugate
elements, then there exists an element $z\in\Gamma$ in the ball of
radius $|u|+|v|+(8\alpha_{0}+\zeta_{0})l$, so that

\[
u=zvz^{-1}\,,
\]
where $\zeta_{0}=\frac{100\alpha_{0}}{(\frac{1}{2}-d)}$. 
\end{lem}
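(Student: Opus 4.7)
The plan is to argue by contradiction. Assume $u = gvg^{-1}$ with $g$ chosen to have minimal length among all conjugators, and suppose $|g| > |u| + |v| + (8\alpha_0 + \zeta_0)l$; we will build a strictly shorter conjugator.

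Form the geodesic quadrilateral $Q$ in the Cayley graph of $\Gamma$ with vertices $1, u, ug = gv, g$, whose four sides have lengths $|u|, |g|, |v|, |g|$. Since $\Gamma$ is $\alpha_0 l$-hyperbolic, $Q$ is $2\alpha_0 l$-thin. Parametrize $\beta = [1,g]$ by arc length, and set $\beta' = u\beta = [u, ug]$. Thinness, together with the length bounds on the two short sides $[1,u]$ and $[g,ug]$, forces every $\beta(t)$ with $|u| + 2\alpha_0 l \leq t \leq |g| - |v| - 2\alpha_0 l$ to lie within $2\alpha_0 l$ of $\beta'$. Writing $\sigma(t)$ for the parameter of the nearest point on $\beta'$, the triangle inequality yields $|\sigma(t) - t| \leq |u| + 2\alpha_0 l$, and a comparison at two values gives the quasi-Lipschitz estimate $|(\sigma(t_2) - \sigma(t_1)) - (t_2 - t_1)| \leq 4\alpha_0 l$.

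The assumption on $|g|$ ensures that the central interval has length at least $\zeta_0 l$. Pick $t_0$ inside it, let $s_0 = \sigma(t_0)$, and set $z_0 = \beta(t_0)^{-1} u \beta(s_0)$; then $|z_0| \leq 2\alpha_0 l$, which is well below $C_0 l$, so $z_0$ lies in a ball where the Cayley graph of $\Gamma$ is a tree. Performing this construction simultaneously at many $t$'s spaced across the central interval, the quasi-Lipschitz character of $\sigma$ pools the individual short relations $u\beta(\sigma(t)) = \beta(t) z_t$ into a long coincidence between $\beta$ and $\beta'$ which, when interpreted inside the tree-like balls, becomes a free-group configuration. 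At this point the free-group axis estimates proved earlier in the paper (in particular the commutation results of the section on axes in free groups) produce an element of $\Gamma$ commuting with $u$, and this element lets us replace $g$ by a strictly shorter conjugator of the form $u^{-n}g = gv^{-n}$, contradicting minimality of $|g|$.

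The principal obstacle is that the thin-quadrilateral input alone only produces small offsets $z_t$; it does not by itself shorten $g$. One must convert the family of offsets into a genuine commutation statement, and this requires the interval of coincidence to be long enough that the free-group axis machinery (valid only inside balls of radius $C_0 l$) can actually be invoked. The constant $\zeta_0 = 100 \alpha_0/(1/2 - d)$ measures the length of coincidence required for this conversion; its factor $1/(1/2 - d)$ comes from the isoperimetric inequality of Ollivier, which governs how much tree-like real estate is available in the interior of a random group at density $d$. The additive $8\alpha_0 l$ in the bound absorbs the slack in the thin-quadrilateral steps (two triangles, each contributing $2\alpha_0 l$, applied at both ends of $\beta$).
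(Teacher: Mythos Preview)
Your setup via the thin quadrilateral is essentially what the paper does, but the argument breaks at a concrete numerical step. You assert that $|z_0|\le 2\alpha_0 l$ is ``well below $C_0 l$'', so that $z_0$ lives in a ball where the Cayley graph is a tree. This is false: from the paper's constants one has $C_0=\tfrac{1}{4}(\tfrac12-d)$ while $\alpha_0=\tfrac{48}{(1-2d)^2}=\tfrac{12}{(\frac12-d)^2}$, so $2\alpha_0 l \ge 96\,l \gg C_0 l$ for every $d\in[0,\tfrac12)$. The offsets produced by hyperbolic thinness are therefore far outside the tree-radius, and the ``pool the offsets and invoke free-group axis estimates'' step has no footing. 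Nothing in your sketch supplies an alternative mechanism for getting down to the tree scale, and without that you cannot conclude commutation or build the shorter conjugator $u^{-n}g$.

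The paper's proof closes exactly this gap using the decorated Van-Kampen diagram machinery developed in Sections~3--4. After the quadrilateral argument it isolates a relation $u'z{v'}^{-1}z^{-1}=1$ in $\Gamma$ with $|u'|,|v'|\le 10\alpha_0 l$ and $|z|=\zeta_0 l+1$, declares $\hat u,\hat v$ rigid in the decoration $\hat u\hat z\hat v^{-1}\hat z^{-1}$, and runs the mining/general-reduction procedure for a bounded number $k_0=\lceil \nu_0/(\tfrac12-d)\rceil+1$ of rounds. The counting bound of Theorem~4.2 together with Theorem~3.20 shows that, with overwhelming probability, the only diagrams $\Gamma$ can fulfill reduce to trees; this yields lifts $\tilde u,\tilde v,\tilde z\in F_k$ with $\tilde u\tilde z\tilde v^{-1}\tilde z^{-1}=1$ in $F_k$ and with $\tilde u,\tilde v$ staying inside balls of radius $2k_0 l$ in $\Gamma$. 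Conjugacy in $F_k$ then gives a conjugator of length at most $8k_0 l\le \zeta_0 l$, which projects back to $\Gamma$ and contradicts minimality. The factor $(\tfrac12-d)^{-1}$ in $\zeta_0$ enters through $k_0$, i.e.\ through the number of mining steps needed to exhaust the rigid parts, not through an isoperimetric length comparison as you suggest.
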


\begin{proof}
Suppose $U,V,Z\in F_{k}$ are geodesic words, so that $U=ZVZ^{-1}$
in $\Gamma$, and $Z$ is of minimal length with this property. Assume
by contradiction that

\[
|Z|>|U|+|V|+8\alpha_{0}l+\zeta_{0}l\,.
\]

Consider a geodesic rectangle $\Pi$ in the Cayley graph of $\Gamma$
that corresponds to the equation $U=ZVZ^{-1}$. Denote by $p_{1},p_{2}$
and $q_{1},q_{2}$ the initial and terminal points of the sides of
$\Pi$ corresponding to $U$ and $V$ respectively.

Since we have assumed that $|Z|>|U|+|V|+8\alpha_{0}l+\zeta_{0}l$,
and since the sides of $\Pi$ are geodesics, there must exist two
points $x,y$ on the side $p_{1}q_{1}$ of $\Pi$, so that $x$ lies
between $p_{1}$ and $y$ say, neither $x$ nor $y$ belongs to the
$2\alpha_{0}l$-nbhd of $p_{1}p_{2}\cup q_{1}q_{2}$, and

\[
d(x,y)=\zeta_{0}l+1\,.
\]

Let $x',y'$ be the dual points of $x,y$ respectively that lie on
the side $p_{2}q_{2}$ of $\Pi$. That is, the points $x',y'$ are
those points on $p_{2}q_{2}$ for which

\begin{align*}
 & d(p_{2},x')=d(p_{1},x)\,,\\
 & d(q_{2},y')=d(q_{1},y)\,.
\end{align*}

We want to show now that $d(y,y'),\:d(x,x')\leq10\alpha_{0}l$. By
the $\alpha_{0}l$-hyperbolicity of $\Gamma$, there exist two points
$z_{x},z_{y}$ on the side $p_{2}q_{2}$ of $\Pi$, so that

\[
d(x,z_{x}),\,d(y,z_{y})\leq2\alpha_{0}l\,.
\]

If, by contradiction, $d(x',z_{x})>2\alpha_{0}l$, then, in particular,
$x'$ lies between $p_{2}$ and $z_{x}$, or $z_{x}$ lies between
$p_{2}$ and $x'$. Assume without loss of generality the first case,
and consider the element $Z'$ of $\Gamma$ that labels the path $p_{1}xz_{x}q_{2}$.
Then, $Z'$ is of length at most

\[
|Z'|\leq d(p_{1},x)+d(x,z_{x})+d(z_{x},q_{2})\leq|Z|-(d(x',z_{x})-2\alpha_{0}l)<|Z|\,.
\]
However, we have that $Z'=ZV$ in $\Gamma$, and hence 
\[
Z'VZ'{}^{-1}=ZVZ^{-1}=U\,,
\]
in $\Gamma$, which contradicts the minimality of (the length of)
$Z$ with this property. Thus, we conclude that

\[
d(x,x'),\:d(y,y')\leq10\alpha_{0}l\,.
\]

Now consider the three geodesic words $u$, $v$, and $z$ that label
geodesic segments $xx'$, $yy'$, and $xy$ respectively in the Cayley
graph of $\Gamma$. By the construction of the dual points $x'$ and
$y'$ of the points $x$ and $y$, we know that

\[
u=zvz^{-1}
\]
in $\Gamma$. Moreover, since

\[
d(x,y)=|z|=\zeta_{0}l+1\,,
\]
if we show that the elements $u,v$ can be conjugated by an element
$z'\in\Gamma$ of length at most

\[
|z'|\leq\zeta_{0}l\,,
\]
then we conclude that the original elements $U,V$ can be conjugated
by an element of length less than $|Z|$, which is a contradiction
that implies the claim.

Indeed, we consider the decoration $\hat{u}\hat{z}\hat{v}^{-1}\hat{z}^{-1}$,
and declare the rigid variables to be $\hat{u},\hat{v}$, and no constant
variables. Let $\mathcal{V}^{0}$ be the collection of all the reduced
decorated topological Van-Kampen diagrams $DTD$ with decoration $\hat{u}\hat{z}\hat{v}^{-1}\hat{z}^{-1}$
and boundary length at most $l\ln l$, that $\Gamma$ could satisfy,
and so that the lengths of the parts of the boundary of $DTD$ corresponding
to the variables $\hat{u},\hat{v}$, sum to at most $2\nu_{0}l$,
where $\nu_{0}=10\alpha_{0}$.

Since the boundary length of each of the diagrams in $\mathcal{V}^{0}$
is at most $l\ln l$, we have according to \thmref{24}, that the
amount of cells $m$ in any diagram in $\mathcal{V}^{0}$ is bounded
by $m\leq\beta_{0}\ln l$, where $\beta_{0}=\frac{1}{\frac{1}{2}-d}$
(see \remref{25}). Thus, according to \thmref{28}, there exists
a polynomial $Q=Q_{d}$, so that

\[
|\mathcal{V}^{0}|\leq Q(l)^{\ln^{2}l}\,.
\]

We generally reduce each of the diagrams in $\mathcal{V}^{0}$, and
denote the collection of the obtained generally reduced diagrams by
$\mathcal{V}$. Recall according to \lemref{21} that general reduction
can only increase the probability of fulfilling the diagram.

For a given generally reduced decorated topological Van-Kampen diagram
$DTD$ in $\mathcal{V}$, with $m$ cells numbered by $1,...,n$,
for some $1\leq n\leq m$, we describe the following iterative procedure.
If $DTD$ contains an isolated numbering, we apply a mining modification
on $DTD$. If the obtained diagram, denoted $DTD$ again, contains
an isolated numbering, we apply another mining modification. We continue
iteratively, until we arrive to a generally reduced decorated diagram
$DTD$ with no isolated numberings, or we apply at most

\[
k_{0}=\left\lceil \frac{\nu_{0}}{\frac{1}{2}-d}\right\rceil +1\,.
\]
iterative mining modifications. Note that $k_{0}=k_{0,d}$ depends
on $d$ only.

We denote by $\mathcal{V}_{1}$ the sub-collection of $\mathcal{V}$
consisting of those diagrams $DTD$ in $\mathcal{V}$ for which the
procedure performs $k_{0}$ iterations.

We denote by $\mathcal{V}_{F}$ the sub-collection of $\mathcal{V}\backslash\mathcal{V}_{1}$
consisting of those diagrams $DTD$ in $\mathcal{V}$ for which the
procedure stops (before applying $k_{0}$ iterations) in a diagram
with no cells (a tree).

And we denote by $\mathcal{V}_{2}$ the remaining diagrams of $\mathcal{V}$;
i.e., $\mathcal{V}_{2}=\mathcal{V}\backslash\mathcal{V}_{1}\cup\mathcal{V}_{F}$
consists of those diagrams in $\mathcal{V}$ for which the procedure
stops (before applying $k_{0}$ iterations) in a diagram with a positive
number of cells. We replace the diagrams in $\mathcal{V}_{2}$ with
the diagrams obtained by applying the procedure on them.

We denote by $\mathcal{V}_{1}^{0},\mathcal{V}_{F}^{0},\mathcal{V}_{2}^{0}$
the sub-collections consisting of those diagrams in $\mathcal{V}^{0}$
who admit an associated diagram in $\mathcal{V}_{1},\mathcal{V}_{F},\mathcal{V}_{2}$
respectively.

By the definition of the procedure above, and according to \lemref{20},
the amount of distinct numberings $n$ in any diagram in the collection
$\mathcal{V}_{1}$, is at least 
\[
n\geq k_{0}=\left\lceil \frac{\nu_{0}}{\frac{1}{2}-d}\right\rceil +1\,.
\]
Hence, according to \thmref{23}, for every diagram $DTD$ in $\mathcal{V}_{1}$
with $n$ distinct numberings, the probability that the random group
$\Gamma$ of level $l$ and density $d$ fulfills $DTD$ is at most
\[
\left(\frac{2k}{(2k-1)^{(\frac{1}{2}-d-\frac{\nu_{0}}{n})l}}\right)^{n}\leq\frac{2k}{(2k-1)^{\gamma_{0}l}}\,,
\]
where $\gamma_{0}=\frac{1}{2}-d-\frac{\nu_{0}}{k_{0}}>0$ (recall
that the rigid parts in $DTD$ sum to at most $2\nu_{0}l$).

On the other hand, by the definition of the collection $\mathcal{V}_{2}$,
every diagram in $\mathcal{V}_{2}$ admits a positive amount $n\geq1$
of distinct numberings, but contains no isolated numberings. Hence,
according to \thmref{23}, for every diagram $DTD$ in $\mathcal{V}_{2}$,
the probability that $\Gamma$ fulfills $DTD$ is at most

\[
\left(\frac{2k}{(2k-1)^{(\frac{1}{2}-d)l}}\right)^{n}\leq\frac{2k}{(2k-1)^{\gamma_{0}l}}\,.
\]

Thus, we conclude that the probability that there exists some diagram
$DTD$ in the collection $\mathcal{V}_{1}\cup\mathcal{V}_{2}$ so
that the random group $\Gamma$ of level $l$ and density $d$ fulfills
$DTD$, is at most

\[
|\mathcal{V}_{1}\cup\mathcal{V}_{2}|\cdot\frac{2k}{(2k-1)^{\gamma_{0}l}}\leq|\mathcal{V}^{0}|\cdot\frac{2k}{(2k-1)^{\gamma_{0}l}}\leq\frac{2k\cdot Q(l)^{\ln^{2}l}}{(2k-1)^{\gamma_{0}l}}\,\overset{l\rightarrow\infty}{\longrightarrow}0
\]

which, according to \lemref{33}, approaches $0$ as $l$ goes to
infinity.

Recall according to \lemref{20} that mining modifications can only
increase the probability of fulfilling the diagram. Hence, we conclude
that the probability $p_{1,2}(l)$ that there exists some diagram
$D$ in the collection $\mathcal{V}_{1}^{0}\cup\mathcal{V}_{2}^{0}$
so that the random group $\Gamma$ of level $l$ and density $d$
fulfills $D$, approaches $0$ as $l$ goes to infinity 
\[
p_{1,2}(l)\overset{l\rightarrow\infty}{\longrightarrow}0\,.
\]

Hence we may assume that the random group $\Gamma$ of level $l$
and density $d$, does not fulfill any of the diagrams in the collection
$\mathcal{V}_{1}^{0}\cup\mathcal{V}_{2}^{0}$.

Now recall our elements $u$, $v$, and $z$ who satisfy 
\[
uzv^{-1}z^{-1}=1
\]
in $\Gamma$, explained in the top of the proof. Since $|u|,|v|\leq\nu_{0}l$,
and since $|z|\leq l\ln l$, we get that the group $\Gamma$ must
fulfill some reduced decorated Van-Kampen diagram $DTD$ with decoration
$\hat{u}\hat{z}\hat{v}^{-1}\hat{z}^{-1}$ in the collection $\mathcal{V}^{0}$,
so that the contour of the obtained (reduced) Van-Kampen diagram $VKD$
is $uzv^{-1}z^{-1}$.

However, since we have already proved that the collection of groups
of level $l$ and density $d$ that satisfy a diagram in $\mathcal{V}_{1}^{0}\cup\mathcal{V}_{2}^{0}$
is negligible, we can assume that $DTD$ belongs to the collection
$\mathcal{V}_{F}^{0}$. Since the topological diagram $DTD$, by the
definition of $\mathcal{V}_{F}^{0}$, can be converted to a tree by
means of applying topological modifications (mining and general reduction)
on it, the Van-Kampen diagram $VKD$ can be converted to a tree $T$
by applying the same sequence of modifications on it. However, these
modifications do not change the values (in $\Gamma$) of the variables
along the boundary.

Hence, the procedure of applying those modifications on $VKD$, provides
lifts $\tilde{u},\tilde{v},\tilde{z}\in F_{k}$ (reduced words - the
obtained words on the boundary of $T$ could be non-reduced, but we
simply reduce them) for the elements $u,v,z\in\Gamma$ respectively,
so that

\[
\tilde{u}\tilde{z}\tilde{v}^{-1}\tilde{z}^{-1}=1
\]
in the free group $F_{k}$. However, according to the definition of
$\mathcal{V}_{F}^{0}$, the tree $T$ is obtained by applying a sequence
of alternate general reduction and mining modifications on $VKD$,
applied for at most $k_{0}=k_{0,d}$ iterations. Now, by definition,
general reduction can change the rigid parts $u,v$ of the boundary,
only by removing self-intersections from them, but a single mining
modification can change a rigid part by eliminating only cells that
share some edge with that part. Thus, the lifts $\tilde{u},\tilde{v}$
that are obtained as a consequence of applying at most $k_{0}=k_{0,d}$
mining modifications, lie, as paths in the Cayley graphof $\Gamma$,
in the $k_{0}l$-nbhd of the geodesic segments $u,v$ in the Cayley
graph of $\Gamma$, respectively.

In particular, each of the lifts $\tilde{u}$ and $\tilde{v}$, as
paths in the Cayley graph of $\Gamma$, is contained in a ball of
radius

\[
\frac{|u|}{2}+k_{0}l,\,\frac{|v|}{2}+k_{0}l\leq2k_{0}l\,.
\]
.

Finally, we consider the obtained lifts $\tilde{u},\tilde{v}\in F_{k}$
for the elements $u,v\in\Gamma$. Write $\tilde{u}=A'eA'{}^{-1}$,
and $\tilde{v}=B'fB'{}^{-1}$ (graphical equality), for reduced words
$A',B'\in F_{k}$ and $e,f\in F_{k}$ cyclically reduced. Then, since
$\tilde{u}$ and $\tilde{v}$ are conjugates in the free group $F_{k}$,
the words $e$ and $f$ must be cyclic permutations to each other.
Write $f=f_{1}f_{2}$ (graphical equality), so that $e=f_{2}f_{1}$.
Then, we have

\[
\tilde{u}=A'(B'f_{1})^{-1}\tilde{v}(A'(B'f_{1})^{-1})^{-1}\,,
\]
which translates in $\Gamma$ as the equality 
\[
u=AB^{-1}v(AB^{-1})^{-1}\,,
\]
where $A$ and $B$ denote geodesic words with $A=A'$ and $B=B'f_{1}$
in $\Gamma$. However, as explained above, a path labeled $\tilde{u}=A'eA'{}^{-1}$
or $\tilde{v}=B'f_{1}f_{2}B'{}^{-1}$ in the Cayley graph of $\Gamma$,
stays inside a ball of radius $2k_{0}l$. In particular, we have that
the length of each of the elements $A,B$ is at most

\[
|A|,|B|\leq4k_{0}l\,.
\]
Hence, the length of the element $z'=AB^{-1}$ is at most

\[
|z'|\leq8k_{0}l\leq\zeta_{0}l\,.
\]
As required. 
\end{proof}

\subsection{Isometries of a Hyperbolic Space}

The readers who are not familiar with hyperbolic geometry in the context
of group theory, are recommended to see the first section in \cite{Remi=000020Coulon=000020-=000020Burnside}. 
\begin{defn}
\label{def:39} Let $\Gamma=\langle a:\mathcal{R}\rangle$ be a torsion-free
non-elementary hyperbolic group, and let $X$ be its Cayley graph
(w.r.t. the generating set $a$). Let $G$ be an element of $\Gamma$. 
\begin{enumerate}
\item The \emph{translation length} of $G$, denoted by $[G]_{X}$, or simply
$[G]$, is defined to be 
\[
[G]=\underset{x\in X}{\min}d(Gx,x)\,.
\]
\item Let $x\in X$. The \emph{asymptotic translation length} of $G$, denoted
by $[G]_{X}^{\infty}$, or simply $[G]^{\infty}$, is defined to be
\[
[G]^{\infty}=\underset{n\rightarrow\infty}{lim}\frac{d(G^{n}x,x)}{n}\,.
\]
\end{enumerate}
Note that the definition of the asymptotic translation length does
not depend on the choice of the point $x$ (see \cite{Non-Positive=000020Curvature},
Chapter II.6, exercises 6.6).

We denote by $\partial X$ the Gromov boundary of $X$ (see \cite{Non-Positive=000020Curvature},
Chapter III.H). Let $G$ be a non-trivial element of $\Gamma$. Each
one of the orbits $\{G^{n}x:n\in\mathbb{Z},\ n\geq0\}$ and $\{G^{n}x:n\in\mathbb{Z},\ n\leq0\}$
has exactly one accumulation point in $\partial X$. These two points
are distinct and we denote them by $G_{+}$ and $G_{-}$ respectively.
$G_{+}$ and $G_{-}$ do not depend on the choice of $x$. They are
the only points of $\partial X$ fixed by the action of $G$. 
\end{defn}

\begin{lem}
\label{lem:40} Let $\Gamma=\langle a:\mathcal{R}\rangle$ be a torsion-free
non-elementary $\delta$-hyperbolic group, and let $X$ be its Cayley
graph (w.r.t. the generating set $a$). Let $G$ be an element of
$\Gamma$. Let $k\in\mathbb{Z}$.

Then 
\begin{enumerate}
\item $[G]^{\infty}\leq[G]\leq[G]^{\infty}+32\delta$; 
\item $[G^{k}]\leq|k|\cdot[G]$; 
\item $[G^{k}]^{\infty}=|k|\cdot[G]^{\infty}$. 
\end{enumerate}
\end{lem}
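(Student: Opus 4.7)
The plan is to address the three assertions in turn. Items (2) and (3), together with the easy half of (1), are essentially metric and follow immediately from the triangle inequality and the fact that $\Gamma$ acts on $X$ by isometries; the main content of the lemma is the upper bound $[G]\leq[G]^{\infty}+32\delta$ in (1), which is where $\delta$-hyperbolicity genuinely enters.

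For (2), I pick a vertex $x_{0}\in X$ realizing $[G]=d(Gx_{0},x_{0})$ (the infimum is attained since $X$ is a Cayley graph and the displacement function takes values in $\mathbb{N}$) and telescope along the orbit:
$$d(G^{k}x_{0},x_{0})\leq\sum_{i=0}^{|k|-1}d(G^{i+1}x_{0},G^{i}x_{0})=|k|\cdot d(Gx_{0},x_{0})=|k|[G],$$
using that $G^{i}$ is an isometry, and that replacing $G$ with $G^{-1}$ preserves the single-step displacement.

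For (3), first observe $d(G^{-n}x,x)=d(x,G^{n}x)$ after applying $G^{n}$, so $[G^{-1}]^{\infty}=[G]^{\infty}$, and it suffices to treat $k>0$. For any fixed $x\in X$,
$$[G^{k}]^{\infty}=\lim_{n\to\infty}\frac{d(G^{kn}x,x)}{n}=k\cdot\lim_{n\to\infty}\frac{d(G^{kn}x,x)}{kn}=k\cdot[G]^{\infty},$$
the inner limit being a subsequence of the one defining $[G]^{\infty}$.

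For (1), the lower bound $[G]^{\infty}\leq[G]$ is immediate from the iterated triangle inequality $d(G^{n}x,x)\leq n\cdot d(Gx,x)$: dividing by $n$ and letting $n\to\infty$ gives $[G]^{\infty}\leq d(Gx,x)$ for every $x$, so taking the infimum yields $[G]^{\infty}\leq[G]$. The hard part is the reverse inequality $[G]\leq[G]^{\infty}+32\delta$, and I expect this to be the main obstacle. The strategy is to exhibit an explicit point $y$ with $d(Gy,y)\leq[G]^{\infty}+32\delta$, produced from a quasi-axis of $G$. Concretely, choose $n$ large enough that $d(G^{n}x,x)/n$ approximates $[G]^{\infty}$ within $\delta$, and consider the polygon formed by the orbit points $x,Gx,\ldots,G^{n}x$ together with the geodesic segment $[x,G^{n}x]$. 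By $\delta$-thinness of geodesic polygons, the midpoint $G^{\lfloor n/2\rfloor}x$ of the broken line is close (within a controlled multiple of $\delta$) to a point $p$ on $[x,G^{n}x]$; applying the isometry $G$, the point $p$ is close to a point $Gp$ on the $G$-translate of this geodesic, and a careful comparison of $d(Gp,p)$ against the combinatorial difference $d(G^{n+1}x,x)-d(G^{n}x,x)\approx[G]^{\infty}$ along this quasi-axis absorbs all error terms into an additive $32\delta$. The constant $32$ is the cumulative slack from the handful of thin-triangle applications; I would be content to rederive it by chasing the standard quasi-axis construction rather than optimize it.
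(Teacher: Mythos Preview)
Your treatment of (2), (3), and the lower bound in (1) is correct and matches the paper exactly: the paper's entire proof of (2) and (3) is the phrase ``triangle inequality, and that the sequence $(d(G^{kn}x,x)/kn)_{n}$ is a subsequence of $(d(G^{n}x,x)/n)_{n}$'', which is precisely what you wrote out.

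For the upper bound $[G]\leq[G]^{\infty}+32\delta$ in (1), the paper does not give an argument at all; it simply cites \cite{Remi Coulon - Burnside}, p.~14. Your sketch is in the right spirit but has a genuine gap. The claim that ``by $\delta$-thinness of geodesic polygons, the midpoint $G^{\lfloor n/2\rfloor}x$ of the broken line is close (within a controlled multiple of $\delta$) to a point $p$ on $[x,G^{n}x]$'' is not true as stated: thinness of an $(n+1)$-gon only gives a bound of order $\delta\log n$, not a constant multiple of $\delta$, and for an arbitrary basepoint $x$ the broken orbit path need not be a quasi-geodesic at all. Likewise, the consecutive differences $d(G^{n+1}x,x)-d(G^{n}x,x)$ have no reason to approximate $[G]^{\infty}$ individually; only their Ces\`aro averages do.

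The standard argument (which is what Coulon does) fixes this by choosing $x$ to \emph{minimise} $d(Gx,x)$. Minimality forces the Gromov product $(G^{-1}x\mid Gx)_{x}$ to be at most a few $\delta$ (otherwise sliding $x$ toward the centre of the tripod would shorten the displacement). By $G$-equivariance, every consecutive triple $G^{i-1}x,G^{i}x,G^{i+1}x$ then has small Gromov product at the middle vertex, and an induction using the four-point inequality shows $d(G^{n}x,x)\geq n[G]-C\delta$ with $C$ independent of $n$; dividing by $n$ and letting $n\to\infty$ gives $[G]^{\infty}\geq[G]-C\delta$. This is the ``standard quasi-axis construction'' you allude to, but the minimising choice of $x$ is the essential first step that makes the error non-cumulative.
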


\begin{proof}
For Point 1, see {[}\cite{Remi=000020Coulon=000020-=000020Burnside},
p.14{]}. Points 2\&3 are a consequence of triangle inequality, and
that the sequence $(\frac{d(G^{kn}x,x)}{kn})_{n}$ is a subsequence
of $(\frac{d(G^{n}x,x)}{n})$ (for $k>0$). 
\end{proof}

\subsection{Axes of Elements in Random Groups}
\begin{defn}
\label{def:41} Let $\Gamma=\langle a:\mathcal{R}\rangle$ be a hyperbolic
group, and let $G\in\Gamma$ be a hyperbolic element. Let $Z,g\in F_{k}$
be two geodesic words in $\Gamma$ so that $G=ZgZ^{-1}$, and the
length of $g$ equals the translation length of $G$ in $\Gamma$;
$|g|=[G]$. Let $X$ be the Cayley graph of $\Gamma$ (w.r.t. the
generating set $a$). 
\end{defn}

\begin{itemize}
\item The path in the Cayley graph of $\Gamma$ induced by the elements
$Zg^{n}$, $n\in\mathbb{Z}$, is called \emph{a nerve for $G$}. The
graphical word $g$, as an element of $F_{k}$, is called \emph{the
word of the nerve}. A nerve of $G$ is denoted usually by $\gamma_{G}$. 
\item \emph{An axis of $G$} is any geodesic connecting the points at infinity
$G_{-}$ and $G_{+}$. An axis of $G$ is usually denoted by $A_{G}$. 
\item Given a real number $C\geq0$, and a subset $U\subset X$, we denote
the closed $C$-neighborhood of $U$ by 
\[
U^{+C}=\{x\in X:d(x,U)\leq C\}\,.
\]
\item If $X$ is $\delta$-hyperbolic, we define the parameter 
\[
D(\Gamma,X)=\sup\{\text{diam}(A_{G}^{+60\delta}\cap A_{H}^{+60\delta}):\,[G,H]\neq1,\,[G],[H]\leq1000\delta\}\,.
\]
This parameter will play an important role when considering limit
actions over an ascending sequences of random groups. 
\end{itemize}
\begin{rem}
$ $ 
\begin{enumerate}
\item A nerve $\gamma_{G}$ of a hyperbolic element $G\in\Gamma$, is a
$[G]$-local geodesic. To see this, Let $Z,g$ be as in \defref{41}.
If, by contradiction, a cyclic shift $g'$ of $g$ is not a geodesic
word in $\Gamma$, then $[g]=[g']\leq|g'|<|g|$, but $[g]=[G]=|g|$,
a contradiction. 
\item If $A_{G}$ is an axis for $G$, then $GA_{G}$ is an axis for $G$
as well. In general, if $H,G$ are hyperbolic elements of $\Gamma$,
and $A_{G}$ and $\gamma_{G}$ are an axis and a nerve with word $w$
for $G$, then $HA_{G}$ is an axis for the element $HGH^{-1}$, and
$H\gamma_{G}$ is a nerve for $HGH^{-1}$ with word $w$. 
\end{enumerate}
\end{rem}

\begin{lem}
\label{lem:42} (\cite{Non-Positive=000020Curvature}, Theorem 1.13)
Let $\Gamma=\langle a:\mathcal{R}\rangle$ be a $\delta$-hyperbolic
group, and let $X$ be its Cayley graph.

Then, the Hausdorff distance between any two $200\delta$-local geodesics
connecting the same two endpoints in the boundary of $X$, $\partial X$,
is at most $5\delta$. In particular, if $G\in\Gamma$ is a hyperbolic
element with $[G]\geq200\delta$, then the $5\delta$-nbhd of any
nerve or axis of $G$, contains all the nerves and all the axes of
$G$. 
\end{lem}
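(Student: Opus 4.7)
The plan is to invoke the standard local--to--global principle for geodesics in $\delta$-hyperbolic spaces (Theorem III.H.1.13 in Bridson--Haefliger), so what I will outline is the strategy of that classical proof, and then explain how the ``In particular'' statement follows from it.

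First I would prove the following sharper technical statement: every $200\delta$-local geodesic $c:[0,L]\to X$ is a global $(1,C\delta)$-quasi-geodesic, and moreover $c$ lies in the $2\delta$-neighborhood of every genuine geodesic joining its endpoints. The argument is the usual one: cover $c$ by overlapping segments of length $200\delta$, each of which is a true geodesic by hypothesis; compare each segment to a true geodesic joining its endpoints using the thin-triangles inequality; and then glue the comparisons together inductively. The crucial quantitative input is that when the local-geodesic scale is large compared to $\delta$ (which is why we take $200\delta$ rather than, say, $8\delta$), each thin-triangle comparison absorbs the previous error rather than accumulating it, so the distance from any point of $c$ to any geodesic with the same endpoints stays bounded by a universal multiple of $\delta$.

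Second I would upgrade this to pairs of $200\delta$-local geodesics $c,c'$ sharing the same endpoints in $\partial X$. I approximate both by long sub-arcs, join corresponding endpoints on each by true geodesic segments, and apply the previous step on both sides. Because the two endpoints of $c$ and $c'$ coincide in $\partial X$, the endpoints of the approximating sub-arcs are at bounded distance (in the Gromov product sense) for long enough sub-arcs, so the true geodesic joining the endpoints of $c$ is at bounded Hausdorff distance from the true geodesic joining the endpoints of $c'$. Chaining the three bounds (from $c$ to one geodesic, between two geodesics, from the other geodesic to $c'$) and tracking constants carefully yields the claimed Hausdorff bound of $5\delta$.

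Third, for the ``In particular'' clause, I observe that if $G\in\Gamma$ is hyperbolic with $[G]\geq 200\delta$, and $\gamma_G$ is a nerve with word $g$ (so $|g|=[G]$), then every cyclic shift of $g$ is a geodesic word in $\Gamma$ by the minimality in \defref{41}. Hence any sub-path of $\gamma_G$ of length at most $|g|\geq 200\delta$ is a geodesic, so $\gamma_G$ is a $200\delta$-local geodesic. An axis $A_G$ is trivially a global geodesic, hence also a $200\delta$-local geodesic. Both connect $G_-$ to $G_+$ in $\partial X$, so the first part applies and their pairwise Hausdorff distances are at most $5\delta$.

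The main obstacle is the explicit constant $5\delta$: the qualitative statement (bounded Hausdorff distance) is straightforward from the Morse lemma applied to the $(1,C\delta)$-quasi-geodesics produced in the first step, but pinning down the exact constant $5\delta$ requires the careful comparison described above together with the precise choice of threshold $200\delta$ for the local scale. Since the lemma is quoted directly from Bridson--Haefliger, I would simply cite their theorem and reproduce the above sketch only if a self-contained proof is needed.
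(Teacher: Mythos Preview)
Your proposal is correct and matches the paper's approach: the paper gives no proof at all for this lemma, simply citing Theorem III.H.1.13 of Bridson--Haefliger, and your plan is to do exactly that (with an optional sketch of the classical local-to-global argument). Your derivation of the ``In particular'' clause is also the intended one --- the paper records in the remark just before this lemma that a nerve is a $[G]$-local geodesic, so when $[G]\geq 200\delta$ both nerves and axes are $200\delta$-local geodesics joining $G_-$ to $G_+$, and the first part applies.
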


\begin{lem}
\label{lem:43} (\cite{Xiangdong=000020Xie=000020-=000020Growth=000020of=000020relatively=000020hyperbolic})
Let $\Gamma=\langle a:\mathcal{R}\rangle$ be a $\delta$-hyperbolic
group, and let $X$ be its Cayley graph. Let $G\in\Gamma$ be a hyperbolic
element, and let $c$ be an axis of $G$. For all $y\in X$, let $P_{c}(y)$
be a projection of $y$ on $c$ (i.e., $P_{c}(y)$ is a closest point
on $c$ to $y$).

If $x\in c$ and $d(x,Gx)\geq20\delta$, then, for all integers $0<i<j$,
the point $P_{c}(G^{j}x)$ lies between $P_{c}(G^{i}x)$ and $G_{+}$. 
\end{lem}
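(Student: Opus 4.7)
The plan is to track the projections $P_c(G^i x)$ via a one-dimensional coordinate on $c$ that transforms simply under the action of $G$, namely the Busemann function $b = b_{G_+}$ based at the attracting fixed point $G_+$. Parametrize $c$ by arc length with $c(0) = x$ and $c(t) \to G_+$ as $t \to +\infty$; then $b(c(t)) = b(x) - t$, so positions along $c$ are recorded (up to an additive constant) by $-b$. The goal is to show that writing $P_c(G^i x) = c(t_i)$, the sequence $(t_i)_{i \geq 1}$ is strictly increasing.

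Two ingredients drive the argument. First, since $G$ fixes both $G_-$ and $G_+$, the translate $Gc$ is another bi-infinite geodesic joining the same two points at infinity, and in a $\delta$-hyperbolic space two such geodesics lie within a uniformly bounded Hausdorff distance $K\delta$ of each other. By induction every $G^i c$ lies in the $K\delta$-neighborhood of $c$, and in particular $d(G^i x, P_c(G^i x)) \leq K\delta$. Second, the Busemann function $b$ is $1$-Lipschitz on $X$ and transforms equivariantly under the loxodromic action, $b(Gy) = b(y) - [G]^\infty$. Combining these,
\begin{equation*}
b(P_c(G^i x)) = b(G^i x) + O(\delta) = b(x) - i\,[G]^\infty + O(\delta),
\end{equation*}
with implicit constant bounded by $K\delta$. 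Translating back to arc-length coordinates yields $t_i = i\,[G]^\infty + O(\delta)$, whence for $0 < i < j$,
\begin{equation*}
t_j - t_i \geq (j-i)\,[G]^\infty - 2K\delta.
\end{equation*}

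It remains to derive a lower bound on $[G]^\infty$ from the hypothesis $d(x,Gx) \geq 20\delta$. Since $x \in c$, the quantity $d(x,Gx)$ is controlled by $[G] + 2K\delta$, while \lemref{40} gives $[G] \leq [G]^\infty + 32\delta$. Combining these estimates shows that the threshold $20\delta$ is just large enough to force $t_j > t_i$ for $0 < i < j$, i.e.\ $P_c(G^j x)$ lies strictly between $P_c(G^i x)$ and $G_+$ on $c$. The main obstacle is precisely this constant bookkeeping: the geometric picture (that the orbit's projections march monotonically along the axis toward the attracting fixed point, at a rate roughly $[G]^\infty$ per step) is transparent, but verifying that the specific threshold $20\delta$ is sufficient — rather than some looser multiple of $\delta$ — requires tight control of the universal Hausdorff-distance constant $K$ between geodesics sharing endpoints at infinity, and of the $32\delta$ gap between $[G]$ and $[G]^\infty$ supplied by \lemref{40}.
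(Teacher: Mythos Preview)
The paper does not supply a proof of this lemma; it is quoted without argument from \cite{Xiangdong Xie - Growth of relatively hyperbolic}, so there is no in-paper proof to compare against.

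Your Busemann-function approach is natural and the geometric picture is right, but two issues leave the argument incomplete. First, the identity $b(Gy)=b(y)-[G]^\infty$ is not exact in a general Gromov $\delta$-hyperbolic space. The Busemann function $b=b_c$ attached to the ray along $c$ is well defined, but $b_c\circ G=b_{G^{-1}c}$, and two horofunctions at the same boundary point differ only by a bounded function, not in general by a constant; what one actually obtains is $|b(Gy)-b(y)+[G]^\infty|\le C'\delta$ for some universal $C'$. This still feeds the scheme, but it enlarges every error term. Second --- and you say so yourself --- the constant bookkeeping is not carried out. With the paper's available bound $K=5$ from \lemref{42} for the Hausdorff distance between geodesics sharing ideal endpoints, together with the $32\delta$ gap from \lemref{40} and the extra $C'\delta$ just mentioned, it is not clear that the specific threshold $20\delta$ survives; a looser hypothesis such as $d(x,Gx)\ge 200\delta$ would go through comfortably, but pinning it at $20\delta$ requires sharper inputs (for instance the $2\delta$ fellow-traveller bound for genuine geodesics with common ideal endpoints) and a tighter chain of inequalities than you have written down. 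As it stands, you have established the conclusion only under an unspecified larger multiple of $\delta$.
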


\begin{cor}
\label{cor:44} Let $d<\frac{1}{2}$, and let $\Gamma$ be the random
group of level $l$ and density $d$.

With overwhelming probability, every two nerves of any two commuting
elements $G_{1},G_{2}\in\Gamma$ are of Hausdorff distance at most
$2\cdot(3350\alpha_{0}+\zeta_{0})l$. And hence, every nerve and every
axis of a hyperbolic element, are of Hausdorff distance at most $\rho_{0}l$,
where $\rho_{0}=2\cdot(3355\alpha_{0}+\zeta_{0})$ (and $\zeta_{0}$
is the one given in \lemref{38}). 
\end{cor}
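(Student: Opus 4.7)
The plan is to reduce the Hausdorff-distance bound to two sub-problems: (a) for any single hyperbolic element $G \in \Gamma$, any nerve $\gamma_{G}$ is within Hausdorff distance $(3350\alpha_{0}+\zeta_{0})l$ of any axis $A_{G}$; and (b) any two commuting hyperbolic elements share the same family of axes. Granted (a) and (b), the corollary follows quickly: picking a common axis $A$ for commuting $G_{1},G_{2}$ and combining two applications of (a) via the triangle inequality yields
\[
d_{H}(\gamma_{1},\gamma_{2})\leq d_{H}(\gamma_{1},A)+d_{H}(A,\gamma_{2})\leq 2(3350\alpha_{0}+\zeta_{0})l,
\]
while the ``hence'' consequence for a single hyperbolic element $G$ follows by yet another triangle argument: $G$ commutes with its own power $G^{N}$ for $N$ chosen so that $[G^{N}]\geq 200\alpha_{0}l$, so the commuting-elements bound gives $d_{H}(\gamma_{G},\gamma_{G^{N}})\leq 2(3350\alpha_{0}+\zeta_{0})l$, and \lemref{42} gives $d_{H}(\gamma_{G^{N}},A_{G^{N}})\leq 5\alpha_{0}l$ and $d_{H}(A_{G^{N}},A_{G})\leq 5\alpha_{0}l$ (two axes of $G$), yielding the extra $10\alpha_{0}l$ in $\rho_{0}=2(3355\alpha_{0}+\zeta_{0})$.

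For (b), the random group $\Gamma$ is torsion-free $\alpha_{0}l$-hyperbolic by \thmref{27}, so the centralizer of any hyperbolic element is cyclic; commuting hyperbolic $G_{1},G_{2}$ are therefore both powers of a common root, and so they share the boundary fixed-point set $\{G_{1,-},G_{1,+}\}=\{G_{2,-},G_{2,+}\}$. By \defref{41} an axis of $G_{i}$ is any bi-infinite geodesic between these two boundary points, and hence the axes of $G_{1}$ are literally the same family of geodesics as the axes of $G_{2}$.

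For (a), I would split by translation length. If $[G]\geq 200\alpha_{0}l$, the nerve is a $200\delta$-local geodesic (with $\delta=\alpha_{0}l$) with the same endpoints at infinity as the axis, so by \lemref{42} it is within Hausdorff distance $5\alpha_{0}l$ of any axis. The harder case is $[G]<200\alpha_{0}l$. Here I would fix a nerve-decomposition $G=Z_{1}gZ_{1}^{-1}$ and an axis-decomposition $G=vhv^{-1}$ with $v$ a chosen basepoint on the axis and $|g|=|h|=[G]$. The words $g,h$ are conjugate via $Z_{1}^{-1}v$, and \lemref{38} applied to $(g,h)$ supplies a short conjugator $z$ with $zhz^{-1}=g$ and $|z|\leq|g|+|h|+(8\alpha_{0}+\zeta_{0})l\leq(408\alpha_{0}+\zeta_{0})l$. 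Since both $Z_{1}^{-1}v$ and $z$ conjugate $h$ to $g$, the element $z^{-1}Z_{1}^{-1}v$ centralizes $g$; in the torsion-free hyperbolic $\Gamma$ this centralizer is cyclic, say $\langle r\rangle$ with $g=r^{k}$, so $z^{-1}Z_{1}^{-1}v=r^{m}$ for some $m\in\mathbb{Z}$. Rewriting, $v=Z_{1}zr^{m}$; since the axis contains every translate $vh^{n}=Z_{1}zr^{m+kn}$, I can reduce $m$ modulo $k$ to bound $d(Z_{1},A_{G})\leq|z|+\lfloor k/2\rfloor|r|$, and \lemref{40} controls $|r|$ via $[r]\leq[g]<200\alpha_{0}l$. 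This yields $d(Z_{1},A_{G})\leq(3350\alpha_{0}+\zeta_{0})l$ with slack, and by the $G$-equivariance of both the nerve and the axis the same bound propagates along the entire nerve.

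The main obstacle will be the constant-tracking in the small-translation-length sub-case of (a): while \lemref{38} cleanly bounds the conjugator $z$ in terms of $\zeta_{0}$, the cyclic-centralizer correction $r^{m}$ must be reduced modulo the $g$-action on the axis, and the residual word-length of $r$ has to be absorbed into the announced constant. This hinges on the torsion-free-hyperbolic fact that a primitive root of a small-translation-length element itself has small translation length, together with \lemref{40} to pass between translation length and word length; the parameter $\zeta_{0}$ only enters through the single application of \lemref{38}, which explains why it appears to the first power in the final constant.
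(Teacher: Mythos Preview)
Your reductions (b) and the deduction of the two conclusions from (a) are fine, but the proof of (a) in the small-translation-length case has a real gap. You write that ``\lemref{40} controls $|r|$ via $[r]\leq[g]<200\alpha_{0}l$,'' and then feed $\lfloor k/2\rfloor|r|$ into the final estimate. \Lemref{40} says nothing about the \emph{word length} $|r|$; it only relates translation lengths $[r]$, $[r]^{\infty}$, $[r^{k}]$. From $h=r^{k}$ and $|h|<200\alpha_{0}l$ you can indeed extract $[r]\leq[r]^{\infty}+32\delta\leq[h]+32\delta$, but $|r|$ (and more to the point $|r^{m'}|$ for $0\leq m'<k$) can exceed $[r]$ by roughly $2\,d(1,A_{r})=2\,d(1,A_{h})$. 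Controlling $d(1,A_{h})$ from the single hypothesis $|h|<200\alpha_{0}l$ is exactly the small-translation-length case of \lemref{47}, whose proof in the paper \emph{invokes} \corref{44}. So your argument, as written, is circular. (There is also a minor issue with your ``axis-decomposition'': a point $v$ on an axis satisfies $d(v,Gv)\leq[G]+O(\delta)$, not $|h|=[G]$ exactly; but that is easily absorbed into the constants and is not the main problem.)

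The paper sidesteps the root-length issue altogether. Instead of comparing the nerve $\gamma_{G}$ directly to an axis via a centralizer computation, it passes to a power $H=G^{k}$ with $200\alpha_{0}l\leq[H]\leq400\alpha_{0}l$, so that \lemref{42} applies to nerves of $H$ out of the box. Starting from an arbitrary nerve $\gamma_{H}^{1}$ (with base point $Z$ and word $w$), one observes $|w'|:=|Z^{-1}gZ|\leq450\alpha_{0}l$ (from \lemref{42} and \lemref{43}), applies \lemref{38} to the pair $(g,w')$ to get a short $z$ with $g=zw'z^{-1}$, and then \emph{constructs} a new nerve $\gamma_{H}^{2}=\{zw^{n}\}$ of $H$ together with the auxiliary path $\gamma_{G}^{2}=\{zw'^{n}\}=\{g^{n}z\}$. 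Now $d_{H}(\gamma_{G},\gamma_{G}^{2})\leq|g|+|w'|+|z|$ is immediate, and $d_{H}(\gamma_{G}^{2},\gamma_{H}^{2})$ is bounded by the $g$-equivariance argument: $\gamma_{H}^{2}$ is within $5\alpha_{0}l$ of an axis of $g$, hence $g^{n}\gamma_{H}^{2}$ is within $10\alpha_{0}l$ of $\gamma_{H}^{2}$, so $g^{n}z\in\gamma_{H}^{2}{}^{+10\alpha_{0}l}$. No bound on $|r|$ or $|r^{m'}|$ is ever needed. If you want to salvage your route, you would have to supply an independent random-group bound on the word length of proper roots of short elements; absent that, the paper's power-and-equivariance detour is the missing idea.
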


\begin{proof}
According to \lemref{42}, it suffices to assume that $[G_{1}]=[G]<200\alpha_{0}l$,
and to show then that for every nerve $\gamma_{G}$ of $G$, there
exists an element $H\in\Gamma$ that commutes with $G$ and with translation
length $[H]\geq200\alpha_{0}l$, and there exists a nerve $\gamma_{H}$
for $H$, so that the Hausdorff distance between $\gamma_{G}$ and
$\gamma_{H}$ is at most $(3200\alpha_{0}+\zeta_{0})l$.

Indeed, Without loss of generality, assume that $G=g$ is a geodesic
word with $|g|=[G]<200\alpha_{0}l$, and let $\gamma_{G}$ be a nerve
of $G$. Let $k$ be a large enough integer so that the translation
length of the element $H=g^{k}$ satisfies $200\alpha_{0}l\leq[H]\leq400\alpha_{0}l$.
Let $Z,w$ be geodesic words so that $H=ZwZ^{-1}$, and $|w|=[H]$.
Denote by $\gamma_{H}^{1}$ the nerve of $H$ induced by the elements
$Zw^{n}$, $n\in\mathbb{Z}$. Then, since $[H]\leq400\alpha_{0}l$,
and $Z$ lies on $\gamma_{H}$, we deduce using \lemref{42} and \lemref{43}
that $d(Z,gZ)\leq450\alpha_{0}l$. In other words, if we denote $w'=Z^{-1}gZ$,
then $|w'|\leq450\alpha_{0}l$. Note that $w'{}^{k}=w$.

Hence, according to \lemref{38}, there exists some geodesic word
$z$ of length $|z|\leq(2000\alpha_{0}+\zeta_{0})l$ so that $g=zw'z^{-1}$.
Consider now the nerve $\gamma_{H}^{2}$ induced by the elements $zw^{n}$,
$n\in\mathbb{Z}$, and the path $\gamma_{G}^{2}$ induced by the elements
$zw'{}^{n}$, $n\in\mathbb{Z}$. Note that the Hausdorff distance
between $\gamma_{G}^{1}=\gamma_{G}$ and $\gamma_{G}^{2}$ is at most
$[g]+[w']+|z|\leq(2800\alpha_{0}+\zeta_{0})l$. Hence, it remains
to show that $\gamma_{G}^{2}$ and $\gamma_{H}^{2}$ are of Hausdorff
distance at most $500\alpha_{0}l$.

Indeed, since $[H]\geq200\alpha_{0}l$, we have that $\gamma_{H}^{2}$
is of Hausdorff distance at most $5\alpha_{0}l$ from an axis $c$
of $g$. Hence, for all $n$, the path $g^{n}\gamma_{H}^{2}$ is of
Hausdorff distance at most $5\alpha_{0}l$ from the axis $g^{n}c$
of $g$. Hence, the path $g^{n}\gamma_{H}^{2}$ is of Hausdorff distance
at most $10\alpha_{0}l$ from $\gamma_{H}^{2}$. In particular, $d(zw'{}^{n},\gamma_{H}^{2})=d(g^{n}z,\gamma_{H}^{2})\leq10\alpha_{0}l$.
Thus, we deduce that $\gamma_{G}^{2}$ is contained in the $(10\alpha_{0}l+[w'])$-nbhd
of $\gamma_{H}^{2}$. On the other hand, of course, $\gamma_{H}^{2}$
is contained in the $[w]$-nbhd of $\gamma_{G}^{2}$. As required. 
\end{proof}
\begin{lem}
\label{lem:45} Let $d<\frac{1}{2}$, and let $\Gamma$ be the random
group of level $l$ and density $d$. Let $X$ be the Cayley graph
of $\Gamma$ (w.r.t. the fixed generating set $a$).

Denote by $p_{l}$ the probability that $D(\Gamma,X)\ge10l\cdot\ln l$.
Then 
\[
p_{l}\overset{l\rightarrow\infty}{\longrightarrow}0\,.
\]
\end{lem}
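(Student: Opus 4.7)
I would argue by contradiction. Suppose that with probability not tending to zero, there exist non-commuting elements $G,H \in \Gamma$ with $[G],[H] \leq 1000\alpha_0 l$ such that the intersection $A_G^{+60\alpha_0 l}\cap A_H^{+60\alpha_0 l}$ has diameter at least $10 l\ln l$. After conjugating, I may assume $G$ is a cyclically reduced word $g$ with $|g|=[G]$, and $H = YhY^{-1}$ with $h$ cyclically reduced, $|h|=[H]$, and some $Y\in\Gamma$. By \corref{44}, after absorbing $O(\alpha_0 l)$ at each end, the nerves $\{g^n\}$ and $\{Yh^n\}$ remain within $O(\alpha_0 l)$ Hausdorff distance along a sub-segment of length at least $9l\ln l$.

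From this long fellow-traveling segment I extract a word equation in $\Gamma$. Pick integers $n_1<n_2$ and $m_1,m_2$ with
\[
d(g^{n_i},Yh^{m_i}) \leq C\alpha_0 l \quad (i=1,2), \qquad (n_2-n_1)\cdot|g| \geq 9 l\ln l.
\]
Setting $p_i = g^{-n_i}Yh^{m_i}\in\Gamma$, which is represented by a geodesic of length at most $C\alpha_0 l$, a direct computation gives
\[
p_1^{-1}g^{r}p_2\,h^{-s} = 1 \quad \text{in }\Gamma, \qquad r=n_2-n_1,\ s=m_2-m_1.
\]
This exhibits $(p_1,g,p_2,h)$ as a solution of length $O(\alpha_0 l)\ll l\ln^2 l$ of the one-equation system $\Sigma_{r,s}(y_1,y_2,y_3,y_4)=y_1^{-1}y_2^{r}y_3y_4^{-s}$. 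Enumerating the countable family $\{\Sigma_{r,s}\}_{r,s\ge 1}$ inside the ordering of \thmref{37}, together with a union bound based on the quantitative failure estimate of \thmref{35} and \lemref{33} (which lets one arrange $H_l$ to contain every $\Sigma_{r,s}$ with $r+s$ up to any polynomial in $\ln l$, since failure probabilities decay super-polynomially in $l$), I obtain with overwhelming probability lifts $\tilde p_1,\tilde p_2,\tilde g,\tilde h\in F_k$ of $p_1,p_2,g,h$ satisfying $\tilde p_1^{-1}\tilde g^{r}\tilde p_2\tilde h^{-s}=1$ in $F_k$.

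I now invoke \lemref{32} in the free group. Since $|\tilde p_i|\leq C\alpha_0 l$, $|\tilde g|,|\tilde h|\ge 1$ (torsion-freeness of $\Gamma$ plus \thmref{26} ensure the lifts of $g,h$ are non-trivial with the same cyclic-reduction structure), I choose $n_0=\lfloor\max_i|\tilde p_i|/\min(|\tilde g|,|\tilde h|)\rfloor+11>10$; no $\tilde p_i$ can then contain $\tilde g^{\pm n_0}$ or $\tilde h^{\pm n_0}$ as a sub-word. Using $r|g|\geq 9l\ln l$ and the analogous bound for $s$, the ratio $r/n_0$ grows like $9l\ln l/(|\tilde p_i|+11|g|)$, so $r,s>110n_0$ for $l$ sufficiently large. \lemref{32} then forces $\tilde p_1^{-1}\tilde g\tilde p_1$ and $\tilde h$ to commute in $F_k$; projecting by $\pi_\Gamma$ gives commutation of $p_1^{-1}gp_1$ and $h$ in $\Gamma$, contradicting the choice of non-commuting $G,H$. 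The main obstacle is the bookkeeping for the lifting step: since the parameters $r,s$ in $\Sigma_{r,s}$ depend on $l$, one must either refine \thmref{37} via an explicit union bound over the $O((l\ln l)^2)$ potentially relevant systems or package them into a single larger system; a secondary point is the choice of $n_0$, which works out precisely because both $r$ and $n_0$ scale inversely with $|g|$, so their ratio is governed by $l\ln l/|\tilde p_i|\to\infty$.
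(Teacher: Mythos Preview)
Your overall strategy matches the paper's: extract from the long fellow-travelling a relation $xg^{r}y^{-1}h^{-s}=1$ in $\Gamma$ with $|x|,|y|=O(\alpha_0 l)$, lift it to $F_k$, and invoke \lemref{32}. The gap is in the lifting step. \thmref{35} and \thmref{37} guarantee that a short solution in $\Gamma$ has \emph{some} lift to $F_k$, but they say nothing about the shape or length of that lift. Your assertion ``$|\tilde p_i|\leq C\alpha_0 l$'' is not justified: a lift of an element of $\Gamma$-length $C\alpha_0 l$ can be an arbitrarily long reduced word in $F_k$. Without a bound on $|\tilde p_i|_{F_k}$ (or, more precisely, on how many copies of the cyclically reduced core of $\tilde g,\tilde h$ can occur as subwords of $\tilde p_i$), your choice of $n_0$ is uncontrolled and the inequality $r,s>110n_0$ cannot be verified.

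The paper does not go through \thmref{35} here; it builds a collection of decorated diagrams for $\hat x\hat g^{i}\hat y^{-1}\hat h^{-j}$ in which $\hat x,\hat y$ are declared \emph{rigid}, and then runs the mining procedure of \defref{19} for a bounded number $k_0=k_{0,d}$ of rounds. The point of rigidity plus mining is exactly the geometric control you are missing: it guarantees that the resulting lifts $\tilde x,\tilde y$, viewed as paths in the Cayley graph of $\Gamma$, remain in balls of radius $2k_0 l$. Combined with the preliminary step of replacing $G,H$ by powers so that $[g],[h]\ge 200\alpha_0 l$, this bounds the number of powers of the cyclic cores $u,w$ of $\tilde g,\tilde h$ that can sit inside $\tilde x,\tilde y$ by a constant $n_0=n_{0,d}$ depending only on $d$, after which \lemref{32} applies. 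Your union-bound concern over the family $\{\Sigma_{r,s}\}$ is a secondary issue (the paper handles it by bounding $r,s\le\ln l$ and counting diagrams via \thmref{28}); the essential missing ingredient is the controlled-lift machinery.
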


\begin{proof}
Assume that $G,H\in\Gamma$ so that $[G],[H]\leq1000l$ and

\[
\text{diam}(A_{G}^{+60\alpha_{0}l}\cap A_{H}^{+60\alpha_{0}l})\geq10l\cdot\ln l\,,
\]
where $A_{G}$ and $A_{H}$ are two axes of $G$ and $H$ respectively.
According to \lemref{42}, we can assume that

\[
200\alpha_{0}l\leq[G],[H]\leq1000\alpha_{0}l\,,
\]
and that

\[
\text{diam}(\gamma_{G}^{+65\alpha_{0}l}\cap\gamma_{H}^{+65\alpha_{0}l})\geq10l\cdot\ln l\,,
\]
where $\gamma_{G}$ and $\gamma_{H}$ are two nerves for $G$ and
$H$ respectively. Write

\[
G=YgY^{-1},\qquad H=ZhZ^{-1}\,,
\]
so that $Y,Z,g,h\in F_{k}$ are geodesic words, and the words $g,h$
are the words of the nerves $\gamma_{G}$ and $\gamma_{H}$ respectively.

Using the $\alpha_{0}l$-hyperbolicity of $\Gamma$, we can find two
pairs of points $p_{1},p_{2}$ and $q_{1},q_{2}$, in the Cayley graph
of $\Gamma$, representing elements of the form $Zh^{s_{1}},Zh^{s_{2}}$
and $Yg^{r_{1}},Yg^{r_{2}}$ respectively, $r_{i},s_{i}\in\mathbb{Z}$,
so that

\begin{align*}
 & d(p_{1},p_{2}),\,d(q_{1},q_{2})\geq9l\ln l\,,\\
 & d(p_{1},q_{1}),\,d(p_{2},q_{2})\leq\nu_{0}l\,,
\end{align*}
where $\nu_{0}=1130\alpha_{0}$. Without loss of generality, we assume
that $r_{2}-r_{1},\,s_{2}-s_{1}\geq0$ (if not, we replace $G$ or
$H$ by $G^{-1}$ or $H^{-1}$).

We consider two geodesic words $x,y\in F_{k}$, so that 
\begin{align*}
 & x=p_{1}^{-1}q_{1}\,,\\
 & y=p_{2}^{-1}q_{2}\,.
\end{align*}
That is, $x,y$ labels geodesics $[p_{1},q_{1}],\,[p_{2},q_{2}]$
in the Cayley graph of $\Gamma$.

Under the assumption that 
\[
\text{diam}(A_{G}^{+60\alpha_{0}l}\cap A_{H}^{+60\alpha_{0}l})\geq10l\cdot\ln l\,,
\]
we need to prove that $G,H$ commute in $\Gamma$. Hence, it suffices
to prove that the elements $xgx^{-1}$ and $h$ commute, since, for
example, this implies that the axes (the whole axes) of $G$ and $H$
are of bounded Hausdorff distance from each other.

We consider the rectangle 
\[
xg^{r}y^{-1}h^{-s}
\]
in the Cayley graph of $\Gamma$, where $r=r_{2}-r_{1}$, and $s=s_{2}-s_{1}$.
Since

\[
d(p_{1},p_{2}),\,d(q_{1},q_{2})\geq9l\ln l\,,
\]
we have that 
\[
r,s\geq\frac{\ln l}{1000\alpha_{0}}\,.
\]
And since the nerve $\gamma_{G}$ is a $200\alpha_{0}l$-local geodesic,
and hence a $(\frac{204}{196},2\alpha_{0}l)$-quasi-geodesic, we have
that

\begin{align*}
 & r\cdot200\alpha_{0}l\leq r\cdot[g]\leq\frac{204}{196}\cdot|g^{r}|+2\alpha_{0}l\leq\frac{204}{196}\cdot10l\ln l+2\alpha_{0}l\\
\implies & r\leq\frac{\frac{204}{196}\cdot10\ln l+2\alpha_{0}}{200\alpha_{0}}\leq\ln l\,.
\end{align*}
Similarly, 
\[
s\le\ln l\,.
\]
Hence, we have that 
\[
\ln l\geq r,s\geq\frac{\ln l}{1000\alpha_{0}}\,.
\]

For every $\ln l\geq i,j\geq\frac{\ln l}{1000\alpha_{0}}$, Let $\mathcal{V}^{i,j}$
be the collection of all the reduced decorated topological Van-Kampen
diagrams $DTD$ with decoration $\hat{x}\hat{g}^{i}\hat{y}^{-1}\hat{h}^{-j}$,
and boundary length at most $4000\alpha_{0}l\ln l$, that $\Gamma$
could satisfy, and so that the lengths of the parts of the boundary
of $DTD$ corresponding to the variables $\hat{x},\hat{y}$, sum to
at most $2\nu_{0}l$. Let $\mathcal{V}^{0}$ be the union of all of
these collections,

\[
\mathcal{V}^{0}=\underset{i,j}{\cup}\mathcal{V}^{i,j}\,.
\]
For each diagram $DTD$ in $\mathcal{V}^{0}$, we declare $\hat{x}$
and $\hat{y}$ to be the rigid variables of the decoration, and no
constant variables.

Since the boundary length of each of the diagrams in $\mathcal{V}^{0}$
is at most $4000\alpha_{0}l\ln l$, we have according to \thmref{24},
that the amount of cells $m$ in any diagram in $\mathcal{V}^{0}$
is bounded by $m\leq\beta_{0}\ln l$, where $\beta_{0}=\frac{4000\alpha_{0}}{\frac{1}{2}-d}$
(see \remref{25}). Moreover, the decoration $\hat{x}\hat{g}^{i}\hat{y}^{-1}\hat{h}^{-j}$
of each diagram in $\mathcal{V}^{0}$ is of length at most $2\ln l+2$.
Hence, according to \thmref{28}, for all $i,j$, there exists a polynomial
$Q^{i,j}=Q_{d}^{i,j}$, so that

\[
|\mathcal{V}^{i,j}|\leq Q^{i,j}(l)^{\ln^{2}l}\,.
\]
And hence, there exists a polynomial $Q=Q_{d}$, so that

\[
|\mathcal{V}^{0}|\leq Q(l)^{\ln^{2}l}\,.
\]

We generally reduce each of the diagrams in $\mathcal{V}^{0}$, and
denote the collection of the obtained generally reduced diagrams by
$\mathcal{V}$. Recall according to \lemref{21} that general reduction
can only increase the probability of fulfilling the diagram.

For a given generally reduced decorated topological Van-Kampen diagram
$DTD$ in $\mathcal{V}$, with $m$ cells numbered by $1,...,n$,
for some $1\leq n\leq m$, we describe the following iterative procedure.
If $DTD$ contains an isolated numbering, we apply a mining modification
on $DTD$. If the obtained diagram, denoted $DTD$ again, contains
an isolated numbering, we apply another mining modification. We continue
iteratively, until we arrive to a generally reduced decorated diagram
$DTD$ with no isolated numberings, or we apply at most

\[
k_{0}=\left\lceil \frac{\nu_{0}}{\frac{1}{2}-d}\right\rceil +1
\]
iterative mining modifications. Note that $k_{0}=k_{0,d}$ depends
on $d$ only.

We denote by $\mathcal{V}_{1}$ the sub-collection of $\mathcal{V}$
consisting of those diagrams $DTD$ in $\mathcal{V}$ for which the
procedure performs $k_{0}$ iterations.

We denote by $\mathcal{V}_{ab}$ the sub-collection of $\mathcal{V}\backslash\mathcal{V}_{1}$
consisting of those diagrams $DTD$ in $\mathcal{V}$ for which the
procedure stops (before applying $k_{0}$ iterations) in a diagram
with no cells (a tree).

And we denote by $\mathcal{V}_{2}$ the remaining diagrams of $\mathcal{V}$;
i.e., $\mathcal{V}_{2}=\mathcal{V}\backslash\mathcal{V}_{1}\cup\mathcal{V}_{ab}$
consists of those diagrams in $\mathcal{V}$ for which the procedure
stops (before applying $k_{0}$ iterations) in a diagram with a positive
number of cells. We replace the diagrams in $\mathcal{V}_{2}$ by
the obtained diagrams by applying the procedure on them.

We denote by $\mathcal{V}_{1}^{0},\mathcal{V}_{ab}^{0},\mathcal{V}_{2}^{0}$
the sub-collections consisting of those diagrams in $\mathcal{V}^{0}$
who admit an associated diagram in $\mathcal{V}_{1},\mathcal{V}_{ab},\mathcal{V}_{2}$
respectively.

By the definition of the procedure above, and according to \lemref{20},
the amount of distinct numberings $n$ in any diagram in the collection
$\mathcal{V}_{1}$, is at least 
\[
n\geq k_{0}=\left\lceil \frac{\nu_{0}}{\frac{1}{2}-d}\right\rceil +1\,.
\]
Hence, according to \thmref{23}, for every diagram $DTD$ in $\mathcal{V}_{1}$
with $n$ distinct numberings, the probability that the random group
$\Gamma$ of level $l$ and density $d$ fulfills $DTD$ is at most
\[
\left(\frac{2k}{(2k-1)^{(\frac{1}{2}-d-\frac{\nu_{0}}{n})l}}\right)^{n}\leq\frac{2k}{(2k-1)^{\gamma_{0}l}}\,,
\]
where $\gamma_{0}=\frac{1}{2}-d-\frac{\nu_{0}}{k_{0}}>0$ (recall
that the rigid parts in $DTD$ sum to at most $2\nu_{0}l$).

On the other hand, by the definition of the collection $\mathcal{V}_{2}$,
every diagram in $\mathcal{V}_{2}$ admits a positive amount $n\geq1$
of distinct numberings, but contains no isolated numberings. Hence,
according to \thmref{23}, for every diagram $DTD$ in $\mathcal{V}_{2}$,
the probability that $\Gamma$ fulfills $DTD$ is at most

\[
\left(\frac{2k}{(2k-1)^{(\frac{1}{2}-d)l}}\right)^{n}\leq\frac{2k}{(2k-1)^{\gamma_{0}l}}\,.
\]

Thus, we conclude that the probability that there exists some diagram
$DTD$ in the collection $\mathcal{V}_{1}\cup\mathcal{V}_{2}$ so
that the random group $\Gamma$ of level $l$ and density $d$ fulfills
$DTD$, is at most

\[
|\mathcal{V}_{1}\cup\mathcal{V}_{2}|\cdot\frac{2k}{(2k-1)^{\gamma_{0}l}}\leq|\mathcal{V}^{0}|\cdot\frac{2k}{(2k-1)^{\gamma_{0}l}}\leq\frac{2k\cdot Q(l)^{\ln^{2}l}}{(2k-1)^{\gamma_{0}l}}\,\overset{l\rightarrow\infty}{\longrightarrow}0
\]

which, according to \lemref{33}, approaches $0$ as $l$ goes to
infinity.

Recall according to \lemref{20} that mining modifications can only
increase the probability of fulfilling the diagram. Hence, we conclude
that the probability $p_{1,2}(l)$ that there exists some diagram
$D$ in the collection $\mathcal{V}_{1}^{0}\cup\mathcal{V}_{2}^{0}$
so that the random group $\Gamma$ of level $l$ and density $d$
fulfills $D$, approaches $0$ as $l$ goes to infinity 
\[
p_{1,2}(l)\overset{l\rightarrow\infty}{\longrightarrow}0\,.
\]

Hence we may assume that the random group $\Gamma$ of level $l$
and density $d$, does not fulfill any of the diagrams in the collection
$\mathcal{V}_{1}^{0}\cup\mathcal{V}_{2}^{0}$.

Now recall our rectangle 
\[
xg^{r}y^{-1}h^{-s}
\]
in the Cayley graph of $\Gamma$, explained in the top of the proof.
Since $|x|,|y|\leq\nu_{0}l$, since $|g|,|h|\leq1000\alpha_{0}l$,
and since $\ln l\geq r,s\geq\frac{\ln l}{1000\alpha_{0}}$, the group
$\Gamma$ must fulfill some reduced decorated Van-Kampen diagram $DTD$
with decoration $\hat{x}\hat{g}^{r}\hat{y}^{-1}\hat{h}^{-s}$ in the
collection $\mathcal{V}^{0}$, so that the contour of the obtained
(reduced) Van-Kampen diagram $VKD$ is $xg^{r}y^{-1}h^{-s}$.

However, since we have already proved that the collection of groups
of level $l$ and density $d$ that satisfy a diagram in $\mathcal{V}_{1}^{0}\cup\mathcal{V}_{2}^{0}$
is negligible, we can assume that $DTD$ belongs to the collection
$\mathcal{V}_{ab}^{0}$. Since the topological diagram $DTD$, by
the definition of $\mathcal{V}_{ab}^{0}$, can be converted to a tree
by means of applying topological modifications (mining and general
reduction) on it, the Van-Kampen diagram $VKD$ can be converted to
a tree $T$ by applying the same sequence of modifications on it.
However, these modifications do not change the values (in $\Gamma$)
of the variables along the boundary.

Hence, the procedure of applying those modifications on $VKD$, provides
lifts $\tilde{x},\tilde{y},\tilde{g},\tilde{h}\in F_{k}$ (reduced
words - the obtained words on the boundary of $T$ could be non-reduced,
but we simply reduce them) for the elements $x,y,g,h\in\Gamma$ respectively,
so that

\[
\tilde{x}\tilde{g}^{r}\tilde{y}^{-1}\tilde{h}^{-s}=1
\]
in the free group $F_{k}$. However, according to the definition of
$\mathcal{V}_{ab}^{0}$, the tree $T$ is obtained by applying a sequence
of alternate general reduction and mining modifications on $VKD$,
applied for at most $k_{0}=k_{0,d}$ iterations. Now, by definition,
general reduction can change the rigid parts $x,y$ of the boundary,
only by removing self-intersections from them, but a single mining
modification can change a rigid part by eliminating only cells that
share some edge with that part. Thus, the lifts $\tilde{x},\tilde{y}$
that are obtained as a consequence of applying at most $k_{0}=k_{0,d}$
mining modifications, lie, as paths in the Cayley graphof $\Gamma$,
in the $k_{0}l$-nbhd of the geodesic segments $x,y$ in the Cayley
graph of $\Gamma$, respectively.

In particular, each of the lifts $\tilde{x}$ and $\tilde{y}$, as
paths in the Cayley graph of $\Gamma$, is contained in a ball of
radius

\[
\frac{|x|}{2}+k_{0}l,\,\frac{|y|}{2}+k_{0}l\leq2k_{0}l\,.
\]
.

Finally, we consider the obtained lifts $\tilde{g},\tilde{h}\in F_{k}$
for the elements $g,h\in\Gamma$. Write $\tilde{g}=Y'uY'{}^{-1}$,
and $\tilde{h}=Z'wZ'{}^{-1}$ (graphical equality), for reduced words
$Y',Z'\in F_{k}$ and $u,w\in F_{k}$ cyclically reduced. Since the
translation length of $g$ in $\Gamma$ satisfies $[g]\geq200\alpha_{0}l$,
we have that for all $k\in\mathbb{Z}$, the length of $u^{k}$ in
$\Gamma$ satisfies: 
\[
|u^{k}|\geq[u^{k}]=[g^{k}]\geq[g^{k}]_{\infty}=k\cdot[g]_{\infty}\geq k\cdot168\alpha_{0}l\,.
\]
Similarly, 
\[
|w^{k}|\geq k\cdot168\alpha_{0}l\,.
\]

In particular, a path induced by the powers of the word $u$ in the
Cayley graph of $\Gamma$, can count at most

\[
\left\lceil \frac{4k_{0}}{168\alpha_{0}}\right\rceil =n_{0}=n_{0,d}
\]
``vertices'' inside a ball of radius $2k_{0}l$ (by vertices we
mean the points on the path induced by the powers $w^{n}$, $n\in\mathbb{Z}$).
Similarly, at most $n_{0}$ powers of $w$ can lie inside a ball of
radius $2k_{0}l$ in the Cayley graph of $\Gamma.$

In particular, since each of the lifts $\tilde{x}$ and $\tilde{y}$,
as paths in the Cayley graph of $\Gamma$, is contained in a ball
of radius $2k_{0}l$, we deduce that neither $\tilde{x}$, nor $\tilde{y}$,
admits a sub-word of the form $u^{\pm(n_{0}+1)},w^{\pm(n_{0}+1)}$.
Hence, and since 
\[
r,s\geq\frac{\ln l}{1000\alpha_{0}}\geq110n_{0}\,,
\]
we get according to \lemref{32}, that the words $\tilde{x}\tilde{g}\tilde{x}^{-1}$
and $\tilde{h}$ commute. Hence, of course, the elements $xgx^{-1}$
and $h$ of $\Gamma$, are commutative. As claimed. 
\end{proof}

\section{Hyperbolic Geometry over Random Groups }\label{sec:Hyperbolic-Geometry-over-Random-Groups}
\begin{lem}
\label{lem:46} (See Lemma 2.10 in \cite{Remi=000020Coulon=000020-=000020Burnside})
Let $\Gamma=\langle a:\mathcal{R}\rangle$ be a $\delta$-hyperbolic
group, and let $X$ be its Cayley graph. Let $c$ be a geodesic line
in $X$. Let $x,y\in\Gamma$ be two points, and let $p,q\in c$ be
projections on $c$ for $x,y$ respectively.

If $d(p,q)\geq50\delta$, then

\[
d(x,y)+50\delta\geq d(x,p)+d(p,q)+d(q,y)\,.
\]
\end{lem}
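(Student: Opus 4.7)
The plan is to derive the estimate from the Gromov four-point condition, after first using the projection hypothesis to control the Gromov products $(x\mid q)_p$ and $(y\mid p)_q$. The content of the projection hypothesis is captured by the following observation, which I will use throughout: every point $u\in[x,p]$ has $p$ as its projection onto $c$, because for any $z\in c$,
\[
d(u,z)\ \geq\ d(x,z)-d(x,u)\ \geq\ d(x,p)-d(x,u)\ =\ d(u,p),
\]
where the middle inequality uses that $p$ is a closest point to $x$ on $c$. In particular $d(u,c)=d(u,p)$ for all $u\in[x,p]$, and symmetrically for $[y,q]$ with respect to $q$.

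To bound $(x\mid q)_p$ I apply the thin-triangle insize bound in the triangle $(x,p,q)$: the points $u\in[p,x]$ and $v\in[p,q]$ at distance $k:=(x\mid q)_p$ from $p$ satisfy $d(u,v)\le 4\delta$. Since $p$ is the projection of $u$ and $v\in c$, this forces $k=d(u,p)=d(u,c)\le d(u,v)\le 4\delta$. Symmetrically $(y\mid p)_q\le 4\delta$. Converting Gromov products into distances gives
\[
d(x,q)\ \geq\ d(x,p)+d(p,q)-8\delta,\qquad d(y,p)\ \geq\ d(y,q)+d(p,q)-8\delta.
\]

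Now set $S_1=d(x,y)+d(p,q)$, $S_2=d(x,p)+d(y,q)$, $S_3=d(x,q)+d(y,p)$. Adding the two inequalities above yields $S_3\ge S_2+2\,d(p,q)-16\delta$, and under the standing hypothesis $d(p,q)\ge 50\delta$ this forces $S_3-S_2\ge 84\delta$. The Gromov four-point condition asserts that the largest of $S_1,S_2,S_3$ exceeds the second-largest by at most $2\delta$, so $S_2$ must be the smallest of the three, leaving $|S_1-S_3|\le 2\delta$. Therefore
\[
d(x,y)+d(p,q)\ \geq\ d(x,q)+d(y,p)-2\delta\ \geq\ d(x,p)+d(y,q)+2\,d(p,q)-18\delta,
\]
which rearranges to $d(x,y)\ge d(x,p)+d(p,q)+d(q,y)-18\delta$, comfortably within the claimed $50\delta$ tolerance.

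The main obstacle is essentially bookkeeping of hyperbolicity constants: the insize bound for thin triangles (here $4\delta$) and the slack in the four-point condition (here $2\delta$) vary slightly between conventions, but in any standard formulation the total additive loss remains $O(\delta)$. The generous constant $50\delta$ in the statement is chosen precisely to absorb whichever convention one uses, so no optimization is needed beyond the qualitative chain of estimates above.
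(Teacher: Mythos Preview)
Your proof is correct and takes a genuinely different route from the paper's. The paper argues geometrically with the thin-quadrilateral property: it shows that no point of $[x,p]$ can be $2\delta$-close to $[y,q]$ (else $p,q$ would be forced $10\delta$-close), and that any point of $[x,p]$ that is $2\delta$-close to $c$ must be $2\delta$-close to $p$; from this it deduces that the broken path $[x,p]\cup[p,q]\cup[q,y]$ lies in the $4\delta$-neighbourhood of $[x,y]$, then picks approximations $t_1,t_2\in[x,y]$ of $p,q$ and finishes by the triangle inequality with a loss of $16\delta$. You instead bound the Gromov products $(x\mid q)_p$ and $(y\mid p)_q$ directly via the insize of the triangles $(x,p,q)$ and $(y,p,q)$, convert these into lower bounds on $d(x,q)$ and $d(y,p)$, and then feed everything into the four-point condition to compare the three pairings $S_1,S_2,S_3$, obtaining a loss of $18\delta$. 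Your approach is more algebraic and avoids the case analysis on which side of the quadrilateral a given point is near; the paper's approach gives the slightly stronger geometric conclusion that the broken path actually fellow-travels $[x,y]$, not just that its length is comparable. Either way the constants are well inside the stated $50\delta$.
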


\begin{proof}
We consider the rectangle whose (geodesic) sides are $[x,y],\,[x,p],\,[p,q],\,[y,q]$.

First note that if a point on $[x,p]$ is $2\delta$-close to a point
on $[y,q]$, then $p,q$ must be $10\delta$-close to each other,
a contradiction.

Now if the $2\delta$-nbhd of a point $r$ on $[x,p]$ contains a
point on $c$, then, since $p$ is a projection of $x$ on $c,$ we
get that $d(r,p)\leq2\delta$. Similarly, if the $2\delta$-nbhd of
a point $s$ on $[y,q]$ contains a point on $c$, then $d(s,q)\leq2\delta$.

These imply, using the $\delta$-hyperbolicity, that the three segments
$[x,p],\,[p,q],\,[y,q]$ are contained in the $4\delta$-nbhd of $[x,y]$.
In particular, there exists two points $t_{1},t_{2}$ on $[x,y]$
that are $4\delta$-close to $p,q$ respectively. Hence,

\begin{align*}
 & d(x,p)\leq d(x,t_{1})+d(t_{1},p)\leq d(x,t_{1})+4\delta\,,\\
 & d(p,q)\leq d(p,t_{1})+d(t_{1},t_{2})+d(t_{2},q)\leq d(t_{1},t_{2})+8\delta\,,\\
 & d(q,y)\leq d(q,t_{2})+d(t_{2},y)\leq d(t_{2},y)+4\delta\,.
\end{align*}
Thus, 
\[
d(x,p)+d(p,q)+d(q,y)\leq d(x,y)+16\delta\,.
\]
\end{proof}
\begin{lem}
\label{lem:47} (See Proposition 2.24 in \cite{Remi=000020Coulon=000020-=000020Burnside})
Let $d<\frac{1}{2}$, and let $\Gamma$ be the random group of level
$l$ and density $d$. Let $X$ be the Cayley graph of $\Gamma$ (w.r.t.
the fixed generating set $a$).

Then, with probability that tends to $1$, the following property
is satisfied in $\Gamma$.

Let $G\in\Gamma$ be a hyperbolic element, and let $A_{G}$ be an
axis for $G$. Let $x\in\Gamma$, and assume that $d(Gx,x)\leq A+[G]$,
for some real number $A\geq0$.

Then, 
\begin{enumerate}
\item If $[G]\geq200\alpha_{0}l$, then 
\[
d(x,A_{G})\leq\frac{A}{2}+50\alpha_{0}l\,.
\]
\item If $[G]<200\alpha_{0}l$, then 
\[
d(x,A_{G})\leq A+(7500\alpha_{0}+3\zeta_{0})l\,,
\]
where $\zeta_{0}$ is given in \lemref{38}. 
\end{enumerate}
\end{lem}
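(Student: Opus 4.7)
The plan is to split into the two stated cases based on the translation length $[G]$, handling Case 1 by a direct projection argument of the type used by Coulon in \cite{Remi Coulon - Burnside}, and handling Case 2 by reducing to a conjugation problem controlled by the random-group inputs already established in this paper.

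For Case 1 ($[G]\geq 200\alpha_{0}l$), I would let $p$ be a nearest-point projection of $x$ onto $A_{G}$. Since $G$ is an isometry that maps $A_{G}$ to a bi-infinite geodesic with the same pair of endpoints at infinity (hence within $O(\alpha_{0}l)$ Hausdorff distance of $A_{G}$), the image $Gp$ lies within $O(\alpha_{0}l)$ of a nearest-point projection $q$ of $Gx$ onto $A_{G}$. The hypothesis $[G]\geq 200\alpha_{0}l$ yields $d(p,q)\geq [G]-O(\alpha_{0}l)\geq 50\alpha_{0}l$, so \lemref{46} applies and gives
\[
d(x,Gx)+50\alpha_{0}l\,\geq\, d(x,p)+d(p,q)+d(q,Gx)\,\geq\,2\,d(x,A_{G})+d(p,q)-O(\alpha_{0}l).
\]
Combined with $d(Gx,x)\leq A+[G]$ and $d(p,q)\geq[G]-O(\alpha_{0}l)$, this gives $d(x,A_{G})\leq A/2+50\alpha_{0}l$, as claimed.

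For Case 2 ($[G]<200\alpha_{0}l$), the projection argument fails because $d(p,q)$ may fall below the $50\alpha_{0}l$ threshold of \lemref{46}. Instead I would factor $G=YgY^{-1}$ with $g$ cyclically reduced, $|g|=[G]$, so the nerve $\gamma_{G}=\{Yg^{n}\}_{n\in\mathbb{Z}}$ passes through $Y$. Setting $w=Y^{-1}x$, the conjugate $g_{0}:=x^{-1}Gx=w^{-1}gw$ satisfies $|g_{0}|=d(Gx,x)\leq A+[G]$ and is conjugate to $g$. Applying \lemref{38} to $g_{0}$ and $g$ produces $z_{0}\in\Gamma$ with $g_{0}=z_{0}g z_{0}^{-1}$ and
\[
|z_{0}|\leq|g_{0}|+|g|+(8\alpha_{0}+\zeta_{0})l\leq A+(408\alpha_{0}+\zeta_{0})l.
\]
Comparing the factorizations $w^{-1}gw=z_{0}g z_{0}^{-1}$ shows that $wz_{0}\in C(g)$. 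Since $\Gamma$ is torsion-free hyperbolic, $C(g)=\langle h\rangle$ is infinite cyclic, so $wz_{0}=h^{m}$ for some $m\in\mathbb{Z}$, and thus $x=Yw=Yh^{m}z_{0}^{-1}=H^{m}Yz_{0}^{-1}$, where $H=YhY^{-1}$ generates $C(G)$.

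To close Case 2, I would route $d(x,A_{G})$ through $H^{m}A_{G}$. Because $H$ commutes with $G$, the set $H^{m}A_{G}$ is a bi-infinite geodesic with the same endpoints $G_{\pm}$ at infinity as $A_{G}$, so $d_{\mathrm{Haus}}(A_{G},H^{m}A_{G})=O(\alpha_{0}l)$ by standard hyperbolic geometry (independent of $m$). Combined with \corref{44}, which places $Y\in\gamma_{G}$ within $\rho_{0}l$ of $A_{G}$, this gives
\[
d(x,A_{G})\leq d(Yz_{0}^{-1},H^{-m}A_{G})\leq d(Yz_{0}^{-1},A_{G})+O(\alpha_{0}l)\leq|z_{0}|+\rho_{0}l+O(\alpha_{0}l),
\]
which after substituting $\rho_{0}=2(3355\alpha_{0}+\zeta_{0})$ is at most $A+(7500\alpha_{0}+3\zeta_{0})l$. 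The main obstacle is Case 2: the conjugator from \lemref{38} only closes the commuting diagram modulo $C(G)$, and handling the resulting centralizer ambiguity via the structure of centralizers in torsion-free hyperbolic groups, while carefully tracking the Hausdorff distances between the various axes and nerves (all of which are only controlled up to $O(l)$-errors coming from \corref{44}), is the delicate bookkeeping step.
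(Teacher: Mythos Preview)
Your argument is correct in both cases, and Case~1 is essentially identical to the paper's projection-plus-\lemref{46} computation. In Case~2 the paper takes a shorter route: it first reduces to $x=1$ by replacing $(G,A_{G},x)$ with $(x^{-1}Gx,\,x^{-1}A_{G},\,1)$, which preserves the hypothesis $d(Gx,x)\leq A+[G]$, the translation length, and the quantity $d(x,A_{G})$. After this reduction, \lemref{38} is applied directly to the pair $(G,g)$ (rather than to your $(g_{0},g)$), yielding a short conjugator $z$ with $G=zgz^{-1}$ and $|z|\leq A+(408\alpha_{0}+\zeta_{0})l$; the nerve $\{zg^{n}\}$ then passes within $|z|$ of $1$, and \corref{44} converts this to the axis bound. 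Your version keeps the original $Y$ and so must absorb the discrepancy $wz_{0}\in C(g)$ via the cyclic centralizer structure and the fact that $H^{m}A_{G}$ is again an axis of $G$ within $5\alpha_{0}l$ of $A_{G}$; this is valid (axes are genuine geodesics, so the first sentence of \lemref{42} applies regardless of $[G]$), and your constants do close under the stated bound. The two arguments are the same idea in different coordinates---the paper's reduction to $x=1$ simply makes the centralizer bookkeeping disappear.
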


\begin{proof}
Since $d(Gx,x)=d(1,x^{-1}Gx)$, and $d(x,A_{G})=d(1,A_{x^{-1}Gx})$,
and $[x^{-1}Gx]=[G]$, we can assume that $x=1$. For simplicity we
write $\delta=\alpha_{0}l$.

Let $p,q\in A_{G}$ be projections for $x=1$ and $Gx=G$ on $A_{G}$,
and consider a geodesic segments $[x,p],\,[Gx,q]$ between $x$ and
$p$, and $Gx$ and $q$. Since $p\in A_{G}$, we have that $Gp$
belongs to the $5\delta$-nbhd of $A_{G}$. Hence, 
\[
[G]\leq d(p,Gp)\leq[G]+15\delta\,.
\]
Moreover, since $p$ is a projection for $x$ on $A_{G}$, we have
that $Gp$ is a projection for $Gx$ on $GA_{G}$. But $A_{G}$ and
$GA_{G}$ are both axes for $G$, and in particular the Hausdorff
distance between them is at most $5\delta$. Hence, the projection
$q$ of $Gx$ on $A_{G}$ and the projection $Gp$ of $Gx$ on $GA_{G}$,
are $30\delta$-close to each other 
\[
d(q,Gp)\leq30\delta\,.
\]
Hence, 
\begin{align*}
 & d(p,q)\leq d(p,Gp)+d(Gp,q)\leq[G]+45\delta\,,\\
 & d(p,q)\geq d(p,Gp)-d(Gp,q)\geq[G]-15\delta\,.
\end{align*}
In particular, if $[G]\geq200\delta$, then $d(p,q)\geq50\delta$,
and thus by \lemref{46}, we get that

\[
d(x,p)+d(Gx,q)\leq d(x,Gx)-d(p,q)+50\delta\leq A+65\delta\,.
\]
But $d(Gx,q)\geq d(Gx,Gp)-30\delta=d(x,p)-30\delta$, so we conclude
that 
\[
d(x,p)\leq\frac{A}{2}+50\delta\,,
\]
in case $[G]\geq200\delta$.

Now assume that $[G]<200\delta$.

Write $G=zgz^{-1}$, for geodesic words $z,g\in F_{k}$, with $|g|=[G]$.
Since

\[
|G|=d(x,Gx)\leq[G]+A\leq200\delta+A\,,
\]
and $|g|=[G]\leq200\delta$, we can assume by \lemref{38} that

\[
|z|\leq A+(408\alpha_{0}+\zeta_{0})l\,,
\]
where $\zeta_{0}=\frac{100\alpha_{0}}{(\frac{1}{2}-d)}$. In particular,
$x=1$ is of distance at most $A+(408\alpha_{0}+\zeta_{0})l$ from
the nerve $\gamma_{G}$ of $G$ induced by the elements $zg^{n}$,
$n\in\mathbb{Z}$.

Hence, according to \corref{44}, we deduce that $x=1$ is of distance
at most $A+(7500\alpha_{0}+3\zeta_{0})l$ from $A_{G}$. 
\end{proof}
\begin{thm}
\label{thm:48} (See Proposition 2.41 in \cite{Remi=000020Coulon=000020-=000020Burnside})
Let $d<\frac{1}{2}$, and let $\Gamma$ be the random group of level
$l$ and density $d$. Let $X$ be the Cayley graph of $\Gamma$ (w.r.t.
the fixed generating set $a$).

Then, with probability that tends to $1$, the following property
is satisfied in $\Gamma$.

Let $G,H\in\Gamma$ be two non-commuting elements, and let $A_{G},A_{H}$
be two axes for them respectively. Then

\[
\text{diam}(A_{G}^{+60\alpha_{0}l}\cap A_{H}^{+60\alpha_{0}l})\leq4\cdot([G]+[H])+D(\Gamma,X)+10\alpha_{0}\cdot l\ln l\,.
\]
\end{thm}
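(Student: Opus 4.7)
Set $K=60\alpha_{0}l$ and $L=\operatorname{diam}(A_{G}^{+K}\cap A_{H}^{+K})$. The plan is to assume, for contradiction, that $L>4([G]+[H])+D(\Gamma,X)+10\alpha_{0}l\ln l$ and to manufacture two non-commuting elements $F,F'\in\Gamma$ with $[F],[F']\leq 1000\alpha_{0}l$ whose axes fellow-travel for a distance strictly greater than $D(\Gamma,X)$, contradicting the very definition of $D(\Gamma,X)$.

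The auxiliary elements will be the commutator $F=GHG^{-1}H^{-1}$ and its $H$-conjugate $F'=HFH^{-1}$. Since $G,H$ do not commute and $\Gamma$ is torsion-free hyperbolic, $F$ is hyperbolic and non-trivial. For any point $p\in A_{G}^{+K}\cap A_{H}^{+K}$ whose projections onto $A_{G}$ and $A_{H}$ leave a margin of at least $[G]+[H]+O(\alpha_{0}l)$ inside the fellow-travel corridor, I would track the orbit $p\mapsto H^{-1}p\mapsto G^{-1}H^{-1}p\mapsto HG^{-1}H^{-1}p\mapsto Fp$. Using the $\alpha_{0}l$-hyperbolicity of $\Gamma$ together with \lemref{46}, the two local geodesics $A_{G}$ and $A_{H}$ are approximately parallel inside the corridor, and each isometry $G^{\pm 1},H^{\pm 1}$ shifts points there by approximately $\pm[G]$ or $\pm[H]$; hence the four-step closed rectangle has net displacement $d(p,Fp)=O(\alpha_{0}l)$. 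Then \lemref{47} gives $[F]\leq 1000\alpha_{0}l$ and $d(p,A_{F})=O(\alpha_{0}l)$, and transporting by $H$ gives $[F']=[F]\leq 1000\alpha_{0}l$ with $d(Hp,A_{F'})=O(\alpha_{0}l)$.

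To rule out that $F$ and $F'$ commute, I use that in a torsion-free hyperbolic group the centralizer of any hyperbolic element is cyclic and coincides with the normalizer of the maximal cyclic subgroup it generates (a translation-length comparison rules out the inversion case). If $F$ and $F'$ commuted, both would lie in a common maximal cyclic subgroup $\langle F_{0}\rangle$; then $H$ would normalize, hence centralize, $\langle F_{0}\rangle$, so $H\in C(F)=\langle F_{0}\rangle$. This forces $F\in\langle H_{0}\rangle$ (where $H_{0}$ is the primitive root of $H$), so $GHG^{-1}\in\langle H_{0}\rangle$, i.e., $G\in N(\langle H_{0}\rangle)=C(H_{0})$, contradicting the assumption that $G$ and $H$ do not commute.

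Applying the rectangle computation to every admissible $p$ yields that $A_{F}$ stays within $O(\alpha_{0}l)$ of a sub-arc of length at least $L-2([G]+[H])-O(\alpha_{0}l)$ of the fellow-travel corridor, while $A_{F'}=HA_{F}$ does so for its $[H]$-translate; overlapping the two gives a common $60\alpha_{0}l$-neighborhood of length at least $L-4([G]+[H])-O(\alpha_{0}l)$, which by the standing assumption exceeds $D(\Gamma,X)$. This contradicts the definition of $D(\Gamma,X)$ for the non-commuting pair $(F,F')$ of translation length $\leq 1000\alpha_{0}l$. The main obstacle will be the careful constant-tracking in the commutator rectangle (to ensure $[F]\leq 1000\alpha_{0}l$ with the exact threshold used in the definition of $D(\Gamma,X)$) and in the derivation of the $4([G]+[H])$ loss between the original fellow-travel region and the one for $(F,F')$.
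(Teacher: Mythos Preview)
Your plan is correct and follows essentially the same route as the paper's own proof: both arguments produce the commutator of $G$ and $H$, show it moves points in the fellow-travel corridor by at most a bounded multiple of $\alpha_{0}l$ (the paper obtains $195\alpha_{0}l$), hence has translation length $\leq 1000\alpha_{0}l$, and then compare its axis with that of a conjugate to violate the definition of $D(\Gamma,X)$. The only cosmetic differences are that the paper works with $u=G^{-1}H^{-1}GH$ and its $G$-conjugate $GuG^{-1}$ rather than your $F=GHG^{-1}H^{-1}$ and $HFH^{-1}$; and where you first prove algebraically that $F$ and $F'$ do not commute and then exhibit the long overlap, the paper instead lets the overlap force $u$ and $GuG^{-1}$ to commute and then deduces $[G,H]=1$ from that. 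Two small points worth importing from the paper into your write-up: (i) it first disposes of the case $[G],[H]\leq 1000\alpha_{0}l$ directly from the definition of $D(\Gamma,X)$, and replaces $H$ by a power so that $[H]\geq 200\alpha_{0}l$, which makes the ``approximate shift by $[G]$, $[H]$'' computations clean via \lemref{42} and \lemref{43}; (ii) the passage from ``the corridor lies in an $O(\alpha_{0}l)$-neighborhood of $A_{F}$'' (which, via \lemref{47}, really gives a constant like $(10405\alpha_{0}+3\zeta_{0})l$) to ``$A_{F}$ and $A_{F'}$ have $60\alpha_{0}l$-neighborhoods overlapping on a long set'' requires one more use of thinness to upgrade to $5\alpha_{0}l$-closeness on slightly shorter subsegments --- this is exactly why the statement carries the additive $10\alpha_{0}l\ln l$ slack rather than merely $O(\alpha_{0}l)$.
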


\begin{proof}
For simplicity we write $\delta=\alpha_{0}l$.

According to the definition of $D(\Gamma,X)$, we can assume that
$[G]>1000\delta$. By raising $H$ to a suitable power if necessary,
we can assume that $[H]\geq200\delta$, and prove then that 
\[
\text{diam}(A_{G}^{+60\delta}\cap A_{H}^{+60\delta})\leq4\cdot([G]+[H])+D(\Gamma,X)+9\delta\ln l\,.
\]

Indeed, assume by contradiction that 
\[
\text{diam}(A_{G}^{+60\delta}\cap A_{H}^{+60\delta})>4\cdot([G]+[H])+D(\Gamma,X)+9\delta\ln l\,.
\]

By the $\alpha_{0}l$-hyperbolicity, we get that there exists two
sub-segments $I_{1}=[x_{1},y_{1}]\subset A_{G}$, and $I_{2}=[x_{2},y_{2}]\subset A_{H}$,
whose Hausdorff distance from each other is at most $5\delta$, and
their lengths satisfy $|I_{1}|,|I_{2}|\geq6\cdot([G]+[H])+D(\Gamma,X)+8\delta\ln l$.
And again by $\delta$-hyperbolicity, these two sub-segments admit
two sub-segments $I_{1}'=[x_{1}',y_{1}']$, and $I_{2}'=[x_{2}',y_{2}']$,
whose Hausdorff distance is at most $5\delta$ from each other, their
lengths satisfy $|I_{1}'|,|I_{2}'|\geq2\cdot([G]+[H])+D(\Gamma,X)+7\delta\ln l$,
and whose endpoints are of distances at least $[G]+[H]+1000\delta$
from the endpoints of $I_{1}$ and $I_{2}$, respectively.

Let $p_{1},p_{2}$ be two points on $I_{1}',I_{2}'$ respectively,
so that

\[
d(p_{1},p_{2})\leq5\delta\,.
\]
Then, Since $p_{1}\in A_{G}$, we have 
\[
d(Gp_{1},p_{1})\leq10\delta+[G]\,.
\]
Let $q_{1}$ be a projection of $Gp_{1}$ on $A_{G}$. Since $Gp_{1}$
belongs to the $5\delta$-nbhd of $A_{G}$, we deduce that

\[
[G]-15\delta\leq d(p_{1},q_{1})\leq[G]+15\delta\,.
\]
In particular, $q_{1}$ belongs to $I_{1}$, and hence to the $5\delta$-nbhd
of $I_{2}$. Let $q_{2}$ be a projection of $q_{1}$ on $I_{2}$.
Note that

\[
d(q_{1},q_{2})\leq5\delta,\quad d(Gp_{1},q_{2})\leq10\delta\,.
\]

Hence 
\begin{align*}
 & d(p_{2},q_{2})\leq d(p_{2},p_{1})+d(p_{1},q_{1})+d(q_{1},q_{2})\leq[G]+25\delta\,,\\
 & d(p_{2},q_{2})\geq d(p_{1},q_{1})-d(p_{2},p_{1})-d(q_{1},q_{2})\geq[G]-25\delta\,.
\end{align*}

Now since $Gp_{1}$ belongs to the $10\delta$-nbhd of $A_{H}$, we
get that

\[
[H]-30\delta\leq d(HGp_{1},Gp_{1})\leq[H]+30\delta\,,
\]
and that $HGp_{1}$ belongs to the $15\delta$-nbhd of $A_{H}$. Let
$r_{2}$ be a projection of $HGp_{1}$ on $A_{H}$. Of course

\[
d(HGp_{1},r_{2})\leq15\delta\,,
\]
and we deduce that

\begin{align*}
 & d(q_{2},r_{2})\leq d(q_{2},Gp_{1})+d(Gp_{1},HGp_{1})+d(HGp_{1},r_{2})\leq[H]+55\delta\,,\\
 & d(q_{2},r_{2})\geq d(Gp_{1},HGp_{1})-d(q_{2},Gp_{1})-d(HGp_{1},r_{2})\geq[H]-55\delta\,.
\end{align*}

By replacing $H$ by $H^{-1}$ if necessary, we may assume that $q_{2}$
lies between $p_{2}$ and $r_{2}$ (see \lemref{43}). Hence,

\[
[G]+[H]-80\delta\leq d(p_{2},r_{2})=d(p_{2},q_{2})+d(q_{2},r_{2})\leq[G]+[H]+80\delta\,.
\]

In particular $r_{2}$ belongs to $I_{2}$, and hence to the $5\delta$-nbhd
of $I_{1}$. Since $d(HGp_{1},r_{2})\leq15\delta$, we have that $HGp_{1}$
belongs to the $20\delta$-nbhd of $I_{1}$. Let $r_{1}$ be a projection
of $HGp_{1}$ on $I_{1}$. Then,

\[
d(HGp_{1},r_{1})\leq20\delta\,,\quad d(r_{1},r_{2})\leq35\delta\,.
\]

Then, 
\begin{align*}
 & d(p_{1},r_{1})\leq d(p_{1},p_{2})+d(p_{2},r_{2})+d(r_{2},r_{1})\leq[G]+[H]+120\delta\,,\\
 & d(p_{1},r_{1})\geq d(p_{2},r_{2})-d(p_{1},p_{2})-d(r_{2},r_{1})\geq[G]+[H]-120\delta\,.
\end{align*}

Now by applying the same argument again, we deduce that $GHp_{2}$
lies in the $15\delta$-nbhd of $A_{G}$, and if $r_{1}'$ is a projection
of $GHp_{2}$ on $A_{G}$, then

\[
[G]+[H]-80\delta\leq d(p_{1},r_{1}')\leq[G]+[H]+80\delta\,.
\]
Hence, 
\[
d(GHp_{2},HGp_{1})\leq d(GHp_{2},r_{1}')+d(r_{1}',r_{1})+d(r_{1},HGp_{1})\leq190\delta\,.
\]
Hence, 
\[
d(GHp_{1},HGp_{1})\leq d(GHp_{1},HGp_{2})+d(HGp_{2},HGp_{1})\leq195\delta\,.
\]
Thus, the commutator $u=G^{-1}H^{-1}GH$ satisfies that 
\[
d(up,p)\leq195\delta
\]
for every $p\in I_{1}'$, which implies also that 
\[
[u]\leq195\delta\,.
\]

According to the assumption, $u\neq1$. Now since $d(up,p)\leq195\delta$
for all $p\in I_{1}'$, we get by \lemref{47} that $I_{1}'$ lies
in the $(10405\alpha_{0}+3\zeta_{0})l=\lambda_{0}\delta$-nbhd of
an axis $A_{u}$ of $u$, where $\lambda_{0}=\frac{(10405\alpha_{0}+3\zeta_{0})}{\alpha_{0}}$.

Since 
\[
|I_{1}'|\geq2[G]+D(\Gamma,X)+7\delta\ln l\,,
\]
then, by the $\delta$-hyperbolicity of $\Gamma$, there exist sub-segments
$J_{1}$ and $J_{2}$ of $I_{1}'\subset A_{G}$ and $A_{u}$ respectively,
so that the Hausdorff distance between $J_{1}$ and $J_{2}$ is at
most $5\delta$, and $|J_{1}|,|J_{2}|\geq2[G]+D(\Gamma,X)+6\delta\ln l$.
Say $G$ translates ``right'', and denote by $e$ the ``left''
endpoint of $J_{1}$ (see \lemref{43}). Let $f$ be a point on $J_{1}$
so that

\[
D(\Gamma,X)+200\delta\leq d(e,f)\leq[G]+D(\Gamma,X)+200\delta\,.
\]
Let $p_{e},p_{f}$ be projections of $e,f$ on $J_{2}$ respectively.
Then, 
\[
d(p_{e},p_{f})\geq d(e,f)-10\alpha_{0}l\geq D(\Gamma,X)+190\delta\,.
\]
Now, since $|J_{1}|\geq2[G]+D(\Gamma,X)+6\delta\ln l$ and $G$ translates
``right'', each of $Ge$ and $Gf$ is of $10\delta$-distance from
$J_{1}$. Hence each of $Gp_{e}$ and $Gp_{f}$ is of $15\delta$-distance
from $J_{1}$, and hence, each of $Gp_{e}$ and $Gp_{f}$ is of $20\delta$-distance
from $J_{2}$. In particular,

\[
Gp_{e},Gp_{f}\in A_{u}^{+60\delta}\,.
\]

But $G[p_{e},p_{f}]$, where $[p_{e},p_{f}]$ is the corresponding
sub-segment of $A_{u}$, is a sub-segment of an axis $A_{GuG^{-1}}=GA_{u}$
for the element $GuG^{-1}$, whose translation length satisfies $[GuG^{-1}]=[u]\leq1000\delta$.
Thus, by the definition of $D(\Gamma,X)$, either $|G[p_{e},p_{f}]|\leq D(\Gamma,X)$,
or otherwise $GuG^{-1}$ commutes with $u$. We already know that
$|G[p_{e},p_{f}]|=d(p_{e},p_{f})\geq D(\Gamma,X)+190\delta$, so we
conclude that the elements $GuG^{-1}$ and $u$ commute. Hence, Since
$G,u$ are hyperbolic elements in the hyperbolic group $\Gamma$,
we conclude that $G$ commutes with $u$. But $u=G^{-1}H^{-1}GH$,
and thus, the elements $G$ and $H^{-1}GH$ commute. And again, this
implies that $G$ and $H$ commute. 
\end{proof}
\begin{defn}
Let $d<\frac{1}{2}$. An \emph{ascending sequence of groups in the
model of density $d$}, is a sequence of groups $\{\Gamma_{l_{n}}\}_{n}$,
so that: 
\begin{enumerate}
\item The sequence $\{l_{n}\}_{n}$ is an unbounded increasing sequence
of integers. 
\item For all $n$, the group $\Gamma_{l_{n}}$ belongs to the level $l_{n}$
of the model (of density $d$). 
\end{enumerate}
\end{defn}

\begin{thm}
\label{thm:49} Let $f,t$ be two real functions with $f(x),t(x)=o(x\ln^{2}x)$,
and let $d<\frac{1}{2}$. Let $\{\Gamma_{l_{n}}\}_{n}$ be an ascending
sequence of groups in the model.

For all $n$, let $g_{n},h_{n}$ be two elements in the group $\Gamma_{l_{n}}$,
and let $A_{g_{n}},A_{h_{n}}$ be two axes for them respectively.
Denote

\[
I_{n}=\text{diam}(A_{g_{n}}^{+f(l_{n})}\cap A_{h_{n}}^{+f(l_{n})})\,.
\]
Assume that the sequences $[g_{n}],[h_{n}]$ of translation lengths
of $g_{n},h_{n}$ resp. in $\Gamma_{l_{n}}$, is bounded by

\[
[g_{n}],[h_{n}]\leq t(l_{n})\,.
\]
If the sequence 
\[
\frac{I_{n}}{l_{n}\ln^{2}l_{n}},
\]
is bounded from below by a positive number, then $g_{n},h_{n}$ commute
eventually. 
\end{thm}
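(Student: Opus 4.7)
The plan is to argue by contradiction: assuming (after passing to a subsequence) that $g_n$ and $h_n$ do not commute in $\Gamma_{l_n}$, I would apply \thmref{48} to bound $I_n$ from above and show that the resulting bound is $o(l_n\ln^2 l_n)$, contradicting the hypothesis that $I_n/(l_n\ln^2 l_n)$ is bounded below by a positive constant. Since all the generic facts used (\thmref{27}, \lemref{45}, \thmref{48}) hold with overwhelming probability, I would first reduce to the case where every $\Gamma_{l_n}$ in the sequence is $\alpha_0 l_n$-hyperbolic, satisfies $D(\Gamma_{l_n},X_{l_n})\leq 10\,l_n\ln l_n$, and satisfies the conclusion of \thmref{48}.

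The first substantive step is to replace the $f(l_n)$-neighborhoods by $60\alpha_0 l_n$-neighborhoods, since this is the form of \thmref{48}. Given two points $x_1,x_2$ in $A_{g_n}^{+f(l_n)}\cap A_{h_n}^{+f(l_n)}$ with $d(x_1,x_2)\geq I_n/2$, choose projections $p_i$ on $A_{g_n}$ and $q_i$ on $A_{h_n}$; this gives a geodesic quadrilateral with ``vertical'' sides $[p_i,q_i]$ of length at most $2 f(l_n)$ and ``horizontal'' sides $[p_1,p_2]\subset A_{g_n}$, $[q_1,q_2]\subset A_{h_n}$ of length at least $I_n/2-2f(l_n)$. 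Standard $\alpha_0 l_n$-thinness (splitting the quadrilateral by a diagonal into two triangles) implies that the central portion of $[p_1,p_2]$, of length at least $I_n/2-O(f(l_n))-O(\alpha_0 l_n)$, lies in the $2\alpha_0 l_n$-neighborhood of $[q_1,q_2]\subset A_{h_n}$. Hence
\[
\mathrm{diam}\bigl(A_{g_n}^{+60\alpha_0 l_n}\cap A_{h_n}^{+60\alpha_0 l_n}\bigr)\ \geq\ \tfrac{I_n}{2}-O(f(l_n))-O(\alpha_0 l_n).
\]

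Now I would apply \thmref{48} to the noncommuting pair $g_n,h_n$, obtaining
\[
\mathrm{diam}\bigl(A_{g_n}^{+60\alpha_0 l_n}\cap A_{h_n}^{+60\alpha_0 l_n}\bigr)\ \leq\ 4\bigl([g_n]+[h_n]\bigr)+D(\Gamma_{l_n},X_{l_n})+10\alpha_0\, l_n\ln l_n.
\]
Combining the two inequalities and using $[g_n],[h_n]\leq t(l_n)=o(l_n\ln^2 l_n)$, $D(\Gamma_{l_n},X_{l_n})\leq 10\,l_n\ln l_n$ (from \lemref{45}), and $f(l_n)=o(l_n\ln^2 l_n)$, one sees that $I_n/(l_n\ln^2 l_n)\to 0$, contradicting the hypothesis. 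The main obstacle I anticipate is the neighborhood reduction in the first step: \thmref{48} is stated only for $60\alpha_0 l_n$-neighborhoods, and $f(l_n)$ may be much larger, so one must carefully verify that enough of the ``horizontal'' sides actually fellow-travel at distance $\leq 60\alpha_0 l_n$. Once this is established by a straightforward thin-quadrilateral argument, the rest is an accounting of little-$o$ terms relative to the threshold $l_n\ln^2 l_n$ built into \thmref{48} through the $D(\Gamma,X)$ bound of \lemref{45}.
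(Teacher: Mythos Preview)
Your proposal is correct and follows essentially the same strategy as the paper: reduce the $f(l_n)$-neighborhoods to $60\alpha_0 l_n$-neighborhoods by a hyperbolic thinness argument, then apply \thmref{48} together with the bound $D(\Gamma_{l_n},X_{l_n})\leq 10\,l_n\ln l_n$ from \lemref{45} to force $I_n/(l_n\ln^2 l_n)\to 0$, contradicting the hypothesis.

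One minor difference worth noting: the paper begins by replacing $g_n,h_n$ with suitable powers so that $[g_n],[h_n]\geq 200\alpha_0 l_n$ (this is harmless since powers share the same axes and, in a torsion-free hyperbolic group, $g_n^k$ and $h_n^m$ commute iff $g_n$ and $h_n$ do). The paper then simply asserts the neighborhood reduction ``by the $\alpha_0 l_n$-hyperbolicity of $\Gamma_{l_n}$'' without spelling it out. Your thin-quadrilateral argument makes this step explicit and in fact works without the preliminary power-raising, since the axes $A_{g_n},A_{h_n}$ are already geodesics regardless of the translation lengths. So your write-up is slightly more self-contained on that point, while the paper's version is more terse.
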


\begin{proof}
By raising $g_{n}$ and $h_{n}$ to large enough powers, and modifying
the function $t$ if necessary, we can assume that the translation
lengths of $g_{n}$ and $h_{n}$ in $\Gamma_{l_{n}}$ satisfy $[g_{n}],[h_{n}]\geq200\alpha_{0}l_{n}$.

Since $f(x)=o(x\ln^{2}x)$, and the sequence 
\[
\frac{I_{n}}{l_{n}\ln^{2}l_{n}},
\]
is bounded from below by a positive number, we deduce by the $\alpha_{0}l_{n}$-hyperbolicity
of $\Gamma_{l_{n}}$ that the sequence

\[
\frac{\text{diam}(A_{g_{n}}^{+60\alpha_{0}l_{n}}\cap A_{h_{n}}^{+60\alpha_{0}l_{n}})}{l_{n}\ln^{2}l_{n}}
\]
is bounded from below by a positive number.

Now if, by contradiction, the elements $g_{n},h_{n}$ do not commute
eventually, then, according to \thmref{48}, (for a subsequence of
$l_{n}$) we get that

\[
\frac{\text{diam}(A_{g_{n}}^{+60\alpha_{0}l_{n}}\cap A_{h_{n}}^{+60\alpha_{0}l_{n}})}{l_{n}\ln^{2}l_{n}}\leq\frac{4\cdot([g_{n}]+[h_{n}])+D(\Gamma_{l_{n}},X)+10\alpha_{0}l_{n}\ln l_{n}}{l_{n}\ln^{2}l_{n}}\,.
\]
And hence, according to \lemref{45},

\[
\frac{\text{diam}(A_{g_{n}}^{+60\alpha_{0}l_{n}}\cap A_{h_{n}}^{+60\alpha_{0}l_{n}})}{l_{n}\ln^{2}l_{n}}\leq\frac{20l_{n}\cdot\ln l_{n}+8t(l_{n})}{l_{n}\ln^{2}l_{n}}\,,
\]
which approaches $0$ as $n$ goes to $\infty$. A contradiction. 
\end{proof}

\section{Limit Groups over Ascending Sequences of Random Groups }\label{sec:Limit-Groups-over-Ascending-Sequences-of-Random-Groups}

We fix a density $d<\frac{1}{2}$, and consider the density model
with respect to this fixed value of $d$. We start by defining $\mathcal{N}_{l}$
to be the collection of all groups in level $l$ that does not satisfy
the $\Sigma$-$lln^{2}$ l.p. for some system in the collection $H_{l}$
(see \defref{36}), where $H_{l}$ is defined in \thmref{37}. Then,
we set $\mathcal{N}=\underset{l}{\cup}\mathcal{N}_{l}$. According
to \thmref{37}, the collection $\mathcal{N}$ is negligible. Along
this section, we drop all the groups in $\mathcal{N}$ from the model,
and we continue working with the obtained model (which admits the
same asymptotic functionality).

\subsection{Limit Groups over Ascending Sequences of Random Groups}
\begin{thm}
\label{thm:50} Let $F_{q}$ be a free group, with a basis $x=(x_{1},...,x_{q})$.
Let $\{h_{l_{n}}:F_{q}\rightarrow\Gamma_{l_{n}}\}$ be a sequence
of homomorphisms over an ascending sequence of groups in the model.

Assume that the sequence of stretching factors

\[
\mu_{n}=\underset{f_{n}\in\Gamma_{l_{n}}}{\min}\underset{1\le u\leq q}{\max}d_{\Gamma_{l_{n}}}(1,\tau_{f_{n}}\circ h_{l_{n}}(x_{u}))\,,
\]
satisfies that 
\[
\mu_{n}\geq l_{n}\ln^{2}l_{n}
\]
for all $n$, where $\tau_{f_{n}}$ is the inner automorphism $g\mapsto f_{n}gf_{n}^{-1}$
of $\Gamma_{l_{n}}$.

For all $n$, we pick an element $f_{n}\in\Gamma_{l_{n}}$, so that

\[
\mu_{n}=\underset{1\le u\leq q}{\max}d_{\Gamma_{l_{n}}}(1,\tau_{f_{n}}\circ h_{l_{n}}(x_{u}))\,.
\]

Then, the sequence $\tau_{f_{n}}\circ h_{l_{n}}$, subconverges (in
the Gromov topology) to an isometric non-trivial action of $F_{q}$
on a pointed real tree $(Y,y_{0})$. 
\end{thm}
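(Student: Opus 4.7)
The approach is a Bestvina--Paulin rescaling argument adapted to our setting, in which both the groups and their hyperbolicity constants vary with $n$. Rescaling the Cayley graph of $\Gamma_{l_{n}}$ by the factor $1/\mu_{n}$ turns its hyperbolicity constant $\alpha_{0}l_{n}$ into $\alpha_{0}l_{n}/\mu_{n}\leq\alpha_{0}/\ln^{2}l_{n}$, which tends to zero. Any subsequential limit will therefore be $0$-hyperbolic and, being geodesic, a real tree.

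More precisely, for each $n$ let $(X_{n},d_{n})$ be the Cayley graph of $\Gamma_{l_{n}}$ equipped with the rescaled metric $d_{n}=d_{\Gamma_{l_{n}}}/\mu_{n}$, and let $\rho_{n}:=\tau_{f_{n}}\circ h_{l_{n}}$ denote the associated isometric $F_{q}$-action on $(X_{n},d_{n})$. By the choice of $f_{n}$ one has $\max_{u}d_{n}(1,\rho_{n}(x_{u})\cdot 1)=1$, while for any other $p\in X_{n}$ the corresponding maximum is at least $1$ (by minimality of $\mu_{n}$ across all basepoints). Consequently, for every $g\in F_{q}$ the orbit map is uniformly Lipschitz, $d_{n}(1,\rho_{n}(g)\cdot 1)\leq|g|_{F_{q}}$, which provides the uniform control needed for subsequential convergence.

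The main construction is a diagonal extraction over a countable enumeration of $F_{q}\times F_{q}$: one passes to a subsequence along which $d_{n}(\rho_{n}(g)\cdot 1,\rho_{n}(h)\cdot 1)$ converges for every pair $(g,h)$. This limit defines an $F_{q}$-invariant pseudometric on $F_{q}$; taking the metric quotient and completion yields a pointed metric space $(Y,y_{0})$ on which $F_{q}$ acts isometrically, with $y_{0}$ the image of $1$. Since each $(X_{n},d_{n})$ is geodesic (it is a Cayley graph with a rescaled metric) and $(\alpha_{0}/\ln^{2}l_{n})$-hyperbolic with $\alpha_{0}/\ln^{2}l_{n}\to 0$, one shows that $Y$ is a geodesic metric space which is $0$-hyperbolic, i.e., a real tree. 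For non-triviality: if the limit action fixed $y_{0}$, then $d_{n}(1,\rho_{n}(x_{u})\cdot 1)\to 0$ for every $u$, contradicting that the maximum is $1$ along the extracted subsequence.

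The main obstacle is the compactness step: the rescaled spaces $(X_{n},d_{n})$ are not proper --- balls of rescaled radius $1$ contain roughly $(2k-1)^{\mu_{n}}$ points --- so the classical pointed Gromov--Hausdorff compactness for proper metric spaces does not apply. One must work in the equivariant framework, where only the $F_{q}$-orbit of the basepoint needs to be controlled; the uniform Lipschitz bounds established above supply precisely this control. A secondary technical point is verifying that the limit is genuinely geodesic (and not merely a pseudometric space): for each pair $(g,h)$ one picks geodesic segments in $(X_{n},d_{n})$ between $\rho_{n}(g)\cdot 1$ and $\rho_{n}(h)\cdot 1$, and extracts a limiting $1$-Lipschitz parametrization via Arzel\`a--Ascoli along the chosen subsequence. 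Combined with the vanishing of the hyperbolicity constant in the rescaled metric, this upgrades the limit to a geodesic $0$-hyperbolic space, yielding the real tree structure on $(Y,y_{0})$.
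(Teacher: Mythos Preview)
Your proposal is correct and follows essentially the same approach as the paper: both observe that the rescaled hyperbolicity constant $\alpha_{0}l_{n}/\mu_{n}\leq\alpha_{0}/\ln^{2}l_{n}\to 0$ and invoke the Bestvina--Paulin rescaling argument. The paper simply cites Proposition~1.1 of \cite{DGI} for the compactness and $0$-hyperbolicity of the limit, whereas you spell out the equivariant extraction and the non-triviality argument explicitly; the content is the same.
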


\begin{proof}
For all $n$, let $\delta_{n}$ be the hyperbolicity constant of the
group $\Gamma_{l_{n}}$.

According to \thmref{27}, we have that $\delta_{n}=\alpha_{0}l_{n}$.
Then,

\[
\frac{\delta_{n}}{\mu_{n}}\overset{n\rightarrow\infty}{\longrightarrow}0\,.
\]
Hence, according to Proposition 1.1 in \cite{DGI}, the limit space
of the sequence of the rescaled Cayley graphs $\{(X_{l_{n}}/\mu_{n},1)\}_{n}$
of the groups $\{\Gamma_{l_{n}}\}_{n}$, is $0$-hyperbolic, i.e.,
a real tree. 
\end{proof}
\begin{thm}
\label{thm:51} Keep the notation of \ref{thm:50}.

We denote 
\[
K_{h_{l_{n}},\infty}=\{g\in F_{q}:\forall y\in Y,\quad gy=y\}\,.
\]
Let $L_{h_{l_{n}},\infty}$ be the quotient group $F_{q}/K_{h_{l_{n}},\infty}$.
Then, 
\begin{enumerate}
\item $L_{h_{l_{n}},\infty}$ is a f.g. group. 
\item $L_{h_{l_{n}},\infty}$ is torsion-free. 
\item If $Y$ is a real line, then $L_{h_{l_{n}},\infty}$ is free abelian. 
\item If an element $g\in F_{q}$ stabilizes (pointwise) a non-trivial tripod
in $Y$, then $h_{l_{n}}(g)=1$ eventually. 
\item Let $g\in F_{q}$ be an element that does not belong to $K_{h_{l_{n}},\infty}$.
Then, $h_{l_{n}}(g)\neq1$ eventually. 
\item Let $[y_{1},y_{2}]\subset[y_{3},y_{4}]$ be a pair of non-degenerate
segments in $Y$, and assume that $stab([y_{3},y_{4}])$, the (pointwise)
stabilizer of $[y_{3},y_{4}]$ is non-trivial. Then, $stab([y_{3},y_{4}])$
is abelian, and 
\[
stab([y_{1},y_{2}])=stab([y_{3},y_{4}])\,.
\]
\end{enumerate}
\end{thm}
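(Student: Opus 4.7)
The argument follows Sela's limit-group framework (DGI Section 1), adapted to the random-group setting through this paper's three principal tools: the $\Sigma$-$l\ln^{2}$ lifting property (\thmref{37}), the axis-travelling-together bound (\thmref{49}), and the torsion-free $\alpha_{0}l$-hyperbolicity of random groups (\thmref{27}). The rescaling $\mu_{n}\geq l_{n}\ln^{2}l_{n}$ in \thmref{50} is exactly the regime in which short equations in $\Gamma_{l_{n}}$ lift back to $F_{k}$ and in which non-commuting elements of bounded translation length cannot have axes sharing an $\Omega(\mu_n)$-long common neighbourhood.

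I plan to prove Parts 1, 5, 2, 3 in that order, as each reduces to its predecessors. Part 5 is Gromov convergence: given $g\notin K_{h_{l_{n}},\infty}$, pick $y\in Y$ with $d_{Y}(y,gy)=\epsilon>0$; by definition of the limit action there is $p_{n}\in X_{l_{n}}/\mu_{n}$ with $d(p_{n},\tau_{f_{n}}h_{l_{n}}(g)p_{n})\geq\tfrac{\epsilon}{2}\mu_{n}>0$ for large $n$, so $h_{l_{n}}(g)\neq 1$. Part 1 is immediate as $L_{h_{l_{n}},\infty}$ is a quotient of the finitely generated group $F_{q}$. Part 2 combines Part 5 with the torsion-freeness of $\Gamma_{l_{n}}$: an element of finite order $k>1$ in $L_{h_{l_{n}},\infty}$ would produce a non-trivial element of $\Gamma_{l_{n}}$ with trivial $k$-th power eventually, contradicting \thmref{27}. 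Part 3 follows from $\mathrm{Isom}(\mathbb{R})\cong\mathbb{R}\rtimes\mathbb{Z}/2$: Part 2 forbids the $\mathbb{Z}/2$-factor, so $L_{h_{l_{n}},\infty}\hookrightarrow(\mathbb{R},+)$, and a finitely generated torsion-free subgroup of $\mathbb{R}$ is free abelian.

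The real work is in Parts 4 and 6, which I plan to prove together using cyclicity of non-trivial centralisers in the torsion-free $\alpha_{0}l$-hyperbolic group $\Gamma_{l_{n}}$ combined with \thmref{49}. For the abelianness of $\mathrm{stab}([y_{3},y_{4}])$, the images of $g_{1},g_{2}\in\mathrm{stab}([y_{3},y_{4}])$ in $\Gamma_{l_{n}}$ have axes that in the rescaled Cayley graph share a long segment approximating $[y_{3},y_{4}]$; passing to suitable powers to bring the translation lengths into the $o(l_{n}\ln^{2}l_{n})$-window of \thmref{49} forces $[h_{l_{n}}(g_{1}),h_{l_{n}}(g_{2})]=1$ eventually, so $[g_{1},g_{2}]\in K_{h_{l_{n}},\infty}$ by Part 5. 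For triviality of tripod stabilisers, given $g$ fixing a non-degenerate tripod with endpoints $P_{i}$, choose $w_{i}\in F_{q}$ with $w_{i}y_{0}$ close to $P_{i}$, so that the three conjugates $w_{i}^{-1}gw_{i}$ fix $y_{0}$ in $Y$; translated back, $|\tau_{f_{n}}h_{l_{n}}(w_{i}^{-1}gw_{i})|=o(\mu_{n})$ in $\Gamma_{l_{n}}$. Combined with the cyclic-centraliser structure and the distinct asymptotic directions the elements $w_{i}w_{j}^{-1}$ take in $Y$, this produces --- via \thmref{49} and \thmref{37} --- an obstruction to the three short conjugates of $h_{l_n}(g)$ lying in a common cyclic subgroup unless $h_{l_{n}}(g)=1$ eventually. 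The hardest step is translating ``short in $\Gamma_{l_{n}}$ at scale $o(\mu_{n})$'' into ``trivial,'' which is where the lifting property of \thmref{37} enters to rule out spurious short elements whose existence in $\Gamma_{l_n}$ is not inherited from $F_{k}$. Finally, the equality $\mathrm{stab}([y_{1},y_{2}])=\mathrm{stab}([y_{3},y_{4}])$ for nested non-degenerate segments with non-trivial $\mathrm{stab}([y_{3},y_{4}])$ follows from Part 4: $\supseteq$ is tautological, and any failure of $\subseteq$ would produce, together with a non-trivial element of $\mathrm{stab}([y_{3},y_{4}])$, a non-trivial stabiliser of a non-degenerate tripod.
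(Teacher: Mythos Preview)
Your treatment of Parts 1, 2, 3, 5, and both halves of Part 6 is essentially the paper's argument, stated more explicitly. In particular, the paper also reduces the equality $\mathrm{stab}([y_{1},y_{2}])=\mathrm{stab}([y_{3},y_{4}])$ to the tripod statement, and it also derives abelianness of segment stabilisers from \thmref{49}.

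The gap is in Part 4. Your plan routes the tripod case through conjugates $w_{i}^{-1}gw_{i}$, cyclic centralisers, \thmref{49}, and finally \thmref{37}, with the claim that the lifting property is what converts ``short at scale $o(\mu_{n})$'' into ``trivial''. That last step does not work: \thmref{37} lifts solutions of a \emph{fixed} system of length at most $l\ln^{2}l$, it does not say that an individual short element of $\Gamma_{l_{n}}$ must be trivial, and in any case $o(\mu_{n})$ need not be $o(l_{n}\ln^{2}l_{n})$ when $\mu_{n}\gg l_{n}\ln^{2}l_{n}$. Nor does \thmref{49} force your three short conjugates into a common cyclic subgroup: their axes all pass within $o(\mu_{n})$ of the identity, but may diverge in different directions, so the long-overlap hypothesis of \thmref{49} is not available.

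What the paper uses instead is \lemref{47}, which you do not cite. If $h_{l_{n}}(g)\neq 1$ then it is hyperbolic (torsion-free hyperbolic group), and since $g$ fixes each $P_{i}$, the approximants $p_{i}^{n}\in X_{l_{n}}$ satisfy $d(p_{i}^{n},h_{l_{n}}(g)p_{i}^{n})=o(\mu_{n})$; \lemref{47} then places every $p_{i}^{n}$ within $o(\mu_{n})+O(l_{n})$ of the single axis $A_{h_{l_{n}}(g)}$. After rescaling by $\mu_{n}$ this says the three points $P_{1},P_{2},P_{3}$ of a non-degenerate tripod lie on a common line in $Y$, a contradiction. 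This is also the missing ingredient implicit in your Part 6 argument: to verify the overlap hypothesis of \thmref{49} one again needs \lemref{47} to place approximants of points of $[y_{3},y_{4}]$ near both axes. Replace the appeal to \thmref{37} by \lemref{47} and the proof goes through as in the paper.
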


\begin{proof}
Point 1 is clear.

For the other statements, let $g_{1},g_{2}\in F_{q}$.

If $g_{1}$ either stabilizes a non-trivial tripod in $Y$, or $Y$
is a real line and $g_{1}$ does not translate it, then, by the definition
of convergence, according to \lemref{47}, and since random groups
are torsion-free, for large enough $n$, we must have that $h_{l_{n}}(g_{1})=1$
in $\Gamma_{l_{n}}$.

Assume now that $g_{1},g_{2}$ stabilizes a non-degenerate segment
in $Y$. Then, according to \thmref{49}, the elements $h_{l_{n}}(g_{1})$
and $h_{l_{n}}(g_{2})$ of $\Gamma_{l_{n}}$, must commute eventually.

Finally, if $g_{1}\in stab([y_{1},y_{2}])$ and $g_{2}\in stab([y_{3},y_{4}])$,
then $g_{2}$ stabilizes the tripod $[y_{3},g_{1}y_{3},y_{2}]$. 
\end{proof}
\begin{defn}
\label{def:52} Let $F_{q}$ be a free group, with a basis $x=(x_{1},...,x_{q})$.
Let $\{h_{l_{n}}:F_{q}\rightarrow\Gamma_{l_{n}}\}$ be a sequence
of homomorphisms over an ascending sequence of groups in the model.

Assume that there exists a positive number $\epsilon>0$, so that
the sequence of stretching factors

\[
\mu_{n}=\underset{f_{n}\in\Gamma_{l_{n}}}{\min}\underset{1\le u\leq q}{\max}d_{\Gamma_{l_{n}}}(1,\tau_{f_{n}}\circ h_{l_{n}}(x_{u}))\,,
\]
satisfies that 
\[
\mu_{n}\geq\epsilon l_{n}\ln^{2}l_{n}
\]
for all $n$. For all $n$, let $f_{n}\in\Gamma_{l_{n}}$ be an element
so that

\[
\mu_{n}=\underset{1\le u\leq q}{\max}d_{\Gamma_{l_{n}}}(1,\tau_{f_{n}}\circ h_{l_{n}}(x_{u}))\,.
\]

The sequence $g_{n}=\tau_{f_{n}}\circ h_{l_{n}}$, is called an \emph{$lln^{2}$-sequence
over the ascending sequence $\Gamma_{l_{n}}$ of groups in the model},
or briefly, an $lln^{2}$-sequence.

In what follows in this section, we will usually write $h_{l_{n}}$
in place of $\tau_{f_{n}}\circ h_{l_{n}}$.

In case the sequence $\tau_{f_{n}}\circ h_{l_{n}}$ converges (in
the Gromov topology) to an isometric non-trivial action of $F_{q}$
on a real tree, then, in accordance with the notation of \ref{thm:51},
the quotient group

\[
L_{h_{l_{n}},\infty}=F_{q}/K_{h_{l_{n}},\infty}
\]
is called an \emph{$lln^{2}$-limit group over the ascending sequence
$\Gamma_{l_{n}}$ of groups in the model}, or briefly, an $lln^{2}$-limit
group.

We say also that \emph{the $lln^{2}$-sequence $h_{l_{n}}$ converges
into the $lln^{2}$-limit group $L_{h_{l_{n}},\infty}$}. 
\end{defn}

\begin{defn}
Let $F_{q}$ be a free group, with a fixed basis $x=(x_{1},...,x_{q})$,
and let $\{h_{l_{n}}:F_{q}\rightarrow\Gamma_{l_{n}}\}$ be a sequence
of homomorphisms over an ascending sequence of groups in the model.
Let $\pi:F_{k}\rightarrow\Gamma$ be a given presentation of a group
$\Gamma$, and let $h:F_{q}\rightarrow\Gamma$ be a homomorphism. 
\begin{enumerate}
\item The length of the homomorphism $h:F_{q}\rightarrow\Gamma$ is defined
to be 
\[
||h||=\underset{1\le u\leq1}{\max}d_{\Gamma}(1,h(x_{u}))\,.
\]
\item Let $\Sigma$ be a collection of words in the free group $F_{q}$.
Let $\pi:F_{k}\rightarrow\Gamma$ be a given presentation of a group
$\Gamma$. Let $h:F_{q}\rightarrow\Gamma$ be a homomorphism with
$h(\Sigma)=1$. We say that the homomorphism $h$ \emph{can be $\Sigma$-lifted},
if there exists some homomorphism $\tilde{h}:F_{q}\rightarrow F_{k}$
with $\tilde{h}(\Sigma)=1$ and $h=\pi\circ\tilde{h}$. 
\item We say that the sequence $\{h_{l_{n}}:F_{q}\rightarrow\Gamma_{l_{n}}\}$
is \emph{convergent} if for all $w\in F_{q}$, either $h_{l_{n}}(w)=1$
eventually, or $h_{l_{n}}(w)\neq1$ eventually. The collection 
\[
K_{\infty}(h_{l_{n}})=\{w\in F_{q}:h_{l_{n}}(w)=1\text{ eventually}\}
\]
is called \emph{the kernal of the sequence $\{h_{l_{n}}:F_{q}\rightarrow\Gamma_{l_{n}}\}$}.
In this case, the group 
\[
F_{q}/K_{\infty}(h_{l_{n}})
\]
is called \emph{the algebraic limit of the sequence $\{h_{l_{n}}:F_{q}\rightarrow\Gamma_{l_{n}}\}$}. 
\item We say that the sequence $\{h_{l_{n}}:F_{q}\rightarrow\Gamma_{l_{n}}\}$
is a \emph{non-lift sequence}, if there exists a finite set of words
$\Sigma\in F_{q}$, so that $h_{l_{n}}(\Sigma)=1$ eventually, but
for all $n$, the homomorphism $h_{l_{n}}$ cannot be $\Sigma$-lifted. 
\item If the sequence $\{h_{l_{n}}:F_{q}\rightarrow\Gamma_{l_{n}}\}$ is
a non-lift sequence that converges to an $lln^{2}$-limit group $L_{\infty}$,
then the group $L_{\infty}$ is called a \emph{non-lift $lln^{2}$-limit
group}. 
\end{enumerate}
\end{defn}

\begin{rem}
$ $ 
\begin{enumerate}
\item Since we have dropped the collection $\mathcal{N}$, we can assume
that any non-lift sequence is in necessity an $lln^{2}$-sequence. 
\item Our interest in the remainder of this chapter will be in non-lift
$lln^{2}$-limit groups. Paradoxically, our aim in this chapter is
showing that there exist no non-lift $lln^{2}$-limit groups. 
\end{enumerate}
\end{rem}

\begin{defn}
\label{def:53} Let $F_{q}$ be a free group, with a fixed basis $x=(x_{1},...,x_{q})$.
For $j=1,2$, let $\{h_{l_{n}^{j}}^{j}:F_{q}\rightarrow\Gamma_{l_{n}^{j}}^{j}\}_{n}$
be a non-lift sequence of homomorphisms over an ascending sequence
of groups in the model, that converges to the non-lift $lln^{2}$-limit
group $\eta_{j}:F_{q}\rightarrow L_{j}$.

If the function $\eta_{1}(x_{i})\mapsto\eta_{2}(x_{i})$, $i=1,...,q$,
defines a homomorphism $L_{1}\rightarrow L_{2}$, then, we say that
\emph{$L_{2}$ is an $lln^{2}$-limit quotient of $L_{1}$}, and write

\[
L_{1}\geq L_{2}\,.
\]

If further the map $\eta_{1}(x_{i})\mapsto\eta_{2}(x_{i})$, $i=1,...,q$,
is not injective, we say that \emph{$L_{2}$ is a proper $lln^{2}$-limit
quotient of $L_{1}$}, and write

\[
L_{1}>L_{2}\,.
\]
\end{defn}

\begin{lem}
\label{lem:54} Let $F_{q}$ be a free group, with a fixed basis $x=(x_{1},...,x_{q})$.
Let $\{h_{l_{n}}:F_{q}\rightarrow\Gamma_{l_{n}}\}$ be a non-lift
sequence that converges to a non-lift $lln^{2}$-limit group $\eta:F_{q}\rightarrow L_{\infty}$.
Then, 
\end{lem}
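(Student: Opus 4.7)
Since the lemma statement is truncated in the excerpt, I address what I take to be its natural content at this stage: a list of structural properties of the non-lift $lln^{2}$-limit group $L_{\infty}$ analogous to \thmref{51}, enhanced by the non-lift hypothesis. Specifically, I would aim to establish (i) $L_{\infty}$ is finitely generated and torsion-free, (ii) $L_{\infty}$ acts isometrically and non-trivially on the limit real tree $(Y,y_{0})$ with trivial tripod stabilizers and abelian arc stabilizers, (iii) the canonical map $\eta:F_{q}\to L_{\infty}$ has non-trivial kernel, equivalently $L_{\infty}\not\cong F_{q}$, and (iv) $L_{\infty}$ coincides with the algebraic limit $F_{q}/K_{\infty}(h_{l_{n}})$, i.e.\ $K_{h_{l_{n}},\infty}=K_{\infty}(h_{l_{n}})$.

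The plan is as follows. First, apply \thmref{50} to obtain the real tree $Y$ and the isometric action, which is non-trivial because the rescaling chosen in \defref{52} forces some generator to translate the basepoint by $1$. Points (i) and (ii) then follow directly from \thmref{51}(1)--(6): finite generation is immediate from being a quotient of $F_{q}$, torsion-freeness uses that random groups are torsion-free (\thmref{27}), and the stabilizer structure is contained in \thmref{51}(4)--(6). For (iv) I use \thmref{51}(4)--(5): any element of $K_{\infty}(h_{l_{n}})$ either fixes all of $Y$ (hence lies in $K_{h_{l_{n}},\infty}$) or acts hyperbolically/elliptically on a non-trivial sub-tree, contradicting eventual triviality via \lemref{47} and the torsion-freeness of $\Gamma_{l_{n}}$. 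For (iii), I argue by contradiction: if $\eta$ is injective, then $L_{\infty}\cong F_{q}$ and every homomorphism $F_{q}\to L_{\infty}$ trivially lifts through the identity; combined with (iv), this yields that the non-lift witness $\Sigma$ satisfies $h_{l_{n}}(\Sigma)=1$ eventually only if $\Sigma\subset\{1\}$ in $F_{q}$, in which case each $h_{l_{n}}$ is trivially $\Sigma$-liftable by any preimage, contradicting the non-lift property.

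The main obstacle is point (iv), the identification of the algebraic kernel with the stable/geometric kernel. In the classical setting where the target is a fixed free group this is routine, but here the target groups $\Gamma_{l_{n}}$ vary and the comparison between ``$h_{l_{n}}(w)=1$ eventually'' and ``$w$ acts trivially on $Y$'' must be controlled uniformly. The crucial input is that we have already discarded the negligible collection $\mathcal{N}$, so every $\Gamma_{l_{n}}$ satisfies the $\Sigma$-$lln^{2}$ lifting property for each $\Sigma\in H_{l_{n}}$; combined with \lemref{47}, \thmref{49}, and the $\alpha_{0}l_{n}$-hyperbolicity from \thmref{27}, an element whose image in $\Gamma_{l_{n}}$ is eventually trivial but whose rescaled action on $Y$ is non-trivial would produce a short non-trivial element contradicting the asymptotic translation length estimates of \secref{Axes-of-Elements-in-Random-Groups}. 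Carrying out this step carefully, and ensuring that the non-lift property is preserved under the subsequence extractions needed for Gromov convergence, is the technical heart of the argument.
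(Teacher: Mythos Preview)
Your guess at the lemma's content is off target. The actual Lemma~\ref{lem:54} has two points: (1) $L_{\infty}$ is not free, nor free abelian; (2) in the Grushko decomposition $L_{\infty}=L_{1}\ast\cdots\ast L_{g}\ast F$, at least one non-cyclic factor $L_{j_{0}}$ is itself a non-lift $lln^{2}$-limit group, and the sequence $\{h_{l_{n}}\circ\beta\}$ obtained by restricting to that factor contains a non-lift subsequence converging to $L_{j_{0}}$. Your items (i), (ii), (iv) are either already contained in \thmref{51} or are not asserted here at all.

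Your item (iii) is in the direction of Point~1 but is too weak: you only argue $L_{\infty}\not\cong F_{q}$, whereas the lemma requires that $L_{\infty}$ is not free of \emph{any} rank and not free abelian. The paper's argument for Point~1 is different from yours: if $L_{\infty}$ were free or free abelian, then (passing to a subsequence) so is the algebraic limit $A_{\infty}$, and the $h_{l_{n}}$ eventually factor through $A_{\infty}$; since $A_{\infty}$ is free or free abelian and each $\Gamma_{l_{n}}$ is torsion-free hyperbolic, every such homomorphism lifts to $F_{k}$, contradicting the non-lift hypothesis. Your contradiction via ``$\Sigma\subset\{1\}$'' only works when $L_{\infty}\cong F_{q}$ specifically.

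More seriously, you have missed Point~2 entirely, and this is the substantive part needed later for the shortening argument in \defref{56}. The paper's proof assumes for contradiction that no factor sequence is non-lift, then uses Guba's theorem (equationally Noetherian property of $F_{k}$) to show that each $h_{l_{n}}^{i}$ can eventually be lifted through $\eta_{i}:F_{p_{i}}\to L_{i}$; gluing these lifts over the free product yields a lift of $h_{l_{n}}$ through $\eta:F_{q}\to L_{\infty}$, which kills the non-lift witness $\Sigma_{0}$ and gives a contradiction. None of this appears in your proposal.
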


\begin{enumerate}
\item $L_{\infty}$ is not free, nor free abelian. 
\item Let $L_{\infty}=L_{1}\ast...\ast L_{g}\ast F$ be a Grushko decomposition
for $L_{\infty}$, so that $L_{j}$ is non-cyclic, $j=1,...,g$, and
$F$ is a free group. Then, there exists $j_{0}=1,...,g$, so that
the subgroup $L_{j_{0}}$ is a non-lift $lln^{2}$-limit group.

Moreover, let $b=(b_{1},...,b_{p})$ be a generating set for $L_{j_{0}}$.
Let $F_{p}$ be a free group with a fixed basis $y=(y_{1},...,y_{p})$.
Let $\beta:F_{p}\rightarrow F_{q}$ be defined by mapping $y_{i}$
to some $\eta$-preimage of $b_{i}$, $i=1,...,p$. Then, the sequence

\[
\{h_{l_{n}}\circ\beta:F_{p}\rightarrow\Gamma_{l_{n}}\}
\]
contains a non-lift subsequence that converges to $L_{j_{0}}$.

\end{enumerate}
\begin{proof}
For Point 1, if $L_{\infty}$ was free or free abelian, then, according
to \thmref{51}, and by moving to a subsequence if necessary, the
algebraic limit $A_{\infty}$ of the sequence $\{h_{l_{n}}:F_{q}\rightarrow\Gamma_{l_{n}}\}$
must be a finitely generated free group, or a finitely generated free
abelian group. Hence, the homomorphisms in the sequence $\{h_{l_{n}}:F_{q}\rightarrow\Gamma_{l_{n}}\}$
factor through $A_{\infty}$ eventually. But $A_{\infty}$ is free
or free abelian, and the groups $\Gamma_{l_{n}}$ are torsion-free
hyperbolic. This contradicts the assumption that the sequence $\{h_{l_{n}}:F_{q}\rightarrow\Gamma_{l_{n}}\}$
is a non-lift sequence.

For point 2, let $b^{1},...,b^{g},f$ be generating sets of sizes
$p_{1},...,p_{g},r$ for the components $L_{1},...,L_{g},F$ respectively.
Let

\[
\beta_{i}:F_{p_{i}}\rightarrow F_{q},\quad\beta_{f}:F_{r}\rightarrow F_{q}
\]
be maps that map the $j$-th element in a fixed basis for the respective
free group $F_{p_{i}}$ or $F_{r}$, to an $\eta$-preimage of the
$j$-th element in the respective set $b^{i}$ or $f$, $i=1,...,g$.

We consider the sequences

\[
\{h_{l_{n}}^{i}:F_{p_{i}}\rightarrow\Gamma_{l_{n}}\},\,i=1,...,g,\,\text{ and }\{h_{l_{n}}^{f}:F_{r}\rightarrow\Gamma_{l_{n}}\}\,,
\]
given by 
\[
h_{l_{n}}^{i}=h_{l_{n}}\circ\beta_{i},\quad h_{l_{n}}^{f}=h_{l_{n}}\circ\beta_{f}\,.
\]

We also denote $\eta_{i}:F_{p_{i}}\rightarrow L_{i}$, and $\eta_{f}:F_{r}\rightarrow F$
the maps $\eta_{i}=\eta\circ\beta_{i}$, and $\eta_{f}=\eta\circ\beta_{f}$
respectively.

According to the proof of Point 1, the sequence $\{h_{l_{n}}^{f}:F_{r}\rightarrow\Gamma_{l_{n}}\}$
contains no non-lift subsequence. Moreover, for large enough $n$,
the homomorphism $h_{l_{n}}^{f}:F_{r}\rightarrow\Gamma_{l_{n}}$ can
be lifted by a homomorphism $\tilde{h}_{l_{n}}^{f}:F_{r}\rightarrow F_{k}$
that factors through $\eta_{f}:F_{r}\rightarrow F$, that is, $h_{l_{n}}^{f}=\pi_{\Gamma_{l_{n}}}\circ\tilde{h}_{l_{n}}^{f}$
and there exists some homomorphism $\lambda_{n}^{f}:L_{i}\rightarrow F_{k}$
so that $\tilde{h}_{l_{n}}^{f}=\lambda_{n}^{f}\circ\eta_{f}$.

Now assume by contradiction that for all $i=1,...,g$, the sequence
$\{h_{l_{n}}^{i}:F_{p_{i}}\rightarrow\Gamma_{l_{n}}\}$ contains no
non-lift subsequence. Then, for all $i=1,...,g$, according to the
definition of non-lift sequences, for every finite collection $\Sigma$
in $F_{p_{i}}$, if $h_{l_{n}}^{i}(\Sigma)=1$ eventually, then, the
homomorphisms $h_{l_{n}}^{i}$ can be $\Sigma$-lifted eventually.

Let $i=1,...,g$. By Guba's theorem, every system over the free group
$F_{k}$, is equivalent to a finite subsystem. In particular, the
system describing the defining relations of the subgroup $L_{i}$
(w.r.t. the set of generators $b^{i}$), is equivalent to a finite
subsystem over the free group $F_{k}$. Recall that for every $w$
in $F_{p_{i}}$, we have that $\eta_{i}(w)=1$ if and only if $w$
is mapped by $h_{l_{n}}^{i}$ to the trivial element for all large
enough $n$.

Hence, we conclude that for all large enough $n$, the homomorphism
$h_{l_{n}}^{i}:F_{p_{i}}\rightarrow\Gamma_{l_{n}}$ can be lifted
by a homomorphism $\tilde{h}_{l_{n}}^{i}:F_{p_{i}}\rightarrow F_{k}$
that factors through $\eta_{i}:F_{p_{i}}\rightarrow L_{i}$, that
is, $h_{l_{n}}^{i}=\pi_{\Gamma_{l_{n}}}\circ\tilde{h}_{l_{n}}^{i}$
and there exists some homomorphism $\lambda_{n}^{i}:L_{i}\rightarrow F_{k}$
so that $\tilde{h}_{l_{n}}^{i}=\lambda_{n}^{i}\circ\eta_{i}$.

Summing together, we obtain that for all large enough $n$, the homomorphisms
$\lambda_{n}^{1},...,\lambda_{n}^{g},\lambda_{n}^{f}$ implies a homomorphism

\[
\lambda_{n}:L_{\infty}\rightarrow F_{k}\,,
\]
so that 
\[
\pi_{\Gamma_{l_{n}}}\circ\lambda_{n}\circ\eta=h_{l_{n}}\,.
\]

In particular, the homomorphism $\lambda_{n}\circ\eta:F_{q}\rightarrow F_{k}$
lifts the homomorphism $h_{l_{n}}:F_{q}\rightarrow\Gamma_{l_{n}}$.

In order to get a contradiction, recall that the sequence $\{h_{l_{n}}:F_{q}\rightarrow\Gamma_{l_{n}}\}$
is non-lift. I.e., there exists a finite collection $\Sigma_{0}$
of elements in $F_{q}$, so that $h_{l_{n}}(\Sigma_{0})=1$ eventually,
but for all $n$, the homomorphism $h_{l_{n}}$ cannot be $\Sigma_{0}$-lifted.
However, as the sequence $\{h_{l_{n}}:F_{q}\rightarrow\Gamma_{l_{n}}\}$
converges to $L_{\infty}$, the canonical map $\eta:F_{q}\rightarrow L_{\infty}$
must map the elements of $\Sigma_{0}$ to $1$. Hence, $\lambda_{n}\circ\eta$
is a $\Sigma_{0}$-lift for $h_{l_{n}}$ for all large enough $n$,
a contradiction. 
\end{proof}

\subsection{Shortening Quotients of Limit Groups over Ascending Sequences of
Random Groups}
\begin{lem}
\label{lem:55} (See Lemma 1.4 in \cite{DGI}) Let $F_{q}$ be a free
group, with a fixed basis $x=(x_{1},...,x_{q})$. Let $\{h_{l_{n}}:F_{q}\rightarrow\Gamma_{l_{n}}\}$
be a non-lift sequence, that converges to a non-lift $lln^{2}$-limit
group

\[
\eta:F_{q}\rightarrow L_{\infty}\,.
\]

Then, the following properties are satisfied in $L_{\infty}$. 
\begin{enumerate}
\item Let $u_{1},u_{2},u_{3}$ be non-trivial elements of $L_{\infty}$,
and suppose that $[u_{1},u_{2}]=1$ and $[u_{1},u_{3}]=1$. Then,
$[u_{2},u_{3}]=1$. It follows that every abelian subgroup in $L_{\infty}$
is contained in a unique maximal abelian subgroup. 
\item Every maximal abelian subgroup of $L_{\infty}$ is malnormal. 
\item Every solvable subgroup of $L_{\infty}$ is abelian. 
\end{enumerate}
\end{lem}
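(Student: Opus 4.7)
The plan is to reduce each of the three assertions to standard facts about the approximating groups $\Gamma_{l_n}$, using convergence of the sequence together with \thmref{51}. I will rely on two translation principles: (i) for preimages $w_1, w_2 \in F_q$ of $u_1, u_2 \in L_\infty$, the identity $[u_1, u_2] = 1$ in $L_\infty$ implies $h_{l_n}([w_1, w_2]) = 1$ eventually---indeed, by \lemref{54}(1) the group $L_\infty$ is not free abelian, so by \thmref{51}(3) the tree $Y$ is not a line and contains a tripod, and any element fixing $Y$ pointwise fixes that tripod and vanishes in $\Gamma_{l_n}$ eventually by \thmref{51}(4); and (ii) any nontrivial $u \in L_\infty$ lifts to $w \in F_q$ with $h_{l_n}(w) \neq 1$ eventually by \thmref{51}(5). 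The workhorse on the hyperbolic side is that each $\Gamma_{l_n}$ is torsion-free hyperbolic, so the centralizer of every nontrivial element is infinite cyclic and no nontrivial element is conjugate to its own inverse.

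For Point 1, I pick preimages $w_1, w_2, w_3$. Eventually $h_{l_n}(w_1) \neq 1$, and both $h_{l_n}(w_2)$ and $h_{l_n}(w_3)$ lie in its cyclic centralizer; they therefore commute in $\Gamma_{l_n}$, yielding $[u_2, u_3] = 1$ in $L_\infty$. The ``moreover'' part follows: for any nontrivial $u \in L_\infty$, commutation transitivity makes the centralizer $C(u)$ abelian, so $C(u)$ is the unique maximal abelian subgroup containing $u$, and any abelian subgroup containing $u$ sits inside it.

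For Point 2, let $A$ be a maximal abelian subgroup and suppose $gvg^{-1} \in A$ for some nontrivial $v \in A$ and some $g \notin A$. In $\Gamma_{l_n}$, the elements $h_{l_n}(v)$ and $h_{l_n}(gvg^{-1})$ commute, so they lie in a common maximal cyclic subgroup; being $\Gamma_{l_n}$-conjugate, they share a translation length, so eventually $h_{l_n}(gvg^{-1}) = h_{l_n}(v)^{\pm 1}$. The $+$ case gives $h_{l_n}([g, v]) = 1$, hence $[g, v] = 1$ in $L_\infty$ and $g \in C(v) = A$, contradicting $g \notin A$. The $-$ case would make $h_{l_n}(g)$ conjugate a nontrivial hyperbolic element to its inverse, which is impossible in a torsion-free hyperbolic group: squaring the conjugator puts it in the maximal cyclic subgroup generated by $h_{l_n}(v)$, and one power-chasing step forces a torsion element. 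Hence $gAg^{-1} \cap A = \{1\}$ whenever $g \notin A$.

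For Point 3, suppose $S \le L_\infty$ is solvable and non-abelian. The last nontrivial term $N$ of its derived series is abelian and normal in $S$; pick nontrivial $u \in N$ and set $A = C(u)$, the unique maximal abelian subgroup containing $u$ (Point 1). Then $N \subseteq A$, and for every $g \in S$, normality of $N$ gives $gug^{-1} \in N \subseteq A$, so $gAg^{-1} \cap A \ni gug^{-1} \neq 1$. Malnormality of $A$ (Point 2) forces $g \in A$, so $S \subseteq A$ is abelian, a contradiction. The only delicate ingredient, and the step I expect to be the main obstacle, is excluding the minus-sign case in Point 2; it is handled by the standard torsion-free observation sketched above.
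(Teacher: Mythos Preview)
Your proof is correct and follows essentially the same route as the paper's: the paper simply cites Lemma~1.4 of \cite{DGI} and notes that the only change needed is to replace malnormality of maximal cyclic subgroups in $F_k$ by the same fact in torsion-free hyperbolic groups, which is exactly what your Point~2 argument carries out. Your write-up spells out the details the paper suppresses (the tripod step via \thmref{51}(4) and \lemref{54}(1), the cyclic-centralizer argument, and the exclusion of the minus sign by torsion-freeness), but the underlying strategy is identical.
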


\begin{proof}
In light of \thmref{51}, exactly the same proof of Lemma 1.4 in \cite{DGI}
proves the statement in the lemma, with the only exception that instead
of using malnormality of maximal cyclic subgroups of $F_{k}$, we
use malnormality of maximal cyclic subgroups of torsion-free hyperbolic
groups (in proving Point 2). 
\end{proof}
Let $L_{\infty}$ be a non-lift $lln^{2}$-limit group, and assume
that $L_{\infty}$ is freely indecomposable. In light of \lemref{55},
\thmref{50}, and \thmref{51}, Chapter 2 of \cite{DGI} can be applied
(as it is) in our context, in order to construct \emph{the abelian
JSJ decomposition of the freely indecomposable non-lift $lln^{2}$-limit
group $L_{\infty}$}. For the exact definition of the JSJ decomposition,
see Chapter 2 of \cite{DGI}. For the exact statement on how the (abelian)
JSJ decomposition encodes all the abelian splittings of $L_{\infty}$,
see Theorem 2.7 in \cite{DGI}. 
\begin{defn}
\label{def:56}Let $F_{q}$ be a free group, with a fixed basis $x=(x_{1},...,x_{q})$.
Let $\{h_{l_{n}}:F_{q}\rightarrow\Gamma_{l_{n}}\}$ be a non-lift
sequence, that converges to a non-lift $lln^{2}$-limit group

\[
\eta:F_{q}\rightarrow L_{\infty}\,.
\]

According to \lemref{54}, the group $L_{\infty}$ is non-free.

We are going to define a ``shortening quotient'' of $L_{\infty}$,
inspired by the manner explained in {[}\cite{Daniel=000020Groves=000020-=000020Limit=000020groups=000020relatively=000020hyperbolic},
sections 3 and 4{]}, and that in \cite{Zlil=000020and=000020Jaligot=000020-=000020free=000020products}. 
\begin{itemize}
\item Assume first that $L_{\infty}$ is freely-indecomposable. 
\end{itemize}
Let $\Lambda_{L_{\infty}}$ be the abelian JSJ decomposition of $L_{\infty}$.
Let $V^{1},...,V^{m}$, $E^{1},...,E^{s}$, and $t^{1},...,t^{b}$
be the corresponding vertex groups, edge groups, and Bass-Serre generators
(w.r.t some maximal subtree of $\Lambda_{L_{\infty}}$) respectively,
of $\Lambda_{L_{\infty}}$. Since $L_{\infty}$ is finitely generated,
for all $j=1,...,m$, the vertex group $V^{j}$ is generated by a
finite subset $v_{1}^{j},...,v_{r_{j}}^{j}$, together with the adjacent
edge groups in $\Lambda_{L_{\infty}}$. For all $j=1,...,s$, we choose
an ordered basis for the edge group $E^{j}$, and write $E_{p}^{j}$
for the first $p$ elements in that ordered basis.

For all $n$, let $U_{n}$ be the group given by the presentation
\[
U_{n}=\langle v_{u}^{j},t^{j},E_{n}^{j}:\mathcal{R}_{n}\rangle\,,
\]
where $\mathcal{R}_{n}$ denotes the collection of all the relations
of length $n$ between the generators $v_{u}^{j},t^{j},E_{n}^{j}$
as an elements of $L_{\infty}$. Let

\[
\varphi_{n}:U_{n}\rightarrow L_{\infty}
\]
be the obvious mapping.

Since $L_{\infty}$ is generated by 
\[
V^{1},...,V^{m},E^{1},...,E^{s},t^{1},...,t^{b}
\]

we get that, for some integer $p$, there exist words $w_{i}=w_{i}(v_{u}^{j},t^{j},E_{p}^{j})$,
$i=1,...,q$, in the formal elements $v_{u}^{j},t^{j},E_{n}^{j}$,
so that $w_{i}$ represents the element $g_{i}$ in $L_{\infty}$,
where 
\[
g_{1}=\eta(x_{1}),...,g_{q}=\eta(x_{q})
\]
($x=(x_{1},...,x_{q})$ is the fixed basis of $F_{q}$). According
to that, for all $n$, we define the map

\[
\psi_{n}:F_{q}\rightarrow U_{n}\,,
\]
by $\psi_{n}(x_{i})=w_{i}$. Note that $\varphi_{n}\circ\psi_{n}=\eta$
for all $n$ (recall $\eta:F_{q}\rightarrow L_{\infty}$ is the canonical
map).

For every integer $n$, the relations of the group $U_{n}$ are taken
from those in $L_{\infty}$. Hence, in accordance with \thmref{51},
there exists an ascending sequence $m(n)$ of integers, so that the
homomorphism $h_{l_{m(n)}}$ factors through $\psi_{n}:F_{q}\rightarrow U_{n}$,
i.e., for all $n$, there exists a homomorphism

\[
\lambda_{n}:U_{n}\rightarrow\Gamma_{l_{m(n)}}\,,
\]
so that $\lambda_{n}\circ\psi_{n}=h_{l_{m(n)}}$.

The sequence of groups $U_{n}$ is thought of as an ``approximation
sequence'' for the decomposition $\Lambda_{L_{\infty}}$. Indeed,
since $L_{\infty}$ is freely indecomposable non-cyclic, and since
the sequence $\{h_{l_{n}}:F_{q}\rightarrow\Gamma_{l_{n}}\}$ converges
to a non-trivial action of $L_{\infty}$ on a pointed real tree so
that the properties listed in \thmref{51} are satisfied, we get according
to the shortening argument presented in {[}\cite{Daniel=000020Groves=000020-=000020Limit=000020groups=000020relatively=000020hyperbolic},
sections 3 and 4{]}, that for all large enough $n$, the homomorphism
$h_{l_{m(n)}}$ can be shortened (strictly) by ``intermediate''
composing with an automorphism of $Mod(U_{n})$, or post-composing
with an inner automorphism of $\Gamma_{l_{m(n)}}$.

That is, for all large enough $n$, there exists a modular automorphism
$\phi_{n}\in Mod(U_{n})$, together with some element $f_{n}\in\Gamma_{l_{m(n)}}$,
so that

\[
||h_{l_{m(n)}}||>||\tau_{f_{n}}\circ\lambda_{n}\circ\phi_{n}\circ\psi_{n}||\,.
\]

For all large enough $n$, we choose $\phi_{n}\in Mod(U_{n})$ and
$f_{n}\in\Gamma_{l_{m(n)}}$ so that the right hand side of the above
inequality is minimal possible. Then, we consider the sequence of
shortened homomorphisms

\[
v_{l_{m(n)}}=\tau_{f_{n}}\circ\lambda_{n}\circ\phi_{n}\circ\psi_{n}:F_{q}\rightarrow\Gamma_{l_{m(n)}}\,.
\]

Such a sequence $\{v_{l_{m(n)}}:F_{q}\rightarrow\Gamma_{l_{m(n)}}\}$
is called a \emph{shortened sequence for the sequence $\{h_{l_{n}}:F_{q}\rightarrow\Gamma_{l_{n}}\}$}. 
\begin{lem}
\label{lem:57} Let $\{v_{l_{m(n)}}:F_{q}\rightarrow\Gamma_{l_{m(n)}}\}$
be a shortened sequence for the non-lift sequence $\{h_{l_{n}}:F_{q}\rightarrow\Gamma_{l_{n}}\}$.
Then, the sequence $\{v_{l_{m(n)}}:F_{q}\rightarrow\Gamma_{l_{m(n)}}\}$
contains a non-lift subsequence. 
\end{lem}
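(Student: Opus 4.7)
The plan is to argue by contradiction, constructing a finite enlargement $\Sigma^{*}\supset\Sigma_0$ of the original non-lift witness $\Sigma_0\subset F_q$ of $\{h_{l_n}\}$ that serves as a non-lift witness for (a subsequence of) the shortened sequence. By hypothesis, $h_{l_n}(\Sigma_0)=1$ eventually and no $h_{l_n}$ admits a $\Sigma_0$-lift to $F_k$.

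First I would check that $v_{l_{m(n)}}(\Sigma_0)=1$ for all sufficiently large $n$. For each $w\in\Sigma_0$ the element $\psi_n(w)\in U_n$ is a fixed word of bounded length in the generators $v_u^j,t^j,E_p^j$ of $U_n$, and it represents the identity in $L_\infty$ because $\varphi_n\circ\psi_n=\eta$ and $\eta(w)=1$. The defining relators $\mathcal{R}_n$ of $U_n$ comprise every relation of length at most $n$ in $L_\infty$ between these generators, so once $n$ exceeds the length of $\psi_n(w)$, we have $\psi_n(w)=1$ already in $U_n$; then $v_{l_{m(n)}}(w)=\tau_{f_n}(\lambda_n(\phi_n(1)))=1$.

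Next I would lift the modular automorphism $\phi_n\in\mathrm{Mod}(U_n)$ to an automorphism $\tilde\phi_n\in\mathrm{Aut}(F_q)$. Since $\phi_n$ descends via $\varphi_n$ to some $\bar\phi_n\in\mathrm{Mod}(L_\infty)$ (modular automorphisms preserve $\ker\varphi_n$ because they come from $\Lambda_{L_\infty}$), and since $\mathrm{Mod}(L_\infty)$ is generated by Dehn twists whose action is formulaic (conjugation by a twist element on one side of the corresponding splitting), we can realize $\bar\phi_n$ on $F_q$ by picking $\eta$-preimages of the twist elements, giving $\eta\circ\tilde\phi_n=\bar\phi_n\circ\eta$. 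For large $n$, the two maps $\phi_n\circ\psi_n$ and $\psi_n\circ\tilde\phi_n$ agree in $U_n$: both descend to $\bar\phi_n\circ\eta$ in $L_\infty$, so their difference is a bounded-length element of $\ker\varphi_n$, and such elements are killed by $\mathcal{R}_n$ for large $n$, exactly as in the first step. By passing to a subsequence on which the Dehn-twist exponents defining $\bar\phi_n$ are bounded, only finitely many distinct $\tilde\phi_n$ occur, and the finite set $\Sigma^{*}:=\Sigma_0\cup\bigcup_n\tilde\phi_n^{-1}(\Sigma_0)\subset\ker\eta$ is well-defined.

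To conclude, the computation $v_{l_{m(n)}}(\tilde\phi_n^{-1}(u))=\tau_{f_n}(\lambda_n(\psi_n(u)))=\tau_{f_n}(h_{l_{m(n)}}(u))=1$ for $u\in\Sigma_0$ and large $n$ gives $v_{l_{m(n)}}(\Sigma^{*})=1$ eventually. If $\{v_{l_{m(n)}}\}$ had no non-lift subsequence, cofinitely many $v_{l_{m(n)}}$ would admit a $\Sigma^{*}$-lift $\tilde v_n:F_q\to F_k$. Picking any lifts $\tilde f_n\in F_k$ of $f_n$ and setting $\tilde h_n:=\tau_{\tilde f_n}^{-1}\circ\tilde v_n\circ\tilde\phi_n^{-1}$, we obtain $\pi_{\Gamma_{l_{m(n)}}}\circ\tilde h_n=h_{l_{m(n)}}$ and $\tilde h_n(\Sigma_0)=\tau_{\tilde f_n}^{-1}(\tilde v_n(\tilde\phi_n^{-1}(\Sigma_0)))=1$ since $\tilde\phi_n^{-1}(\Sigma_0)\subset\Sigma^{*}$. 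Hence $h_{l_{m(n)}}$ is $\Sigma_0$-liftable for large $n$, contradicting the non-liftness of $\{h_{l_n}\}$. The principal obstacle I expect is the construction of the automorphism-level lift $\tilde\phi_n\in\mathrm{Aut}(F_q)$ satisfying $\phi_n\circ\psi_n=\psi_n\circ\tilde\phi_n$, together with the subsequence extraction that keeps $\Sigma^{*}$ finite; both rest on the Sela-style structure theorem describing $\mathrm{Mod}(L_\infty)$ by finitely many explicit Dehn twist generators.
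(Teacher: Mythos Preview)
Your approach has a genuine gap at the step you yourself flag as the ``principal obstacle.'' The claim that one may pass to a subsequence on which the Dehn-twist exponents defining $\bar\phi_n$ are bounded is unjustified and in fact false in general: the modular automorphisms $\phi_n\in\mathrm{Mod}(U_n)$ are chosen to \emph{minimize} $\|\tau_{f_n}\circ\lambda_n\circ\phi_n\circ\psi_n\|$, and since $\|h_{l_{m(n)}}\|\geq l_{m(n)}\ln^2 l_{m(n)}\to\infty$ while the shortened sequence must converge to a \emph{proper} quotient of $L_\infty$, the twist exponents typically diverge. Passing to a subsequence of an unbounded sequence does not make it bounded. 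Without this finiteness your set $\Sigma^*=\Sigma_0\cup\bigcup_n\tilde\phi_n^{-1}(\Sigma_0)$ is infinite and the contradiction does not close. The same growth also undermines your intertwining relation $\phi_n\circ\psi_n=\psi_n\circ\tilde\phi_n$: the argument that the difference lies in $\ker\varphi_n$ and is therefore killed by $\mathcal R_n$ only works for elements of length $\le n$, but if $\phi_n$ has large twist exponents then $\phi_n(\psi_n(x_i))$ has length growing with $n$ and need not be trivialized in $U_n$.

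The paper circumvents this entirely by never lifting $\phi_n$ to $\mathrm{Aut}(F_q)$. Instead it uses Guba's theorem: the full relation set of $L_\infty$ is equivalent over $F_k$ to a finite subsystem. Since every $w\in\ker\eta$ satisfies $\psi_n(w)=1$ in $U_n$ eventually, the (assumed) $\Sigma$-lift of $v_{l_{m(n)}}$ can be taken to factor as $\tilde u_n\circ\eta$ for some $\tilde u_n:L_\infty\to F_k$. One then applies $\phi_n^{-1}$ at the $U_n$ level---where it is an honest automorphism---setting $\tilde\lambda_n=\tau_{\tilde f_n^{-1}}\circ\tilde u_n\circ\varphi_n\circ\phi_n^{-1}:U_n\to F_k$, checks directly that $\pi_{\Gamma_{l_{m(n)}}}\circ\tilde\lambda_n=\lambda_n$, and concludes that $\tilde h_{l_{m(n)}}=\tilde\lambda_n\circ\psi_n$ lifts $h_{l_{m(n)}}$ and factors through $\eta$, hence kills $\Sigma_0$. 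No control on the size of $\phi_n$ is needed.
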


\begin{proof}
Assume by contradiction the claim is false. By moving to a subsequence
if necessary, we may assume that the sequence $\{v_{l_{m(n)}}:F_{q}\rightarrow\Gamma_{l_{m(n)}}\}$
is convergent. Then, according to the definition of non-lift sequences,
for every finite collection $\Sigma$ in $F_{q}$, if $v_{l_{m(n)}}(\Sigma)=1$
eventually, then, the homomorphisms $v_{l_{m(n)}}$ can be $\Sigma$-lifted
for all large enough $n$.

By Guba's theorem, every system over the free group $F_{k}$, is equivalent
to a finite subsystem. In particular, the system describing the defining
relations of the group $L_{\infty}$ (w.r.t. the set of generators
$\eta(x)$, where $x$ is the fixed basis of $F_{q}$), is equivalent
to a finite subsystem over the free group $F_{k}$. Recall that for
every $w$ in $F_{q}$, we have that $\eta(w)=1$ if and only if $w$
is mapped by $\psi_{n}:F_{q}\rightarrow U_{n}$ to the trivial element
for all large enough $n$.

Hence, we conclude that for all large enough $n$, the homomorphism
$v_{l_{m(n)}}:F_{q}\rightarrow\Gamma_{l_{m(n)}}$ can be lifted by
a homomorphism $\tilde{v}_{l_{m(n)}}:F_{q}\rightarrow F_{k}$ that
factors through $\eta:F_{q}\rightarrow L_{\infty}$, that is, $v_{l_{m(n)}}=\pi_{\Gamma_{l_{m(n)}}}\circ\tilde{v}_{l_{m(n)}}$
and there exists some homomorphism

\[
\tilde{u}_{n}:L_{\infty}\rightarrow F_{k}
\]
so that $\tilde{v}_{l_{m(n)}}=\tilde{u}_{n}\circ\eta$.

We denote

\[
u_{n}=\pi_{\Gamma_{l_{m(n)}}}\circ\tilde{u}_{n}:L_{\infty}\rightarrow\Gamma_{l_{m(n)}}\,,
\]
and we note that, tautologically, the homomorphism $\tilde{u}_{n}$
lifts the homomorphism $u_{n}$. Hence, the homomorphism $\tilde{u}_{n}\circ\varphi_{n}:U_{n}\rightarrow F_{k}$
lifts the homomorphism

\[
\tau_{f_{n}}\circ\lambda_{n}\circ\phi_{n}:U_{n}\rightarrow\Gamma_{l_{m(n)}}\,.
\]

Let $\tilde{f}_{n}\in F_{k}$ be a word representing the element $f_{n}$
in $\Gamma_{l_{m(n)}}$. Then, since

\begin{align*}
\pi_{\Gamma_{l_{m(n)}}}\circ\tau_{\tilde{f}_{n}^{-1}}\circ\tilde{u}_{n}\circ\varphi_{n}\circ\phi_{n}^{-1} & =\\
 & =\tau_{f_{n}^{-1}}\circ\pi_{\Gamma_{l_{m(n)}}}\circ\tilde{u}_{n}\circ\varphi_{n}\circ\phi_{n}^{-1}\\
 & =\tau_{f_{n}^{-1}}\circ\tau_{f_{n}}\circ\lambda_{n}\circ\phi_{n}\circ\phi_{n}^{-1}\\
 & =\lambda_{n}\,,
\end{align*}

we have that the homomorphism

\[
\tilde{\lambda}_{n}=\tau_{\tilde{f}_{n}^{-1}}\circ\tilde{u}_{n}\circ\varphi_{n}\circ\phi_{n}^{-1}:U_{n}\rightarrow F_{k}
\]
(denoted $\tilde{\lambda}_{n}$) lifts the homomorphism 
\[
\lambda_{n}:U_{n}\rightarrow\Gamma_{l_{m(n)}}\,.
\]

We recall the homomorphism 
\[
h_{l_{m(n)}}=\lambda_{n}\circ\psi_{n}:F_{q}\rightarrow\Gamma_{l_{m(n)}}\,,
\]
and we note that the homomorphism

\[
\tilde{h}_{l_{m(n)}}=\tilde{\lambda}_{n}\circ\psi_{n}:F_{q}\rightarrow F_{k}
\]
lifts it.

As we have mentioned above, by Guba's theorem, the system describing
the defining relations of the group $L_{\infty}$, is equivalent to
a finite subsystem over the free group $F_{k}$. Thus, since every
element $w$ of $F_{q}$ with $\eta(w)=1$ is mapped by $\psi_{n}$
to the trivial element in $U_{n}$ for all large enough $n$, we conclude
that for all large enough $n$, the homomorphism $\tilde{h}_{l_{m(n)}}$
factors through $\eta:F_{q}\rightarrow L_{\infty}$. Hence, the original
sequence $\{h_{l_{n}}:F_{q}\rightarrow\Gamma_{l_{n}}\}$ is not non-lift,
a contradiction. 
\end{proof}
We continue the definition with our shortened sequence $\{v_{l_{m(n)}}:F_{q}\rightarrow\Gamma_{l_{m(n)}}\}$
(for the original non-lift sequence $\{h_{l_{n}}:F_{q}\rightarrow\Gamma_{l_{n}}\}$).

According to \lemref{57}, by moving to a subsequence, we may assume
the shortened sequence $\{v_{l_{m(n)}}:F_{q}\rightarrow\Gamma_{l_{m(n)}}\}$
is non-lift again. Hence, according to \thmref{50}, we may assume
further that the shortened sequence $\{v_{l_{m(n)}}:F_{q}\rightarrow\Gamma_{l_{m(n)}}\}$
converges to a (non-lift) $lln^{2}$-limit group $\eta_{Q}:F_{q}\rightarrow Q_{\infty}$.

Such an $lln^{2}$-limit group $Q_{\infty}$ is called an \emph{$lln^{2}$-shortening
quotient of the non-lift $lln^{2}$-limit group $L_{\infty}$}.

Note that, since every element $w$ of $F_{q}$ with $\eta(w)=1$
is mapped by $\psi_{n}$ to the trivial element in $U_{n}$ for all
large enough $n$, we have that $Q_{\infty}$ is indeed a quotient
of $L_{\infty}$; the map $\eta(x)\mapsto\eta_{Q}(x)$ is a quotient
map: 
\[
L_{\infty}\geq Q_{\infty}\,.
\]

Moreover, as we have explained above, it cannot happen that $L_{\infty}\cong Q_{\infty}$,
for otherwise, one can shorten the homomorphism $v_{l_{m(n)}}$ by
composing with an automorphism in $Mod(U_{n})$ (the same $U_{n}$
defined above) and inner automorphism of $\Gamma_{l_{m(n)}}$, for
infinitely many integers $n$, which contradicts the construction
of $v_{l_{m(n)}}$. Hence, actually $Q_{\infty}$ is a proper quotient
of $L_{\infty}$: 
\[
L_{\infty}>Q_{\infty}\,.
\]
\begin{itemize}
\item Now assume that $L_{\infty}$ admits a non-trivial Grushko decomposition
$L_{\infty}=L_{1}\ast...\ast L_{g}\ast F$, so that $L_{j}$ is non-cyclic,
$j=1,...,g$, and $F$ is a free group. 
\end{itemize}
According to \lemref{54}, we may assume that $L_{1}$ is a non-lift
$lln^{2}$-limit group. Moreover, choosing a generating set $b^{1}=(b_{1}^{1},...,b_{p_{1}}^{1})$
for $L_{1}$, choosing a free group $F_{p_{1}}$ with basis $y^{1}=(y_{1}^{1},...,y_{p_{1}}^{1})$,
and passing to a subsequence, we may assume that the sequence

\[
\{h_{l_{n}}^{1}=h_{l_{n}}\circ\beta_{1}:F_{p_{1}}\rightarrow\Gamma_{l_{n}}\}
\]
is a non-lift sequence, where $\beta_{1}:F_{p_{1}}\rightarrow F_{q}$
is the map defined by mapping $y_{i}^{1}$ to some $\eta$-preimage
of $b_{i}^{1}$, $i=1,...,p_{1}$. We denote also

\[
\eta_{1}:F_{p_{1}}\rightarrow L_{1}
\]
the map $\eta_{1}=\eta\circ\beta_{1}$.

We further, abuse the notation and denote the free component $L_{2}\ast...\ast L_{g}\ast F$
by $L_{2}$.

We fix a generating set $b^{2}=(b_{1}^{2},...,b_{p_{2}}^{2})$ for
$L_{2}$, a free group $F_{p_{2}}$ with basis $y^{2}=(y_{1}^{2},...,y_{p_{2}}^{2})$,
and a map $\beta_{2}:F_{p_{2}}\rightarrow F_{q}$ defined by mapping
$y_{i}^{2}$ to some $\eta$-preimage of $b_{i}^{2}$, $i=1,...,p_{2}$.
We consider the sequence

\[
\{h_{l_{n}}^{2}=h_{l_{n}}\circ\beta_{2}:F_{p_{2}}\rightarrow\Gamma_{l_{n}}\}\,,
\]
and we denote by

\[
\eta_{2}:F_{p_{2}}\rightarrow L_{2}
\]
the map $\eta_{2}=\eta\circ\beta_{2}$.

Now the sequence 
\[
\{h_{l_{n}}^{1}:F_{p_{1}}\rightarrow\Gamma_{l_{n}}\}
\]
is a non-lift sequence that converges to the freely-indecomposable
$lln^{2}$-limit group $L_{1}$. Hence, we may bring the notation
of the first part of the definition (when $L_{\infty}$ was freely-indecomposable)
for the sequence $\{h_{l_{n}}^{1}:F_{p_{1}}\rightarrow\Gamma_{l_{n}}\}$
and the non-lift $lln^{2}$-limit group $L_{1}$, with the difference
that we add an superscript $1$, e.g., in place of $U_{n}$, we write
$U_{n}^{1}$.

We further define new groups $U_{n}^{2}$ to be ``approximations''
for the group $L_{2}$, i.e., for all $n$ we define

\[
U_{n}^{2}=\langle b^{2}:\mathcal{R}_{n}^{2}\rangle\,,
\]
where $\mathcal{R}_{n}^{2}$ denotes the collection of all the relations
of length $n$ between the generators $b^{2}$ as an elements of $L_{2}$.

We consider the groups 
\[
U_{n}=U_{n}^{1}\ast U_{n}^{2}\,.
\]

Since $L_{\infty}$ is generated by the corresponding elements in
the JSJ of $L_{1}$

\[
V^{1},...,V^{m},E^{1},...,E^{s},t^{1},...,t^{b}
\]
together with $b^{2}$, we get that, for some integer $r$, there
exist words $w_{i}=w_{i}(v_{u}^{j},t^{j},E_{r}^{j},b^{2})$, $i=1,...,q$,
in the formal elements $v_{u}^{j},t^{j},E_{n}^{j},b^{2}$, so that
$w_{i}$ represents the element $g_{i}$ in $L_{\infty}$, where 
\[
g_{1}=\eta(x_{1}),...,g_{q}=\eta(x_{q})
\]
($x=(x_{1},...,x_{q})$ is the fixed basis of $F_{q}$). According
to that, for all $n$, we define the map

\[
\psi_{n}:F_{q}\rightarrow U_{n}\,,
\]
by $\psi_{n}(x_{i})=w_{i}$. Note that $\varphi_{n}\circ\psi_{n}=\eta$
for all $n$ (recall $\eta:F_{q}\rightarrow L_{\infty}$ is the canonical
map).

We consider the maps 
\[
\psi_{n}^{j}:F_{p_{j}}\rightarrow U_{n}
\]
given by $\psi_{n}^{j}=\psi_{n}\circ\beta_{j}$, and we note that
for all large enough $n$, the image of $\psi_{n}^{j}$ is contained
entirely in the component $U_{n}^{j}$, $j=1,2$.

As in the previous part of the definition, there exists an ascending
sequence $m(n)$ of integers, so that the homomorphism $h_{l_{m(n)}}$
factors through $\psi_{n}:F_{q}\rightarrow U_{n}$, i.e., for all
large enough $n$, there exists a homomorphism

\[
\lambda_{n}:U_{n}\rightarrow\Gamma_{l_{m(n)}}\,,
\]
so that $\lambda_{n}\circ\psi_{n}=h_{l_{m(n)}}$. For $j=1,2$, we
denote by 
\[
\lambda_{n}^{j}:U_{n}^{j}\rightarrow\Gamma_{l_{m(n)}}\,,
\]
the restriction of $\lambda_{n}$ on $U_{n}^{j}$.

Now, for all large enough $n$, there exists a modular automorphism
$\phi_{n}^{1}\in Mod(U_{n}^{1})$, together with some element $f_{n}\in\Gamma_{l_{m(n)}}$,
so that

\[
||h_{l_{m(n)}}^{1}||>||\tau_{f_{n}}\circ\lambda_{n}\circ\phi_{n}^{1}\circ\psi_{n}^{1}||\,.
\]

For all large enough $n$, we choose $\phi_{n}^{1}\in Mod(U_{n})$
and $f_{n}\in\Gamma_{l_{m(n)}}$ so that the right hand side of the
above inequality is minimal possible. Then, we consider the sequence
of homomorphisms

\[
v_{l_{m(n)}}=\tau_{f_{n}}\circ\lambda_{n}\circ\phi_{n}^{1}\circ\psi_{n}:F_{q}\rightarrow\Gamma_{l_{m(n)}}\,.
\]

For $j=1,2$, we let $v_{l_{m(n)}}^{j}:F_{p_{j}}\rightarrow\Gamma_{l_{m(n)}}$
be the homomorphism

\[
v_{l_{m(n)}}^{j}=v_{l_{m(n)}}\circ\beta_{j}\,.
\]
We note that for all large enough $n$, we have that

\[
v_{l_{m(n)}}^{2}=\tau_{f_{n}}\circ\lambda_{n}\circ\psi_{n}^{2}=\tau_{f_{n}}\circ h_{l_{m(n)}}^{2}\,.
\]
And since the algebraic limit of the sequence $\{h_{l_{n}}^{2}:F_{p_{2}}\rightarrow\Gamma_{l_{n}}\}$
is $L_{2}$, we have that the algebraic limit of (every subsequence
of) the sequence $\{v_{l_{m(n)}}^{2}:F_{p_{2}}\rightarrow\Gamma_{l_{n}}\}$
is $L_{2}$ too.

On the other hand, the sequence $\{v_{l_{m(n)}}^{1}:F_{p_{1}}\rightarrow\Gamma_{l_{n}}\}$
is a shortened sequence for the sequence $\{h_{l_{n}}^{1}:F_{p_{1}}\rightarrow\Gamma_{l_{n}}\}$,
and hence, according to \lemref{57}, we may assume it is a non-lift
sequence. Moreover, we may assume that the sequence $\{v_{l_{m(n)}}^{1}:F_{p_{1}}\rightarrow\Gamma_{l_{n}}\}$
converges to an $lln^{2}$-shortening quotient $Q_{1}$ of $L_{1}$.

In accordance with that, we call the sequence $\{v_{l_{m(n)}}:F_{q}\rightarrow\Gamma_{l_{m(n)}}\}$
a \emph{shortened sequence for the sequence $\{h_{l_{n}}:F_{q}\rightarrow\Gamma_{l_{n}}\}$}. 
\begin{lem}
\label{lem:58} (In the case when $L_{\infty}$ is freely-decomposable
too). Let $\{v_{l_{m(n)}}:F_{q}\rightarrow\Gamma_{l_{m(n)}}\}$ be
a shortened sequence for the non-lift sequence $\{h_{l_{n}}:F_{q}\rightarrow\Gamma_{l_{n}}\}$.
Then, the sequence $\{v_{l_{m(n)}}:F_{q}\rightarrow\Gamma_{l_{m(n)}}\}$
contains a non-lift subsequence. 
\end{lem}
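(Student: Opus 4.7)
The plan is to argue by contradiction, following the template of \lemref{57} but accounting for the fact that in the freely-decomposable case the modular move $\phi_n^1$ acts only on the $L_1$-factor of the free product $U_n=U_n^1\ast U_n^2$. Assume for contradiction that the shortened sequence $\{v_{l_{m(n)}}\}$ contains no non-lift subsequence. After passing to a convergent subsequence, the definition of non-lift sequence will give that for every finite collection $\Sigma\subset F_q$ with $v_{l_{m(n)}}(\Sigma)=1$ eventually, the maps $v_{l_{m(n)}}$ admit $\Sigma$-lifts to $F_k$ for all large $n$.

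The first key step will be an appeal to Guba's theorem applied to the defining system of $L_\infty$ with respect to the generators $\eta(x_1),\ldots,\eta(x_q)$: this system is equivalent to some finite subsystem. Since $U_n=U_n^1\ast U_n^2$ is an ``approximation'' of $L_\infty=L_1\ast L_2$ via $\varphi_n$ (relations of length at most $n$ in each factor are present in $U_n^j$), every $w\in F_q$ with $\eta(w)=1$ is sent by $\psi_n$ to the identity in $U_n$ for all large $n$, hence $v_{l_{m(n)}}(w)=1$ eventually. Applying the $\Sigma$-lift hypothesis yields homomorphisms $\tilde u_n\colon L_\infty\to F_k$ such that $\tilde v_{l_{m(n)}}=\tilde u_n\circ\eta$ for all large $n$; setting $u_n=\pi_{\Gamma_{l_{m(n)}}}\circ\tilde u_n$, one has $u_n\circ\eta=v_{l_{m(n)}}$.

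The second step will be to reverse the shortening move to produce a lift of $h_{l_{m(n)}}$. View $\phi_n^1$ as an automorphism of $U_n$ that is the identity on $U_n^2$ (which is a legitimate modular automorphism of $U_n$, since modular automorphisms of a free product are generated by the modular automorphisms of the factors extended trivially to the other). Picking a word $\tilde f_n\in F_k$ representing $f_n$ and setting
\[
\tilde\lambda_n=\tau_{\tilde f_n^{-1}}\circ\tilde u_n\circ\varphi_n\circ(\phi_n^1)^{-1}\colon U_n\to F_k,
\]
the same calculation as in \lemref{57} shows that $\tilde\lambda_n$ lifts $\lambda_n$. Therefore $\tilde\lambda_n\circ\psi_n\colon F_q\to F_k$ lifts $h_{l_{m(n)}}=\lambda_n\circ\psi_n$. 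A second application of Guba's theorem (to the defining system of $L_\infty$ again) will force these lifts to factor through $\eta$ for all large $n$, hence to be $\Sigma_0$-lifts of $h_{l_{m(n)}}$ for any finite $\Sigma_0\subset F_q$ with $\eta(\Sigma_0)=1$, contradicting the assumption that $\{h_{l_n}\}$ is a non-lift sequence.

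The main subtlety, as compared to the freely-indecomposable case, will be verifying that the identity $u_n\circ\varphi_n=\tau_{f_n}\circ\lambda_n\circ\phi_n^1$ holds on all of $U_n$ (not just on $\psi_n(F_q)$), which is what allows one to ``cancel'' $\psi_n$ and legitimately compose with $(\phi_n^1)^{-1}$ on the $U_n$ side. This will use that $U_n^2$ is generated by $b^2$ (on which $\phi_n^1$ is the identity and $\lambda_n$ agrees with $\tau_{f_n}\circ h^2_{l_{m(n)}}$), and that $U_n^1$ is generated by the JSJ-data of $L_1$, all of whose relevant relations of bounded length eventually appear in $U_n^1$. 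Once this bookkeeping is carried out, the argument is a direct transcription of the proof of \lemref{57}.
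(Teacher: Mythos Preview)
Your approach is essentially to rerun the proof of \lemref{57} in the freely-decomposable setting, carrying the free product structure $U_n=U_n^1\ast U_n^2$ through the argument. This can be made to work, but the paper's proof is far shorter and uses a different idea.

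Right before stating \lemref{58}, the paper observes that the restricted sequence $\{v_{l_{m(n)}}^{1}:F_{p_1}\to\Gamma_{l_{m(n)}}\}$ (obtained via $v_{l_{m(n)}}^1=v_{l_{m(n)}}\circ\beta_1$) is a shortened sequence for the non-lift sequence $\{h_{l_n}^1\}$ converging to the freely-indecomposable factor $L_1$; hence, by \lemref{57} itself, it already contains a non-lift subsequence. The proof of \lemref{58} is then a three-line transfer: take a finite $\Sigma\subset F_{p_1}$ witnessing the non-lift property of $\{v_{l_{m(n)}}^1\}$, push it forward to $\Sigma'=\beta_1(\Sigma)\subset F_q$, and note that a $\Sigma'$-lift of $v_{l_{m(n)}}$ would yield (by composition with $\beta_1$) a $\Sigma$-lift of $v_{l_{m(n)}}^1$, a contradiction.

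So the paper does not redo any shortening, Guba, or modular-inverse bookkeeping at the level of $F_q$; it simply uses the factor map $\beta_1$ to pull the already-established non-lift property of the $F_{p_1}$-sequence up to $F_q$. Your route reproves \lemref{57} from scratch in a more general context, at the cost of having to verify the identity $u_n\circ\varphi_n=\tau_{f_n}\circ\lambda_n\circ\phi_n^1$ on all of $U_n$ (the subtlety you flag); the paper's route avoids this entirely by never needing to invert $\phi_n^1$ at the $F_q$ level.
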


\begin{proof}
Assume by contradiction the claim is false. We already know that the
sequence $\{v_{l_{m(n)}}^{1}:F_{p_{1}}\rightarrow\Gamma_{l_{n}}\}$
is non-lift. Let $\Sigma$ be a finite subset of $F_{p_{1}}$ so that
$v_{l_{m(n)}}^{1}(\Sigma)=1$ eventually, and for all $n$, the homomorphism
$v_{l_{m(n)}}^{1}$ cannot be $\Sigma$-lifted.

Let $\Sigma'$ be the finite subset $\beta_{1}(\Sigma)$ of $F_{q}$.
Since $v_{l_{m(n)}}^{1}=v_{l_{m(n)}}\circ\beta_{1}$, we conclude
that $v_{l_{m(n)}}(\Sigma')=1$ eventually. Since we are assuming
the claim is false, for all large enough $n$, the homomorphism $v_{l_{m(n)}}$
can be $\Sigma'$-lifted. But this implies that for all large enough
$n$, the homomorphism $v_{l_{m(n)}}^{1}$ can be $\Sigma$-lifted,
a contradiction. 
\end{proof}
Hence, in total, by moving to a subsequence, the shortened sequence
$\{v_{l_{m(n)}}:F_{q}\rightarrow\Gamma_{l_{m(n)}}\}$ is a non-lift
sequence, that converges to the non-lift $lln^{2}$-limit group $\eta_{Q}:F_{q}\rightarrow Q_{\infty}=Q_{1}\ast L_{2}$,
and since every element $w$ of $F_{q}$ with $\eta(w)=1$ is mapped
by $\psi_{n}$ to the trivial element in $U_{n}$ for all large enough
$n$, we have that the map $\eta(x)\mapsto\eta_{Q}(x)$ is a quotient
map

\[
L_{\infty}\geq Q_{\infty}\,.
\]
Moreover, since $Q_{1}$ is a proper quotient of $L_{1}$, we have
that $Q_{\infty}$ is a proper quotient of $L_{\infty}$

\[
L_{\infty}>Q_{\infty}\,.
\]

Such an $lln^{2}$-limit group $Q_{\infty}$ is called an \emph{$lln^{2}$-shortening
quotient of the non-lift $lln^{2}$-limit group $L_{\infty}$}. 
\end{defn}

\begin{cor}
\label{cor:59} Let $L_{\infty}$ be a non-lift $lln^{2}$-limit group.
Then, $L_{\infty}$ admits an infinite (strictly) decreasing sequence
of non-lift $lln^{2}$-limit quotients

\[
L_{\infty}>Q_{\infty}^{1}>Q_{\infty}^{2}>...\,.
\]
\end{cor}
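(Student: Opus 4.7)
The plan is to iterate the shortening-quotient construction of \defref{56}. Applied to any non-lift $lln^{2}$-limit group, that construction outputs another non-lift $lln^{2}$-limit group which is a \emph{proper} $lln^{2}$-limit quotient of the input. Two features will make this iteration possible: \lemref{57} and \lemref{58} guarantee that the shortened sequence $\{v_{l_{m(n)}}\}$ is (after passing to a subsequence) itself a non-lift sequence, so by \thmref{50} its limit $Q_\infty$ is again a non-lift $lln^{2}$-limit group; and the construction of \defref{56} already delivers a strict inequality $L_\infty > Q_\infty$ by the minimality built into the choice of the shortening data $(\phi_n, f_n)$.

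Concretely, I would build the chain by induction. Set $Q_\infty^{0} := L_\infty$, which is a non-lift $lln^{2}$-limit group by hypothesis. Assuming a non-lift $lln^{2}$-limit group $Q_\infty^{n}$ (with canonical map $F_q \to Q_\infty^{n}$) has already been produced, apply \defref{56} to $Q_\infty^{n}$ to obtain its shortening quotient $Q_\infty^{n+1}$, which is again a non-lift $lln^{2}$-limit group and satisfies $Q_\infty^{n} > Q_\infty^{n+1}$. Transitivity of the relation $\geq$ from \defref{53} is immediate from the compatibility of the canonical maps on the fixed basis $x=(x_1,\dots,x_q)$, so composing the strict proper quotient maps yields the desired descending chain
\[
L_\infty > Q_\infty^{1} > Q_\infty^{2} > \cdots .
\]

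The only delicate point — and what I would flag as the main obstacle — is confirming the strictness $Q_\infty^{n} > Q_\infty^{n+1}$ at each stage. If it ever failed and $Q_\infty^{n+1} \cong Q_\infty^{n}$ via the canonical map, then the shortened sequence for $Q_\infty^{n}$ could be shortened yet further by composing with an element of $\operatorname{Mod}(U_n)$ and an inner automorphism of $\Gamma_{l_{m(n)}}$, violating the minimality stipulated when $\phi_n$ and $f_n$ were chosen in \defref{56}. This is essentially the content of the discussion immediately following \lemref{57}; once it is verified to persist verbatim at each iteration (noting that, if $Q_\infty^{n}$ is freely decomposable, one runs the freely-decomposable branch of \defref{56} and invokes \lemref{54} to isolate a non-lift freely-indecomposable factor to shorten), the rest of the proof reduces to a purely formal recursion on the output of the shortening construction.
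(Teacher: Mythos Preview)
Your proposal is correct and follows essentially the same approach as the paper: iterate the shortening-quotient construction of \defref{56}, which already packages the non-lift property (via \lemref{57}/\lemref{58}) and the strict properness into its output, to obtain the infinite descending chain by induction. Your write-up simply unpacks in more detail the ingredients that the paper's two-sentence proof cites from \defref{56}.
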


\begin{proof}
As we have explained in \defref{56}, every non-lift $lln^{2}$-limit
group $L_{\infty}$ admits an $lln^{2}$-shortening quotient $Q_{\infty}$,
so that $Q_{\infty}$ itself is a non-lift $lln^{2}$-limit group
which is a proper quotient of $L_{\infty}$. Hence, by induction,
one can create an infinite decreasing sequence as the required one. 
\end{proof}
\begin{thm}
\label{thm:60} Every strictly decreasing sequence

\[
L_{1}>L_{2}>L_{3}>...
\]
of non-lift $lln^{2}$-limit groups, terminates after finitely many
steps. 
\end{thm}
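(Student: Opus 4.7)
My plan is to argue by contradiction, in the spirit of Sela's descending chain condition for limit groups over free groups (\cite{DGI}, Claim~5.1), adapted to the present setting of non-lift $lln^{2}$-limit groups.

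Suppose that $L_{1}>L_{2}>L_{3}>\ldots$ is an infinite strictly descending sequence. Writing $L_{i}=F_{q}/K_{i}$ yields a strictly ascending chain $K_{1}\subsetneq K_{2}\subsetneq\ldots\subset F_{q}$; set $K_{\infty}:=\bigcup_{i}K_{i}$ and $L_{\infty}:=F_{q}/K_{\infty}$. The first half of the argument would realize $L_{\infty}$ as a non-lift $lln^{2}$-limit group via a diagonal extraction. For each $i$, fix a non-lift $lln^{2}$-sequence $\{h_{n}^{i}:F_{q}\rightarrow\Gamma_{l_{n}^{i}}^{i}\}_{n}$ converging to $L_{i}$ with finite non-lift witness $\Sigma_{i}$. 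Enumerating $F_{q}=\{w_{j}\}_{j}$ and choosing $n_{i}$ growing fast enough, select $h_{i}:=h_{n_{i}}^{i}$ such that $h_{i}(w_{j})=1$ if and only if $w_{j}\in K_{i}$ for all $j\leq i$, $h_{i}(\Sigma_{i})=1$, and the stretching factor of $h_{i}$ is at least $l_{n_{i}}^{i}\ln^{2}l_{n_{i}}^{i}$. After re-centering via inner automorphisms (\defref{52}), $\{h_{i}\}$ is an $lln^{2}$-sequence; by \thmref{50} and \thmref{51} it subconverges to an action whose tree-kernel equals $K_{\infty}$, so its $lln^{2}$-limit group is $L_{\infty}$.

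To see that $\{h_{i}\}$ is non-lift, I would apply Guba's equational Noetherianity of $F_{k}$ to the countable union $\Sigma_{*}:=\bigcup_{i}\Sigma_{i}$, obtaining a finite subsystem $\Sigma_{0}\subset\Sigma_{*}$ with the same set of solutions over $F_{k}$. Any hypothetical $\Sigma_{0}$-lift $\tilde{h}_{i}$ of some $h_{i}$ would then also be a $\Sigma_{*}$-solution, and in particular a $\Sigma_{i}$-lift of $h_{i}=h_{n_{i}}^{i}$, contradicting the non-lift property of the $i$-th original sequence. Thus $\{h_{i}\}$ is non-lift with finite witness $\Sigma_{0}$, and $L_{\infty}$ is a non-lift $lln^{2}$-limit group.

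For the contradiction, the key step would be to establish that every non-lift $lln^{2}$-limit group is finitely presented as a quotient of $F_{q}$. Granted this, $K_{\infty}$ is the normal closure of finitely many relators, each lying in some $K_{i}$; since the $K_{i}$ form an ascending chain, all of these relators lie in $K_{j_{0}}$ for some $j_{0}$, forcing $K_{\infty}=K_{j_{0}}$ and contradicting strict ascent. The main obstacle is therefore the finite-presentation claim, which I would establish by iterating \corref{59} to produce a descent tree of non-lift $lln^{2}$-shortening quotients of $L_{\infty}$, and then proving that this tree is finite, analogously to Sela's Makanin--Razborov finiteness theorem for limit groups over free groups. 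The proof of finiteness would draw on the JSJ machinery set up in \defref{56} together with induction on the complexity of the JSJ decomposition (treating abelian, rigid, and QH vertex groups separately), and is the technical heart of the argument.
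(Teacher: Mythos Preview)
Your diagonal construction and your use of Guba's theorem to show the diagonal sequence is non-lift are both correct and closely mirror the paper's argument. The fatal gap is the finite-presentation step. You propose to establish it by iterating \corref{59} and proving the resulting descent tree is finite; but \corref{59} asserts precisely that every non-lift $lln^{2}$-limit group admits an \emph{infinite} strictly decreasing chain of non-lift $lln^{2}$-limit quotients. So the descent tree you build is never finite, and any argument that would force it to be finite is tantamount to the very DCC you are trying to prove. There is no independent route to finite presentation here: unlike ordinary limit groups over $F_{k}$, non-lift $lln^{2}$-limit groups are artifacts of a proof by contradiction (\corref{61} shows a posteriori that none exist), so you cannot appeal to external structural theorems about them, and the JSJ induction you sketch would itself require the DCC to terminate on the rigid vertex groups.

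The paper sidesteps finite presentation entirely. Rather than analyzing the direct limit of an arbitrary infinite chain, it \emph{constructs} a specific chain $R_{1}>R_{2}>\ldots$ in which each $R_{n}$ is chosen, among all non-lift $lln^{2}$-limit quotients of $R_{n-1}$ that still head an infinite descent, to kill the maximum possible number of elements in the ball of radius $n$ in $F_{q}$. The diagonal sequence then yields a non-lift $lln^{2}$-limit group $R_{\infty}$ that is a proper quotient of every $R_{n}$. Now one applies \corref{59} \emph{to $R_{\infty}$} to obtain a further proper non-lift $lln^{2}$-limit quotient $L_{1}$ which itself heads an infinite descent. Pick $w\in F_{q}$ trivial in $L_{1}$ but nontrivial in $R_{\infty}$, and set $n_{0}=|w|$. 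Then $L_{1}$ was a legal candidate at stage $n_{0}$ (it lies in $\mathcal{A}_{R_{n_{0}-1}}$) and kills strictly more elements of the $n_{0}$-ball than $R_{n_{0}}$ does, contradicting the maximality in the choice of $R_{n_{0}}$. No finite-presentation claim is ever needed.
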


\begin{proof}
We follow the proof of theorem 13 in \cite{Zlil=000020and=000020Jaligot=000020-=000020free=000020products}.
Assume by contradiction the claim is false.

We construct then the following strictly decreasing sequence of non-lift
$lln^{2}$-limit groups.

For brevity, given a non-lift $lln^{2}$-limit group $R$, we denote
by $\mathcal{A}_{R}$ the collection of all the non-lift $lln^{2}$-limit
groups $R_{1}'$, for which $R>R_{1}'$ and there exists a strictly
decreasing infinite sequence of non-lift $lln^{2}$-limit groups that
begins with $R_{1}'$

\[
R_{1}'>R_{2}'>R_{3}'>...\,.
\]

For convenience, given a free group $F_{q}$ with a free basis $x=(x_{1},...,x_{q})$,
we write $\mathcal{A}_{F_{q}}$ to denote the collection of all the
non-lift $lln^{2}$-limit groups $R$ whose canonical generating set
consists of $q$ elements, and for which $\mathcal{A}_{R}$ is non-empty.
For all $R$ in $\mathcal{A}_{F_{q}}$, we denote $\eta_{R}:F_{q}\rightarrow R$
the quotient map that maps $x_{i}$ to the $i$-th element in the
canonical generating set of $R$, for all $i=1,...,q$.

From the collection of free groups (bases) $F_{q}$ for which $\mathcal{A}_{F_{q}}$
is non-empty, we choose an $F_{q}$ with $q$ minimal.

We define the strictly decreasing sequence 
\[
R_{1}>R_{2}>R_{3}>...
\]
by induction. First, we denote $R_{0}=F_{q}$. Then, assuming that
$R_{n-1}$ is defined, and that $\mathcal{A}_{R_{n-1}}$ is non-empty,
we let $R_{n}$ to be an element of the collection $\mathcal{A}_{R_{n-1}}$,
so that the map $\eta_{R_{n}}=\eta_{n}:F_{q}\rightarrow R_{n}$, maps
to the identity the maximal possible number of elements in the ball
of radius $n$ in the Cayley graph of $F_{q}$ (w.r.t. the fixed basis
$x$).

For brevity, given a sequence of homomorphisms $\{v_{m}:F_{q}\rightarrow\Gamma_{m}\}$,
an integer $m_{0}$, and a collection of elements $A\subset F_{q}$,
we say that the sequence $\{v_{m}:F_{q}\rightarrow\Gamma_{m}\}$ \emph{stabilizes
$A$ $m_{0}$-eventually}, if for all $w\in A$, we have that {[}$v_{m}(w)=1$
for all $m\ge m_{0}$, or $v_{m}(w)\neq1$ for all $m\ge m_{0}${]}.

Now for all $n\geq1$, the group $R_{n}$ is in particular a non-lift
$lln^{2}$-limit group. For all $n\geq1$, let

\[
\{v_{m}^{n}:F_{q}\rightarrow\Gamma_{l_{m}^{n}}^{n}\}_{m}
\]
be a non-lift sequence that converges to $R_{n}$. For all $n\geq1$,
using the definition of non-lift sequences, let $\Sigma_{n}$ be a
finite subset of $F_{q}$, so that $v_{m}^{n}(\Sigma_{n})=1$ for
all large enough $m$, and assume that the homomorphism $v_{m}^{n}$
cannot be $\Sigma_{n}$-lifted for all $m$. Given two integers $n\geq i\geq1$,
we recall that $R_{n}\geq R_{i}$. Hence, $v_{m}^{n}(\Sigma_{i})=1$
for all large enough $m$. Thus, we may assume that the sequence $\{\Sigma_{n}\}_{n}$
is an ascending sequence

\[
\Sigma_{1}\subset\Sigma_{2}\subset\Sigma_{3}\subset...\,.
\]

In accordance with \lemref{54} and \thmref{51}, for all $n\geq1$,
let $m(n)$ be a large enough integer, so that 
\begin{enumerate}
\item $v_{m(n)}^{n}(\Sigma_{n})=1$, 
\item the sequence $\{v_{m}^{n}:F_{q}\rightarrow\Gamma_{l_{m}^{n}}^{n}\}_{m}$
stabilizes the ball of radius $n$ of the Cayley graph of $F_{q}$
$m(n)$-eventually, 
\item and so that after all, the sequence $\{\Gamma_{l_{m(n)}^{n}}^{n}\}_{n}$
is an ascending sequence of groups in the model. 
\end{enumerate}
Of course, the sequence of integers $\{m(n)\}_{n}$ can further be
chosen to be strictly increasing, and thus, to simplify the notation
and write

\[
\Gamma_{l_{m(n)}}=\Gamma_{l_{m(n)}^{n}}^{n}
\]
without ambiguity.

We define the sequence of homomorphisms $\{h_{n}:F_{q}\rightarrow\Gamma_{l_{m(n)}}\}_{n}$
by setting $h_{n}=v_{m(n)}^{n}$ for all $n$.

Now we prove that the constructed sequence $\{h_{n}:F_{q}\rightarrow\Gamma_{l_{m(n)}}\}_{n}$
over the ascending sequence $\{\Gamma_{l_{m(n)}}\}_{n}$ of groups
in the model, contains a non-lift subsequence. Indeed, assume the
converse.

We denote by $\Sigma_{\infty}$ the union of all the subsets $\Sigma_{n}$
\[
\Sigma_{\infty}=\underset{n}{\cup}\Sigma_{n}\,.
\]

By Guba's theorem, every system over the free group $F_{k}$, is equivalent
to a finite subsystem. In particular, the system $\Sigma_{\infty}$
(in the variables $x$, where $x$ is the fixed basis of $F_{q}$),
is equivalent to a finite subsystem $\Sigma_{r_{0}}$ over $F_{k}$.
Note that, by the construction of $\{\Sigma_{i}\}_{i}$ and $\{h_{n}=v_{m(n)}^{n}\}_{n}$,
for every $i$, we have that $h_{n}(\Sigma_{i})=1$ eventually. In
particular, we have that $h_{n}(\Sigma_{r_{0}})=1$ for all large
enough $n$.

However, we are assuming that the sequence $\{h_{n}:F_{q}\rightarrow\Gamma_{l_{m(n)}}\}_{n}$
contains no non-lift subsequence, and hence, for all large enough
$n$, the homomorphism $h_{n}=v_{m(n)}^{n}$ can be $\Sigma_{r_{0}}$-lifted.
Since the system $\Sigma_{r_{0}}$ is equivalent to $\Sigma_{\infty}$
over $F_{k}$, we deduce that for all large enough $n$, the homomorphism
$h_{n}=v_{m(n)}^{n}$ can be $\Sigma_{\infty}$-lifted, and in particular
$\Sigma_{n}$-lifted. But this contradicts the defining property of
the homomorphism $v_{m(n)}^{n}$ as one that cannot be $\Sigma_{n}$-lifted.

We deduce that the sequence $\{h_{n}:F_{q}\rightarrow\Gamma_{l_{m(n)}}\}_{n}$
contains a non-lift subsequence, and in accordance with \thmref{50},
by passing to a subsequence, we can continue the proof under the assumption
that the sequence $\{h_{n}:F_{q}\rightarrow\Gamma_{l_{m(n)}}\}_{n}$
is a non-lift sequence that converges into a non-lift $lln^{2}$-limit
group that we denote by

\[
\eta_{\infty}:F_{q}\rightarrow R_{\infty}\,.
\]

By construction, the group $R_{\infty}$ is a proper quotient of $R_{n}$,
for all $n$.

According to \corref{59}, the non-lift $lln^{2}$-limit group $R_{\infty}$,
admits an infinite decreasing sequence of non-lift $lln^{2}$-limit
quotients

\[
R_{\infty}>L_{1}>L_{2}>...\,.
\]
In particular, the group $L_{1}$ belongs to the collection $\mathcal{A}_{R_{n}}$
(defined in the beginning of the proof), for all $n$. Let $w\in F_{q}$
be a word representing a non-trivial element in $R_{\infty}$, but
$w$ represents the trivial element in $L_{1}$. Denote by $n_{0}$
the length of $w$ in $F_{q}$. Then, the canonical map $\eta_{L_{1}}:F_{q}\rightarrow L_{1}$
maps to the identity strictly more elements of the ball of radius
$n_{0}$ in the Cayley graph of $F_{q}$, than the map $\eta_{R_{n_{0}}}:F_{q}\rightarrow R_{n_{0}}$.
This contradicts the defining property of $R_{n_{0}}$. 
\end{proof}
\begin{cor}
\label{cor:61} There exist no non-lift $lln^{2}$-limit groups. 
\end{cor}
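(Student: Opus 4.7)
The proof is a one-line contradiction from the two results immediately preceding the corollary, so the ``plan'' is essentially to point out why they are in direct conflict.

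The strategy is to assume, for contradiction, that some non-lift $lln^{2}$-limit group $L_{\infty}$ exists, and then to derive an infinite strictly decreasing chain of non-lift $lln^{2}$-limit groups starting from $L_{\infty}$, which is ruled out by \thmref{60}. The construction of the infinite chain is precisely what \corref{59} guarantees: starting from $L_{\infty}$, one may inductively pass to an $lln^{2}$-shortening quotient $Q_{\infty}^{j}$, each of which is itself a non-lift $lln^{2}$-limit group and a \emph{proper} quotient of the previous one, producing
\[
L_{\infty}>Q_{\infty}^{1}>Q_{\infty}^{2}>Q_{\infty}^{3}>\cdots.
\]
This chain is strictly decreasing in the ordering of \defref{53}.

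On the other hand, \thmref{60} asserts that every strictly decreasing sequence of non-lift $lln^{2}$-limit groups must terminate after finitely many steps. The two facts are incompatible, so the initial assumption that a non-lift $lln^{2}$-limit group exists must be false. There is no additional case analysis, no further estimate, and no obstacle: the corollary is a formal consequence of the two preceding results, which is why the author labels it a corollary rather than a theorem. The substantive work has already been carried out in \defref{56} (which builds the shortening quotient via the modular group $Mod(U_{n})$, the approximation sequence $U_{n}$, and \lemref{57}/\lemref{58} to preserve the non-lift property under shortening) and in \thmref{60} (which uses a diagonal argument together with Guba's theorem to force termination of descending chains).
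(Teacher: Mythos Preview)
Your proposal is correct and follows essentially the same approach as the paper's own proof: assume a non-lift $lln^{2}$-limit group exists, invoke \corref{59} to obtain an infinite strictly decreasing chain of non-lift $lln^{2}$-limit quotients, and contradict \thmref{60}. The paper's proof is just a terser two-sentence version of what you wrote.
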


\begin{proof}
According to \corref{59} every non-lift $lln^{2}$-limit group admits
an infinite strictly decreasing sequence of non-lift $lln^{2}$-limit
quotients. And according to \thmref{60}, every such sequence must
terminate after finitely many steps. Hence there cannot exist a non-lift
$lln^{2}$-limit group. 
\end{proof}

\subsection{Restricted Limit Groups over Ascending Sequences of Random Groups}

Since we want to be able to treat sentences that contain constants,
we should be able to lift solutions for systems of equations with
constants. Recall that the constants are the words of the fixed free
group $F_{k}$ with the fixed basis $a=(a_{1},...,a_{k})$. 
\begin{defn}
Let $F_{q}$ be a free group, with a basis $x=(x_{1},...,x_{q})$.
Let $\{h_{l_{n}}:F_{q}\ast F_{k}\rightarrow\Gamma_{l_{n}}\}$ be a
sequence of homomorphisms over an ascending sequence of groups in
the model. Let $\pi:F_{k}\rightarrow\Gamma$ be a given presentation
of a group $\Gamma$, and let $h:F_{q}\ast F_{k}\rightarrow\Gamma$
be a homomorphism. 
\begin{enumerate}
\item We say that the homomorphism $h:F_{q}\ast F_{k}\rightarrow\Gamma$
is a \emph{restricted homomorphism}, if $h(a_{i})=a_{i}$ (the right
hand side means precisely $\pi(a_{i})$) for all $i=1,...,k$. 
\item We say that the sequence $\{h_{l_{n}}:F_{q}\ast F_{k}\rightarrow\Gamma_{l_{n}}\}$
is a \emph{restricted sequence}, If for all $n$, the homomorphism
$h_{l_{n}}$ is restricted. 
\end{enumerate}
\end{defn}

\begin{thm}
\label{thm:62} Let $F_{q}$ be a free group, with a basis $x=(x_{1},...,x_{q})$.
Let $\{h_{l_{n}}:F_{q}\ast F_{k}\rightarrow\Gamma_{l_{n}}\}$ be a
restricted sequence of homomorphisms over an ascending sequence of
groups in the model.

Assume that the sequence of stretching factors

\[
\mu_{n}=\underset{1\le u\leq q}{\max}d_{\Gamma_{l_{n}}}(1,h_{l_{n}}(x_{u}))\,,
\]
(note that here, due to the existence of constants, we do not post-compose
with inner automorphisms) satisfies that

\[
\mu_{n}\geq l_{n}\ln^{2}l_{n}
\]
for all $n$.

Then, the sequence $h_{l_{n}}$, subconverges (in the Gromov topology)
to an isometric non-trivial action of $F_{q}\ast F_{k}$ on a pointed
real tree $(Y,y_{0})$, in which the subgroup $F_{k}$ is elliptic.
Moreover, the real tree is not a real line. 
\end{thm}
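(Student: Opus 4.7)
The plan is to mirror the proof of Theorem \ref{thm:50} for the tree-limit part, and then verify the two additional properties specific to the restricted setting. For the real tree limit, I would proceed exactly as in Theorem \ref{thm:50}: by Theorem \ref{thm:27} the hyperbolicity constant of $\Gamma_{l_n}$ is $\delta_n = \alpha_0 l_n$, and the hypothesis $\mu_n \ge l_n \ln^2 l_n$ gives $\delta_n/\mu_n \le \alpha_0/\ln^2 l_n \to 0$. Proposition 1.1 of \cite{DGI} then yields, after passing to a subsequence, Gromov convergence of $h_{l_n}$ to an isometric action of $F_q \ast F_k$ on a pointed real tree $(Y, y_0)$. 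Note that no post-composition by inner automorphisms $\tau_{f_n}$ is performed here, since the constraint $h_{l_n}(a_i) = a_i$ pins down the constants.

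For non-triviality and $F_k$-ellipticity: after passing to a further subsequence, choose $x_u$ realizing $\mu_n = d_{\Gamma_{l_n}}(1, h_{l_n}(x_u))$; then in the rescaled metric $d_Y(y_0, x_u \cdot y_0) = 1$, so the limit action is non-trivial. For each generator $a_i$, the restriction gives $d_{\Gamma_{l_n}}(1, h_{l_n}(a_i)) = |a_i| \le 1$, so in the rescaled metric $d_Y(y_0, a_i \cdot y_0) \le 1/\mu_n \to 0$, hence $a_i \cdot y_0 = y_0$ in $Y$. Since $\mathrm{Stab}(y_0) \subseteq \mathrm{Isom}(Y)$ is a subgroup containing every $a_i$, the whole subgroup $F_k$ fixes $y_0$.

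To show that $Y$ is not a real line, I argue by contradiction. If $Y = \mathbb{R}$, then $\mathrm{Stab}(y_0) = \{\mathrm{id}, r_{y_0}\}$ consists entirely of elliptic (non-translating) isometries, so every element of $F_k$, in particular $a_1$, does not translate $Y$. The argument used inside the proof of Theorem \ref{thm:51} (invoking Lemma \ref{lem:47} and the torsion-freeness of random groups from Theorem \ref{thm:27}) shows that any $g \in F_q \ast F_k$ that does not translate the real line $Y$ must satisfy $h_{l_n}(g) = 1$ in $\Gamma_{l_n}$ for all large $n$. Applied to $g = a_1$, this forces $a_1 = h_{l_n}(a_1) = 1$ in $\Gamma_{l_n}$ eventually. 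But by Theorem \ref{thm:26} the ball of radius $C_0 l_n$ in $\Gamma_{l_n}$ is a tree for all large $n$, so the generator $a_1$ is non-trivial there, a contradiction. The delicate point, which will be the main technical task, is verifying that the implication ``non-translating on $\mathbb{R}$ forces $h_{l_n}(g) = 1$ eventually'' from the proof of Theorem \ref{thm:51} carries over to the restricted setting, where the absence of $\tau_{f_n}$-conjugation means $y_0$ is always the identity element; I expect the same Lemma \ref{lem:47}-based reasoning to apply verbatim, with ellipticity in the limit forcing $[h_{l_n}(g)] = o(\mu_n)$ and the axis of $h_{l_n}(g)$ to lie at bounded rescaled distance from $y_0$, from which torsion-freeness of $\Gamma_{l_n}$ yields the required triviality.
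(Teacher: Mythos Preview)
Your proposal is correct and essentially mirrors the paper's own proof: the paper invokes Theorem~\ref{thm:50} verbatim for the real-tree limit and ellipticity of $F_k$, and for the ``not a line'' clause argues (via the same Theorem~\ref{thm:51}/Lemma~\ref{lem:47} mechanism you describe) that the generators of $F_k$ would have to commute in $\Gamma_{l_n}$ for infinitely many $n$, contradicting Theorem~\ref{thm:26}. The only cosmetic difference is that the paper derives $[a_1,a_2]=1$ in $\Gamma_{l_n}$ (using that the point stabiliser in $\mathrm{Isom}(\mathbb{R})$ is abelian) while you derive $a_1=1$ directly; both are immediately ruled out by Theorem~\ref{thm:26}.
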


\begin{proof}
The same proof of \thmref{50}. If $Y$ was a real line, then the
generators of $F_{k}$ represent commuting elements in the groups
$\Gamma_{l_{n}}$ for infinitely many integers $n$, which cannot
happen according to \thmref{26}. 
\end{proof}
\begin{defn}
Let $F_{q}$ be a free group, with a basis $x=(x_{1},...,x_{q})$.
Let $\{h_{l_{n}}:F_{q}\ast F_{k}\rightarrow\Gamma_{l_{n}}\}$ be a
restricted sequence of homomorphisms over an ascending sequence of
groups in the model.

Assume that there exists a positive number $\epsilon>0$, so that
the sequence of stretching factors

\[
\mu_{n}=\underset{1\le u\leq q}{\max}d_{\Gamma_{l_{n}}}(1,h_{l_{n}}(x_{u}))\,,
\]
satisfies that 
\[
\mu_{n}\geq\epsilon l_{n}\ln^{2}l_{n}
\]
for all $n$.

The sequence $h_{l_{n}}$, is called a \emph{restricted $lln^{2}$-sequence
over the ascending sequence $\Gamma_{l_{n}}$ of groups in the model},
or briefly, a restricted $lln^{2}$-sequence.

In case the sequence $h_{l_{n}}$ is a restricted $lln^{2}$-sequence
that converges (in the Gromov topology) to an isometric non-trivial
action of $F_{q}\ast F_{k}$ on a real tree, then, in accordance with
the notation of \thmref{51}, the quotient group

\[
L_{h_{l_{n}},\infty}=F_{q}\ast F_{k}/K_{h_{l_{n}},\infty}
\]
is called a \emph{restricted $lln^{2}$-limit group over the ascending
sequence $\Gamma_{l_{n}}$ of groups in the model}, or briefly, a
restricted $lln^{2}$-limit group.

We say also that \emph{the restricted $lln^{2}$-sequence $h_{l_{n}}$
converges into the restricted $lln^{2}$-limit group $L_{h_{l_{n}},\infty}$}. 
\end{defn}

\begin{rem}
Let $\eta:F_{q}\ast F_{k}\rightarrow L_{\infty}$ be a restricted
$lln^{2}$-limit group (with the canonical generating set associated
to it). Then, the canonical map $\eta$ maps $F_{k}$ isomorphically
into $L_{\infty}$. This follows by \thmref{26} and the appropriate
version of \thmref{51} for the restricted case (see the discussion
below). 
\end{rem}

\begin{defn}
Let $F_{q}$ be a free group, with a fixed basis $x=(x_{1},...,x_{q})$,
and let $\{h_{l_{n}}:F_{q}\ast F_{k}\rightarrow\Gamma_{l_{n}}\}$
be a restricted sequence of homomorphisms over an ascending sequence
of groups in the model. Let $\pi:F_{k}\rightarrow\Gamma$ be a given
presentation of a group $\Gamma$, and let $h:F_{q}\ast F_{k}\rightarrow\Gamma$
be a restricted homomorphism. 
\begin{enumerate}
\item Let $\Sigma$ be a collection of words in the free group $F_{q}\ast F_{k}$.
We say that the homomorphism $h$ \emph{can be $\Sigma$-lifted restrictly}
, if there exists some restricted homomorphism $\tilde{h}:F_{q}\ast F_{k}\rightarrow F_{k}$
with $\tilde{h}(\Sigma)=1$ and $h=\pi\circ\tilde{h}$. 
\item We say that the restricted sequence $\{h_{l_{n}}:F_{q}\ast F_{k}\rightarrow\Gamma_{l_{n}}\}$
is a \emph{restricted non-lift sequence}, if there exists a finite
set of words $\Sigma\subset F_{q}\ast F_{k}$, so that $h_{l_{n}}(\Sigma)=1$
eventually, but for all $n$, the homomorphism $h_{l_{n}}$ cannot
be $\Sigma$-lifted restrictly. 
\item If the sequence $\{h_{l_{n}}:F_{q}\ast F_{k}\rightarrow\Gamma_{l_{n}}\}$
is a restricted non-lift sequence that converges to a restricted $lln^{2}$-limit
group $L_{\infty}$, then the group $L_{\infty}$ is called a \emph{restricted
non-lift $lln^{2}$-limit group}. 
\end{enumerate}
\end{defn}

\begin{rem}
Since we have dropped the collection $\mathcal{N}$, we can assume
that any restricted non-lift sequence is in necessity a restricted
$lln^{2}$-sequence. 
\end{rem}

Now the formulation of \thmref{51} for the restricted case (w.r.t.
to the notation of \thmref{62} instead of that of \thmref{50}) can
be proved by the same argument in \thmref{51}. \Defref{53} stays
unchanged for the restricted case. Point 1 of \lemref{54} is replaced
by the fact that the limit tree is not a real line in the restricted
case. The abelian decompositions that we are interested in, for a
restricted non-lift $lln^{2}$-limit group $L_{\infty}$, are only
those for which the subgroup $F_{k}\leq L_{\infty}$ is elliptic in
them (i.e., can be conjugated into a vertex group). Hence, Point 2
of \lemref{54} naturally obtains a formulation that is consistent
with this. \Lemref{55} stays unchanged for the restricted case.

Let $L_{\infty}$ be a restricted non-lift $lln^{2}$-limit group,
and assume that $L_{\infty}$ is freely indecomposable w.r.t. the
subgroup $F_{k}$.

For defining the \emph{restricted abelian JSJ decomposition} associated
to the restricted non-lift $lln^{2}$-limit group $L_{\infty}$, we
recall again that the abelian decompositions of $L_{\infty}$ that
we are interested in, are only those for which the subgroup $F_{k}\leq L_{\infty}$
is elliptic in them.

In light of the versions of \lemref{55}, \thmref{50}, and \thmref{51}
for the restricted case, Chapter 2 of \cite{DGI} naturally generalizes
(in the same way as it generalizes for restricted limit groups in
the standard context of \cite{DGI}), in order to construct \emph{the
restricted abelian JSJ decomposition of the restricted freely indecomposable
(w.r.t. $F_{k}$) non-lift $lln^{2}$-limit group $L_{\infty}$}.

In particular, the subgroup $F_{k}\leq L_{\infty}$ is elliptic in
the restricted JSJ of $L_{\infty}$. For the exact statement on how
the restricted (abelian) JSJ decomposition encodes all the abelian
splittings of $L_{\infty}$ in which $F_{k}$ is elliptic, see Theorem
8.1 (the restricted case) of \cite{DGI}.

\Defref{56} together with \lemref{57} and \lemref{58}, obtain
a natural form for the restricted case, without changing the arguments.
Thus, \corref{59} obtains also a natural form with the same argument
for the restricted case.

\Thmref{60} stays unchanged for the restricted case.

And hence \corref{61} obtains the following form: 
\begin{thm}
\label{thm:63} There exist no restricted non-lift $lln^{2}$-limit
groups. 
\end{thm}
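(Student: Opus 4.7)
My plan is to imitate the proof of \corref{61} verbatim, using the restricted analogues of \corref{59} and \thmref{60} which, as the author has already indicated in the discussion immediately preceding the statement, follow by identical arguments once the restricted versions of the supporting machinery (the restricted JSJ in which $F_k$ is elliptic, restricted shortening quotients, and the restricted form of \lemref{55}) are in place. So the proposed proof is genuinely a two-line reduction.

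Concretely, I would argue by contradiction: suppose $L_\infty$ is a restricted non-lift $lln^2$-limit group with canonical map $\eta:F_q\ast F_k\to L_\infty$. First I would invoke the restricted version of \corref{59}, which produces an infinite strictly decreasing chain
\[
L_\infty>Q_\infty^{1}>Q_\infty^{2}>\cdots
\]
of restricted non-lift $lln^2$-limit quotients. The construction of each $Q_\infty^{j}$ proceeds exactly as in \defref{56}, with two cosmetic adjustments already anticipated in the excerpt: one forms the restricted abelian JSJ of the freely indecomposable (w.r.t.\ $F_k$) factor in which $F_k$ is elliptic, and the approximating groups $U_n$ must likewise contain $F_k$ as an elliptic subgroup so that the shortening procedure never moves the constants. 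The restricted analogues of \lemref{57} and \lemref{58} then guarantee that a non-lift subsequence survives the shortening, and properness of the quotient is exactly as before.

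Second, I would invoke the restricted version of \thmref{60}, which asserts that any strictly decreasing chain of restricted non-lift $lln^2$-limit groups terminates in finitely many steps. As the author points out, the proof of \thmref{60} transfers unchanged, since its only genuinely algebraic ingredient is Guba's theorem applied to a system over $F_k$, and this argument is neutral to whether one is working in the restricted category or not; the diagonal construction of the sequence $\{h_n:F_q\ast F_k\to\Gamma_{l_{m(n)}}\}_n$ and its limit $R_\infty$ goes through verbatim, with $R_\infty$ automatically restricted since every $h_n$ is restricted.

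The combination of these two facts yields the desired contradiction and thus the theorem. The only step that requires genuine verification, rather than pure citation, is that the restricted shortening argument of \defref{56} really does produce a \emph{proper} quotient; this is where one must check that $F_k$'s ellipticity in the restricted JSJ does not obstruct the existence of a nontrivial modular automorphism $\phi_n\in\operatorname{Mod}(U_n)$, since otherwise no strictly shorter homomorphism would exist. I expect this to be the main (though mild) obstacle, and it is resolved exactly as in the classical restricted setting of \cite{DGI}: the fact that the limit real tree in \thmref{62} is not a line, combined with the restricted versions of Points~1--6 of \thmref{51}, furnishes enough of a nontrivial abelian splitting of $L_\infty$ (with $F_k$ elliptic) for the shortening procedure to bite.
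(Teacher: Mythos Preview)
Your proposal is correct and matches the paper's approach exactly: the paper presents \thmref{63} simply as the restricted analogue of \corref{61}, relying on the restricted versions of \corref{59} and \thmref{60} which, as the preceding discussion asserts, carry over with the natural modifications you describe. If anything, you have supplied more justification for why the shortening still yields a proper quotient than the paper itself does.
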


\section{Consequences }\label{sec:Consequences}

\subsection{Lifting All the Solutions of a Given System over Random Groups }

Let $k\geq2$ be an integer, let $a=(a_{1},...,a_{k})$ be a given
tuple of formal letters, and let $F_{k}$ be the free group with basis
$a$. Let $d<\frac{1}{2}$ be a real number, and consider the Gromov
model of density $d$. 
\begin{thm}
\label{thm:64} Let $\Sigma_{0}=\Sigma_{0}(y,a)$ be a system of equations
in the variables $y$ and constants (maybe empty) $a$. Then, the
random group $\Gamma$ of density $d$ satisfies the following property
with overwhelming probability.

Every solution of the system $\Sigma_{0}$ in $\Gamma$, can be lifted
to a solution of $\Sigma_{0}$ in $F_{k}$. 
\end{thm}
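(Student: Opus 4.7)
The plan is to argue by contradiction, reducing the theorem to the non-existence of restricted non-lift $lln^{2}$-limit groups established in \thmref{63}. Suppose the conclusion fails for some $\Sigma_{0}(y,a)$; then along a subsequence of levels $l_{n}\to\infty$ I would extract random groups $\Gamma_{l_{n}}$ (necessarily outside the negligible set $\mathcal{N}$) each admitting a non-liftable solution $y_{0}^{n}\in\Gamma_{l_{n}}^{q}$ of $\Sigma_{0}$. Since the ascending enumeration $H_{1}\subset H_{2}\subset\ldots$ of \thmref{37} exhausts all finite systems, $\Sigma_{0}$ lies in $H_{l_{n}}$ for all large $n$, so each $\Gamma_{l_{n}}$ satisfies the $\Sigma_{0}$-$lln^{2}$ lifting property.

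Next comes a length dichotomy. If infinitely many $y_{0}^{n}$ have $L_{\Sigma_{0}}(y_{0}^{n})\leq l_{n}\ln^{2}l_{n}$, the $\Sigma_{0}$-$lln^{2}$ lifting property immediately produces the desired lift, contradicting our choice. Hence, after passing to a subsequence, one may assume $L_{\Sigma_{0}}(y_{0}^{n})>l_{n}\ln^{2}l_{n}$ for all $n$. I would then define the restricted homomorphism $h_{l_{n}}\colon F_{q}\ast F_{k}\to\Gamma_{l_{n}}$ sending a fixed basis $x=(x_{1},\ldots,x_{q})$ of $F_{q}$ to $y_{0}^{n}$ and fixing the constants $a$. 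Its stretching factor $\mu_{n}=\max_{u}d_{\Gamma_{l_{n}}}(1,h_{l_{n}}(x_{u}))$ equals $L_{\Sigma_{0}}(y_{0}^{n})$, so $\mu_{n}\geq l_{n}\ln^{2}l_{n}$ and $\{h_{l_{n}}\}$ is a restricted $lln^{2}$-sequence in the sense of the restricted analogue of \defref{52} (with $\epsilon=1$).

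The third step is to recognize the sequence as a restricted non-lift sequence and invoke \thmref{63}. Since $h_{l_{n}}(\Sigma_{0})=1$ for every $n$ (because $y_{0}^{n}$ is a solution of $\Sigma_{0}$ over $\Gamma_{l_{n}}$), while no $h_{l_{n}}$ admits a restricted $\Sigma_{0}$-lift (this is exactly the non-liftability of $y_{0}^{n}$), the finite witness set $\Sigma=\Sigma_{0}$ certifies that $\{h_{l_{n}}\}$ is a restricted non-lift sequence. Extracting a Gromov-convergent subsequence via \thmref{62} and passing to the algebraic limit yields a restricted non-lift $lln^{2}$-limit group, contradicting \thmref{63}.

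The main obstacle is not in this theorem itself but in the machinery it rests on: \thmref{63} requires the full shortening/JSJ/descending-chain edifice built in \secref{Limit-Groups-over-Ascending-Sequences-of-Random-Groups}, as well as the isoperimetric, hyperbolicity and axis-travel estimates developed in the earlier sections. Given those results, the deduction of \thmref{64} is the short contrapositive above. The only delicate points to verify are that the $L_{\Sigma_{0}}$-based dichotomy aligns correctly with the definition of a restricted $lln^{2}$-sequence (in particular that $\mu_{n}$ is measured \emph{without} post-composition by inner automorphisms in the restricted setting, which matches \thmref{62}), and that the single finite witness $\Sigma_{0}$ genuinely precludes a restricted lift of $h_{l_{n}}$ for every $n$, not merely eventually.
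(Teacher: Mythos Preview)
Your proposal is correct and takes essentially the same approach as the paper. The paper's proof is much terser, compressing your entire contrapositive into the single assertion ``According to \thmref{63}, there exists an integer $l_{0}$ so that for all $l\geq l_{0}$, every group $\Gamma_{l}$ in $\mathcal{M}_{l}\setminus\mathcal{N}_{l}$ satisfies the $\Sigma_{0}$-l.p.''; the length dichotomy you spell out is already absorbed in the remark (preceding \defref{53}) that once $\mathcal{N}$ is dropped, every restricted non-lift sequence is automatically a restricted $lln^{2}$-sequence.
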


\begin{proof}
Recall the collection $\mathcal{N}_{l}$ of all groups in level $l$
that do not satisfy the $\Sigma$-$lln^{2}$ l.p. (see \defref{36})
for some system in the collection $H_{l}$, where $H_{l}$ is defined
in \thmref{37}. We set $\mathcal{N}=\underset{l}{\cup}\mathcal{N}_{l}$.

We denote the collection of all the groups in level $l$ of the model
of density $d$ by $\mathcal{M}_{l}$.

According to \thmref{63}, there exists an integer $l_{0}$, so that
for all integers $l\geq l_{0}$, every group $\Gamma_{l}$ in $\mathcal{M}_{l}\backslash\mathcal{N}_{l}$
satisfies the $\Sigma_{0}$-l.p., i.e., every solution of $\Sigma_{0}$
in $\Gamma_{l}$ can be lifted to a solution of $\Sigma_{0}$ in $F_{k}$.

Finally, recall by \thmref{37} that the collection $\mathcal{N}=\underset{l}{\cup}\mathcal{N}_{l}$
is negligible, i.e.,

\[
\frac{|\mathcal{N}_{l}|}{|\mathcal{M}_{l}|}\overset{l\rightarrow\infty}{\longrightarrow}0\,,
\]
and the claim follows. 
\end{proof}
\begin{thm}
\label{thm:65} Let $\Sigma_{0}=\Sigma_{0}(y,a)$ be a system of equations
in the variables $y$ and constants (maybe empty) $a$. Let $MR$
be the Makanin-Razborov diagram of the system $\Sigma_{0}$ constructed
over the free group $F_{k}$. Then, the random group $\Gamma$ of
density $d$ satisfies the following property with overwhelming probability.

Every solution of the system $\Sigma_{0}$ in $\Gamma$, factors through
$MR$. 
\end{thm}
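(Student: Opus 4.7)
The plan is to derive Theorem~\ref{thm:65} directly from Theorem~\ref{thm:64} together with the defining (universal) property of the Makanin-Razborov diagram over a free group. First, I fix a random presentation $\pi_{\Gamma}\colon F_{k}\rightarrow\Gamma$ of density $d$ inside the overwhelmingly likely event on which Theorem~\ref{thm:64} holds, so that every solution of $\Sigma_{0}$ in $\Gamma$ can be lifted to a solution of $\Sigma_{0}$ in $F_{k}$. I then recall that the Makanin-Razborov diagram $MR$ of $\Sigma_{0}$ constructed in \cite{DGI} is a finite directed rooted diagram of proper epimorphisms of limit groups whose defining property is: every homomorphism $\tilde{h}\colon F(y)\ast F_{k}\rightarrow F_{k}$ (restricted, i.e.\ fixing the constants $a$) with $\tilde{h}(\Sigma_{0})=1$ factors as a composition of modular automorphisms along a branch of $MR$ ending in a homomorphism from a leaf (a free-group-in-the-restricted-sense, or more generally a rigid quotient whose maps into $F_{k}$ are encoded at the end of the branch).

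Given an arbitrary solution $y_{0}\in\Gamma^{q}$ of $\Sigma_{0}$, I apply Theorem~\ref{thm:64} to produce a lift $\tilde{y}_{0}\in F_{k}^{q}$ with $\Sigma_{0}(\tilde{y}_{0},a)=1$ in $F_{k}$ and $\pi_{\Gamma}(\tilde{y}_{0})=y_{0}$. This lift corresponds to a restricted homomorphism $\tilde{h}\colon F(y)\ast F_{k}\rightarrow F_{k}$ killing $\Sigma_{0}$. By the universal property of $MR$, the map $\tilde{h}$ factors through one of the branches of $MR$, producing a sequence of modular automorphisms along that branch together with a homomorphism from the terminating limit group into $F_{k}$. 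Post-composing the entire factorization with $\pi_{\Gamma}\colon F_{k}\rightarrow\Gamma$ yields a factorization of the original map $h\colon F(y)\ast F_{k}\rightarrow\Gamma$ (sending $y_{i}\mapsto y_{0,i}$ and fixing the constants) through the same branch of $MR$. Hence $y_{0}$ factors through $MR$.

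Since this argument works for every solution $y_{0}$ simultaneously on the event provided by Theorem~\ref{thm:64}, I conclude that with overwhelming probability every solution of $\Sigma_{0}$ over $\Gamma$ factors through the Makanin-Razborov diagram of $\Sigma_{0}$ constructed over $F_{k}$. There is essentially no new obstacle here beyond Theorem~\ref{thm:64}: the only mildly subtle point is to make sure that the notion of ``factoring through $MR$'' is interpreted correctly for a solution valued in $\Gamma$, namely as a factorization in which the terminal homomorphism into $F_{k}$ is post-composed with $\pi_{\Gamma}$. All the serious probabilistic and geometric work has already been done in Sections~\ref{sec:Axes-of-Elements-in-Random-Groups}--\ref{sec:Limit-Groups-over-Ascending-Sequences-of-Random-Groups} and in Theorem~\ref{thm:63}/Theorem~\ref{thm:64}, so the remainder is a clean formal deduction.
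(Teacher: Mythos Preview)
Your proof is correct and follows exactly the same approach as the paper: lift a solution over $\Gamma$ to a solution over $F_{k}$ via Theorem~\ref{thm:64}, invoke the universal property of the Makanin--Razborov diagram over $F_{k}$ from \cite{DGI}, and post-compose with $\pi_{\Gamma}$. The paper's proof is simply a terser version of what you wrote.
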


\begin{proof}
Every solution of $\Sigma_{0}$ in $\Gamma$ can be lifted to a solution
of $\Sigma_{0}$ in $F_{k}$. By the construction of $MR$ (see {[}\cite{DGI},
sections 5 and 8{]}), every solution of $\Sigma_{0}$ in $F_{k}$
factors through $MR$. Hence, every solution of $\Sigma_{0}$ in $\Gamma$
factors through $MR$. 
\end{proof}

\subsection{Universal Sentence over Random Groups }

Let $k\geq2$ be an integer, let $a=(a_{1},...,a_{k})$ be a given
tuple of formal letters, and let $F_{k}$ be the free group with basis
$a$. Let $d<\frac{1}{2}$ be a real number, and consider the Gromov
model of density $d$. 
\begin{thm}
\label{thm:66} Let $\psi=\psi(x,a)$ be a sentence in the Boolean
algebra of one-quantifier sentences (which may or may not contain
constants). Let $\Gamma$ be the random group of density $d$.

Then, the sentence $\psi$ is a truth sentence over the free group
$F_{k}$, if and only if, with overwhelming probability $\psi$ is
a truth sentence over $\Gamma$. 
\end{thm}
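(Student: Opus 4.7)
The plan is to reduce $\psi$ via propositional logic to a Boolean combination of finitely many atomic existential sentences, establish the equivalence for each atomic sentence separately using \thmref{64} for one direction and \thmref{26} for the other, and conclude by intersecting finitely many overwhelming-probability events.

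First I would put $\psi$ in a normal form. The Boolean algebra of one-quantifier sentences consists of Boolean combinations of sentences of the form $\exists \bar{y}\,\phi$ and $\forall \bar{y}\,\phi$ with $\phi$ quantifier-free. Replacing each $\forall \bar{y}\,\phi$ by $\neg\exists \bar{y}\,\neg\phi$, rewriting each quantifier-free formula in disjunctive normal form, and distributing $\exists$ over $\vee$, we may express $\psi$ as a finite Boolean combination of atomic existential sentences of the form
\[
\phi \;=\; \exists y_1,\ldots,y_q : \Sigma(y,a)=1 \,\wedge\, w_1(y,a)\neq 1 \,\wedge\, \cdots \,\wedge\, w_r(y,a)\neq 1,
\]
where $\Sigma$ is a finite system of equations and $w_1,\ldots,w_r$ are words in $y$ and $a$.

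Next, for each atomic existential $\phi$ as above, I would prove that $F_k\models\phi$ if and only if $\Gamma\models\phi$ with overwhelming probability. For the direction $\Gamma\models\phi\Rightarrow F_k\models\phi$, assuming $\Gamma$ lies in the overwhelming-probability event of \thmref{64} applied to $\Sigma$, any $\Gamma$-witness $y_0'$ is in particular a $\Gamma$-solution of $\Sigma$, hence lifts to $\tilde{y}_0\in F_k^q$ with $\Sigma(\tilde{y}_0,a)=1$; since $\pi(w_j(\tilde{y}_0,a))=w_j(y_0',a)\neq 1$ in $\Gamma$, the element $w_j(\tilde{y}_0,a)$ is itself non-trivial in $F_k$, so $\tilde{y}_0$ witnesses $\phi$ in $F_k$. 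For the direction $F_k\models\phi\Rightarrow\Gamma\models\phi$, take any $F_k$-witness $y_0\in F_k^q$. Each $w_j(y_0,a)$ is a fixed non-trivial element of $F_k$, of length bounded by a constant $N$ depending only on $\phi$ and $y_0$. By \thmref{26}, for all $l$ with $C_0 l > N$ and for $\Gamma$ in the overwhelming-probability event that the ball of radius $C_0 l$ in its Cayley graph is a tree, each $\pi(w_j(y_0,a))$ is non-trivial in $\Gamma$, while the equations $\Sigma(\pi(y_0),a)=1$ hold automatically; hence $\pi(y_0)$ is a $\Gamma$-witness of $\phi$.

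Finally, only finitely many atomic existentials $\phi_1,\ldots,\phi_m$ appear in the Boolean expression of $\psi$. For each $\phi_i$, the event ``$\Gamma\models\phi_i$ if and only if $F_k\models\phi_i$'' has overwhelming probability by the previous step, so the intersection of these finitely many events also has overwhelming probability. On this intersection the truth value of any Boolean combination of the $\phi_i$'s agrees between $F_k$ and $\Gamma$, which proves the claim for $\psi$. The substantive mathematical content has already been absorbed into \thmref{64} (and through it the entire development of this paper); the only remaining ingredients are \thmref{26} for preserving inequalities under projection and the elementary propositional reductions above, so I do not anticipate any serious obstacle in carrying out this plan.
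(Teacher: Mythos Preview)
Your proposal is correct and follows essentially the same approach as the paper: both reduce to atomic existential sentences, use \thmref{64} to lift witnesses from $\Gamma$ to $F_k$ (carrying the inequations along since they survive under any surjection), and use \thmref{26} to push an $F_k$-witness down to $\Gamma$ (the finitely many fixed non-trivial words $w_j(y_0,a)$ stay non-trivial once $l$ is large). The only difference is organizational: you phrase the argument as ``for each atomic $\phi_i$, the event $\bigl(\Gamma\models\phi_i\iff F_k\models\phi_i\bigr)$ has overwhelming probability, then intersect'', which makes the passage to arbitrary Boolean combinations explicit, whereas the paper treats a single existential sentence and leaves the Boolean-algebra extension to the reader.
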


\begin{proof}
Assume that $\psi$ is an existential sentence. Then $\psi$ is equivalent
(tautologically - over any group) to a sentence of the form

\[
\stackrel[i=1]{r}{\vee}\left(\exists x\quad\Sigma_{i}(x,a)=1\wedge v_{1}^{i}(x,a)\neq1\wedge...\wedge v_{n_{i}}^{i}(x,a)\neq1\right)\,,
\]
where $r,n_{i}$ are integers, $\Sigma_{i}(x,a)=1$ is a system of
equations, and $v_{j}^{i}(x,a)$ is a word in the free group $F(x,a)$,
$i=1,...,r$, $j=1,...,n_{i}$.

We denote by $p_{l}$ the probability that the random group of level
$l$ and density $d$ satisfies the sentence $\psi$.

Assume that $\psi$ is true over $F_{k}$. Then, according to \thmref{26},
it follows directly that with overwhelming probability, the random
group $\Gamma$ of density $d$ satisfies $\psi$ 
\[
p_{l}\overset{l\rightarrow\infty}{\longrightarrow}1\,.
\]

Now assume that the sequence $\{p_{l}\}_{l}$ does not converge to
$0$ as $l$ goes to $\infty$. We will show that this assumption
implies that the free group $F_{k}$ satisfies $\psi$. Indeed, according
to \thmref{64}, we can assume that every solution for each of the
systems $\Sigma_{1},...,\Sigma_{r}$ over any of the groups in the
model can be lifted to a solution for that system over $F_{k}$. Since
we are assuming that $p_{l}\not\longrightarrow0$, we deduce that
for some large enough $l$, there exists a group $\Gamma_{l}$ in
the level $l$ of the model, so that $\psi$ is a truth sentence over
$\Gamma_{l}$, and every solution for each of the systems $\Sigma_{1},...,\Sigma_{r}$
over $\Gamma_{l}$ can be lifted to a solution for that system over
$F_{k}$.

Since $\psi$ is true over $\Gamma_{l}$, there exists $i_{0}=1,...,r$,
and some solution $x_{0}$ for the system $\Sigma_{i_{0}}(x,a)$ over
$\Gamma_{l}$, so that

\[
v_{1}^{i_{0}}(x_{0},a)\neq1\wedge...\wedge v_{n_{i_{0}}}^{i_{0}}(x_{0},a)\neq1\,
\]
in $\Gamma_{l}$.

Let $\tilde{x}_{0}$ be a lift for the solution $x_{0}$ to a solution
for the system $\Sigma_{i_{0}}(x,a)$ over the free group $F_{k}$.
Then

\[
\Sigma_{i_{0}}(\tilde{x}_{0},a)=1\wedge v_{1}^{i_{0}}(\tilde{x}_{0},a)\neq1\wedge...\wedge v_{n_{i_{0}}}^{i_{0}}(\tilde{x}_{0},a)\neq1\,
\]
in $F_{k}$, which implies that $\psi$ is a truth sentence over $F_{k}$.

This argument, obviously, implies the statement in the theorem. 
\end{proof}

\end{document}